\numberwithin{equation}{section}
\theoremstyle{plain}
\newtheorem{lem}[equation]{Lemma}
\newtheorem{prop}[equation]{Proposition}
\newtheorem{thm}[equation]{Theorem}
\newtheorem{cor}[equation]{Corollary}
\newtheorem{conjecture}[equation]{Conjecture}
\theoremstyle{definition}
\newtheorem{definition}[equation]{Definition}
\newtheorem{remark}[equation]{Remark}
\newtheorem{example}[equation]{Example}
\newcommand{\field}[1]{\mathbb{#1}}
\newcommand{\integers}{\ensuremath{\field{Z}}}
\newcommand{\naturals}{\ensuremath{\field{N}}}
\newcommand{\C}{\mathbb C}
\newcommand{\G}{\mathcal G}
\newcommand{\Aut}[1]{\text{Aut}(#1)}
\newcommand{\semidirect}{\rtimes}
\newcommand{\acts}{\curvearrowright}
\newcommand{\neb}{\ensuremath{\mathcal N}}
\let\diameter\relax \DeclareMathOperator{\diameter}{\text{diameter}}
\newcommand{\systole}[1]{\ensuremath{\|{#1}\|}}
\newcommand{\nclose}[1]{\ensuremath{\langle\!\langle#1\rangle\!\rangle}}
\newcommand*{\vertchar}[2][0pt]{%
  \tikz[
    inner sep=0pt,
    shorten >=-.15ex,
    shorten <=-.15ex,
    line cap=round,
    baseline=(c.base),
  ]\draw [line width=0.05mm,  black]
    (0,0) node (c) {#2}
    ($(c.south)+(#1,0)$) -- ($(c.north)+(#1,0)$);%
}
\newcommand{\pstab}{\operatorname{\textup{\vertchar{S}tab}}}
\newcommand{\stab}{\operatorname{Stab}}
\newcommand{\showcomments}{yes}
\renewcommand{\showcomments}{no}
\newsavebox{\commentbox}
\begin{document}
\title{Stature and separability in graphs of groups}

\begin{abstract}
We introduce the notion of \emph{finite stature} of a family $\{H_i\}$ of subgroups  of a group $G$.
We investigate the separability of subgroups of a group $G$ that splits
as a graph of hyperbolic special groups with quasiconvex edge groups.
We prove that when the vertex groups of $G$ have finite stature,
then quasiconvex subgroups of the vertex groups of $G$ are separable in $G$.
We present some partial results in a relatively hyperbolic framework.
\end{abstract}

\author{Jingyin Huang}
\address{Dept. of Math. \& Stats.\\
	McGill University \\
	Montreal, Quebec, Canada}
\curraddr{Department of Mathematics, The Ohio State University, 231 W. 18th Ave, Columbus, Ohio, U.S. 43210}
\email{huang.929@osu.edu}

\author{Daniel T. Wise}
\address{Dept. of Math. \& Stats.\\
	McGill University \\
	Montreal, Quebec, Canada}
\email{daniel.wise@mcgill.ca}

\maketitle
	
	\tableofcontents
\section{Introduction}	

\subsection{Stature and Statement of Main Result}
\begin{definition}[Stature]
	\label{def:finite stature}
	Let $G$ be a group and let $\{H_\lambda\}_{_{\lambda\in\Lambda}}$ be a collection of subgroups of $G$. Then $(G,\{H_\lambda\}_{_{\lambda\in\Lambda}})$ has \emph{finite stature} if for each $\mu\in \Lambda$, there are finitely many $H_{\mu}$-conjugacy classes of infinite subgroups of form $H_{\mu}\cap C$, where $C$ is an intersection of (possibly infinitely many) $G$-conjugates of elements of $\{H_\lambda\}_{\lambda\in\Lambda}$.
\end{definition}

We are especially interested in the finite stature of the vertex
groups in a splitting of $G$
as a graph of groups. An attractive equivalent definition of finite stature in that case
 is described in Definition~\ref{def:finite stature1}. 
Finite stature does not always hold for the vertex groups of a splitting,
but it is easy to verify in the following simple case:
 \begin{example}
If $G$ is a graph of groups such that each attaching map of each edge group is 
an isomorphism, then $G$ has finite stature with respect to its vertex groups. Note that this includes many nilpotent groups and solvable groups.
In Proposition~\ref{prop:NbyF dichotomy},
we generalize this to provide a characterization of finite stature
of the vertex groups when all edge groups in the splitting of $G$ are commensurable.
This can be applied to tree $\times$ tree lattices to see that irreduciblility corresponds to 
infinite stature of the vertex groups in the action on either factor.
\end{example}

The main result in this paper which is proven as Theorem~\ref{thm:main1} is the following:
\begin{thm} \label{thm:main10}
		Let $G$ be the fundamental group of a graph of groups with finite underlying graph. Let $\mathcal{V}$ be the collection of vertex groups of $G$. Suppose the following conditions hold:
		\begin{enumerate}
			\item each vertex group is word-hyperbolic and virtually compact special;
			\item each edge group is quasiconvex in its vertex groups;
			\item $(G,\mathcal{V})$ has finite stature.
		\end{enumerate}
		Then each quasiconvex subgroup of a vertex group of $G$ is separable in $G$. In particular, $G$ is residually finite.
\end{thm}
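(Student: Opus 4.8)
The plan is to reduce the separability of a quasiconvex subgroup $Q$ of a vertex group $G_v$ in $G$ to a finite collection of ``canonical completion and retraction'' constructions, using the action of $G$ on the Bass--Serre tree $T$ of the splitting. The overall strategy follows the template of malnormal/quasiconvex combination theorems: build a finite cover of a suitable nonpositively curved cube complex in which a lift of $Q$ becomes a retract, then pass back down.

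First I would replace each vertex group by a finite-index subgroup that is compact special (using hypothesis (1)), and upgrade hypothesis (2) so that each edge group is quasiconvex -- hence virtually a subcomplex-carried subgroup -- in the cube complexes of both of its vertex groups; this costs passing to a finite-index subgroup of $G$, and one must check that separability of quasiconvex subgroups is insensitive to this by a standard argument (a subgroup is separable in $G$ iff it is separable in a finite-index subgroup containing it, together with a coset-counting argument). Next, I would set up the ``canon'' machinery $\canon{}{}$ for walls: given the quasiconvex subgroup $Q\le G_v$, I want a canonical completion that produces, for each vertex and edge group, a finite cover in which the relevant intersections $Q^g\cap G_w$ and the edge-group intersections all become separated/retracts, and these finite covers are compatible along the edges of the graph of groups so that they assemble into a finite-index subgroup of $G$ (equivalently, a finite cover of the total space realizing the graph of groups). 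The point of hypothesis (3), finite stature, is exactly that only \emph{finitely many} $G_v$-conjugacy classes of infinite subgroups of the form $G_v\cap C$ arise as $C$ ranges over intersections of conjugates of vertex groups; this guarantees that the compatibility conditions to be imposed at the (infinitely many) translates of vertices and edges in $T$ reduce to finitely many conditions, so the canonical completion is a genuine finite cover.

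The main technical heart, and the step I expect to be the principal obstacle, is the \emph{induction on the complexity of the graph of groups} (number of edges, or on a well-ordered ``grade'' of the intersection pattern) together with controlling how quasiconvexity and finite stature behave when one collapses or cuts an edge. Concretely, after cutting an edge $e$ one obtains either an amalgam $G_1\ast_{G_e}G_2$ or an HNN extension $G_1\ast_{G_e}$; one must verify that the vertex groups of the smaller graph of groups still have finite stature with respect to the induced family (this uses that intersections of conjugates in the larger group restrict to intersections of conjugates in the smaller one, plus a counting argument) and that the subgroup $Q$, or the relevant edge-group intersections, remain quasiconvex after the cut -- the latter is where Bestvina--Feighn-type combination/acylindricity estimates and the quasiconvexity of edge groups (hypothesis (2)) are needed. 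Granting the inductive hypothesis for amalgams and HNN extensions along a single edge, the base case is a single virtually compact special hyperbolic group, where quasiconvex subgroups are separable by Haglund--Wise; the inductive step then glues the finite covers provided by canonical completion along the edge, using finite stature to keep the gluing data finite, and retracts $Q$ off of the resulting finite cover.

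Finally, once $Q$ is shown to be a virtual retract of a finite-index subgroup of $G$ (or, more modestly, closed in the profinite topology), separability follows, and taking $Q$ trivial yields residual finiteness of $G$; I would also record the relatively hyperbolic variants claimed in the abstract as consequences of replacing ``hyperbolic + quasiconvex'' by ``relatively hyperbolic + relatively quasiconvex'' throughout, at the cost of the partial hypotheses noted there. The delicate points to watch are: (i) that the cube complexes chosen at the vertices can be taken with the edge subcomplexes embedded and two-sided, so that the gluings are again special; (ii) that ``finite stature'' is preserved under the finite-index passages in step one; and (iii) the uniformity of quasiconvexity constants across the infinitely many edge-orbit representatives, which is what makes the combination estimates in the inductive step uniform.
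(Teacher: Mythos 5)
Your proposal diverges fundamentally from the paper's argument, and the divergence is not benign: the central step you rely on has a genuine gap. You propose to build, via canonical completion/retraction, finite covers of the vertex and edge cube complexes that are \emph{compatible} along the edges of the graph of groups, and you assert that finite stature reduces the compatibility conditions to ``finitely many conditions, so the canonical completion is a genuine finite cover.'' But having finitely many conjugacy classes of transections does not by itself let you solve those finitely many compatibility constraints: the whole difficulty is that finite-index subgroups of the vertex groups chosen at each vertex need not match up across the transfer isomorphisms of the edge groups, and no soft counting argument fixes this. Indeed, irreducible lattices in products of trees (Wise, Burger--Mozes) are graphs of free groups with quasiconvex edge groups for which no compatible system of finite covers exists at all (they are not residually finite); so any correct use of finite stature must enter in an essential, structural way, not merely as a finiteness of bookkeeping. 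The paper does something quite different: it does not construct finite covers of $G$ directly. It inducts on the \emph{depth} $\delta(G,\mathcal{E})$ of intersections of conjugates of edge groups, and in the inductive step it applies the Malnormal Special Quotient Theorem to the almost malnormal family of commensurators of the \emph{lowest transections} in each vertex group, producing infinite (Dehn-filling type) quotients $V\to\bar V$ that kill finite-index subgroups of these transections compatibly across all edges (Proposition~\ref{prop:quotients}); cubical small cancellation (liftable shells, fiber products, Lemma~\ref{lem:Intersection Control}) is what guarantees compatibility, preservation of quasiconvexity, preservation of finite stature, and the strict decrease of depth. Separability of $Q$ is then obtained by separating $\bar g$ from $\bar Q$ in the lower-depth quotient, not by exhibiting $Q$ as a virtual retract.

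Two further steps of your outline would fail as stated. First, your induction on the number of edges with Bestvina--Feighn/acylindricity estimates presupposes hyperbolicity-type control of $G$ that is unavailable: $G$ need not be hyperbolic or even relatively hyperbolic (free-by-cyclic groups already satisfy the hypotheses), and an HNN edge cannot be ``cut'' into smaller pieces in a way that retains the conjugation action of the stable letter, which is exactly where separability is hard. Second, your final claim that $Q$ becomes a virtual retract of a finite-index subgroup of $G$ is stronger than the theorem and is not something the paper proves or that one should expect in this generality; only separability is obtained, via the chain of depth-reducing quotients ending in a hyperbolic virtually compact special group where Haglund--Wise style separability applies (this is the paper's base case $\delta(G,\mathcal{E})=0$, where all edge groups are finite).
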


Though the vertex groups and edge groups of $G$ are hyperbolic, $G$ does not have to be (relatively) hyperbolic. For example, any free-by-cyclic group satisfies the assumption of  Theorem~\ref{thm:main10}, but not all such groups are relatively hyperbolic.

\subsection{Height}
We now recall the notion of height, which will sometimes play a facilitating role in 
determining finite stature.

\begin{definition}[Height]
	\label{def:height}
A subgroup $H\le G$ has \emph{finite height} if there does not exist
a sequence $\{g_n\}_{n\in \naturals}$ with $g_iH\neq g_j H$ for $i\neq j$ and 
with
$\cap_{n\in N} H^{g_n}$  infinite.
We use the  notation $H^g=gHg^{-1}$.

A finite collection $\mathcal H = \{H_1,\ldots, H_r\}$ of subgroups has \emph{finite height} if each $H_k$ has finite height.
\end{definition}

The above notion  was introduced in \cite{GMRS98}, where the \emph{height} $h$ is  the number $0\leq h \leq \infty$ 
that is the supremal length of  a sequence with infinite intersection of conjugates.
Particular attention was paid to word-hyperbolic groups,
and it was shown in \cite{GMRS98} that a quasiconvex subgroup of a word-hyperbolic group has finite height.
Since then, this notion has been studied in other contexts, in the relatively hyperbolic setting \cite{HruskaWisePacking},
and most recently in a graded relatively hyperbolic setting \cite{DahmaniMj2016}.

As it examines infinite intersections of conjugates,
finite stature is certainly related in various ways to finite height.
And we will utilize finite height of various subgroups
 to prove finite stature for the edge groups in certain graphs of groups.
The following crisply distinguishes the two notions:

\begin{example}[Height/Stature when abelian]
Let $G$ be abelian. Then $(G,\Lambda)$ has finite stature for any finite collection $\Lambda$.
 However, $(G,\Lambda)$ has infinite height precisely when $\Lambda$
has an element that is both infinite and infinite index in $G$.
\end{example}

\begin{example}
Let $M$ be a 3-manifold,
and consider the splitting of $\pi_1M$ arising from its JSJ splitting \cite{JacoShalen79}.
Then $\pi_1M$ has finite stature relative to its vertex groups.
Indeed, the intersection between adjacent vertex groups corresponds to the torus between them.
And the intersection between adjacent edge groups is the normal cyclic subgroup associated to the Siefert fibration of their common  vertex group. Finally, these normal cyclic subgroups intersect trivially
for consecutive vertex groups. We conclude that the only multiple intersection in a vertex group
arise from its tori, these normal cyclic subgroups, and the trivial group.
Finally, we caution that this example is deceptive since its controlling feature is that length~3 lines
 in the Bass-Serre tree have trivial stabilizer.
\end{example}

\subsection{Intuitive explanation of finite stature:}
As is clear from Theorem~\ref{thm:main10},
we are particularly interested in the finite stature of the collection of vertex 
groups in a splitting of $G$ as a graph of groups.
As the explanations in the text are more group theoretical, let us give a topological description  of finite stature in this setting.
Let $X$ be a graph of spaces associated to $G$ with underlying graph $\Gamma_X=\Gamma_G$,
and where each vertex space $X_v$ has $\pi_1X_v=G_v$ and each edge space $X_e$ has $\pi_1X_e=G_e$, and the attaching maps are $\pi_1$-injections. 
We are interested in \emph{transections} which are defined in Definition~\ref{def:big-tree}, but are roughly``maximal product regions'' in $X$ subject to having an underlying graph
being isomorphic to a specific tree. More precisely, we are interested in immersions of graphs of spaces $Y\rightarrow X$ 
where the underlying  graph $\Gamma_Y$ is a  tree,
and where all attaching maps are isomorphisms of groups. The reader should regard $Y$ as a product $Y_v \times \Gamma_Y$ where $Y_v$ is a vertex space of $Y$. We refer to $Y_v$ as the ``cross-section'' of $Y$. Maximality of the cross-section means that $Y\rightarrow X$ doesn't extend to an immersion $Y'\rightarrow X$ with $\Gamma_{Y'}=\Gamma_Y$ such that
the cross-section $Y_v'$ is bigger than $Y_v$ in the sense that $\pi_1Y_v\rightarrow \pi_1Y_v'$ is a proper inclusion. That $Y\rightarrow X$ is an ``immersion'' of graphs of spaces,
 means that the induced map $\widetilde Y\rightarrow \widetilde X$ induces
 an embedding between underlying Bass-Serre trees $\Gamma_{\widetilde Y}\rightarrow \Gamma_{\widetilde X}$. 
 finite stature means that for each vertex space $X_v$ of $X$,
 there are finitely many distinct cross-sections mapping into $X_v$ up to homotopy.
We refer to Example~\ref{exmp:Multiple HNN} and Figure~\ref{fig:CrossSections}.
We ignore the cross-sections with $\pi_1Y_v$ finite.
Sometimes the underlying tree $\Gamma_Y$ is infinite,
 so $Y$  threatens to intersect $X_v$ in infinitely many cross-sections, yielding infinite stature. However, in many cases, the map $Y\rightarrow X$
factors as $Y\rightarrow \bar Y \rightarrow X$ where $Y\rightarrow \bar Y$ is a covering space
and $\bar Y$ has a finite underlying graph, in which case $Y$ yields finitely many cross-sections in each vertex space  as desired.

\begin{figure}\centering
\includegraphics[width=.85\textwidth]{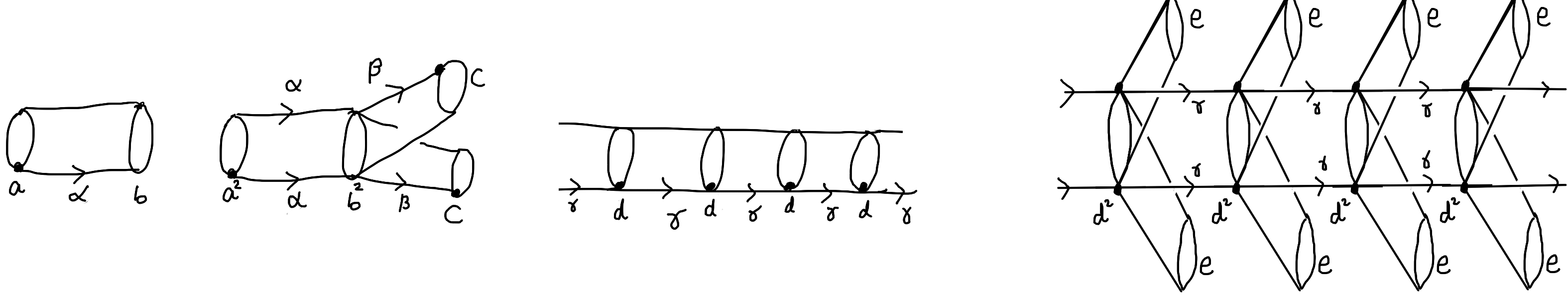}
\caption{Four products yielding 8 distinct cross-sections in the vertex space of Example~\ref{exmp:Multiple HNN}.
\label{fig:CrossSections}}
\end{figure}
\begin{example}\label{exmp:Multiple HNN}
Consider the following simplistic example of a multiple HNN extension of the free group on $a,b,c,d,e$
with stable letters $\alpha,\beta,\gamma,\delta$.
$$\langle a,b,c,d,e, \ \alpha, \beta, \gamma, \delta, \mid
a^\alpha=b,
(b^2)^\beta=c,
d^\gamma=d,
(d^2)^\delta=e\rangle$$
The various transections   are illustrated in Figure~\ref{fig:CrossSections}.
Each transection can contribute several cross-sections within the vertex space.
\end{example}

In the case of the HNN extension
$\langle a,t\mid a^t=aa\rangle$,
there is a single immersed product region whose underlying graph is an infinite 3-valent tree.
There are then infinitely many cross-sections in the vertex space, each corresponding to a circle $a^{2^n}$. Hence the group has infinite stature with respect to its vertex subgroup.
See Example~\ref{exmp:height:BS}.

\begin{example}[Tubular Groups]
A \emph{tubular group} $G$ splits as a finite graph $\Gamma$ of groups with $\integers^2$ for each vertex group and $\integers$ for each edge group.
We claim that $(G,\{G_v\})$ has infinite stature precisely when there is an embedding $BS(n,m)\hookrightarrow G$ with $n\neq \pm m$. Here $BS(n,m)=\langle a,t\mid t^{-1}a^nt=a^m\rangle$.
To see this,  consider an associated finite graph of groups $\Gamma'$
(that might be disconnected)  formed as follows:
Each edge $e$ of $\Gamma$ yields an edge of $\Gamma'$ with $G'_e=G_e$.
There is a vertex $u$ of $\Gamma'$ for each maximal cyclic subgroup of $G_v$ that is commensurable with an edge group at $G_v$, and we let $G'_u$ be this maximal cyclic subgroup. The edges of $\Gamma'$ are attached to the vertices according to the inclusion 
of the edge groups.
 Note that a single vertex of $\Gamma$ can contribute multiple vertices of $\Gamma'$. 
We refer to Figure~\ref{fig:TubularAssociate}
There is a map $\Gamma'\rightarrow \Gamma$, that induces maps between graphs of spaces.

Every transection for $G$ induces a transection in $G'$ and vice-versa.
As $G'$ is a graph of cyclic groups it is easy to see that it has finite stature if and only if it has no $BS(n,m)$ subgroup with $n\neq \pm m$ (see Lemma~\ref{prop:NbyF dichotomy}.)

Finite stature of a tubular group does not allow us to prove
residual finiteness, since there isn't an adequate version of the Malnormal Special Quotient Theorem for abelian groups. 
\end{example}

\begin{figure}\centering
\includegraphics[width=.9\textwidth]{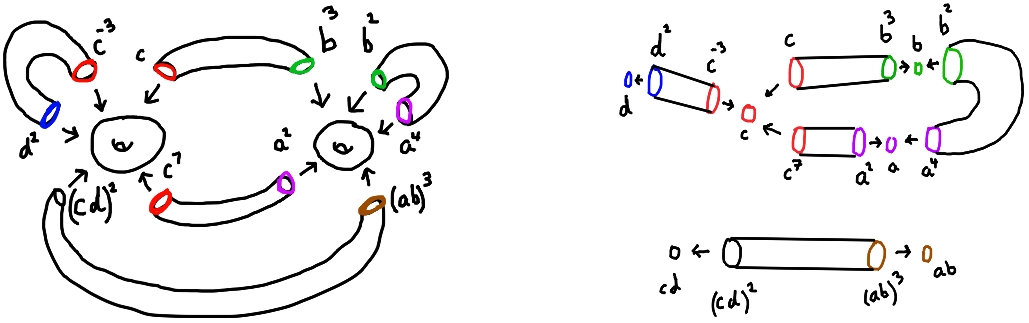}
\caption{The graph of spaces on the left, has an associated graph of spaces  carrying
all its transections on the right.
\label{fig:TubularAssociate}}
\end{figure}

\subsection{Connection to virtual specialness}
In \cite{HuangWiseSpecial} we formulate a conjecture relating the notion of  finite stature to virtual specialness of a compact nonpositively curved cube complex.
Specifically, we show that $(\pi_1X,\{\pi_1U_i\})$ has finite stature when
$X$ is a compact virtually special cube complex, and $\{U_i\}$ varies over the hyperplanes.
We conjecture that the converse holds.
We apply Theorem~\ref{thm:main10}  there
as a primary ingredient to prove the virtual specialness of certain nonpositively curved cube complexes. 
We emphasize that  Theorem~\ref{thm:main10} applies to prove the residual finiteness of many groups that aren't special.
Moreover, the relatively hyperbolic variant given in the following application
applies to groups that are not hyperbolic relative to virtually abelian subgroups,
since the parabolics can be $\integers^n\semidirect_\phi \integers$ where $\phi$ is an infinite order automorphism of $\integers^n$.

\begin{thm}
	\label{thm:rel hyperbolic intro}
Let $G$ be hyperbolic relative to subgroups that are virtually f.g.\ free abelian by $\mathbb Z$. Suppose $G$ splits as a finite graph of groups whose edge groups are relative quasiconvex and whose vertex groups are virtually sparse special. Then each relatively quasiconvex subgroup of each vertex group of $G$ is separable. In particular, $G$ is residually finite.
\end{thm}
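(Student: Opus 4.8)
The strategy is to carry out the argument of Theorem~\ref{thm:main10} within the relatively hyperbolic category, deducing the statement from the relatively hyperbolic counterpart of Theorem~\ref{thm:main1} developed in the body of the paper by verifying its hypotheses in their relative forms. The analogues of conditions (1) and (2) hold by assumption -- virtual sparse specialness of the vertex groups and relative quasiconvexity of the edge groups -- so the real content is twofold: a bookkeeping step aligning peripheral structures, and the verification of finite stature of $(G,\mathcal V)$. For the bookkeeping step, each vertex group $G_v$, being virtually sparse special, is hyperbolic relative to a finite collection of subgroups each virtually $\integers^{n}\semidirect_\phi\integers$; after replacing $G$ by a finite-index subgroup (harmless for the conclusion, by the usual commensurability arguments for separability) one uses the combination theorems for relatively hyperbolic groups together with relative quasiconvexity of the edge groups to arrange that the peripheral structure of $G$ restricts compatibly to every vertex and edge group, so that each peripheral $P\le G$ is conjugate into some vertex group, is virtually $\integers^{n}\semidirect_\phi\integers$, and meets each $G_e^{g}$ in a subgroup that is either finite or again virtually of this shape.

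To establish finite stature, fix a vertex group $G_\mu$ and an intersection $C=\bigcap_i G_{v_i}^{g_i}$ of $G$-conjugates of vertex groups, and bound the number of $G_\mu$-conjugacy classes of infinite subgroups $G_\mu\cap C$. I would split into two regimes. In the \emph{loxodromic regime}, where $C$ is not conjugate into a peripheral subgroup of $G$, the edge groups are relatively quasiconvex and hence have finite relative height by the relatively hyperbolic version of \cite{GMRS98} due to \cite{HruskaWisePacking}; this caps the depth of the associated transections (Definition~\ref{def:big-tree}), and the covering-space factorization $Y\to\bar Y\to X$ described in the intuitive discussion of the Introduction produces a finite underlying graph $\bar Y$, hence finitely many cross-sections per vertex space, exactly as in the hyperbolic proof. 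In the \emph{parabolic regime}, where $C$ is conjugate into a peripheral subgroup $P$ that is virtually $\integers^{n}\semidirect_\phi\integers$, one works inside $P$: the induced splitting of $P$ over the groups $P\cap G_e^{g}$ has virtually abelian edge groups, and arguing in the spirit of Proposition~\ref{prop:NbyF dichotomy} one sees that the only obstruction to finite stature would be an embedding $BS(n,m)\hookrightarrow P$ with $n\ne\pm m$. But $P$ is virtually polycyclic, whereas $BS(n,m)$ with $n\ne\pm m$ is not virtually polycyclic -- it either contains a non-abelian free subgroup (when $|n|,|m|\ge 2$) or a non-finitely-generated locally cyclic subgroup (when exactly one of $|n|,|m|$ equals $1$) -- so no such subgroup exists and finite stature in the parabolic regime follows.

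The step I expect to be the principal obstacle is the parabolic regime, and more precisely the combinatorial bookkeeping within a single peripheral subgroup $P$: although $P$ may be met by infinitely many of the conjugates $G_{v_i}^{g_i}$ and $G_e^{g}$, one must show that their intersections fall into only finitely many $P$-conjugacy classes. This should follow by exploiting that a finitely generated subgroup of $\integers^{n}\semidirect_\phi\integers$ is determined up to commensurability by its intersection with the fiber $\integers^{n}$ together with its image in the quotient $\integers$, that relative quasiconvexity of the edge groups forces the intersections $P\cap G_e^{g}$ to be of this tame, finitely parametrized form, and that the bounded packing of the edge groups limits the relevant configurations to finitely many. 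Once finite stature is in hand, the relatively hyperbolic separability machinery applies to give separability of relatively quasiconvex subgroups of vertex groups, and residual finiteness of $G$ is the special case of the trivial subgroup of a vertex group.
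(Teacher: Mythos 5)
Your proposal takes a genuinely different route from the paper's, and the route has a serious gap. The paper does \emph{not} establish finite stature of $(G,\mathcal V)$ for the original relatively hyperbolic group $G$ -- nor does it invoke any ``relatively hyperbolic counterpart of Theorem~\ref{thm:main1}.'' Instead, the proof of Theorem~\ref{thm:rel hyperbolic} proceeds by reduction to Theorem~\ref{thm:main1} itself: (i)~Lemma~\ref{lem:reduction} replaces the given splitting by a new one in which each edge group is \emph{full} (as well as relatively quasiconvex) in its vertex groups while the vertex groups remain virtually sparse special; (ii)~Proposition~\ref{prop:quotient rel hyperbolic} produces a quotient $G\to\bar G$ obtained by killing deep finite-index subgroups of the parabolics, so that $\bar G$ is a graph of word-hyperbolic, virtually compact special vertex groups with quasiconvex edge groups; (iii)~crucially, finite stature is established for $\bar G$ (not $G$) via Proposition~\ref{prop:lift}, which lifts transections of $\bar G$ back to transections of $G$ that are necessarily loxodromic because the parabolics have finite image in $\bar G$, and then Corollary~\ref{lem:finitely many conjugacy class1} applies; (iv)~Theorem~\ref{thm:main1} is applied to $\bar G$, and separability of $Q\le V\le G$ is deduced by choosing the quotient to separate a given $g\notin Q$, exactly as in the proof of Theorem~\ref{thm:main1}.

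The step your proposal gets wrong is precisely the parabolic regime. Your argument reduces finite stature there to the absence of $BS(n,m)$ subgroups with $n\neq\pm m$, appealing to the discussion in Example~\ref{exmp:Multiple HNN} (tubular groups) and Proposition~\ref{prop:NbyF dichotomy}. But that characterization is specific to the tubular setting ($\integers^2$ vertices, $\integers$ edges, and finite-index edge inclusions for the proposition); it is not a general criterion for finite stature inside a polycyclic peripheral $P\cong \integers^n\rtimes_\phi\integers$. Indeed, if a vertex group meets $P$ in a proper (infinite-index) subgroup $L\le\integers^n$ of the fiber, an infinite-order monodromy $\phi$ can conjugate $L$ to infinitely many non-conjugate subgroups of $\integers^n$ without producing any $BS(n,m)$ subgroup (since $P$ stays polycyclic). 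This is exactly the phenomenon that the paper's Lemma~\ref{lem:reduction} is designed to control: by enlarging edge groups until they are full in their vertex groups, intersections with parabolics become finite index in those parabolics, taming the conjugation action. Your proposal does not perform this reduction and so cannot rule out infinitely many $G_\mu$-conjugacy classes of parabolic intersections. Even the loxodromic regime of your argument presupposes a rel.-hyp.\ version of the small-cancellation quotient machinery (the analogue of Proposition~\ref{prop:quotients}) that you have not supplied; this is the content of Proposition~\ref{prop:quotient rel hyperbolic} in the paper and requires the fullness hypothesis.

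In short: the paper avoids the finite-stature question for $G$ altogether by first passing to a full splitting and then filling the parabolics to land in the word-hyperbolic case of Theorem~\ref{thm:main1}. Your proposal attempts to prove finite stature for $G$ directly, which is neither established in the paper nor (without the full-splitting reduction) plausibly true in general, and also assumes a theorem (a relatively hyperbolic Theorem~\ref{thm:main1}) that the paper does not prove.
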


Theorem~\ref{thm:rel hyperbolic intro} which is proven as Theorem~\ref	{thm:rel hyperbolic}
plays a role in the inductive proof of virtual specialness of certain relatively hyperbolic groups with quasiconvex hierarchies \cite[Sec~15]{WiseIsraelHierarchy}.

\section{Preliminaries}

\subsection{Background on Cube Complexes}\label{sub:cubical background}

\subsubsection{Nonpositively curved cube complexes:}
An \emph{$n$-dimensional cube} is a copy of $[-\frac12,+\frac12]^n$.
Its \emph{subcubes} are the subspaces obtained by restricting some coordinates to $\pm\frac12$.
We regard a subcube as a copy of a cube in the obvious fashion.
A \emph{cube complex} $X$ is a cell complex obtained by gluing cubes together along subcubes,
where all gluing maps are modeled on isometries.
Recall that a \emph{flag complex} is a simplicial complex with the property that a finite set of vertices spans a simplex if and only if they are pairwise adjacent.
$X$ is \emph{nonpositively curved} if the link of each $0$-cube of $X$ is a flag complex.
A \emph{CAT(0) cube complex} $\widetilde X$ is a simply-connected nonpositively curved cube complex.

\subsubsection{Hyperplanes}
A \emph{midcube} is a subspace of an $n$-cube obtained by restricting one coordinate of $[-\frac12,+\frac12]^n$ to $0$.  A \emph{hyperplane} $\widetilde U$ is  connected subspace of a CAT(0) cube complex $\widetilde X$
such that for each cube $c$ of $\widetilde X$, either  $\widetilde U\cap c =\emptyset$ or $\widetilde U\cap c$ consists of a midcube of $c$. The \emph{carrier} of a hyperplane $U$ is the subcomplex $N(\widetilde U)$ consisting of all closed cubes intersecting $U$.
We note that every midcube of $\widetilde X$ lies in a unique hyperplane, and $N(\widetilde U)\cong \widetilde U\times c^1$ where $c^1$ is a 1-cube. An \emph{immersed hyperplane} $U\rightarrow X$ in a nonpositively curved cube complex is a map $\stab(\widetilde U)\backslash \widetilde U \rightarrow X$ where $\widetilde U$ is a hyperplane of the universal cover $\widetilde X$ of $X$. We similarly define $N(U)\rightarrow X$ via $N(U)=\stab(\widetilde U)\backslash N(\widetilde U)$.

\newcommand{\link}{\text{link}}

A map $\phi:Y\rightarrow X$ between nonpositively curved cube complexes is \emph{combinatorial} if it maps open $n$-cubes homeomorphically to open $n$-cubes. A combinatorial map is a \emph{local-isometry}
if for each $0$-cube $y$,
the induced map $\link(y)\rightarrow \link(\phi(y))$ is an embedding of simplicial complexes,
such that $\link(y)\subset \link(\phi(y))$ is \emph{full} in the sense that if a collection of vertices
of $\link(y)$ span a simplex in $\link(\phi(y))$ then they span a simplex in $\link(y)$.

\subsubsection{Special Cube Complexes}
A nonpositively curved cube complex $X$ is \emph{special}
if each immersed hyperplane $U\rightarrow X$  is an embedding, and moreover $N(U)\cong U\times [-\frac12,+\frac12]$,
each restriction $U\times \{\pm\frac12\} \rightarrow X$ is an embedding,
and if $U,V$ are hyperplanes of $X$ that intersect then 0-cube of $N(U)\cap N(V)$ lies in a 2-cube
intersected by both $U$ and $V$.

\subsection{Cubical small cancellation}
A \emph{cubical presentation} $\langle X | \{Y_i\} \rangle$ consists of a nonpositively curved cube complex $X$, and a set of local isometries $Y_i \rightarrow X$ of nonpositively curved cube complexes. We use the notation $X^*$ for the cubical presentation above. As a topological space, $X^*$ consists of $X$ with a cone on each $Y_i$ attached,
so $\pi_1X^*=\pi_1X/\nclose{\{\pi_1Y_i\}}$. We use the notation $\widetilde X^*$ for the universal cover of $X^*$.

  $\systole{Y_i}$ denotes the infimal length of an essential combinatorial closed path in $Y_i$.
  
Let $Y_i\rightarrow X$ and $Y_j\rightarrow X$ be maps.
A \emph{morphism} $Y_i\rightarrow Y_j$ is a map such that $Y_i\rightarrow X$ factors as $Y_i\rightarrow Y_j\rightarrow X$.
It is an \emph{isomorphism} if there is an inverse map $Y_j\rightarrow Y_i$ that is also a morphism.
Define an automorphism accordingly and let $\Aut{Y\rightarrow X}$ denote the group of automorphisms of
 $Y\rightarrow X$.

A \emph{cone-piece} of $X^*$ in $Y_i$ is a component of $g\widetilde Y_j \cap \widetilde Y_i$ for some $g \in \pi_1X$, where we exclude the case that $i=j$ and $g\in \text{Stab}(\widetilde Y_i)$
and there is a map $\bar g: Y_i\rightarrow Y_i$ so that the following diagram commutes:
 $$\begin{array}{ccccc}
  \widetilde Y_i & \rightarrow  & Y_i           & \searrow &  \\
  g\downarrow      &              &  \bar g \downarrow   &          & X \\
  \widetilde Y_i  &  \rightarrow & Y_i           & \nearrow &  \\
\end{array}$$

For a hyperplane $\widetilde{U}$ of $\widetilde{X}$,  let $N(\widetilde{U})$ denote its \emph{carrier}, which is the union of all closed cubes intersecting $\widetilde{U}$.
A \emph{wall-piece} of $X^*$ in $Y_i$ is a component of $\widetilde{Y}_i \cap N(\widetilde{U})$, where $\widetilde{U}$ is a hyperplane that is disjoint from  $\widetilde{Y}_i$.

For instance, consider the presentation  $\langle a,b \mid (abbb)^{20}, (baaa)^{20} \rangle$,
and regard it as a cubical presentation $\langle X \mid Y_1, Y_2\rangle$ where $X$ is a bouquet of circles and each $Y_i$ is an immersed cycle.
Then the path $ab$ corresponds to a piece, since it appears as an intersection between distinct lines $\widetilde Y_1, \widetilde Y_2$ in $\widetilde X$.
Likewise $bb$ is a piece since it occurs as the intersection of two distinct translates of $\widetilde Y_1$. However, $bbb$ is not a piece, since any two translates of $\widetilde Y_1$ that contain $bbb$ are actually the same, and they differ by a translation that projects to an automorphism of $Y_1$.


\begin{definition}[Small Cancellation]\label{defn:small cancellation}
$X^*$ satisfies the $C'(\frac{1}{24})$ \emph{small cancellation} condition if
$\diameter(P) < \frac{1}{24} \systole{Y_i}$
for every cone-piece or wall-piece $P$ of  $Y_i$.
\end{definition}


\begin{definition}
Let $X^*=\langle X \mid \{Y_i\}\rangle$ and $A^*=\langle A \mid \{B_j\}\rangle$ be cubical presentations.
A \emph{map} $A^*\rightarrow X^*$ of cubical presentations is a local isometry $A\rightarrow X$,
so that for each $j$ there exists $i$ such that there is a map $B_j\rightarrow Y_i$
so that the composition $B_j\rightarrow A\rightarrow X$ equals $B_j \rightarrow Y_i\rightarrow X$.

Given a cubical presentation $X^*$ and a local isometry $A\rightarrow X$,
  the \emph{induced presentation} is the cubical presentation of the form
$A^* = \langle A \mid \{A\otimes_X Y_i\} \rangle$ where $A\otimes_X Y_i$ is the fiber-product of $A\rightarrow X$ and $Y_i\rightarrow X$. We refer to Section~\ref{subsubsec:fiber products} for the definition of fiber-product. It is immediate that there is a map of cubical presentations $A^*\rightarrow X^*$.
\end{definition}

The following is a slightly more restrictive version of the same notion treated in \cite[Def~3.61 \& Def~3.65]{WiseIsraelHierarchy}:
\begin{definition}
	Let $A^*=\langle A\mid \{B_j\}\rangle$ and $X^*=\langle X\mid\{Y_i\}\rangle$.
	We say $A^*\rightarrow X^*$ has \emph{liftable shells} provided the following holds:
	Whenever $QS\rightarrow Y_i$ is an essential closed path with $|Q| > |S|_{Y_i}$ and $Q\to Y_i$ factors through $Q\to A \otimes_X Y_i\to Y_i$,
	there exists $B_j$ and a lift $QS\rightarrow B_j$, such that $B_j\rightarrow A\rightarrow X$
	factors as $B_j\rightarrow Y_i\rightarrow X$.
\end{definition}

The following is a restatement of a combination of \cite[Thm~3.68 and Cor~3.72]{WiseIsraelHierarchy}:
\begin{lem}\label{lem:quasi-isom embedding}
Let  $X^*$ be $C'(\frac{1}{24})$.
Let $A^*\rightarrow X^*$ have liftable shells and suppose that $A^*$ is compact.
Then $\pi_1 A^*\to\pi_1 X^*$ is injective, $A^*\rightarrow X^*$ lifts to an embedding $\widetilde A^*\rightarrow \widetilde X^*$,
and moreover the map is a quasi-isometric embedding.
\end{lem}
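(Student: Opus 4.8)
The plan is to deduce the statement from the cubical small‑cancellation machinery of \cite{WiseIsraelHierarchy}. Since the liftable‑shells hypothesis as formulated above is more restrictive than \cite[Def.~3.61 \& Def.~3.65]{WiseIsraelHierarchy} and $A^*$ is assumed compact, the conclusions will follow once we match hypotheses to \cite[Thm~3.68]{WiseIsraelHierarchy} (giving $\pi_1$‑injectivity and the lift to an embedding $\widetilde A^*\to\widetilde X^*$) and \cite[Cor~3.72]{WiseIsraelHierarchy} (giving the quasi‑isometric embedding). The engine is the cubical Greendlinger lemma: a reduced disc diagram $D\to X^*$ containing at least one cone cell is a single cone cell, a ladder, or contains at least two boundary features, each a spur, a cornsquare, or a shell; in the first two cases a shell (possibly with empty inner path) still appears on $\partial D$. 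For a shell one has a cone cell $C\to Y_i$ whose outer path $Q\subseteq\partial D$ and inner path $S$ satisfy $|Q|>|S|_{Y_i}$ with $QS\to Y_i$ essential.

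\textbf{Injectivity of $\pi_1 A^*\to\pi_1 X^*$.} We argue by induction on the number of cone cells of a reduced disc diagram. Let $P\to A^*$ be an essential closed path that is nullhomotopic in $X^*$; after a homotopy in $A^*$ we may take $P$ to be a reduced path lying in $A$, and we fix a reduced disc diagram $D\to X^*$ with $\partial D=P$. If $D$ has no cone cells then $D\to X$, and since $A\to X$ is a local isometry it is $\pi_1$‑injective, so $P$ is nullhomotopic in $A$ and hence in $A^*$. Otherwise the Greendlinger lemma yields a shell $C\to Y_i$ with outer path $Q\subseteq P$ (reducedness of $P$ excludes boundary spurs, and a minimal‑area representative has no boundary cornsquares outside cone cells). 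Because $Q$ lies on $P$ it factors through $A$, and as a subpath of $\partial C$ it factors through $Y_i$, compatibly over $X$; by the universal property of the fiber product, $Q$ factors through $A\otimes_X Y_i\to Y_i$. Liftable shells then produces a $B_j$ and a lift $QS\to B_j$ with $B_j\to A\to X$ equal to $B_j\to Y_i\to X$. Thus $Q$ and $S$ cobound a disc in the cone of $A^*$ over $B_j$, so replacing the subpath $Q$ of $P$ by $S$ gives a reduced closed path $P'$ in $A$ that is nullhomotopic in $A^*$ exactly when $P$ is, while deleting $C$ from $D$ yields a disc diagram $D'\to X^*$ with $\partial D'=P'$ and one fewer cone cell. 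Re‑reducing $D'$ only decreases the cone‑cell count, so by induction $P'$, hence $P$, is nullhomotopic in $A^*$.

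\textbf{Embedding and quasi‑isometric embedding.} With $\pi_1 A^*\hookrightarrow\pi_1 X^*$ established, fix basepoints to obtain an equivariant map $\widetilde A^*\to\widetilde X^*$ covering $A^*\to X^*$. On the cubical part this restricts to the embedding $\widetilde A\hookrightarrow\widetilde X$ coming from the local isometry $A\to X$, and each cone of $\widetilde A^*$ is carried into a cone of $\widetilde X^*$ since $A^*\to X^*$ is a map of cubical presentations; a failure of injectivity would produce an essential reduced diagram over $X^*$ bridging two cubical or conical pieces of $\widetilde A^*$, which the Greendlinger lemma and liftable shells remove exactly as in the previous paragraph, a contradiction. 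This is the content of \cite[Thm~3.68]{WiseIsraelHierarchy}. Finally, for the quasi‑isometric embedding one takes a combinatorial geodesic $\gamma$ of $\widetilde X^*$ between two $0$‑cubes of $\widetilde A^*$ together with a combinatorial path $\gamma'$ of $\widetilde A^*$ between them, forms a disc diagram with boundary $\gamma\gamma'^{-1}$, and uses that each shell contributes an outer path strictly longer than its inner path while, by compactness of $A^*$, the complexes $B_j$ (and the relevant $Y_i$) have uniformly bounded diameter; the $C'(\frac{1}{24})$ condition then bounds the multiplicative and additive distortion, giving the bound of \cite[Cor~3.72]{WiseIsraelHierarchy}.

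\textbf{Main obstacle.} The delicate point is the bookkeeping in the inductive step: verifying that the outer path of the Greendlinger shell genuinely factors through the fiber product $A\otimes_X Y_i$ so that liftable shells applies, and that the surgery strictly lowers the number of cone cells while keeping the new boundary path inside $A$. Once this is in place, the $\pi_1$‑injectivity, the embedding of universal covers, and the distortion estimate are the standard small‑cancellation package of \cite{WiseIsraelHierarchy}.
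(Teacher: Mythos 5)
The paper does not give a proof of this lemma: it is explicitly stated as ``a restatement of a combination of \cite[Thm~3.68 and Cor~3.72]{WiseIsraelHierarchy}'' and is used as a black box throughout, so there is no in-paper argument to compare against. Your proposal instead reconstructs the argument from the cited source, and that reconstruction is substantially correct: invoking the cubical Greendlinger lemma, observing that the shell's outer path $Q$ factors through $A\otimes_X Y_i$ because it maps to both $A$ (as a subpath of $\partial D = P$) and $Y_i$ (as a subpath of $\partial C$) compatibly over $X$, applying the liftable-shells hypothesis to obtain a $B_j$ through which the surgery may be performed inside $A^*$, and inducting on the number of cone cells is exactly the standard package.

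Two points worth tightening, neither of which is a substantive gap. First, the phrase ``a minimal-area representative has no boundary cornsquares outside cone cells'' is imprecise: what is true is that a cornsquare whose corner lies on $P\subset A$ can be absorbed across into $A$ because $A\rightarrow X$ is a local isometry, and doing so strictly decreases area; after minimizing area, and since $P$ is chosen reduced so there are no spurs, the remaining Greendlinger features along $P$ must be shells, with the degenerate single-cone-cell and ladder cases also yielding a shell with empty inner path, as you note. Second, in the surgery step you should replace the subpath $Q$ of $P = QR$ by $S^{-1}$ rather than $S$ (since $QS$ is the closed path that lifts to $B_j$, so $Q\simeq S^{-1}$ rel endpoints in $A^*$), and the resulting path $P' = S^{-1}R$ may fail to be reduced and $D'$ may fail to be a reduced diagram, both of which you address by folding and re-reducing; this only decreases the cone-cell count, so the induction survives.
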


The following appears as \cite[Lem~3.67]{WiseIsraelHierarchy}.

\begin{lem}\label{lem:liftable shell criterion}
Let $\langle X \mid \{Y_i\}\rangle$ be a $C'(\frac{1}{24})$ small-cancellation cubical presentation.
Let $A\rightarrow X$ be a local isometry and let $A^*$ be the associated induced presentation.
Suppose that for each $i$, each component of $A\otimes_X Y_i$ is either a copy of $Y_i$
or is a contractible  complex $K$ with $\diameter(K) \leq  \frac12\systole{Y_i}$.
Then the natural map $A^*\rightarrow X^*$ has liftable shells.
\end{lem}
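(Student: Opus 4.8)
The plan is to unwind the definition of liftable shells for the induced presentation $A^* = \langle A \mid \{A\otimes_X Y_i\}\rangle$, whose relators $B_j$ are exactly the connected components of the fiber products $A\otimes_X Y_i$. First I would observe that the last requirement in the definition---that $B_j\to A\to X$ factor as $B_j\to Y_i\to X$---is automatic: by construction of the fiber product, $A\otimes_X Y_i$ carries compatible maps to $A$ and to $Y_i$ whose compositions to $X$ agree, and this survives passing to a component. So the whole content is, given an essential closed path $QS\to Y_i$ with $|Q| > |S|_{Y_i}$ whose subpath $Q\to Y_i$ factors as $Q\to A\otimes_X Y_i\to Y_i$, to produce a component $B_j$ of $A\otimes_X Y_i$ together with a lift $QS\to B_j$ of $QS\to Y_i$.

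Since $Q$ is connected and nontrivial (if $Q$ were trivial then $|Q| > |S|_{Y_i}$ would fail), the map $Q\to A\otimes_X Y_i$ factors through a single component $B$, so $Q\to Y_i$ factors as $Q\to B\xrightarrow{p}Y_i$, and by hypothesis $B$ is either isomorphic to $Y_i$ via $p$, or a contractible complex $K$ with $\diameter(K)\le\tfrac12\systole{Y_i}$. In the first case I would take $B_j = B$ and the lift $QS\to Y_i\xrightarrow{p^{-1}}B$, and be done. So the real work is to rule out the second case: one shows that if $Q$ factors through such a small contractible $K$, then the inequality $|Q| > |S|_{Y_i}$ cannot hold, so the ``whenever'' clause is vacuous there. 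Note this must be ruled out rather than handled directly, since $QS$ is essential and hence cannot lift to the simply connected $K$, while $Q$ forces the component to be $K$.

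For the estimate, because $K$ is contractible the lift $Q\to K$ is homotopic rel endpoints in $K$ to a combinatorial geodesic of $K$ between the same $0$-cells, of length at most $\diameter(K)$; pushing forward by $p$ gives a path $\bar Q\to Y_i$, homotopic rel endpoints to $Q$ in $Y_i$, with $|\bar Q|\le\diameter(K)\le\tfrac12\systole{Y_i}$, hence $|Q|_{Y_i}\le\tfrac12\systole{Y_i}$. Replacing $S$ by a $Y_i$-geodesic $S_0$ with $|S_0| = |S|_{Y_i}$, the closed path $\bar Q S_0$ is homotopic to $QS$, hence essential and combinatorial, so $|\bar Q| + |S|_{Y_i}\ge\systole{Y_i}$ and therefore $|S|_{Y_i}\ge\tfrac12\systole{Y_i}\ge|Q|_{Y_i}$. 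Since the outer path $Q$ of a shell is geodesic in $Y_i$---equivalently $|Q| = |Q|_{Y_i}$ under the embedding properties available for $C'(\tfrac{1}{24})$ presentations---this gives $|S|_{Y_i}\ge|Q|$, contradicting $|Q| > |S|_{Y_i}$. Hence $B\cong Y_i$ always, and the lemma follows.

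I expect the main obstacle to be the bookkeeping in this last step: checking that the fiber-product projection $p$ does not increase the length of the chosen geodesic, that the homotopy of $Q$ inside $K$ keeps $\bar QS_0$ an essential combinatorial closed path so that $\systole{Y_i}$ applies, and---most delicately---confirming that $|Q|$ may be taken to equal $|Q|_{Y_i}$, i.e.\ that the relevant outer path is genuinely geodesic in $Y_i$. Once these points are in place, the chain $\diameter(K)\le\tfrac12\systole{Y_i}\le|S|_{Y_i}$ closes the argument.
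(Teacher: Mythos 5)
The paper never proves this lemma itself --- it quotes it from \cite[Lem~3.67]{WiseIsraelHierarchy} --- so what follows compares you with the argument that citation is standing in for. Your overall scheme is the right one: the factoring condition on $B_j$ is automatic for components of $A\otimes_X Y_i$, the path $Q$ lands in a single component $B$, the copy-of-$Y_i$ case lifts directly through the inverse of the projection, and the contractible case must be shown never to arise (as you note, an essential $QS$ cannot lift to a contractible cone). The gap is in how you rule out the contractible case. First, $S_0$ is chosen only to share endpoints with $S$; it need not be homotopic to $S$ rel endpoints, so $\bar Q S_0$ need not be homotopic to $QS$ and its essentiality --- which is what you use to get $|\bar Q|+|S|_{Y_i}\ge\systole{Y_i}$ --- is unjustified (this part is repairable by working with $S$ itself, or with the shortest path in the rel-endpoints homotopy class of $S$, depending on how one reads $|S|_{Y_i}$). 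Second, and decisively, your last step needs $|Q|=|Q|_{Y_i}$, i.e.\ that $Q$ is geodesic in $Y_i$. Nothing supplies this: the definition of liftable shells used in this paper quantifies over \emph{all} essential closed paths $QS\to Y_i$ with $|Q|>|S|_{Y_i}$; there is no disc diagram and no shell present, and outerpaths of shells are in any case not geodesics of $Y_i$ (the point of a shell is precisely that its outerpath is long compared to its innerpath, and one uses it to shorten the boundary). A path $Q$ carried by a small contractible component can backtrack and be arbitrarily long, so the inequality you want, $|S|_{Y_i}\ge|Q|$, is not derivable from the stated hypotheses. You flagged this as the delicate point, but it is not bookkeeping --- it is where the argument breaks.

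The mechanism that actually closes the contractible case requires no control of $|Q|$ at all. In the cited formulation the innerpath of a shell comes with an absolute bound: it is a concatenation of boundedly many pieces, so $C'(\frac{1}{24})$ gives $|S|<\frac12\systole{Y_i}$. Since $K$ is contractible with $\diameter(K)\le\frac12\systole{Y_i}$, one homotopes $Q$ rel endpoints across the image of $K$ to a path $\bar Q$ with $|\bar Q|\le\frac12\systole{Y_i}$; then $\bar Q S$ is homotopic to $QS$, hence essential, yet has length less than $\systole{Y_i}$ --- a contradiction. So for genuine shells the contractible component simply cannot carry $Q$, and the lift exists through the copy of $Y_i$ exactly as in your first case. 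If you insist on the inequality $|Q|>|S|_{Y_i}$ as printed here (the paper defines neither $|S|_{Y_i}$ nor any restriction on $Q$), your chain only closes if $|Q|$ is also measured up to homotopy rel endpoints in $Y_i$, i.e.\ if $Q$ may be taken geodesic in its class; you should either justify that reading or import the bounded-piece bound on $S$ from the source, rather than asserting that outerpaths of shells are geodesic.
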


\subsection{Special quotients}
\begin{definition}
	\label{def:almost malnormal}
	A collection of subgroups $\{H_1,\ldots,H_r\}$ of $G$ is \emph{malnormal} provided that $H^g_i\cap H_j=\{1_G\}$ unless $i=j$ and $g\in H_i$. Similarly, the collection is \emph{almost malnormal} if intersections of nontrivial conjugates are finite (instead of trivial). Note that this condition implies that $H_i\neq H_j$ (unless they are finite in the almost malnormal case).
\end{definition}

The following appears as \cite[Thm~12.2]{WiseIsraelHierarchy}:
\begin{thm}[Malnormal Virtually Special Quotient]\label{thm:malnormal special quotient}
Let $G$ be a word-hyperbolic group with a finite index subgroup $J$ that is the fundamental group
of a compact  special cube complex.
Let $\{H_1,\dots, H_r\}$ be an almost malnormal collection of quasiconvex subgroups of $G$.
Then there are finite index subgroups $\ddot H_1, \dots, \ddot H_r$ such that:
For any finite index subgroups $ H_1', \dots, H_r'$ contained in the $\ddot H_1, \dots, \ddot H_r$ the quotient:
$G'= G / \nclose{H_1', \dots, H_r'}$ is
a word-hyperbolic group with a finite index subgroup $J'$ that is the fundamental group of a compact special cube complex.
\end{thm}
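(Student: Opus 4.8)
The statement is quoted verbatim from \cite[Thm~12.2]{WiseIsraelHierarchy}, so there is nothing genuinely new to establish here; but let me indicate the strategy I would follow, since its shape motivates several later arguments.

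First I would reduce to the cubical setting. Replacing each $H_i$ by $H_i\cap J$ preserves almost malnormality and quasiconvexity, and the conclusion for these subgroups of $J$ can be promoted to the conclusion for $G$, since a finite-index subgroup of a group which is virtually $\pi_1$ of a compact special cube complex controls the hyperbolicity and virtual compact specialness of the ambient group, and normal-closure quotients interact well with passage to finite-index subgroups. So assume $G=\pi_1X$ for $X$ compact special. Quasiconvexity then lets one represent each $H_i$ as $\pi_1Y_i$ for a compact nonpositively curved cube complex equipped with a local isometry $Y_i\to X$ (take the convex hull of an $H_i$-orbit in $\widetilde X$ and quotient by $H_i$; compactness of the quotient is exactly quasiconvexity). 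A finite-index subgroup $H_i'\le H_i$ corresponds to a finite cover $Y_i'\to Y_i$, and $G/\nclose{H_1',\dots,H_r'}=\pi_1X^*$ for the cubical presentation $X^*=\langle X\mid\{Y_i'\}\rangle$.

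The key step is to arrange that $X^*$ satisfies $C'(\tfrac1{24})$ by choosing the $H_i'$ deep enough. Almost malnormality and quasiconvexity force a uniform bound $D$, depending only on $X$ and $\{H_i\}$, on the diameters of cone-pieces and wall-pieces of $X^*$ in each $Y_i'$: an arbitrarily long cone-piece would produce an infinite intersection $H_i^g\cap H_j$ with $g$ witnessing distinct cosets, contradicting almost malnormality, and wall-pieces are controlled similarly using quasiconvexity of hyperplane carriers. Meanwhile $\systole{Y_i'}\to\infty$ as the index grows, because for each $n$ only finitely many conjugacy classes in $\pi_1Y_i$ are represented by essential loops of length at most $n$, and residual finiteness of $\pi_1Y_i$ (a consequence of specialness of $X$) furnishes a finite-index subgroup missing all of them. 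Hence for $H_i'$ sufficiently deep we get $\diameter(P)<\tfrac1{24}\systole{Y_i'}$ for every piece $P$, as required; one also checks that $\pi_1X^*$ is then word-hyperbolic.

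Finally I would invoke the cubical small-cancellation and special-quotient machinery: over a compact special base with compact relators, a $C'(\tfrac1{24})$ cubical presentation has fundamental group that is virtually $\pi_1$ of a compact special cube complex. The genuine content is this last assertion. One cubulates $\pi_1X^*$ using a wallspace structure on $\widetilde{X^*}$ built from the hyperplanes of $X$ together with the cones, producing a proper cocompact action on a CAT(0) cube complex, and then upgrades to virtual specialness by establishing separability of the hyperplane stabilizers. That separability step is where one feeds in the Malnormal Special Quotient Theorem one level lower in an induction, together with combination theorems for special cube complexes and the canonical completion and retraction technology. I expect this final step to be the main obstacle; everything preceding it is comparatively routine bookkeeping.
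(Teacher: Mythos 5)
The paper invokes this result as a black box from Wise's hierarchy monograph (their Theorem~12.2) and supplies no proof of its own, so there is no internal argument here to compare your sketch against. Your outline of the reduction to a compact special base, the representation of the $H_i$ by local isometries $Y_i\to X$, driving the systoles up via residual finiteness, and invoking cubical small cancellation is a fair summary of the shape of the actual argument, and you correctly identify the cubulation-plus-specialness step as the part that carries all the weight.

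One technical claim in your sketch is wrong as stated: wall-pieces are \emph{not} controlled by quasiconvexity of hyperplane carriers. A quasiconvex core $\widetilde Y_i$ of $H_i$ in $\widetilde X$ can fellow-travel the carrier of a hyperplane it does not cross for an unbounded distance, producing arbitrarily long wall-pieces no matter how deep the cover $Y_i'\to Y_i$ is taken — lengthening the systole does nothing to them. What is actually needed is \emph{superconvexity}: one first enlarges $\widetilde Y_i$ to a superconvex $H_i$-cocompact subcomplex, so that any bi-infinite geodesic trapped in a bounded neighborhood of $\widetilde Y_i$ already lies in $\widetilde Y_i$, and it is this property that yields a uniform bound on wall-piece diameters. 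This is precisely the content of Lemma~\ref{lem:superconvex core} and Lemma~\ref{lem:bounded wall-piece} in the present paper (quoting Wise's Lemmas~2.35 and~2.39), and the same superconvex enlargement is what Lemma~\ref{lem:piece control} is arranging before the small-cancellation hypotheses are verified in the proof of Proposition~\ref{prop:quotients}. Since the paper treats the whole theorem as a citation, nothing downstream is affected, but your sketch is close enough to the real argument that it is worth having this particular detail right.
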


The following is a simplified restatement of \cite[Lem~12.10]{WiseIsraelHierarchy}:
\begin{lem}\label{lem:Intersection Control}
Let $\langle X\mid Y_1,\dots, Y_k\rangle$ be a $C'(\frac{1}{24})$ small-cancellation cubical presentation.
Let $A_1\rightarrow X$ and $A_2\rightarrow X$ be based local isometries.
Suppose $X^*$ has small pieces relative to $A_1,A_2$ in the following sense
for each $1\leq j\leq 2$ and $1\leq i\leq k$:
For each pair of lifts $\widetilde A_j, \widetilde Y_i$ to $\widetilde X$,
the piece $P$ between $\widetilde A_j,\widetilde Y_i$ satisfies: Either $\diameter(P) < \frac18\systole{Y_i}$
or $\widetilde Y_i \subset \widetilde A_j$ and factor through a map $Y_i\to A_j$.

Let $\{\pi_1A_1g_i\pi_1A_2\}$ be a collection of distinct double cosets in $\pi_1X$.
And suppose that for each chosen representative $g_i$ and each cone $Y_j$ we have $|g_i| < \frac18 \systole{Y_j}$.

Let $G\rightarrow \bar G$ denote the quotient $\pi_1X\rightarrow \pi_1X^*$.
Then:
\begin{enumerate}
\item \label{Double:1} $[$Double Coset Separation$]$ \
$\overline{\pi_1 A_1} \bar g_i \overline{\pi_1 A_2} \ \neq \  \overline{\pi_1 A_1} \bar g_j \overline{\pi_1 A_2}$
\ for $i\neq j$.
\end{enumerate}

Suppose moreover that $\{\pi_1A_1g_i\pi_1A_2\}$ form a complete set of double cosets with the property that $\pi_1A_1^{g_i} \cap \pi_1A_2$ is infinite.
Then:

\begin{enumerate}
\setcounter{enumi}{1}
\item \label{Double:2} $[$Square Annular Diagram Replacement$]$ \ If $\overline{\pi_1 A_1}^{\bar g} \cap \overline{\pi_1 A_2}$ is infinite for some $\bar g \in \bar G$,
then $\overline{\pi_1 A_1}\bar g \overline{\pi_1 A_2}= \overline{\pi_1 A_1}\bar g_i \overline{\pi_1 A_2}$ for some (unique)\  $i$.

\item \label{Double:3} $[$Intersections of Images$]$ \
$\overline{\pi_1 A_1^{g_i} \cap \pi_1 A_2}
=  \overline{\pi_1 A_1}^{\bar g_i}\cap \overline{\pi_1 A_2}$
for each $i$.
\end{enumerate}
\end{lem}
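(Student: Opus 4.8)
The plan is to argue entirely with disc and annular diagrams over the cubical presentation $X^*=\langle X\mid Y_1,\dots,Y_k\rangle$, invoking the cubical Greendlinger/ladder machinery for $C'(\tfrac1{24})$ presentations from \cite{WiseIsraelHierarchy}. The two local isometries $A_1\to X$ and $A_2\to X$ enter through their carriers: given a path $w$ in $G=\pi_1X$ that is null-homotopic in $\bar G=\pi_1X^*$ and is subdivided into an arc labelled by an element of $\pi_1A_1$, a short connector, an arc labelled by an element of $\pi_1A_2$, and a second short connector, choose a disc diagram $D\to X^*$ with $\partial D\to X$ reading $w$ that is minimal with respect to (area, number of cone-cells). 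Push the $A_1$-arc and the $A_2$-arc of $\partial D$ into $N(A_1)$ and $N(A_2)$ respectively. The hypothesis that $X^*$ has small pieces relative to $A_1,A_2$ guarantees that any overlap of a cone-cell with the $A_j$-part of $\partial D$ is a piece of diameter $<\tfrac18\systole{Y_i}$, unless the relevant $Y_i$ factors through $A_j$; and the hypothesis $|g_i|<\tfrac18\systole{Y_j}$ does the same job for the connector arcs. Thus the interaction of $\partial D$ with any single cone-cell stays below the threshold needed to run the ladder theorem.

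For (1): if $\overline{\pi_1A_1}\bar g_i\overline{\pi_1A_2}=\overline{\pi_1A_1}\bar g_j\overline{\pi_1A_2}$ with $i\ne j$, then $w=a_1g_ia_2g_j^{-1}$ represents $1$ in $\bar G$ for suitable $a_1\in\pi_1A_1$ and $a_2\in\pi_1A_2$; take a minimal $D$ as above. If $D$ contained a cone-cell, the ladder theorem would supply a shell whose outer arc is a long subpath of $\partial D$ made up of at most two cone-pieces or wall-pieces together with (pieces of) the connectors; but each such contribution is short by the previous paragraph, contradicting that the shell is large — unless the shell's cone $Y_i$ factors through some $A_j$, in which case the cone-cell is absorbed into $N(A_j)$, contradicting minimality. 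Hence $D$ has no cone-cells, so $w=1$ already in $\pi_1X$, contradicting the distinctness of the double cosets $\pi_1A_1g_i\pi_1A_2$ in $\pi_1X$. The uniqueness clause in (2) is exactly this statement.

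For (2) and (3): given $\bar g$ with $\overline{\pi_1A_1}^{\bar g}\cap\overline{\pi_1A_2}$ infinite, choose an infinite-order $\bar h$ in it and write $\bar h=\overline{ga_1g^{-1}}=\overline{a_2}$ with $a_1\in\pi_1A_1$, $a_2\in\pi_1A_2$. For each $n$ the word $ga_1^ng^{-1}a_2^{-n}$ is null-homotopic in $\bar G$, so it bounds an annular diagram over $X^*$; choose one, $\mathbb A_n$, of minimal complexity, with outer cycle running along the $g$-conjugate of $N(A_1)$ and inner cycle along $N(A_2)$. As in the disc case the boundary cycles meet cone-cells only in small pieces, so $\mathbb A_n$ is ladder-like, and for $n$ large it must contain an arc lying simultaneously in $N(A_1^{g})$ and $N(A_2)$ whose length exceeds every $\systole{Y_i}$; this forces the cone-cells met along that arc to be absorbed into $N(A_1)$ or $N(A_2)$, and replacing $\mathbb A_n$ accordingly yields a genuine square annular diagram over $X$. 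That diagram exhibits a nontrivial infinite-order element of $\pi_1A_1^{g'}\cap\pi_1A_2$ in $G$ for some $g'\in\pi_1A_1g\pi_1A_2$ (read off the reconfigured connector arc); by completeness of $\{g_i\}$ we get $\pi_1A_1g'\pi_1A_2=\pi_1A_1g_i\pi_1A_2$ for some $i$, and projecting to $\bar G$ gives the equality of double cosets in (2), unique by (1). For (3), the inclusion $\overline{\pi_1A_1^{g_i}\cap\pi_1A_2}\subseteq\overline{\pi_1A_1}^{\bar g_i}\cap\overline{\pi_1A_2}$ is immediate since the quotient commutes with conjugation and intersection; the reverse inclusion is the same annular-diagram replacement carried out with the connector arc equal to the fixed short word $g_i$, so no double-coset ambiguity arises and each $\bar h$ on the right is realized by an element of $\pi_1A_1^{g_i}\cap\pi_1A_2$.

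The main obstacle is the annular-diagram replacement underlying (2): one must argue that the infinitude of $\overline{\pi_1A_1}^{\bar g}\cap\overline{\pi_1A_2}$ is detected inside a single bounded subdiagram that can be certified to lie over $X$ (so that deleting cone-cells genuinely recovers an intersection in $G$), and then reconcile the conjugator $g'$ produced by the diagram with the prescribed representatives $g_i$ — this last point is precisely where completeness of the list $\{\pi_1A_1g_i\pi_1A_2\}$ and the bound $|g_i|<\tfrac18\systole{Y_j}$ are used. The disc-diagram argument for (1) is comparatively routine given the cubical Greendlinger lemma, and for the fine bookkeeping in the annular case I would follow the proof of \cite[Lem~12.10]{WiseIsraelHierarchy} closely rather than reprove it.
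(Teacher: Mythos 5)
The paper does not prove this lemma at all: it is stated as a simplified restatement of \cite[Lem~12.10]{WiseIsraelHierarchy} and used as a black box. Your sketch (minimal disc/annular diagrams over $X^*$, shells meeting the $A_j$-carriers and the short connectors only in pieces of diameter $<\frac18\systole{Y_i}$, square annular diagram replacement, completeness of the double coset list), with the fine annular bookkeeping deferred to that reference, follows essentially the same route as the cited source, so there is nothing to reconcile with the paper itself.
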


\begin{remark}
	\label{rmk:Intersection Control}
The following hold under the assumption of Lemma~\ref{lem:Intersection Control}.
\begin{enumerate}
	\item Let $g\in G$ be arbitrary. If $\overline{\pi_1 A_1}^{\bar g}\cap \overline{\pi_1 A_2}$ is infinite, then there exists $g'$ with $\bar{g}'=\bar{g}$ such that $\overline{\pi_1 A_1^{g'} \cap \pi_1 A_2}=\overline{\pi_1 A_1}^{\bar g}\cap \overline{\pi_1 A_2}$.
	\item Let $g\in G$ be arbitrary. If $(\pi_1 A_1)^{g}\cap \pi_1 A_2$ is infinite
	then $\overline{(\pi_1 A_1)^{g}\cap \pi_1 A_2}=\overline{\pi_1 A_1}^{\bar{g}}\cap\overline{\pi_1 A_2}$.
\end{enumerate}
We only prove (1) as (2) is similar. Note that by Lemma~\ref{lem:Intersection Control} \eqref{Double:2}, $\overline{\pi_1 A_1}\bar g \overline{\pi_1 A_2}= \overline{\pi_1 A_1}\bar g_i \overline{\pi_1 A_2}$. Thus $\bar{g}=\bar{a}_1\bar{g}_i\bar{a}_2$ for some $a_1\in A_1$ and $a_2\in A_2$. Let $g'=a_1g_ia_2$. Then
\begin{align*}
\overline{\pi_1 A_1}^{\bar g}\cap \overline{\pi_1 A_2} &=\overline{\pi_1 A_1}^{\bar{a}_1\bar{g}_i\bar{a}_2}\cap \overline{\pi_1 A_2}=(\overline{\pi_1 A_1}^{\bar g_i}\cap \overline{\pi_1 A_2})^{\bar{a}_2}\\
&=(\overline{\pi_1 A_1^{g_i} \cap \pi_1 A_2})^{\bar{a}_2}=\overline{(\pi_1 A_1^{g_i} \cap \pi_1 A_2)^{a_2}}\\
&=\overline{\pi_1 A_1^{g'} \cap \pi_1 A_2},
\end{align*}
where the third inequality follows from Lemma~\ref{lem:Intersection Control} \eqref{Double:3}.
\end{remark}

\subsection{Superconvexity and fiber products}
\subsubsection{Superconvexity}
The following are quoted from \cite[Def~2.35 \& Lem~2.36]{WiseIsraelHierarchy}:
\begin{definition}
Let $X$ be a metric space.
A subset $Y\subset X$ is \emph{superconvex}
if it is convex and for any bi-infinite geodesic $\gamma$, if $\gamma$ is contained in the $r$-neighborhood $\neb_r(Y)$ for some $r>0$,
then $\gamma\subset Y$.
A map $Y\rightarrow X$ is \emph{superconvex}
if the map $\widetilde Y \rightarrow \widetilde X$ is an embedding onto a superconvex subspace.
\end{definition}

\begin{lem}\label{lem:superconvex core}
Let $H$ be a quasiconvex subgroup of a word-hyperbolic group $G$.
And suppose that $G$ acts properly and cocompactly on
a CAT(0) cube complex $X$.
For each compact subcomplex $D\subset X$
there exists a superconvex $H$-cocompact subcomplex $K\subset X$
such that $D\subset K$.
\end{lem}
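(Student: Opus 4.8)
The plan is to construct $K$ as an increasing union of convex hulls of nested $H$-cocompact neighborhoods of $D$, taking advantage of the fact that in a CAT(0) cube complex the convex hull of a bounded set is bounded, and that quasiconvexity of $H$ gives uniform control on how far a bi-infinite geodesic "fellow-traveling" an orbit of $D$ can stray. First I would fix a basepoint and a compact fundamental domain, and use quasiconvexity of $H$ to obtain a constant $\kappa$ such that the orbit $HD$ is $\kappa$-quasiconvex in $X$; then $\neb_\kappa(\mathrm{Hull}(HD))$ already contains every bi-infinite geodesic with both endpoints in the limit set of $H$, but not yet every geodesic lying in a bounded neighborhood of $K$, so a single convex hull does not suffice and one must iterate.

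The key steps, in order: (1) Set $K_0 = \mathrm{Hull}(HD)$, the smallest convex subcomplex containing $HD$; since convex hulls of cocompact sets in a CAT(0) cube complex on which $G$ acts properly cocompactly are again cocompact (combinatorial convex hulls are built by successively adjoining cubes across hyperplanes separating pairs of points, and $H$-cocompactness is preserved), $K_0$ is convex and $H$-cocompact and contains $D$. (2) Inductively, having built an $H$-cocompact convex subcomplex $K_n \supseteq D$, consider the (finitely many, up to the $H$-action, by cocompactness and properness) bi-infinite geodesics that are contained in $\neb_{n}(K_n)$; by hyperbolicity each such geodesic lies within bounded Hausdorff distance of a geodesic with endpoints in $\Lambda H$, hence within bounded distance of $K_0$, so there is a uniform $r_n$ with all of them contained in $\neb_{r_n}(K_0) \subseteq \neb_{r_n}(K_n)$. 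Set $K_{n+1} = \mathrm{Hull}\bigl(\neb_{r_n}(K_n)\bigr)$, which is again $H$-cocompact and convex. (3) Put $K = \bigcup_n K_n$. This is convex as an increasing union of convex subcomplexes. (4) Check superconvexity: if a bi-infinite geodesic $\gamma \subset \neb_r(K)$ for some $r$, then since $\gamma$ is proper it must fellow-travel a geodesic with endpoints in $\Lambda H$ — here one uses that $K$ is within bounded distance of the $H$-orbit of $D$ together with hyperbolic thin-triangle/Morse arguments — and hence $\gamma \subset \neb_{r'}(K_n)$ for some $n$ and $r' \le n$, which by construction forces $\gamma \subset K_{n+1} \subseteq K$.

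The main obstacle I expect is making step (2) genuinely uniform: I need that among all bi-infinite geodesics lying in a bounded neighborhood of an $H$-cocompact set, there are only finitely many up to the $H$-action, and that they are all uniformly close to $\mathrm{Hull}(HD)$. The first point follows from properness and cocompactness of the $H$-action on $K_n$ (a geodesic in $\neb_n(K_n)$ is determined up to the action by its trace in a compact fundamental domain together with bounded combinatorial data), and the second from $\delta$-hyperbolicity: a bi-infinite geodesic staying in a bounded neighborhood of a quasiconvex set has its endpoints in the limit set of that set, which equals $\Lambda H$, and quasiconvexity then bounds its distance from any geodesic realizing those endpoints, in particular from $\mathrm{Hull}(HD)$. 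One must also verify the standing assumptions of this process are stable — namely that taking convex hulls of bounded neighborhoods of $H$-cocompact convex subcomplexes yields $H$-cocompact convex subcomplexes — which is where the combinatorial structure of CAT(0) cube complexes (hyperplanes, finite interval property) is used, rather than general CAT(0) geometry. Once these uniformity statements are in hand, the union $K$ absorbs every geodesic that could violate superconvexity, and the proof concludes.
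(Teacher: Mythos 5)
There is a genuine gap, and it sits exactly where the content of the lemma lies: $H$-cocompactness of $K$. As written, your iteration produces $K=X$. At each stage you pass from $K_n$ to $K_{n+1}=\operatorname{Hull}\bigl(\neb_{r_n}(K_n)\bigr)$, and your radii $r_n$ are bounded below by a fixed positive constant (they must absorb geodesics lying at roughly the uniform distance $\kappa'$ from $K_0$), so $K_{n+1}\supseteq \neb_1(K_n)$ and hence $K_n\supseteq \neb_n(K_0)$; since $X$ is connected and locally finite, every point of $X$ lies at finite distance from $K_0$, so the increasing union $\bigcup_n K_n$ is all of $X$, which is $H$-cocompact only when $[G:H]<\infty$. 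Even under the charitable reading in which $K_{n+1}$ is the hull of $K_n$ together with only the offending geodesics, you never verify that the increasing union of $H$-cocompact convex subcomplexes is $H$-cocompact --- and that is the whole point, since superconvexity alone is satisfied trivially by $X$ itself. Two smaller problems: in step (4), $\gamma\subset\neb_r(K)$ with $K$ an increasing union does not give $\gamma\subset\neb_{r'}(K_n)$ for a single $n$, because $\gamma$ is unbounded; and the claim that there are only finitely many $H$-orbits of bi-infinite geodesics contained in $\neb_n(K_n)$ is false in general (fortunately it is also not needed). Finally, $H$-cocompactness of $\operatorname{Hull}(HD)$ is not ``preserved by adjoining cubes'' as you assert: in a non-hyperbolic cube complex the hull of a quasiconvex set can be enormously larger (think of a diagonal line in the standard squaring of the plane). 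What is really used is Haglund's bounded convex-hull lemma: in a uniformly locally finite hyperbolic CAT(0) cube complex, the cubical convex hull of a quasiconvex subset lies in a uniformly bounded neighborhood of it. That is the one genuinely cubical input and it must be invoked or proved.

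The missing idea is that the control you already extracted from quasiconvexity is uniform in $r$, which makes the iteration unnecessary. Hyperbolicity of $X$ and quasiconvexity of $H$ give a single constant $\kappa'$ (depending only on $\delta$ and the quasiconvexity constant) such that \emph{every} bi-infinite geodesic with both endpoints in $\Lambda H$ lies in $\neb_{\kappa'}(Hx_0)$. Hence one hull suffices: set $K=\operatorname{Hull}\bigl(\neb_{\kappa'}(Hx_0)\cup D\bigr)$. The input set is at bounded Hausdorff distance from $Hx_0$, so by Haglund's lemma $K$ is too, and properness of the action then gives $H$-cocompactness; and if $\gamma$ is a bi-infinite geodesic with $\gamma\subset\neb_r(K)$ for some $r$, then $\gamma$ lies in a bounded neighborhood of $Hx_0$, so its endpoints lie in $\Lambda H$, so $\gamma\subset\neb_{\kappa'}(Hx_0)\subset K$, independently of $r$. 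Note that the paper gives no proof of this lemma: it quotes it from \cite[Def~2.35 \& Lem~2.36]{WiseIsraelHierarchy}, and the argument there is precisely this one-step hull construction, so your instinct that ``a single convex hull does not suffice'' is where the proposal departs from, and falls short of, the actual proof.
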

The following is a consequence of \cite[Lem~2.39]{WiseIsraelHierarchy}:
\begin{lem}
	\label{lem:bounded wall-piece}
Let $Y\rightarrow X$ be compact and superconvex.
Then there exists $r$ bounding the diameter of every wall-piece in $\langle X\mid Y\rangle$.
 \end{lem}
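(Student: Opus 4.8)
The statement to prove is Lemma~\ref{lem:bounded wall-piece}: if $Y\to X$ is compact and superconvex, then there is a uniform bound $r$ on the diameter of every wall-piece in the cubical presentation $\langle X\mid Y\rangle$. Recall that a wall-piece is a component of $\widetilde Y\cap N(\widetilde U)$ where $\widetilde U$ is a hyperplane disjoint from $\widetilde Y$. The plan is to argue by contradiction: suppose there is a sequence of wall-pieces $P_n$ with $\diameter(P_n)\to\infty$, carried by hyperplanes $\widetilde U_n$ disjoint from $\widetilde Y$ (after translating, we may take a single fixed copy $\widetilde Y$). Since $N(\widetilde U_n)\cong \widetilde U_n\times c^1$ splits as a product with a $1$-cube factor, a wall-piece of large diameter inside $\widetilde Y\cap N(\widetilde U_n)$ contains a long combinatorial geodesic $\gamma_n$ lying within bounded (in fact, distance $\le 1$) Hausdorff distance of a geodesic in $\widetilde U_n$, hence within the $1$-neighborhood of a hyperplane disjoint from $\widetilde Y$.

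Next I would extract a limiting bi-infinite geodesic. Using that $Y\to X$ is compact, $\stab(\widetilde Y)$ acts cocompactly on $\widetilde Y$, so after applying elements of $\stab(\widetilde Y)$ we may assume the geodesics $\gamma_n\subset \widetilde Y$ all pass through a fixed compact region; since $\diameter(\gamma_n)\to\infty$ and $\widetilde Y$ is locally finite, a standard Arzelà–Ascoli / diagonal argument produces a bi-infinite geodesic $\gamma_\infty\subset \widetilde Y$. The key geometric point is that $\gamma_\infty$ remains within a uniformly bounded neighborhood of $\widetilde Y\cap N(\widetilde U_n)$ for the relevant $n$'s; more precisely, because each $\gamma_n$ lies in $N(\widetilde U_n)$ and $N(\widetilde U_n)$ is convex, and because the $\widetilde U_n$ are disjoint from $\widetilde Y$, the limiting geodesic $\gamma_\infty$ will lie in the $r_0$-neighborhood $\neb_{r_0}(\widetilde Y)$ \emph{inside $\widetilde X$} while being "parallel" to a hyperplane that misses $\widetilde Y$ --- the contradiction will come from superconvexity, which forces $\gamma_\infty\subset \widetilde Y$, combined with the fact that a bi-infinite geodesic inside $\widetilde Y$ that is carried by hyperplanes disjoint from $\widetilde Y$ cannot exist (such a hyperplane would have to cross $\widetilde Y$ to run parallel to a line in it, contradicting disjointness). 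Here one invokes \cite[Lem~2.39]{WiseIsraelHierarchy} as cited, whose content is essentially this argument packaged for hyperplane carriers; I would spell out the carrier-splitting step since that is what turns "wide wall-piece" into "long geodesic in $\widetilde U$-direction."

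The main obstacle I anticipate is making precise the claim that a wall-piece of large diameter actually contains a long geodesic that is genuinely geodesic \emph{in $\widetilde X$} (not merely in $\widetilde Y$ or in $N(\widetilde U_n)$), so that the hypothesis of superconvexity (which is about bi-infinite geodesics of $\widetilde X$) applies. Since $\widetilde Y\to\widetilde X$ is the universal cover of a local isometry, $\widetilde Y$ is convex in $\widetilde X$, so a geodesic of $\widetilde Y$ is a geodesic of $\widetilde X$ --- this resolves the issue, but it must be stated. The secondary subtlety is the bookkeeping in the compactness step: one must use both the cocompactness of $\stab(\widetilde Y)\curvearrowright\widetilde Y$ and local finiteness of $\widetilde X$ to get convergence, and then check that the limiting bi-infinite geodesic indeed lies in $\neb_{r}(\widetilde Y)$ for the appropriate $r$ coming from the hyperplane carriers, so that superconvexity applies and yields the contradiction. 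I expect the rest --- translating "no bi-infinite geodesic of $\widetilde Y$ is parallel into a hyperplane disjoint from $\widetilde Y$" --- to be routine from the structure of CAT(0) cube complexes and hyperplanes.
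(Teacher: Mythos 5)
The paper does not actually prove this lemma --- it is quoted as a consequence of \cite[Lem~2.39]{WiseIsraelHierarchy} --- so the comparison is with the standard argument there, whose overall shape (long geodesics inside wall-pieces, translation by $\stab(\widetilde Y)$ using cocompactness, a limiting bi-infinite geodesic, superconvexity) your sketch does follow. However, there is a genuine gap exactly where the contradiction is supposed to appear. Your limiting geodesic $\gamma_\infty$ is a limit of geodesics $\gamma_n\subset\widetilde Y$, and since $\widetilde Y$ is a convex subcomplex the limit lies in $\widetilde Y$ automatically; so the assertion that ``superconvexity forces $\gamma_\infty\subset\widetilde Y$'' uses the hypothesis vacuously and yields no contradiction. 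The contradiction is then delegated to the parenthetical claim that a bi-infinite geodesic of $\widetilde Y$ cannot lie in the carrier of a hyperplane disjoint from $\widetilde Y$, which you describe as routine. It is not: for a merely convex cocompact $\widetilde Y$ it is false. Take $\widetilde Y$ to be a combinatorial geodesic line in the standard cubulation of $\reals^2$ (the $x$-axis); the hyperplane at height $\frac12$ is disjoint from $\widetilde Y$, yet its carrier contains all of $\widetilde Y$. That claim is precisely where superconvexity must enter, via the parallel-copy trick: writing $N(\widetilde U)\cong\widetilde U\times[-\frac12,+\frac12]$, the limiting geodesic $\gamma$ lies in one bounding copy $\widetilde U\times\{+\frac12\}$, and its parallel copy $\gamma'\subset\widetilde U\times\{-\frac12\}$ is again a bi-infinite geodesic of $\widetilde X$ (both bounding copies are convex) lying in $\neb_1(\widetilde Y)$; superconvexity puts $\gamma'\subset\widetilde Y$, and then convexity of $\widetilde Y$ forces the $1$-cubes dual to $\widetilde U$ joining $\gamma$ to $\gamma'$ into $\widetilde Y$, so $\widetilde U$ meets $\widetilde Y$, contradicting the definition of a wall-piece.

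There is a secondary gap in the limiting step: to speak of a single hyperplane ``carrying'' $\gamma_\infty$ you must make the translated hyperplanes $g_n\widetilde U_n$ stabilize; your statement that $\gamma_\infty$ stays in a bounded neighborhood of $\widetilde Y\cap N(\widetilde U_n)$ ``for the relevant $n$'s'' does not deliver this. The standard repair is a pigeonhole: after translating, each $\gamma_n$ passes near a fixed basepoint and is joined to $\widetilde U_n$ by a dual $1$-cube there; local finiteness gives finitely many such $1$-cubes, so after passing to a subsequence all $g_n\widetilde U_n$ equal a single hyperplane $\widetilde U$ disjoint from $\widetilde Y$, and then $\widetilde Y\cap N(\widetilde U)$ contains arbitrarily long geodesics through a bounded set, hence a bi-infinite one, and the paragraph above applies. (Alternatively, take the limit of the parallel copies on the far side of the $\widetilde U_n$: their $0$-cubes lie outside $\widetilde Y$, every finite subpath of the limit coincides with a subpath of one of them, so the limit is a bi-infinite geodesic in $\neb_1(\widetilde Y)$ not contained in $\widetilde Y$, contradicting superconvexity directly.) With these two repairs your outline becomes the argument of the cited lemma.
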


\subsubsection{Fiber Products}
\label{subsubsec:fiber products}
We record the following  from \cite[Def~8.8 and Lem~8.9]{WiseIsraelHierarchy}:
\begin{definition}[fiber-product]
	\label{def:fiber-product}
Given a pair of combinatorial maps $A\rightarrow X$ and $B\rightarrow X$ between cube complexes,
we define their \emph{fiber-product} $A\otimes_X B$ to be a cube complex,
whose $i$-cubes are pairs of $i$-cubes in $A,B$ that map to the same
$i$-cube in $X$.  There is a commutative diagram:
$$\begin{matrix}
A\otimes_X B & \rightarrow & B \\
\downarrow & & \downarrow \\
A & \rightarrow & X \\
\end{matrix}$$
Note that $A \otimes_X B$ is the subspace of $A\times B$ that is the preimage of the diagonal $D\subset X\times X$
under the map $A\times B\rightarrow X\times X$.
For any cube $Q$, the diagonal of  $Q\times Q$ is isomorphic to  $Q$ by either of the projections,
and this makes $D$ into a cube complex isomorphic to $X$.
We thus obtain an induced cube complex structure on $A\otimes_X B$.

Our description of $A\otimes_X B$ as a subspace of the cartesian product $A\times B$
endows the fiber-product $A\otimes_X B$ with the property of being a universal receiver in the following sense:
Consider a commutative diagram as below.
Then there is an induced map $C\rightarrow A \otimes_X B$ such that the following diagram commutes:
$$\begin{matrix}
C  &  &\longrightarrow &     & B \\
   &\rotatebox{-45}{$\dashrightarrow$} &  & \nearrow &  \\
\downarrow &     & A\otimes_X B &  & \downarrow \\
&\swarrow && \searrow&\\
A &  & \longrightarrow & & X \\
\end{matrix}$$
\end{definition}

\begin{lem}\label{lem:superconvex fiber-product intersection}
Let $A\rightarrow X$ and $B\rightarrow X$ be local isometries of connected nonpositively curved cube complexes.
Suppose the induced lift of universal covers $\widetilde A \subset \widetilde X$ is a superconvex subcomplex.
Then the [noncontractible] components of $A\otimes_X B$ correspond precisely to the [nontrivial] intersections of conjugates of $\pi_1(A,a)$ and $\pi_1(B,b)$ in $\pi_1X$.
\end{lem}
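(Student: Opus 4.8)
The plan is to use the standard correspondence between based connected components of a fiber product and double cosets, and then to upgrade this combinatorially-indexed correspondence to the statement about intersections of conjugates, using superconvexity only to control which components are contractible. First I would recall the covering-space picture: since $A\to X$ and $B\to X$ are local isometries of nonpositively curved cube complexes, they are $\pi_1$-injective, so we may identify $\pi_1(A,a)$ and $\pi_1(B,b)$ with subgroups $H,K\le G:=\pi_1(X,x)$ after choosing basepoint paths. The fiber product $A\otimes_X B$ is a covering space of $X$ in each of the maps it projects to? — more precisely its components are covered by translates $g\widetilde A\cap \widetilde B$ of intersections of the lifts $\widetilde A,\widetilde B\subset\widetilde X$. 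A component $C$ of $A\otimes_X B$ containing a lift of $g\widetilde A\cap h\widetilde B$ is, up to the $G$-action, determined by the double coset $HgK$ (with $g$ ranging so that $g\widetilde A\cap\widetilde B\neq\emptyset$), and $\pi_1 C$ is conjugate into $\stab(g\widetilde A)\cap\stab(\widetilde B)=H^g\cap K$. Conversely every intersection of conjugates $H^g\cap K$ arises from the component indexed by $HgK$. This already gives a bijection between components of $A\otimes_X B$ and conjugacy-intersection data, with $\pi_1 C\cong H^g\cap K$.

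Next I would pin down the parenthetical refinement: a component $C$ is \emph{contractible} iff the corresponding intersection $H^g\cap K$ is \emph{trivial}. One direction is immediate: if $H^g\cap K=1$ then $\pi_1 C=1$, and since $C$ is a nonpositively curved cube complex (being locally isometrically mapped, in fact locally injectively, to $X$ via either projection), $C$ is aspherical, hence contractible. The reverse direction — $\pi_1 C=1$ forces $H^g\cap K=1$ — follows from the same isomorphism $\pi_1 C\cong H^g\cap K$ established above, so actually no extra work is needed here beyond making the identification $\pi_1 C\cong H^g\cap K$ precise.

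This is where superconvexity enters, and it is the main obstacle. Without a hypothesis controlling the intersections $g\widetilde A\cap\widetilde B$, a component of the fiber product need not cover its image faithfully: the subspace $g\widetilde A\cap\widetilde B$ of $\widetilde X$ could be disconnected, or its $\stab$-quotient could fail to be the honest component $C$, so the naive map $\stab(g\widetilde A\cap\widetilde B)\backslash(g\widetilde A\cap\widetilde B)\to C$ need not be an isomorphism. Superconvexity of $\widetilde A\subset\widetilde X$ guarantees (this is the geometric heart) that $g\widetilde A\cap\widetilde B$ is itself convex, hence connected, and that the intersection is ``clean'' — it is exactly the universal cover of the component $C$, with deck group $\stab(g\widetilde A)\cap\stab(\widetilde B)$. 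Thus I would: (i) show convexity of $\widetilde A\cap\widetilde B$ using that an intersection of convex subcomplexes is convex, and that superconvexity upgrades this so the $\stab$-action is cocompact precisely when the intersection is the relevant component; (ii) conclude $\widetilde C:=\widetilde{g\widetilde A\cap\widetilde B}$ maps isomorphically onto a component of the preimage in $\widetilde X$, so that $C=\stab(g\widetilde A\cap\widetilde B)\backslash\widetilde C$ with $\pi_1 C=\stab(g\widetilde A)\cap\stab(\widetilde B)=H^g\cap K$; (iii) read off that $C$ is contractible exactly when this group is trivial. The only genuinely delicate point is step (i): verifying that superconvexity of one factor is enough to force each component of the fiber product to be the quotient of a single convex piece, rather than some more complicated cover — I expect to invoke a lemma to the effect that a convex subcomplex of a CAT(0) cube complex is itself a CAT(0) cube complex on which its stabilizer acts, together with the fact that local isometries correspond to convex embeddings of universal covers, so that $\widetilde A\cap g\widetilde B$ being convex and nonempty identifies it with the universal cover of exactly one component of $A\otimes_X B$.
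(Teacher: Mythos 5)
Your set-up of the double-coset bookkeeping and the identification $\pi_1 C\cong H^g\cap K$ for a component $C$ is essentially right, and your observation that a contractible component has trivial fundamental group follows from asphericity. But you have misdiagnosed what superconvexity is for, and as a result your proposal has a genuine gap exactly at the point you flag as the ``geometric heart.''

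You worry that without superconvexity ``$g\widetilde A\cap\widetilde B$ could be disconnected, or its $\stab$-quotient could fail to be the honest component.'' Neither of these is actually a danger. Because $A\to X$ and $B\to X$ are local isometries of nonpositively curved cube complexes, the induced maps $\widetilde A\to \widetilde X$ and $\widetilde B\to \widetilde X$ are already convex embeddings — this needs only convexity, not superconvexity. An intersection of convex subcomplexes of a CAT(0) cube complex is convex, hence CAT(0), hence connected and contractible; and the component of $A\otimes_X B$ indexed by the double coset of $g$ is precisely $\bigl(\stab(g\widetilde A)\cap\stab(\widetilde B)\bigr)\backslash\bigl(g\widetilde A\cap\widetilde B\bigr)$. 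All of this holds with no superconvexity whatsoever, so steps (i) and (ii) of your plan do not use the hypothesis in any essential way, and your claimed use of it proves something that was already true.

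The real content of superconvexity, which your proposal omits, lies in the implication from the group-theoretic side to the geometric side: if $H^g\cap K$ is \emph{nontrivial}, why must $g\widetilde A\cap\widetilde B$ be \emph{nonempty}? Without that implication, there could be double cosets $HgK$ with $H^g\cap K\neq 1$ but $g\widetilde A\cap\widetilde B=\emptyset$, i.e.\ a nontrivial intersection of conjugates with no corresponding component at all, and the ``correspond precisely'' in the lemma would fail. The standard argument is: a nontrivial element of $\stab(g\widetilde A)\cap\stab(\widetilde B)$ of infinite order $h$ acts as a hyperbolic isometry; since both $g\widetilde A$ and $\widetilde B$ are $h$-invariant convex subcomplexes, each contains an axis of $h$; any two axes of $h$ are parallel, so the axis inside $\widetilde B$ lies in a bounded neighborhood of $g\widetilde A$; superconvexity of $g\widetilde A$ then forces that bi-infinite geodesic to lie inside $g\widetilde A$, so $g\widetilde A\cap\widetilde B\neq\emptyset$. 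That single step is where the hypothesis earns its keep, and your proposal does not contain it.
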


Let $\langle X | \{Y_i\} \rangle$ be a cubical presentation. Then any cone piece of $Y_i$ can be written as the universal cover of some component of $Y_j\otimes_Y Y_i$ in $\widetilde Y_i$.

\section{Stature, depth and big-trees}
\subsection{Big-trees and Stature}
\label{subsec:big-tree}
Let $G$ be the fundamental group of a finite graph of groups with underlying graph $\mathcal{G}$, and let $T$ be the associated Bass-Serre tree. 
A subtree of $T$ is \emph{nontrivial} if it contains at least one edge.
Note that for any nontrivial subtree $S\subset T$, the pointwise stabilizer of $S$, denoted by $\pstab(S)$, is the intersection of the pointwise stabilizers of edges in $S$.
Consequently, $\pstab(S)$ equals the intersection of conjugates of edge groups of $G$
that correspond to the eges of $S$. 

\begin{definition}
	\label{def:big-tree}
A \emph{big-tree} is a nontrivial subtree $S\subset T$ such that
\begin{itemize}
	\item $\pstab(S)$ is infinite;
	\item there does not exist
	a subtree $S'\subset T$ with $S\subsetneq S'$ and $\pstab(S)=\pstab(S')$.
\end{itemize}
\end{definition}

It follows from the definition that $G$ acts on the collection of big-trees of $T$. If two big-trees have the same pointwise stabilizer, then they are the same. Consequently, for a big-tree $S\subset T$, the fixed point set of $\pstab(S)$ is exactly $S$. Moreover, for two big-trees $S_1$ and $S_2$, there exists $g\in G$ such that $gS_1=S_2$ if and only if $g\pstab(S_1)g^{-1}=\pstab(S_2)$.

\begin{definition}[based big-trees, transections and transfer isomorphisms]
	\label{def:based big-trees}
Choose a spanning tree in $\mathcal{G}$ and lift this tree to a subtree $T_{\mathcal{G}}\subset T$. This identifies vertex groups of $G$ with stabilizers of vertices in $T_{\mathcal{G}}$. For each vertex $u\in T$, choose  $g_u\in G$ such that $g_u u\in T_{\mathcal{G}}$. Note that $g_u u$ is unique but $g_u$ might not be unique.
	
A \emph{based big-tree} $(S,v)$ consists of a big-tree $S\subset T$ and a vertex $v\in S$. An \emph{$(S,v)$-transection} is a subgroup 
$\pstab(g_v S)\le \stab(g_v v)$, i.e.\
it is a subgroup of $\stab(g_vv)$ of the form $g_v\pstab(S)g^{-1}_v$. Different choices of $g_v$ for $v$ yield different $(S,v)$-transections, however, they are  conjugate within $\stab(g_vv)$. 

For two different vertices $v_1,v_2\in S$, the inclusions $  \stab(v_1)\hookleftarrow\pstab(S)\hookrightarrow \stab(v_2)$ induce an isomorphism between an $(S,v_1)$-transection and an $(S,v_2)$-transection. This is called a \emph{transfer isomorphism}, which is well-defined up to conjugacy in the vertex groups.
\end{definition}

\begin{lem}
	\label{lem:correspondence}
Let $(S_1,v_1)$ and $(S_2,v_2)$ be based big-trees. There exists $g\in G$ such that $g(S_1,v_1)=(S_2,v_2)$ if and only if there exist $g_1,g_2\in G$ and $w\in T_{\mathcal{G}}$ such that $g_1v_1=g_2v_2=w$ and any $(S_1,v_1)$-transection and $(S_2,v_2)$-transection are conjugtate in $\stab(w)$.
\end{lem}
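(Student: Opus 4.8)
The plan is to prove both implications by carefully unwinding the definitions of based big-tree, transection, and transfer isomorphism, using the basic facts recorded just before the lemma: that $G$ permutes big-trees, that a big-tree is the fixed-point set of its pointwise stabilizer, and that $gS_1 = S_2$ iff $g\pstab(S_1)g^{-1} = \pstab(S_2)$.

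For the forward direction, suppose $g(S_1, v_1) = (S_2, v_2)$, so $gS_1 = S_2$ and $gv_1 = v_2$. Pick $g_1 = g_{v_1}$ as in Definition~\ref{def:based big-trees}, so $w := g_1 v_1 \in T_{\mathcal{G}}$. Then $g_1 g^{-1}$ sends $v_2$ to $w$, so we may take $g_2 = g_1 g^{-1}$ (any valid choice of $g_{v_2}$ differs from this by $\stab(w)$, and the resulting transections are conjugate in $\stab(w)$, so this is harmless). Now $g_1\pstab(S_1)g_1^{-1}$ is an $(S_1,v_1)$-transection and $g_2\pstab(S_2)g_2^{-1} = g_1 g^{-1}\pstab(S_2) g g_1^{-1} = g_1 \pstab(S_1) g_1^{-1}$ using $g^{-1}\pstab(S_2)g = \pstab(S_1)$; so the two transections are literally equal in $\stab(w)$, hence conjugate there. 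Since any other choice of $(S_i,v_i)$-transection is conjugate in the relevant vertex group to the one just produced, we get conjugacy in $\stab(w)$ for arbitrary choices.

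For the converse, suppose we are given $g_1, g_2, w$ with $g_1 v_1 = g_2 v_2 = w \in T_{\mathcal{G}}$ and an element $h \in \stab(w)$ conjugating an $(S_1,v_1)$-transection to an $(S_2,v_2)$-transection. Thus $h g_1 \pstab(S_1) g_1^{-1} h^{-1} = g_2 \pstab(S_2) g_2^{-1}$, i.e. writing $g := g_2^{-1} h g_1$ we have $g \pstab(S_1) g^{-1} = \pstab(S_2)$. By the recorded fact this gives $gS_1 = S_2$. It remains to check $g v_1 = v_2$: indeed $g v_1 = g_2^{-1} h g_1 v_1 = g_2^{-1} h w = g_2^{-1} w = v_2$, since $h \in \stab(w)$. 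Hence $g(S_1,v_1) = (S_2,v_2)$.

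The main point requiring care — and the only real obstacle — is the well-definedness bookkeeping: transections and transfer isomorphisms are only defined up to conjugacy in the vertex groups, so I must make sure that the statement "any $(S_1,v_1)$-transection and $(S_2,v_2)$-transection are conjugate in $\stab(w)$" is insensitive to all the choices ($g_{v_i}$, the particular representative of the conjugacy class of transections). This amounts to observing that replacing $g_{v_i}$ by $s\, g_{v_i}$ with $s \in \stab(w)$ conjugates the transection by $s$ inside $\stab(w)$, so conjugacy-in-$\stab(w)$ of one pair of representatives is equivalent to conjugacy-in-$\stab(w)$ of any other pair. Once that normalization is fixed, both implications are short computations as above. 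I would also remark that $g$ produced in the converse is genuinely unique up to $\pstab(S_1)$ on the right, matching the parenthetical uniqueness comments surrounding Definition~\ref{def:big-tree}.
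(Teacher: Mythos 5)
Your proof is correct and follows essentially the same argument as the paper's: both directions reduce to the observation that $gS_1=S_2$ iff $g\pstab(S_1)g^{-1}=\pstab(S_2)$, with the element $g$ (or $k=g^{-1}$) produced by the same algebraic manipulation $g_2^{-1}hg_1$ in the converse. The well-definedness remark you add about the choice of $g_{v_i}$ is a reasonable clarification that the paper leaves implicit via its "without loss of generality."
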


\begin{proof}
For $i\in\{1,2\}$, let $H_i \le \stab(w)$ be an $(S_i,v_i)$-transection.

Suppose $g(S_1,v_1)=(gS_1,gv_1)=(S_2,v_2)$. Pick $g_2\in G$ such that $g_2v_2=w\in T_{\mathcal{G}}$. We assume without loss of generality that $H_i=g_i\pstab(S_i)g_i^{-1}$ for $i=1,2$, where $g_1=g_2g$. Thus $H_1=g_2(g\pstab(S_1) g^{-1})g_2^{-1}=g_2\pstab(S_2) g_2^{-1}=H_2$.

Suppose $H_1$ and $H_2$ are conjugate in $\stab(w)$. We assume without loss of generality that $H_i=g_i\pstab(S_i)g_i^{-1}$. Hence $g_1\pstab(S_1)g^{-1}_1=hg_2\pstab(S_2)g^{-1}_2h^{-1}$ for $h\in \stab(w)$. Let $k=g_1^{-1}hg_2$. Then $kv_2=v_1$ and $kS_2=S_1$.
\end{proof}

\begin{definition}[$\Upsilon$ and $\Upsilon_V$]
	\label{defn:phi}
Consider the action of $G$ on the collection of based big-trees. For each $G$-orbit, we pick a representative $(S,u)$ and consider an $(S,u)$-transection. Let $\Upsilon$ be the collection of all such transections. For a vertex group $V$ of $G$ (i.e.\ $V=\stab(v)$ for some $v\in T_{\mathcal{G}}$ as above), let $\Upsilon_V\subset\Upsilon$ be the sub-collection of $(S,v)$-transections such that $g_vv$ corresponds to the vertex group $V$.
\end{definition}

\begin{definition}
	\label{def:lowest and higher}
	A big-tree $S\subset T$ is \emph{lowest} if it is not properly contained in another big-tree. A subtree $S\subset T$ is \emph{high} if $|\pstab(S)|=\infty$ and $\pstab(S)$ does not contain any pointwise stabilizer of a lowest big-tree as a finite index subgroup. Similarly, a transection in $\Upsilon$ is \emph{lowest} or \emph{high}, if the associated big-tree is lowest or high. 
\end{definition}

\begin{lem}
	\label{lem:control of intersection of conjugates}
	Let $V$ be a vertex group of $G$. Pick two elements $H_1,H_2$ in $\Upsilon_V$.
	\begin{enumerate}
		\item \label{control of intersections:1} Suppose $H_1,H_2$ are lowest. If $H_1\cap H^g_2$ is infinite for some $g\in V$, then $H_1=H_2$ and $H_1=H^g_1$.
		\item \label{control of intersections:2} If $H_1$ is lowest and $H_2$ is arbitrary, then for any $a_1,a_2\in V$, either $H^{a_1}_1\subset H^{a_2}_2$, or $H^{a_1}_1\cap H^{a_2}_2$ is finite.
	\end{enumerate}	
\end{lem}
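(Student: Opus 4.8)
The plan is to reduce both statements to a single elementary observation about the action of $G$ on the Bass--Serre tree $T$: if $K\le G$ is infinite and is an intersection $\bigcap_j\pstab(S_j)$ of pointwise stabilizers of subtrees, at least one of the $S_j$ containing an edge, then the pointwise fixed-point set $\operatorname{Fix}(K)\subseteq T$ is a big-tree, $\pstab(\operatorname{Fix}(K)) = K$, and every $S_j$ lies in $\operatorname{Fix}(K)$. This is immediate from the definitions: each $S_j\subseteq\operatorname{Fix}(K)$ because every $k\in K$ fixes $S_j$ pointwise; $\operatorname{Fix}(K)$ is a subtree (geodesics between fixed points are fixed) and is nontrivial since it contains some $S_j$ with an edge; one has $\pstab(\operatorname{Fix}(K)) = K$ because $\supseteq$ is the definition of $\operatorname{Fix}(K)$ while $\operatorname{Fix}(K)\supseteq S_j$ for all $j$ forces $\pstab(\operatorname{Fix}(K))\subseteq\bigcap_j\pstab(S_j) = K$; and $\operatorname{Fix}(K)$ is maximal among subtrees with pointwise stabilizer $K$, since any such subtree is fixed pointwise by $K$. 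I will also use freely that $\pstab(gS) = g\,\pstab(S)\,g^{-1}$, that $\pstab$ reverses inclusions of subtrees, and that $G$-translates of lowest big-trees are lowest big-trees. Write $H_i = \pstab(S_i')$ with $S_i' := g_{v_i}S_i$ the big-tree determined by $H_i\in\Upsilon_V$; by hypothesis $S_1'$ is lowest, and in part~(1) so is $S_2'$. Let $w$ be the vertex of $T_{\mathcal{G}}$ with $\stab(w) = V$, so that $g_{v_1}v_1 = g_{v_2}v_2 = w$ by the definition of $\Upsilon_V$.

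For part~(1), suppose $H_1\cap H_2^g$ is infinite with $g\in V$. Since $H_2^g = \pstab(gS_2')$ and $gS_2'$ is again a lowest big-tree, applying the observation to $K := H_1\cap H_2^g = \pstab(S_1')\cap\pstab(gS_2')$ shows $\operatorname{Fix}(K)$ is a big-tree containing both $S_1'$ and $gS_2'$; as a lowest big-tree is not properly contained in another big-tree, $S_1' = \operatorname{Fix}(K) = gS_2'$, so taking pointwise stabilizers gives $H_1 = H_2^g$. Thus the $(S_1,v_1)$-transection $H_1$ and the $(S_2,v_2)$-transection $H_2$, both lying in $\stab(w)=V$, are conjugate in $\stab(w)$, so Lemma~\ref{lem:correspondence} places $(S_1,v_1)$ and $(S_2,v_2)$ in one $G$-orbit of based big-trees. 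Since $\Upsilon$ contains exactly one transection per such orbit, $H_1 = H_2$, whence $H_1^g = H_2^g = H_1$.

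For part~(2), fix $a_1,a_2\in V$ and suppose $H_1^{a_1}\cap H_2^{a_2}$ is infinite. Here $H_1^{a_1} = \pstab(a_1S_1')$ with $a_1S_1'$ a lowest big-tree, and $H_2^{a_2} = \pstab(a_2S_2')$ with $a_2S_2'$ a big-tree. Applying the observation to $K := H_1^{a_1}\cap H_2^{a_2} = \pstab(a_1S_1')\cap\pstab(a_2S_2')$ produces a big-tree $\operatorname{Fix}(K)$ containing both $a_1S_1'$ and $a_2S_2'$; as $a_1S_1'$ is lowest, $a_1S_1' = \operatorname{Fix}(K)\supseteq a_2S_2'$, and since $\pstab$ reverses inclusions, $H_1^{a_1} = \pstab(a_1S_1')\subseteq\pstab(a_2S_2') = H_2^{a_2}$. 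Hence $H_1^{a_1}\cap H_2^{a_2}$ is finite or else $H_1^{a_1}\subseteq H_2^{a_2}$, as required.

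The tree-theoretic observation is routine. The step I expect to demand the most care is the passage in part~(1) from the subgroup equality $H_1 = H_2^g$ to the equality $H_1 = H_2$ of the chosen representatives in $\Upsilon_V$: this reduction hinges on correctly unwinding the definition of $\Upsilon_V$ (one transection per $G$-orbit of based big-trees, based at the fixed vertex $w$) and is exactly what Lemma~\ref{lem:correspondence} is designed to supply. A smaller point worth flagging is that an intersection of edge stabilizers need not be the pointwise stabilizer of a \emph{connected} subtree, which is why the observation is phrased through $\operatorname{Fix}(K)$ rather than through the subtree spanned by the $S_j$.
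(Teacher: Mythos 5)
Your proof is correct and takes essentially the same approach as the paper: both arguments realize the infinite intersection as the pointwise stabilizer of a subtree containing the two translated big-trees and then apply lowest-ness, the only differences being cosmetic (you use $\operatorname{Fix}(K)$ where the paper uses the convex hull, and in part~(1) you route the identification $(S_1,v_1)=(S_2,v_2)$ through Lemma~\ref{lem:correspondence} where the paper performs the explicit computation with $k=g_1^{-1}gg_2$). Nothing further is needed.
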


\begin{proof}
	For $i\in\{1,2\}$, suppose $H_i$ is an $(S_i,v_i)$-transection. Let $v\in T_{\mathcal{G}}$ be the vertex associated with $V$. Suppose $H_i=g_i\pstab(S_i)g^{-1}_i$ for $i\in\{1,2\}$, where $g_iv_i=v$. Now we prove Part~\eqref{control of intersections:1}. Let $S$ be the convex hull of $g_1 S_1$ and $gg_2 S_2$. Then $\pstab(S)=H_1\cap H^g_2$. Suppose $\pstab(S)$ is infinite. Since both $S_1$ and $S_2$ are lowest, we have $S=g_1 S_1=gg_2 S_2$. Let $k=g^{-1}_1gg_2$. Then $S_1=kS_2$ and $v_1=kv_2$. Thus $H_1=H_2$, $(S_1,v_1)=(S_2,v_2)$ and $g_1=g_2$ by our choice of $\Upsilon_V$. Moreover, $H^g_1=\pstab(gg_1S_1)=\pstab(gg_2S_1)=\pstab(g_1 S_1)=H_1$.
	
	We now prove~\eqref{control of intersections:2}. Let $S$ be the convex hull of $a_1g_1 S_1$ and $a_2g_2 S_2$. Then $H^{a_1}_1\cap H^{a_2}_2=\pstab(S)$. If $H^{a_1}_1\cap H^{a_2}_2$ is infinite, since $a_1g_1 S_1$ is lowest, $S=a_1g_1 S_1$. Hence $a_2g_2 S_2\subset a_1g_1 S_1$. Hence $H^{a_1}_1=\pstab(a_1g_1 S_1)\subset\pstab(a_2g_2 S_2)=H^{a_2}_2$.
\end{proof}

\begin{definition}
	\label{def:finite stature1}
Let $G$ act without inversions on a tree $T$. Say $G$ has \emph{finite stature} (relative  to the action $G\acts T$) if the action of $G$ on the collection of based big-trees has finitely many orbits.
\end{definition}

Note that $G$ satisfies the above definition if and only if the collection $\Upsilon$ is finite.

\begin{example}\label{exmp:height:BS}
Let $G=BS(1,2)=\langle a,t | a^t= a^2\rangle$. The action of $G$ on its Bass-Serre tree has finitely many orbits of big-trees, however, the action does not have finitely many orbits of based big-trees. Thus $G$ does not have finite stature. On the other hand, if $G=BS(1,1)=\langle a,t | a^t= a\rangle$, then $G$ has finite stature with respect to its action on the Bass-Serre tree.
\end{example}

\begin{lem}
	\label{lem:finitely many conjugacy classes0}
Let $G$ act without inversions on a tree $T$.
The following are equivalent:
\begin{enumerate}
	\item $G$ has finite stature with respect to the action $G\acts T$.
	\item For each vertex group $V$ of $G$, there are finitely many $V$-conjugacy classes of infinite subgroups of the form $V\cap (\cap_{e\in E}\stab(e))$, where $E$ is a collection of edges in $T$.
	\item $(G,\mathcal{V})$ has finite stature in the sense of Definition~\ref{def:finite stature}, where $\mathcal{V}$ is the collection of vertex groups of $G$.
\end{enumerate}
\end{lem}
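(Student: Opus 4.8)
The plan is to prove the cycle of implications $(1)\Rightarrow(2)\Rightarrow(3)\Rightarrow(1)$, translating between the ``geometric'' language of based big-trees and the ``algebraic'' language of intersections of conjugates of edge and vertex groups. Throughout I would keep in mind the two basic facts recorded just before the statement: for a nontrivial subtree $S\subset T$, $\pstab(S)$ is the intersection of the stabilizers of the edges of $S$ (equivalently, an intersection of conjugates of edge groups); and a big-tree is the full fixed-point set of its pointwise stabilizer, so two based big-trees are in the same $G$-orbit iff (roughly) their transections are conjugate in a common vertex group (Lemma~\ref{lem:correspondence}).

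\emph{$(1)\Rightarrow(2)$.} Assume $\Upsilon$ is finite, hence each $\Upsilon_V$ is finite. Let $E$ be a collection of edges of $T$ with $V\cap(\cap_{e\in E}\stab(e))=:C$ infinite. First reduce to the case where the subtree spanned by $E$ together with $v$ (the vertex of $T_{\mathcal G}$ carrying $V$) is all of $\mathrm{Fix}(C)$; enlarging $E$ to all edges fixed by $C$ only shrinks nothing, since $C$ already fixes them, and the pointwise stabilizer of $\mathrm{Fix}(C)$ is still $C$ (it contains $C$ trivially, and is contained in the stabilizer of the original edges, hence in $C$). The subtree $S:=\mathrm{Fix}(C)$ is then a big-tree: it is nontrivial provided $E\neq\emptyset$ (if $E=\emptyset$ then $C=V$ and the statement is about one conjugacy class, namely $V$ itself — handle this degenerate possibility separately, or note $V$ may fail to be of the listed form and is excluded), its pointwise stabilizer $C$ is infinite, and it is maximal for its pointwise stabilizer by construction. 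Since $v\in S$ may be assumed (if not, translate by an element of $G$ carrying the relevant vertex into $T_{\mathcal G}$, which only conjugates $C$ inside $V$ — this is where I must be careful, as $v$ need not lie on $S$ a priori, so instead I pick the vertex $v'\in S$ nearest $v$... actually $C\le V=\stab(v)$ forces $v\in\mathrm{Fix}(C)=S$). So $(S,v)$ is a based big-tree, $C$ is an $(S,v)$-transection, and by finiteness of $\Upsilon_V$ and Lemma~\ref{lem:correspondence} there are finitely many $V$-conjugacy classes of such $C$.

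\emph{$(2)\Rightarrow(1)$.} Conversely, given a based big-tree $(S,u)$, choose $g_u$ with $g_uu=w\in T_{\mathcal G}$ carrying vertex group $V$; then an $(S,u)$-transection is $g_u\pstab(S)g_u^{-1}=V\cap\bigl(\cap_{e\in g_uS}\stab(e)\bigr)$, an infinite subgroup of the form in (2). By Lemma~\ref{lem:correspondence}, two based big-trees are in the same $G$-orbit iff their transections (pushed into a common vertex group) are conjugate there; so the finite list of $V$-conjugacy classes from (2), ranging over the finitely many vertex groups $V$, surjects onto the set of $G$-orbits of based big-trees. Hence finitely many orbits.

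\emph{$(2)\Leftrightarrow(3)$.} This is the translation between ``$E$ a collection of edges of $T$'' and ``$C$ an intersection of $G$-conjugates of vertex groups'' in Definition~\ref{def:finite stature}, applied with $\{H_\lambda\}=\mathcal V$. The content is: an intersection $C$ of $G$-conjugates of vertex groups, intersected with a fixed vertex group $V=\stab(v)$, is the same thing as $V\cap\pstab(S)$ for the subtree $S$ spanned by $\{v\}$ together with the fixed vertices/edges of the conjugates involved — because a conjugate $gV'g^{-1}=\stab(gv')$ of a vertex group is the stabilizer of a vertex, and $\stab(a)\cap\stab(b)$ fixes the geodesic $[a,b]$, hence equals the pointwise stabilizer of the convex hull of all the relevant vertices. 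Running this in both directions gives a bijection between the two families of subgroups $V\cap C$ up to $V$-conjugacy, so finiteness of one family is equivalent to finiteness of the other; quantifying over all vertex groups $V$ (of which there are finitely many) yields the equivalence of (2) and (3). One subtlety to flag: Definition~\ref{def:finite stature} phrases stature via ``for each $\mu$'' picking a distinguished $H_\mu$ and intersecting with arbitrary $C$, so I must check the indexing matches — in the graph-of-groups setting $\mu$ ranges over vertex groups and the distinguished $H_\mu=V$ plays the role of $\stab(v)$, which is exactly the set-up above.

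\emph{Main obstacle.} The routine parts are the convex-hull/fixed-point bookkeeping. The step requiring genuine care is $(1)\Leftrightarrow(2)$, specifically verifying that the subtree $S=\mathrm{Fix}(C)$ attached to an arbitrary infinite intersection $C$ really is a \emph{big-tree} (nontriviality when $E\ne\emptyset$, and maximality), and conversely that every transection arises this way; and correctly handling the choice of the conjugating element $g_u$ (and the resulting ``well-defined up to conjugacy in the vertex group'' caveats from Definition~\ref{def:based big-trees}) so that the count of $G$-orbits of based big-trees matches the count of $V$-conjugacy classes across all $V$, rather than overcounting. Lemma~\ref{lem:correspondence} is precisely the tool that makes this matching exact, so the proof is really an exercise in applying it carefully in both directions.
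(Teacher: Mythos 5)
Your proposal is correct and takes essentially the same approach as the paper: the key step in both is to identify an infinite subgroup of the form $V\cap(\cap_{e\in E}\stab(e))$ with a transection by passing to $S=\mathrm{Fix}(C)$ (and conversely), then invoke Lemma~\ref{lem:correspondence} to match $V$-conjugacy classes with $G$-orbits of based big-trees, while $(2)\Leftrightarrow(3)$ is the observation that $\stab(e)$ is an intersection of two vertex stabilizers and, conversely, $\stab(v)\cap\stab(w)$ is the pointwise stabilizer of the geodesic $[v,w]$. The paper's proof is just a terser statement of exactly these points; the minor degenerate case $E=\emptyset$ (giving $C=V$) that you flag contributes at most one extra conjugacy class and so does not affect the equivalence.
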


Recall that we have identified $V$ with the stabilizer of a vertex $v\in T_{\mathcal{G}}$.

\begin{proof}
Note that an infinite subgroup $H\le V$ is of form $V\cap (\cap_{e\in E}\stab(e))$ if and only if $H$ is an $(S,v)$-transection. Each element in $\Upsilon_V$ is of such form. Moreover, by Lemma~\ref{lem:correspondence}, each $V$-conjugacy class of such subgroups contains exactly one element inside $\Upsilon_V$. Now the equivalence between (1) and (2) follows. The equivalence between (2) and (3) follows directly from definition.
\end{proof}

In general, if $G$ splits as a graph of groups in two different ways, then it is possible that $G$ has finite stature under one splitting, but not the other splitting, see Example~\ref{example:free by Z}. However, when the splitting of $G$ is already clear, we will only write $G$ has finite stature for simplicity.

\subsection{Depth and Stature}
We now explore a notion measuring the maximal length of an increasing sequence of big trees. There are two variations according to whether the pointwise stabilizer of these big trees are commensurable.

\begin{definition}
	\label{def:depth}
	Let $G$ be a group and let $\Lambda=\{H_1,\ldots,H_r\}$ be a collection of subgroups. The \emph{commensurable depth} of $\Lambda$ in $G$, denoted $\delta_c(G,\Lambda)$, is the largest integer $d$, such that there is a strictly increasing chain $L_1 < \cdots < L_{d}$, where each $L_i$ is the intersection of finitely many conjugates of elements of $\Lambda$. If there are arbitrarily long such sequences, then we define $\delta_c(G,\Lambda)=\infty$. We say $\Lambda$ has \emph{finite commensurable depth} in $G$ if $\delta_c(G,\Lambda)<\infty$.
\end{definition}



\begin{definition}
	Let $\Lambda$ and $G$ be as before. The \emph{depth} of $\Lambda$ in $G$, denoted $\delta(G,\Lambda)$, is the largest integer $d$, such that there is a strictly increasing chain $L_1 <\cdots < L_{d}$ satisfying the conditions that $|L_1|=\infty$, each $L_i$ is an intersection of finitely many conjugates of elements of $\Lambda$, and $[L_{i+1}:L_i]=\infty$. If such $d$ does not exist, then we define $\delta(G,\Lambda)=\infty$.
\end{definition}

\begin{example}
	\label{example:BS}
Note that $\delta_c(G,\Lambda)<\infty$ implies $\delta(G,\Lambda)<\infty$, but the converse may not be true. For instance if $G=\langle a,t | a^t= a^2\rangle$, then the sequence
$\langle a\rangle < \langle a^{t^{-1}}\rangle < \langle a^{t^{-2}}\rangle < \cdots$
shows that $H=\{\langle a\rangle\}$ has infinite commensurable depth. However, $\delta(G,\{\langle a\rangle\})=1$.	
\end{example}

In the rest of this subsection, we return to the scenario where $G$ splits as a graph $\mathcal{G}$ of groups. Recall that we have identified vertex groups of $G$ with vertex stabilizers of a subtree $T_{\mathcal{G}}\subset T$. We assume in addition that each vertex group of $G$ is word-hyperbolic, and each edge group is quasiconvex in its associated vertex groups. Let $\mathcal{E}$ be the collection of edge groups of $G$. 

We recall the following fact which is proven in \cite{GMRS98} (see also \cite{HruskaWisePacking}):
\begin{lem}
	\label{lem:finite height}
	Let $\{H_1,\ldots,H_r\}$ be a collection of quasiconvex subgroups of the word-hyperbolic group $G$. Then $\{H_1,\ldots,H_r\}$ has finite height in $G$.
\end{lem}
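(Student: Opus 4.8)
The statement is classical and is precisely the content of \cite{GMRS98}; here is the argument I would give. Since by Definition~\ref{def:height} a finite collection has finite height exactly when each of its members does, it suffices to treat a single $\mu$-quasiconvex subgroup $H$ of the word-hyperbolic group $G$, where $G$ carries a word metric with respect to a fixed finite generating set and is $\delta$-hyperbolic. The plan is to show that the height of $H$ is at most $\#B(p,R)$, the cardinality of a ball of radius $R=R(\delta,\mu)$ in the Cayley graph of $G$, which is finite by local finiteness. Throughout I use that $H^g=gHg^{-1}$ depends only on the coset $gH$, so a ``height sequence'' of distinct conjugates of $H$ is really a sequence of distinct cosets $g_iH$.

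First I would record the group-theoretic input: every infinite subgroup $K$ of a word-hyperbolic group contains an element of infinite order. Indeed, a finitely generated torsion subgroup of a hyperbolic group is finite, and there is a uniform bound $N$ on the orders of finite subgroups; writing $K$ as the ascending union of its finitely generated subgroups, if every element of $K$ had finite order then $K$ would be the union of a chain of finite groups of order $\le N$, hence finite, a contradiction.

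Now suppose for contradiction that $g_0H,\dots,g_mH$ are pairwise distinct cosets with $K=\bigcap_{i=0}^{m}H^{g_i}$ infinite, and fix an infinite order element $k\in K$ as above. Let $\gamma$ be a bi-infinite geodesic from $k^{-\infty}$ to $k^{+\infty}$. For each $i$ we have $g_i^{-1}kg_i\in H$, hence $g_i^{-1}k^{n}g_i\in H$ for all $n$, so by $\mu$-quasiconvexity the segment $[\,g_i^{-1}k^{-N}g_i,\ g_i^{-1}k^{N}g_i\,]$ lies in $\neb_\mu(H)$; left-translating by $g_i$, the segment $\sigma_{i,N}:=[\,k^{-N}g_i,\ k^{N}g_i\,]$ lies in $\neb_\mu(g_iH)$. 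Because $d(k^{\pm N}g_i,k^{\pm N})=|g_i|$ is independent of $N$, the endpoints $k^{\pm N}g_i$ converge to $k^{\pm\infty}$ as $N\to\infty$; by stability of geodesics in the $\delta$-hyperbolic space $G$, the segments $\sigma_{i,N}$ Hausdorff-converge (on compacta) to a bi-infinite geodesic with the same endpoints as $\gamma$, which lies within $2\delta$ of $\gamma$. Hence every point of $\gamma$ lies within $\mu+2\delta+1$ of some $\sigma_{i,N}$, and therefore $\gamma\subset\neb_{R}(g_iH)$ for $R=\mu+2\delta+1$, a constant depending only on $\delta$ and $\mu$ and \emph{not} on $g_i$. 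Finally I would run the pigeonhole step: fix a vertex $p$ on $\gamma$; then each coset $g_iH$ meets $B(p,R)$, and distinct cosets are disjoint, so $m+1\le\#B(p,R)<\infty$. This bounds the height of $H$ and completes the proof. (Equivalently, one may simply cite \cite{GMRS98}, or \cite{HruskaWisePacking}.)

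The main obstacle is the uniformity of the constant $R$: a priori the distance from the fixed geodesic $\gamma$ to $g_iH$ could grow with $|g_i|$, which would wreck the pigeonhole count. What rescues the argument is that the approximating segments $\sigma_{i,N}$ have endpoints tending to the \emph{same} pair $k^{\pm\infty}\in\partial G$ for every $i$, so geodesic stability forces $\gamma$ into an $R$-neighborhood of $g_iH$ with $R$ depending only on $(\delta,\mu)$. A secondary point worth isolating is the passage from an infinite intersection of conjugates to an infinite-order element, which rests on the finiteness of finitely generated torsion subgroups of hyperbolic groups together with the uniform bound on the orders of finite subgroups.
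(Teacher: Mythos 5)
The paper does not prove this lemma; it is stated as a recollection and attributed to \cite{GMRS98} (and \cite{HruskaWisePacking}). Your argument is a correct reconstruction of the standard proof from \cite{GMRS98}: extract an infinite-order element $k$ from the intersection of conjugates (using the boundedness of finite subgroups and the fact that torsion subgroups of hyperbolic groups are finite), observe that each coset $g_iH$ comes uniformly fellow-travels the axis of $k$ via quasiconvexity and geodesic stability, and then apply a pigeonhole count in a ball of uniform radius $R(\delta,\mu)$ to bound the number of distinct cosets. This matches the cited approach, so there is nothing further to compare against the paper's own text.
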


Note that each big-tree is uniformly locally finite by Lemma \ref{lem:finite height}, thus $G$ has finite stature if and only if both of the following conditions hold:
\begin{enumerate}
	\item There are finitely many $G$-orbits of big-trees in $T$;
	\item $\stab(S)$ acts cocompactly on each big-tree $S$.
\end{enumerate}	

\begin{lem}
	\label{lem:quasiconvex0}
Suppose each vertex group of $G$ is word-hyperbolic, and each edge group is quasiconvex in its vertex groups. Pick a finite subtree $S\subset T$ and a vertex $v\in S$. Then $\pstab(S)< \stab(v)$ is quasiconvex.
\end{lem}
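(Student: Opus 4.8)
The statement asks that for a finite subtree $S \subset T$ and a vertex $v \in S$, the pointwise stabilizer $\pstab(S)$ is quasiconvex in $\stab(v)$. The plan is to induct on the number of edges of $S$, reducing the general case to the single-edge case. First I would handle the base case where $S$ consists of a single edge $e$ incident to $v$. Then $\pstab(S) = \stab(e)$, and this is precisely the edge group $G_e$ sitting inside the vertex group $G_v = \stab(v)$; by hypothesis (2) this inclusion is quasiconvex, so the base case is immediate.

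For the inductive step, write $S = S' \cup \{e\}$ where $S'$ is a subtree with one fewer edge containing $v$, and $e$ is a terminal edge of $S$ meeting $S'$ at a single vertex, call it $w$. The key observation is that $\pstab(S) = \pstab(S') \cap \stab(e)$, and both factors live naturally inside $\stab(w)$: indeed $\stab(e) \le \stab(w)$ trivially, and $\pstab(S') \le \stab(w)$ since $w \in S'$. So first I would establish, as an intermediate claim, that $\pstab(S')$ is quasiconvex in $\stab(w)$. This is not quite the inductive hypothesis (which is phrased with basepoint $v$, not $w$), so I would either strengthen the inductive statement to hold for every vertex of $S$ simultaneously, or invoke the fact that quasiconvexity of a subgroup in a hyperbolic group does not depend on the choice of basepoint and that all vertices of a subtree see the same pointwise stabilizer up to the transfer isomorphism. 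With that claim in hand, $\pstab(S)$ is the intersection of two quasiconvex subgroups of the hyperbolic group $\stab(w)$ — namely $\pstab(S')$ and $\stab(e)$ — and intersections of quasiconvex subgroups of a word-hyperbolic group are quasiconvex (a standard fact, e.g.\ Short). Hence $\pstab(S)$ is quasiconvex in $\stab(w)$.

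Finally I would transport this back to the desired basepoint $v$: $\pstab(S)$ is quasiconvex in $\stab(w)$, and I need it quasiconvex in $\stab(v)$. Here I would use that $w$ and $v$ are joined by a path in $S$, that consecutive vertices along this path have stabilizers glued along the edge group between them — which is quasiconvex in each — and that quasiconvexity is transitive: if $A$ is quasiconvex in $B$ and $B$ is quasiconvex in a hyperbolic group $C$, then $A$ is quasiconvex in $C$. Applying this along the path, together with the observation that a vertex group adjacent across an edge $f$ contains $\stab(f)$ quasiconvexly and $\stab(f)$ contains $\pstab(S)$ (once the edge $f$ is already in $S$), one threads $\pstab(S)$ through the chain of vertex groups down to $\stab(v)$. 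I expect the main obstacle to be bookkeeping the basepoints correctly — making the inductive statement uniform over all vertices of $S$ so that the "intermediate claim" about $\pstab(S')$ inside $\stab(w)$ is genuinely the inductive hypothesis, and checking that the auxiliary intersection step really takes place inside a single hyperbolic vertex group rather than across several. The underlying group-theoretic inputs (quasiconvex $\cap$ quasiconvex is quasiconvex, transitivity of quasiconvexity, basepoint-independence) are all standard for word-hyperbolic groups and require no new work.
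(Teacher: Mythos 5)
Your proof is correct and follows essentially the same route as the paper: induct on the number of edges, strip off a leaf edge $e$ meeting the rest of $S$ at a vertex $u$, express $\pstab(S)$ as an intersection of two quasiconvex subgroups of $\stab(u)$, and transport quasiconvexity back to the chosen basepoint $v$ via transitivity through the chain of vertex and edge groups. Your explicit note that the inductive hypothesis should be stated uniformly over all vertices of $S$ is exactly the bookkeeping the paper handles implicitly with the remark that ``quasiconvexity is transitive via the vertex groups.''
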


\begin{proof}
	It suffices to show that $\pstab(S)\hookrightarrow \pstab(e)$	is quasiconvex for a particular edge $e\subset S$, since quasiconvexity is transitive via the vertex groups. We induct on the number of edges in $S$. Let $e\subset S$ be containing a leaf. We remove $e$ from $S$ to obtain another tree $S'$. Let $u=e\cap S'$. By induction, $\pstab(S')$ and $\pstab(e)$ are quasiconvex in $\stab(u)$, hence $\pstab(S)=\pstab(S')\cap\pstab(e)$ is quasiconvex in $\stab(u)$. So $\pstab(S)$ is quasiconvex in $\stab(e)$.
\end{proof}

We recall the following standard fact about hyperbolic groups.
\begin{lem}
	\label{lem:finite subgroups}
	Let $G$ be a word-hyperbolic group. Then it contains only finitely many conjugacy classes of finite subgroups.
\end{lem}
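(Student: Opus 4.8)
The statement to prove is the standard fact that a word-hyperbolic group contains only finitely many conjugacy classes of finite subgroups (Lemma~\ref{lem:finite subgroups}).

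\textbf{Proof proposal.}

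The plan is to reduce the statement to a bound on the order of finite subgroups together with a local-finiteness argument on the Rips complex. First I would fix a finite generating set for $G$ and recall that for a suitable constant $\delta$ the group $G$ acts properly and cocompactly on the Rips complex $P_d(G)$ for all sufficiently large $d$, and this complex is contractible (in fact $(\delta+\text{something})$-dense, with the relevant $d$ chosen so that the Rips complex is contractible by the standard Rips lemma for hyperbolic groups). The key input is the classical observation, going back to the work on hyperbolic groups, that any finite subgroup $F\le G$ fixes a point of $P_d(G)$: one takes the orbit $F\cdot x$ of any vertex $x$, forms its ``circumcenter'' or, more robustly in the quasi-geodesic setting, uses that the orbit has diameter bounded by $2\diam(F\cdot x)$ and that for $d$ large enough a set of uniformly bounded diameter spans a simplex of $P_d(G)$, whose barycenter is then $F$-fixed. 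Hence every finite subgroup is conjugate into the stabilizer of one of the finitely many vertices (equivalently cells) of the quotient $G\backslash P_d(G)$.

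The second step is to observe that, since the action of $G$ on $P_d(G)$ is proper and the quotient is compact, there are only finitely many $G$-orbits of simplices and each simplex has finite stabilizer. Choosing orbit representatives $\sigma_1,\dots,\sigma_k$ with stabilizers $G_{\sigma_1},\dots,G_{\sigma_k}$, every finite subgroup $F$ of $G$ is subconjugate to some $G_{\sigma_j}$: indeed $F$ fixes a point, hence fixes the minimal simplex $\sigma$ containing that point (a group acting on a simplicial complex and fixing a point fixes the carrier simplex), and $\sigma = g\sigma_j$ for some $g$ and some $j$, so $g^{-1}Fg \le G_{\sigma_j}$. Since each $G_{\sigma_j}$ is a finite group it has only finitely many subgroups, so there are only finitely many possibilities for the conjugacy class of $F$. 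This yields the conclusion.

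The main obstacle --- really the only nontrivial point --- is the fixed-point statement: that a finite subgroup of a hyperbolic group fixes a point of a suitable Rips complex. This requires choosing the Rips parameter $d$ large enough relative to the hyperbolicity constant so that (i) $P_d(G)$ is contractible and (ii) orbits of bounded diameter span simplices; the bound on the diameter of a finite orbit comes from the fact that $F$ permutes $F\cdot x$ and any two points of $F\cdot x$ are at distance at most $2|x|$... more carefully, one picks $x$ and notes $\diam(F\cdot x)\le 2d_G(x, \text{any fixed basepoint})$ is not automatically bounded, so instead one argues that a quasi-center of the orbit has a bounded-size orbit, or invokes the known lemma (e.g.\ in Bridson--Haefliger III.$\Gamma$.3.2 or in the hyperbolic-groups literature) that finite subgroups of hyperbolic groups have bounded order and lie in finitely many conjugacy classes. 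Since this is a standard fact cited without proof in many places, in the write-up I would simply give the Rips-complex argument sketched above and cite the appropriate reference for the fixed-point lemma, rather than re-deriving the diameter bound from scratch.
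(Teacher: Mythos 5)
The paper does not actually prove this lemma: it states it as a ``standard fact about hyperbolic groups'' and cites nothing, so there is no in-text argument to compare yours against. Your Rips-complex argument is the standard proof of this standard fact (it is essentially the proof in Bridson--Haefliger III.$\Gamma$), and it is correct in outline: for $d$ large enough relative to the hyperbolicity constant, $G$ acts properly, cocompactly, and simplicially on $P_d(G)$; a finite subgroup $F$ fixes a point, hence stabilizes the carrier simplex setwise; there are finitely many $G$-orbits of simplices, each with finite stabilizer, and a finite group has finitely many subgroups.

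One passage in your write-up is confused and worth cleaning up: the clause ``uses that the orbit has diameter bounded by $2\diam(F\cdot x)$'' is circular as written, and the alternative $\diam(F\cdot x)\le 2d_G(x,\ast)$ is, as you then note, not bounded. The correct and standard fix, which you do identify, is the quasi-center lemma: for a bounded nonempty set $Y$ in a $\delta$-hyperbolic space, the set of points $z$ with $Y\subset B(z,\mathrm{rad}(Y)+\epsilon)$ has diameter at most $4\delta+2\epsilon$; applied to the $F$-invariant set $Y=F\cdot x$ it produces an $F$-invariant set of uniformly bounded diameter, which for $d$ large enough spans a simplex of $P_d(G)$ whose barycenter is $F$-fixed. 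If you intend to cite rather than reprove this, cite it cleanly and drop the half-sentence that doesn't parse; as it stands a reader could be misled into thinking the diameter bound is elementary when it is really the one nontrivial input.

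Two further small points. First, ``fixes the carrier simplex'' should read ``stabilizes the carrier simplex setwise'': $F$ need not fix $\sigma$ pointwise, but it suffices that $F\le\stab_G(\sigma)$ and that $\stab_G(\sigma)$ is finite by properness. Second, your argument in fact proves the stronger statement that finite subgroups are (up to conjugacy) subgroups of finitely many fixed finite groups, which in particular gives the uniform bound on orders of finite subgroups that the paper also uses (in the proof of Lemma~\ref{lem:finite depth}); it is worth noting that both facts come out of the same Rips-complex argument.
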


\begin{lem}\label{lem:finite depth}
Let $G$ be as in Lemma~\ref{lem:quasiconvex0}. If $G$ has finite stature, then $\delta_c(G,\mathcal{E})<\infty$. Consequently, $\delta(G,\mathcal{E})<\infty$.
\end{lem}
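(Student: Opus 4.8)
The plan is to prove the contrapositive: if $\delta_c(G,\mathcal{E})=\infty$, then $G$ has infinite stature. So suppose there exist arbitrarily long strictly increasing chains $L_1 < L_2 < \cdots < L_d$ where each $L_i$ is an intersection of finitely many conjugates of edge groups. Recall that an intersection of conjugates of edge groups is precisely a pointwise stabilizer $\pstab(S)$ of some subtree $S\subset T$. The first step is to arrange that all $L_i$ in such a chain are \emph{infinite}: since $\mathcal{E}$ is a finite collection of quasiconvex subgroups of word-hyperbolic vertex groups, Lemma~\ref{lem:finite height} gives $\mathcal{E}$ finite height, and Lemma~\ref{lem:finite subgroups} says each vertex group has only finitely many conjugacy classes of finite subgroups; combining these, only finitely many isomorphism types (indeed $G$-conjugacy types) of finite intersections arise, so a long chain with $L_1$ finite can be truncated to a still-long chain with all terms infinite. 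Equivalently, after truncation we may assume $\delta_c(G,\mathcal{E})=\infty$ is witnessed by chains of infinite subgroups.

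Next I would pass from each infinite $L_i=\pstab(S_i)$ to a \emph{big-tree}. Given an infinite $\pstab(S)$, enlarge $S$ to a maximal subtree $\widehat S$ with $\pstab(\widehat S)=\pstab(S)$; this is possible — and $\widehat S$ is again a subtree — because by Lemma~\ref{lem:finite height} big-trees are uniformly locally finite and, more to the point, the fixed-point set of an infinite subgroup is a subtree, so $\widehat S$ is the fixed set of $\pstab(S)$ and $(\widehat S, \cdot)$ is a big-tree once we note $\pstab(\widehat S)$ is infinite. Thus a chain $L_1 < \cdots < L_d$ of infinite subgroups yields a chain of big-trees $\widehat S_1 \supseteq \widehat S_2 \supseteq \cdots \supseteq \widehat S_d$ (inclusions of subtrees reversing the inclusions of stabilizers), with $\pstab(\widehat S_i)=L_i$ pairwise distinct. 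Pick a vertex $v_d \in \widehat S_d$; then $(\widehat S_i, v_d)$ is a based big-tree for each $i$, and the associated transections are (conjugates of) the $L_i$, which are pairwise distinct as subgroups. By Lemma~\ref{lem:correspondence}, two based big-trees lie in the same $G$-orbit only if their transections are conjugate in a common vertex group; since the $L_i$ are pairwise non-conjugate infinite subgroups — here I use that distinct transections landing in a fixed vertex group $\stab(v_d)$ are non-conjugate in $\stab(v_d)$ by the construction of $\Upsilon_V$ — the based big-trees $(\widehat S_i,v_d)$ lie in $d$ distinct $G$-orbits. As $d$ is arbitrary, the action of $G$ on based big-trees has infinitely many orbits, i.e., $G$ has infinite stature. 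This contradicts the hypothesis, proving $\delta_c(G,\mathcal{E})<\infty$. The final sentence, $\delta(G,\mathcal{E})<\infty$, then follows immediately from the general implication noted in Example~\ref{example:BS}, since a chain with all quotients of infinite index is in particular a strictly increasing chain.

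The main obstacle I anticipate is the bookkeeping in the previous paragraph: making precise the claim that distinct terms $L_i$ of the commensurability-depth chain really do produce based big-trees in \emph{distinct} $G$-orbits. The subtlety is that enlarging $S_i$ to $\widehat S_i$ could, a priori, collapse two different $L_i$'s to big-trees with conjugate stabilizers even though the $L_i$ themselves were only known to be distinct, not non-conjugate. The cleanest fix is to choose the witnessing chain more carefully: among all chains of length $d$, take one, translate so that the smallest big-tree $\widehat S_d$ meets the fundamental domain $T_{\mathcal{G}}$ at $v_d$, and then observe that each $L_i = \pstab(\widehat S_i)$, being the fixed set of a subgroup of the single vertex group $V=\stab(v_d)$, corresponds under Lemma~\ref{lem:finitely many conjugacy classes0}(2) to a $V$-conjugacy class of infinite subgroups of the prescribed form; a strictly increasing chain of such subgroups inside a fixed $V$ gives strictly more than $d$ distinct $V$-conjugacy classes only if we know the terms are pairwise non-conjugate in $V$, which need not be automatic. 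To sidestep this entirely, I would instead argue directly with $\Upsilon_V$: if $\Upsilon_V$ were finite for every vertex group $V$, then there would be a global bound on the length of any strictly increasing chain of infinite intersections of conjugates of edge groups that all contain a common vertex, hence — after using Lemma~\ref{lem:finite height} to reduce a general finite intersection of conjugates of edge groups to one recorded by a finitely-generated/locally-finite subtree — a global bound on $\delta_c$. This is the step that deserves the most care, and where I'd spend the bulk of the write-up.
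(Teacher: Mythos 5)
Your overall strategy is the same as the paper's: anchor the chain at a vertex $v$ of the smallest tree so that every infinite term is a $\stab(v)$-conjugate of an element of $\Upsilon_V$, bound the number of infinite terms using the finiteness of $\Upsilon_V$, and bound the finite initial segment via Lemma~\ref{lem:finite subgroups}. But there is a genuine gap at the decisive step, which you flag yourself without closing. Finiteness of $\Upsilon_V$ only says that the terms $L_1<\cdots<L_d$ fall into finitely many $V$-conjugacy classes; to bound $d$ you must rule out that two distinct nested terms $L_i\subsetneq L_j$ are $V$-conjugate to the \emph{same} element of $\Upsilon_V$, i.e.\ that $L_j$ is conjugate in $V$ to a proper subgroup of itself. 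Your first pass asserts that ``the $L_i$ are pairwise non-conjugate \dots\ by the construction of $\Upsilon_V$'', which is circular: the construction only makes distinct \emph{elements of} $\Upsilon_V$ non-conjugate, not distinct transections. Your proposed fix (``if $\Upsilon_V$ were finite \dots\ there would be a global bound on the length of any strictly increasing chain'') is precisely the statement to be proved, and you defer it. This is not a formality: a group can contain a strictly increasing chain of pairwise conjugate subgroups (the chain $\langle a\rangle<\langle a^{t^{-1}}\rangle<\cdots$ in $BS(1,2)$ of Example~\ref{example:BS}), and the remark following the lemma in the paper ($V=\langle a,s\mid a^s=a^2\rangle$, $G=\langle V,t\mid a^t=a^2\rangle$ has finite stature with respect to $V$ yet infinite commensurable depth) shows that no argument using only the finiteness of the number of conjugacy classes can succeed; some hyperbolicity input is indispensable.

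That missing ingredient is exactly how the paper closes the gap: each infinite term $L_i=\pstab(T_i)$ with $v\in T_i$ is quasiconvex in the word-hyperbolic vertex group $V=\stab(v)$ by Lemma~\ref{lem:quasiconvex0}, and a quasiconvex subgroup of a word-hyperbolic group is never conjugate to a proper subgroup of itself: if $H^g\subsetneq H$ then for every $N$ the cosets $H,gH,\dots,g^NH$ are distinct while $\bigcap_{n=0}^{N}H^{g^n}=H^{g^N}$ is infinite, contradicting the finiteness of the height of $H$ as in Lemma~\ref{lem:finite height}. With this fact the assignment of infinite chain terms to elements of $\Upsilon_V$ is injective, and your outline goes through; your detour through the maximal big-trees $\widehat S_i$ and Lemma~\ref{lem:correspondence} is then unnecessary, since the enlargement does not change the stabilizers and the counting happens entirely inside $V$. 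The truncation of the finite initial segment is essentially fine, though the clean statement is that Lemma~\ref{lem:finite subgroups} gives an upper bound on the order of finite subgroups of the hyperbolic vertex groups (every finite subgroup of $G$ fixes a vertex of $T$), which bounds the length of any strictly increasing chain of finite subgroups.
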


\begin{proof}
Let $L_1 < \cdots < L_{d}$ be the chain in Definition~\ref{def:depth}. Thus there is a sequence of finite trees $T_1\supset\cdots\supset T_d$ such that $L_i=\pstab(T_i)$. Let $V=\stab(v)$ be a vertex group with $v\in T_{\mathcal{G}}$. Without loss of generality, we assume $v\in T_d$, thus each $L_i$ is an intersection of $V$ with an intersection of finitely many conjugates of elements of $\mathcal{E}$. Thus each $L_i$ is an $V$-conjugate of an element of $\Upsilon_V$. Moreover, if $i\neq j$, then $L_i$ and $L_j$ can not be conjugated to the same element of $\Upsilon_V$. This is because each $L_i$ is quasiconvex in a word-hyperbolic group $V$ (by Lemma~\ref{lem:quasiconvex0}) and a quasiconvex subgroup of a word-hyperbolic group is not conjugated to its proper subgroups (by Lemma~\ref{lem:finite height}). Since $\Upsilon_V$ is finite, we have a bound on the number of infinite order elements in the chain. By Lemma~\ref{lem:finite subgroups}, there is an upper bound on the order of any finite subgroup. This bounds the length of any chain of finite subgroups.
\end{proof}

\begin{remark}\
\begin{enumerate}
	\item Even if $G$ satisfies the assumption of Lemma~\ref{lem:quasiconvex0}, the converse of Lemma~\ref{lem:finite depth} is not true. This can be seen by letting $G$ be the free by cyclic group with a non-standard splitting discussed in Example~\ref{example:free by Z}.
	\item Lemma~\ref{lem:finite depth} is not true for more general groups. For example, let $V=\langle a,s | a^s= a^2\rangle$ and let $G=\langle V,t | a^t= a^2\rangle$. Then $G$ has finite stature with respect to $V$, but $\delta_c(G,V)=\infty$.
\end{enumerate}
\end{remark}

Since $\delta_c(G,\mathcal{E})<\infty$, the pointwise stabilizer of any subtree of $T$ can be expressed as an intersection of finitely many conjugates of edges groups. Thus the following two lemmas hold.

\begin{lem}
\label{lem:quasiconvex}
Lemma~\ref{lem:quasiconvex0} holds without the assumption that $S$ is finite. In particular, for any vertex group $V$, each element in $\Upsilon_V$ is quasiconvex.
\end{lem}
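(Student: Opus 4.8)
The plan is to reduce to the already-established finite case Lemma~\ref{lem:quasiconvex0} by replacing an arbitrary subtree $S$ with a suitable \emph{finite} subtree having the same pointwise stabilizer. Fix a subtree $S\subset T$ and a vertex $v\in S$. Recall $\pstab(S)=\bigcap_{e\in E(S)}\stab(e)$, an intersection of (possibly infinitely many) conjugates of edge groups. For a finite subset $F\subset E(S)$ write $L_F=\bigcap_{e\in F}\stab(e)$; the $L_F$ form a decreasing net with $\bigcap_F L_F=\pstab(S)$. Since $\delta_c(G,\mathcal E)<\infty$ there is no infinite strictly decreasing chain among the $L_F$, so there is a finite $F_0$ with $L_F=L_{F_0}$ for every finite $F\supseteq F_0$; hence $L_{F_0}=\pstab(S)$. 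This is precisely the assertion quoted before the statement, that $\pstab(S)$ is an intersection of finitely many conjugates of edge groups.

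Next I would upgrade the finite edge set $F_0$ to a finite subtree. Let $S'\subset T$ be the convex hull of $\{v\}$ together with the endpoints of the edges in $F_0$. Because $T$ is a tree, $S'$ is finite, and because $S$ is convex and contains $v$ and all these edges, $S'\subseteq S$. Now $F_0\subseteq E(S')$ gives $\pstab(S')\subseteq L_{F_0}=\pstab(S)$, while $S'\subseteq S$ gives the reverse inclusion; thus $\pstab(S')=\pstab(S)$, and $v\in S'$. Applying Lemma~\ref{lem:quasiconvex0} to the finite subtree $S'$ and the vertex $v$ shows $\pstab(S)=\pstab(S')< \stab(v)$ is quasiconvex. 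This proves the first assertion.

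For the ``in particular'' clause: an element of $\Upsilon_V$ is, by Definition~\ref{defn:phi}, an $(S,v)$-transection $g_v\pstab(S)g_v^{-1}$ with $g_vv\in T_{\mathcal G}$ the vertex corresponding to $V$. This subgroup equals $\pstab(g_vS)$, where $g_vS$ is a subtree of $T$ containing the vertex $g_vv$ whose stabilizer is $V$. Applying the first part to the subtree $g_vS$ and the vertex $g_vv$ yields that $\pstab(g_vS)< \stab(g_vv)=V$ is quasiconvex, as desired.

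\textbf{Main obstacle.} The only genuinely non-formal point is the passage from ``$\pstab(S)$ is a finite intersection of edge-group conjugates'' to ``$\pstab(S)=\pstab(S')$ for a \emph{finite subtree} $S'$,'' which is needed because Lemma~\ref{lem:quasiconvex0} is phrased for trees and its inductive proof strips off leaf edges; taking the convex hull of the relevant finitely many edges handles this, since enlarging a finite edge set of $S$ to its hull inside $S$ cannot change the pointwise stabilizer once we are past the stable level $F_0$. Everything else is a direct appeal to finite commensurable depth and to Lemma~\ref{lem:quasiconvex0}.
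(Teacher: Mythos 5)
Your proof is correct and follows essentially the same route the paper takes: the paper's one-sentence justification (``Since $\delta_c(G,\mathcal{E})<\infty$, the pointwise stabilizer of any subtree of $T$ can be expressed as an intersection of finitely many conjugates of edge groups'') is precisely your finite-depth argument producing $F_0$, and the implicit final step of passing from a finite edge set to a finite subtree and invoking Lemma~\ref{lem:quasiconvex0} is exactly your convex-hull construction of $S'$. You have simply spelled out what the authors left tacit.
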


\begin{lem}
	\label{lem:finitely many conjugacy classes1}
	Suppose $G$ satisfies the assumption of Lemma~\ref{lem:quasiconvex0}. Then Lemma~\ref{lem:finitely many conjugacy classes0} still holds if we assume the collection $E$ there is finite.
\end{lem}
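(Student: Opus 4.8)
The plan is to show that the equivalence of conditions~(1) and~(2) in Lemma~\ref{lem:finitely many conjugacy classes0} survives when we restrict to \emph{finite} collections $E$ of edges in $T$, given the standing assumptions (each vertex group word-hyperbolic, each edge group quasiconvex in its vertex groups). The one direction is free: a finite $E$ is in particular a collection of edges, so if condition~(2) of Lemma~\ref{lem:finitely many conjugacy classes0} holds for arbitrary $E$, then it holds a fortiori for finite $E$, and hence (1) holds. So the content is the reverse implication: assuming only that there are finitely many $V$-conjugacy classes of infinite subgroups $V\cap(\cap_{e\in E}\stab(e))$ with $E$ \emph{finite}, we must recover the same finiteness for arbitrary (possibly infinite) $E$.

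The key point is that under the present hypotheses we may invoke Lemma~\ref{lem:finite depth}: finite stature (in the restricted, finite-$E$ sense) implies $\delta_c(G,\mathcal{E})<\infty$. Actually, to avoid circularity I would argue directly that the finite-$E$ hypothesis already forces $\delta_c(G,\mathcal E)<\infty$ by exactly the argument of Lemma~\ref{lem:finite depth} — the chain $L_1<\cdots<L_d$ there is built from finitely many conjugates of edge groups at each stage, i.e.\ from finite edge collections, so the proof goes through verbatim using only the finite-$E$ version of finite stature. Once $\delta_c(G,\mathcal E)<\infty$ is in hand, the remark preceding Lemma~\ref{lem:quasiconvex} applies: the pointwise stabilizer $\pstab(S)=\cap_{e\in E}\stab(e)$ of \emph{any} subtree $S$ (equivalently, of any edge collection $E$) is equal to an intersection of only \emph{finitely many} of the $\stab(e)$. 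Therefore every infinite subgroup of the form $V\cap(\cap_{e\in E}\stab(e))$ with $E$ arbitrary is in fact of the form $V\cap(\cap_{e\in E'}\stab(e))$ for some finite $E'\subseteq E$, and the finite-$E$ hypothesis immediately bounds the number of $V$-conjugacy classes of such subgroups.

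Concretely the steps are: (i) observe condition~(1)$\Rightarrow$ the finite-$E$ form of condition~(2) is a restriction of the general statement, so only the converse needs proof; (ii) run the argument of Lemma~\ref{lem:finite depth} using only finite edge collections to conclude $\delta_c(G,\mathcal E)<\infty$ — this uses Lemma~\ref{lem:quasiconvex0} (quasiconvexity of $\pstab$ of a finite subtree) and Lemma~\ref{lem:finite height} (quasiconvex subgroups of hyperbolic groups have finite height, hence are not conjugate into proper subgroups of themselves), together with Lemma~\ref{lem:finite subgroups} to bound chains of finite subgroups; (iii) deduce from $\delta_c(G,\mathcal E)<\infty$ that for any edge collection $E$ there is a finite $E'\subseteq E$ with $\cap_{e\in E}\stab(e)=\cap_{e\in E'}\stab(e)$; (iv) conclude that the finite-$E$ and the general versions of condition~(2) coincide, so Lemma~\ref{lem:finitely many conjugacy classes0} holds with $E$ restricted to finite collections.

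The main obstacle is really just making sure the appeal to Lemma~\ref{lem:finite depth} is not circular: since that lemma's hypothesis ``$G$ has finite stature'' refers to the unrestricted notion, one should either check (as above) that its proof only ever manipulates finite edge collections — which it does, since each $L_i$ there is by definition an intersection of \emph{finitely many} conjugates of edge groups — or, alternatively, first establish the equivalence of the finite-$E$ and unrestricted notions for the definition of finite stature itself and then cite Lemma~\ref{lem:finite depth} unchanged. Either way the real work has already been done in Lemmas~\ref{lem:quasiconvex0}, \ref{lem:finite height}, \ref{lem:finite subgroups}, and~\ref{lem:finite depth}; this lemma is a short corollary packaging the observation that finite commensurable depth lets one always truncate an infinite edge collection to a finite one.
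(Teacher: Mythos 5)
Your proposal is correct and follows the same route as the paper: the paper justifies this lemma by observing that once $\delta_c(G,\mathcal E)<\infty$ is known (Lemma~\ref{lem:finite depth}), every pointwise stabilizer of a subtree equals a finite intersection of conjugates of edge groups, so the finite-$E$ and unrestricted versions of condition~(2) coincide. You are more explicit than the paper about the one point it glosses over --- that for the implication from the finite-$E$ condition back to finite stature one must first re-run the chain argument of Lemma~\ref{lem:finite depth} using only finite edge collections to obtain $\delta_c(G,\mathcal E)<\infty$ non-circularly --- and your verification that that proof indeed only ever manipulates pointwise stabilizers of finite trees is exactly the right observation.
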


Lemma~\ref{lem:finitely many conjugacy classes1} and Corollary~\ref{lem:finitely many conjugacy class} below imply the following.
\begin{cor}
Suppose $G$ is hyperbolic and it splits as a graph of groups such that each vertex group is quasiconvex. Then $G$ has finite stature.
\end{cor}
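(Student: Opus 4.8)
The plan is to deduce the Corollary from Lemma~\ref{lem:finitely many conjugacy classes1} by verifying that condition (2) there holds, i.e.\ that for each vertex group $V$ there are only finitely many $V$-conjugacy classes of infinite subgroups of the form $V\cap(\cap_{e\in E}\stab(e))$ with $E$ a finite collection of edges in $T$. Since $G$ is word-hyperbolic, each vertex group $V$ is quasiconvex in $G$, hence is itself word-hyperbolic, and each edge group is the intersection of two adjacent vertex groups along an edge, hence is quasiconvex in $G$ and therefore quasiconvex in each of its vertex groups. So the hypotheses of Lemma~\ref{lem:quasiconvex0} (and Lemma~\ref{lem:finitely many conjugacy classes1}) are satisfied, and it remains to bound the number of conjugacy classes.

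First I would invoke the result quoted above as Corollary~\ref{lem:finitely many conjugacy class} (the ``finitely many conjugacy classes'' statement for intersections of conjugates of a finite family of quasiconvex subgroups of a hyperbolic group): since the vertex groups $\mathcal V$ and edge groups $\mathcal E$ of $G$ form a finite collection of quasiconvex subgroups of the hyperbolic group $G$, there are only finitely many $G$-conjugacy classes of subgroups arising as intersections of finitely many conjugates of members of $\mathcal E$ (and of $\mathcal V$). An infinite subgroup of the form $V\cap(\cap_{e\in E}\stab(e))$ is precisely such a finite intersection of conjugates of edge groups (with the extra conjugate of $V$), so there are finitely many $G$-conjugacy classes of them. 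One then needs the standard refinement that finitely many $G$-conjugacy classes of quasiconvex subgroups sitting inside a fixed quasiconvex $V$ gives finitely many $V$-conjugacy classes: if $H_1,H_2\le V$ are quasiconvex and $H_2=gH_1g^{-1}$ for some $g\in G$, then because a quasiconvex subgroup of a hyperbolic group has finite-index normalizer-commensurator behaviour — more precisely, one uses that there are only finitely many $V$-cosets $gV$ with $gH_1g^{-1}\le V$ up to the relevant equivalence — the number of $V$-conjugacy classes mapping to a single $G$-conjugacy class is finite. This is the content of, e.g., the height/width machinery of \cite{GMRS98}; alternatively one can cite Lemma~\ref{lem:correspondence} together with finiteness of $\Upsilon$.

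Putting these together: condition (2) of Lemma~\ref{lem:finitely many conjugacy classes0} holds (with $E$ finite, which by Lemma~\ref{lem:finitely many conjugacy classes1} suffices), hence by the equivalence in that lemma $G$ has finite stature with respect to its splitting as a graph of groups.

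The main obstacle I anticipate is the passage from ``finitely many $G$-conjugacy classes'' to ``finitely many $V$-conjugacy classes'' — that is, controlling the fibers of the map from $V$-conjugacy classes to $G$-conjugacy classes. Naively a single $G$-conjugacy class could split into infinitely many $V$-classes, and ruling this out requires a genuine input: either the finite-height/finite-width results for quasiconvex subgroups of hyperbolic groups (so that the relevant double coset count $V\backslash\{g: gH_1g^{-1}\le V\}/N_G(H_1)$ is finite), or a direct bookkeeping via the based-big-tree formalism as in Lemma~\ref{lem:correspondence}. Everything else — quasiconvexity of vertex and edge groups in $G$, and the reduction to a finite collection $E$ — is routine given the lemmas already established.
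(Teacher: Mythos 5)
Your argument is correct and is essentially the paper's own proof: the paper deduces this corollary from exactly the two ingredients you use, namely Lemma~\ref{lem:finitely many conjugacy classes1} together with Corollary~\ref{lem:finitely many conjugacy class}. The one difference is that the step you flag as the main obstacle is already built into the cited statement—Corollary~\ref{lem:finitely many conjugacy class} gives finitely many \emph{$K$-conjugacy} classes for any quasiconvex $K$, so applying it directly with $K=V$ and the edge groups as the $H_i$ makes your passage from $G$-conjugacy to $V$-conjugacy (which is nonetheless correct, resting on Lemma~\ref{lem:finitely many double cosets}) redundant; note also that your suggested alternative of citing Lemma~\ref{lem:correspondence} together with finiteness of $\Upsilon$ would be circular, since finiteness of $\Upsilon$ is precisely finite stature.
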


A proof of the following statement can be found in \cite{GMRS98}
or \cite{HruskaWisePacking}.

\begin{lem}
	\label{lem:finitely many double cosets}
	Let $G$ be a hyperbolic group, and $A,B$ be quasiconvex subgroups. There are finitely many double cosets $BgA$ such that $A\cap B^g$ is infinite.
\end{lem}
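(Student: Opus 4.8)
\textbf{Proof proposal for Lemma~\ref{lem:finitely many double cosets}.}

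The plan is to deduce this from the finite height property (Lemma~\ref{lem:finite height}) applied to the collection $\{A,B\}$, or more precisely to reduce the double coset statement to finite height by relating double cosets $BgA$ to conjugates whose pairwise intersections stay infinite. First I would set up the standard bijective dictionary: double cosets $BgA$ with $A\cap B^g$ infinite correspond, after passing to the diagonal action of $A\times B$ on $G$ (or equivalently the action of $A$ on the coset space $B\backslash G$), to $A$-orbits of cosets $Bg$ whose $A$-stabilizer $A\cap B^g$ is infinite. So it suffices to bound the number of such orbits.

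Next I would argue by contradiction. Suppose there are infinitely many distinct double cosets $\{Bg_nA\}_{n\in\naturals}$ with each $A\cap B^{g_n}$ infinite. The key geometric input is that $A$ and $B$ are quasiconvex in the hyperbolic group $G$, hence undistorted, and an infinite intersection $A\cap B^{g_n}$ contains an infinite-order element $h_n$; the axis of $h_n$ is a quasigeodesic that fellow-travels both a translate of the quasiconvex set for $A$ and a translate of the quasiconvex set for $B$, so in particular $g_n$ can be chosen within a bounded Hausdorff-type distance of the overlap. The standard coarse geometry of quasiconvex subgroups in hyperbolic groups (as in \cite{GMRS98}) then forces the following: if there are infinitely many such double cosets, one can extract a sequence of translates $g_nBg_n^{-1}$ that are pairwise distinct as cosets $g_nB$ but whose mutual intersections with $A$ — and after refining, with each other — remain infinite, contradicting that $\{A,B\}$ has finite height in $G$ (Lemma~\ref{lem:finite height}). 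Concretely, quasiconvexity gives a uniform constant so that any two of the relevant conjugates either coincide or their intersection is bounded; a pigeonhole argument on the bounded-diameter ``cores'' then shows only finitely many $g_n$ can be pairwise inequivalent while keeping the intersections infinite.

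The main obstacle I expect is the bookkeeping in the pigeonhole step: one must choose, for each $n$, a representative $g_n$ normalized so that a fixed-size piece of the axis of an infinite-order element of $A\cap B^{g_n}$ sits in a bounded neighborhood of a basepoint, then observe that there are only finitely many such normalized configurations up to the action of the relevant stabilizer, so infinitely many $g_n$ must give the same configuration and hence the same double coset. Making ``bounded-size piece'' and ``finitely many configurations'' precise requires the quasiconvexity constants and the thin-triangles constant of $G$, together with local finiteness of the Cayley graph; this is exactly the content extracted from \cite{GMRS98}, so I would cite that argument rather than reproduce it. An alternative, cleaner route is to invoke Lemma~\ref{lem:finite height} more directly: if the double cosets were infinite in number, build from them an infinite sequence of conjugates of $B$ (or of $A$) witnessing infinite height, which is impossible; this packages the same coarse geometry but offloads the hard estimates onto the already-cited height theorem.
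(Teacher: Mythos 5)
Your proposal's logical backbone has a genuine gap: the lemma does not follow from Lemma~\ref{lem:finite height}, and both the ``cleaner route'' and the final contradiction in your main argument rely on exactly that non-implication. Finite height forbids an infinite family of distinct cosets $g_nB$ whose conjugates have an infinite \emph{common} intersection $\bigcap_n B^{g_n}$; the present lemma concerns infinitely many double cosets each contributing its own infinite \emph{pairwise} intersection $A\cap B^{g_n}$, and these subgroups may be pairwise non-commensurable with every multi-fold intersection finite. Your refining step (``whose mutual intersections \dots remain infinite'') would at best contradict finite \emph{width}, which is a different theorem of \cite{GMRS98} not quoted in this paper, and even that extraction is unjustified: nothing forces any two of the conjugates $B^{g_n}$ to intersect infinitely. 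Indeed the implication ``finite height $\Rightarrow$ finitely many double cosets with infinite intersection'' is false for general groups: let $A,B$ be free of rank two and let $G$ be the fundamental group of a graph of groups with two vertices $A,B$ and infinitely many edges, the $n$-th edge identifying $\langle w_n\rangle\le A$ with $\langle u_n\rangle\le B$, where $\{\langle w_n\rangle\}_n$ and $\{\langle u_n\rangle\}_n$ are malnormal families of maximal cyclic subgroups. Looking at pointwise stabilizers of paths of length at least two in the Bass--Serre tree shows distinct conjugates of $A$ (or of $B$) intersect trivially, so each has height one; yet the stable letters $t_n$ give infinitely many distinct double cosets $Bt_nA$ with $A\cap B^{t_n}\supseteq\langle w_n\rangle$ infinite. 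This $G$ is of course not hyperbolic, but it shows your reduction is not formal: hyperbolicity and quasiconvexity must enter precisely at the step where you instead appeal to the height theorem.

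What is correct in your sketch is the direct argument you describe in the normalization-and-pigeonhole paragraph, and it should be run on its own rather than routed through height: take $g$ a shortest representative of its double coset $BgA$; since $A\cap B^g$ is infinite it contains an infinite-order element, and thin quadrilaterals together with the quasiconvexity constants of $A$ and $B$ bound $|g|$ uniformly, whence local finiteness of the Cayley graph yields finitely many double cosets. This is exactly the content of \cite{GMRS98} and \cite{HruskaWisePacking}, and the paper itself offers no independent proof of this lemma --- it simply cites those references. So if your intention is to offload the estimates onto the literature, cite those sources for the double-coset statement itself (as the paper does), not for finite height, since finite height alone does not yield the lemma.
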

In other words, if we consider the collection of all subgroups of form $A\cap B^g$ which are infinite, then there are finitely many $A$-conjugacy classes of such subgroups.

The next result follows from Lemma~\ref{lem:finite height}, Lemma~\ref{lem:finitely many double cosets} and Lemma~\ref{lem:finite subgroups}.
\begin{cor}
	\label{lem:finitely many conjugacy class}
	Let $\{H_1,\ldots,H_r\}$ be a collection of quasiconvex subgroups of the word-hyperbolic group $G$. Let $K$ be a quasiconvex subgroup of $G$. Then there are only finitely many $K$-conjugacy classes of subgroups of the form $K\cap(\cap_{k=1}^{n}H^{g_k}_{k_i})$.
\end{cor}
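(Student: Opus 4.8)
# Proof Proposal for Corollary~\ref{lem:finitely many conjugacy class}

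The plan is to induct on $n$, the number of conjugates $H^{g_k}_{k_i}$ being intersected with $K$, using the finite height of the collection $\{H_1,\dots,H_r\}$ (Lemma~\ref{lem:finite height}) to bound the induction and Lemma~\ref{lem:finitely many double cosets} to handle the base case and the inductive step. The base case $n=1$ is exactly the statement that there are finitely many $K$-conjugacy classes of infinite subgroups of the form $K\cap H^g_j$ for each fixed $j$: this follows from Lemma~\ref{lem:finitely many double cosets} applied to the quasiconvex pair $K,H_j$, together with Lemma~\ref{lem:finite subgroups} to absorb the finitely many conjugacy classes of finite subgroups into the count. Summing over the finitely many indices $j\in\{1,\dots,r\}$ keeps the total finite.

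For the inductive step, I would first observe that by Lemma~\ref{lem:finite height} there is a uniform bound $h$ on the length of a chain of distinct cosets of any $H_j$ whose conjugates have infinite intersection; more precisely, any infinite subgroup $L = K\cap(\cap_{k=1}^n H^{g_k}_{k_i})$ can be rewritten, after discarding redundant conjugates, as an intersection involving at most a bounded number of terms — but even without that reduction, the cleaner route is: write $L = L'\cap H^{g_n}_{k_n}$ where $L' = K\cap(\cap_{k=1}^{n-1}H^{g_k}_{k_i})$. If $L$ is finite we are done by Lemma~\ref{lem:finite subgroups}, so assume $L$ infinite, hence $L'$ is infinite. By the inductive hypothesis, $L'$ lies in one of finitely many $K$-conjugacy classes; replacing the whole intersection by a $K$-conjugate, we may assume $L'$ ranges over a finite set $\mathcal{L}'$ of quasiconvex subgroups of $K$ (quasiconvexity of each $L'$ follows since it is an intersection of quasiconvex subgroups of the hyperbolic group $K$, which is quasiconvex). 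Now for each fixed $L'\in\mathcal{L}'$ and each fixed index $j=k_n$, the subgroup $L = L'\cap H^{g_n}_j$ is an intersection of the quasiconvex subgroup $L'$ with a conjugate of the quasiconvex subgroup $H_j$, so Lemma~\ref{lem:finitely many double cosets} gives finitely many $L'$-conjugacy classes of such $L$ that are infinite. Each $L'$-conjugacy class lies inside a single $K$-conjugacy class, and we are taking a finite union over $L'\in\mathcal{L}'$ and over $j\in\{1,\dots,r\}$, so finitely many $K$-conjugacy classes result. Finally, the finite-index/finite-order subgroups are again controlled by Lemma~\ref{lem:finite subgroups}.

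The one point requiring care — and the main obstacle — is that the naive induction on $n$ does not obviously terminate, since a priori $n$ can be arbitrarily large: the statement quantifies over \emph{all} finite intersections, not intersections of bounded size. The resolution is that the induction above does not need $n$ bounded: each inductive step takes the finite set of $K$-conjugacy classes available at stage $n-1$ and produces a finite set at stage $n$, and the collection of subgroups of the form $K\cap(\cap_{k=1}^n H^{g_k}_{k_i})$ for \emph{fixed} $n$ is thus handled. To get a single finite bound valid for \emph{all} $n$ simultaneously, invoke finite height directly: by Lemma~\ref{lem:finite height} the collection $\{H_1,\dots,H_r, K\}$ has finite height $h$, so any infinite $L = K\cap(\cap_{k=1}^n H^{g_k}_{k_i})$ already equals an intersection of at most $h$ of the conjugates appearing (the rest are redundant once the intersection has stabilized, since a strictly decreasing chain of such intersections that stays infinite has length at most $h$). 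Hence it suffices to bound the count for $n\le h$, which the induction does; summing the finitely many bounds for $n=1,\dots,h$ gives the result. I would present the finite-height reduction first, then run the induction for $n \le h$.
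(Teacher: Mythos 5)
Your proposal is correct and takes essentially the same route the paper intends: the paper offers no detailed argument, stating only that the corollary follows from Lemma~\ref{lem:finite height}, Lemma~\ref{lem:finitely many double cosets} and Lemma~\ref{lem:finite subgroups}, and your finite-height reduction to intersections of at most $h$ essentially distinct conjugates, followed by induction on $n\le h$ via the double-coset lemma, is the natural elaboration of exactly that outline. The only small point to make explicit is that Lemma~\ref{lem:finite subgroups} should be applied to the hyperbolic group $K$ itself (quasiconvex in $G$, hence hyperbolic) so that the finite intersections fall into finitely many \emph{$K$}-conjugacy classes rather than merely $G$-conjugacy classes.
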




\subsection{Several observations for passing to finite index subgroups}
We need the following lemmas later when we consider torsion-free finite index subgroups of the edge groups. For each $E\in \mathcal{E}$, we choose a finite index normal subgroup $E'\le E$, and let $\mathcal{E}'=\{E'\}_{E\in\mathcal{E}}$. For each subtree $S\subset T$, let  $E'_S$ denote the intersections of conjugates of elements of $\mathcal{E}'$ corresponding to edges of $S$. Note that $E'_S$ is well-defined since 
$E'\le E$ is normal for each $E'\in \mathcal{E}'$.

\begin{lem}
	\label{lem:uniformly bounded index}
Let $G$ be as in Lemma~\ref{lem:quasiconvex0}. Then for any finite collection of edges $\{e_i\}_{i=1}^{n}$ in $T$, the index $[\cap_{i=1}^{n}\pstab(e_i):\cap_{i=1}^{n}E'_{e_i}]$ is uniformly bounded above. Consequently, $\delta_c(G,\mathcal{E}')<\infty$ and $\delta(G,\mathcal{E}')=\delta(G,\mathcal{E})$.
\end{lem}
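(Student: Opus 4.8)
The plan is to fix a vertex $v\in T_{\mathcal G}$ and first handle the intersections that are contained in $V=\stab(v)$. Given a finite family of edges $\{e_i\}_{i=1}^n$ all lying in a subtree $S$ through $v$, we have $\cap_i\pstab(e_i)=\pstab(S)$, which by Lemma~\ref{lem:quasiconvex0} is quasiconvex in $V$, and this subgroup is a $V$-conjugate of some element of $\Upsilon_V$. The point is that $\cap_i E'_{e_i}=E'_S$ is a finite-index subgroup of $\pstab(S)$, and we want this index bounded independently of $S$. I would argue as follows: since $\Upsilon_V$ is finite (by finite stature) and each $H\in\Upsilon_V$ is quasiconvex, it suffices to bound, for each fixed $H\in\Upsilon_V$, the index $[gHg^{-1}:g(E'_S)g^{-1}]=[H:E'_S{}^{g^{-1}}]$ over all presentations of $gHg^{-1}$ as a $\pstab(S)$. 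But $E'_S$ is the intersection, over edges $e$ of $S$, of conjugates $\pstab(e)$ with a normal finite-index subgroup $E'\unlhd \pstab(e)$ replacing $\pstab(e)$; by normality $E'_S = \pstab(S)\cap\bigl(\cap_e (E'\text{-conjugate})\bigr)$ is itself normal in $\pstab(S)$ with quotient embedding into $\prod_e \pstab(e)/E'$. The subtlety is that $S$ may have many edges, so this product is a priori unbounded.

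To remove that subtlety I would invoke $\delta_c(G,\mathcal E)<\infty$, which holds by Lemma~\ref{lem:finite depth} since $G$ has finite stature. By finite commensurable depth, $\pstab(S)=\cap_i\pstab(e_i)$ can always be rewritten as an intersection of at most $\delta_c(G,\mathcal E)$ conjugates of edge groups, say $\pstab(S)=\cap_{k=1}^{m} \pstab(e_{i_k})$ with $m\le \delta_c(G,\mathcal E)$; indeed one extracts a maximal strictly descending chain. Then $E'_S$ contains $\cap_{k=1}^m E'_{e_{i_k}}$, which is normal of index at most $\prod_{k=1}^m [\pstab(e_{i_k}):E'_{e_{i_k}}]\le \bigl(\max_{E\in\mathcal E}[E:E']\bigr)^{\delta_c(G,\mathcal E)}$ inside $\pstab(S)$. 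Wait — one must check $E'_S \supseteq \cap_k E'_{e_{i_k}}$: this holds because each edge $e$ of $S$ has $\pstab(e)\supseteq\pstab(S)\supseteq\cap_k E'_{e_{i_k}}$ and, $E'$ being normal in $\pstab(e)$ with the relevant conjugating element landing in $\pstab(e)$... actually the cleanest route is to note $E'_{e_{i_k}}\unlhd\pstab(e_{i_k})$ so $\cap_kE'_{e_{i_k}}\unlhd\cap_k\pstab(e_{i_k})=\pstab(S)$ with quotient injecting into $\prod_k\pstab(e_{i_k})/E'_{e_{i_k}}$, giving the index bound directly for $\cap_kE'_{e_{i_k}}\le\pstab(S)$; since $E'_S\supseteq\cap_{e\subset S}E'_e$ and in particular the bound on the smaller group $\cap_{e\subset S}E'_e\le\pstab(S)$ suffices, and $\cap_{e\subset S}E'_e$ differs from $\cap_kE'_{e_{i_k}}$ only by extra intersectands so has index at most the same bound via the same quotient argument applied with the full (but still effectively $\le\delta_c$-bounded) edge set. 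This yields the uniform bound $N:=\bigl(\max_{E\in\mathcal E}[E:E']\bigr)^{\delta_c(G,\mathcal E)}$ on $[\cap_i\pstab(e_i):\cap_iE'_{e_i}]$.

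For the consequences: $\delta_c(G,\mathcal E')<\infty$ follows since a strictly increasing chain $L_1<\cdots<L_d$ with each $L_j$ an intersection of finitely many conjugates of elements of $\mathcal E'$ sits inside the corresponding chain of $\pstab$'s of subtrees with uniformly bounded index at each stage, so $d$ is bounded in terms of $\delta_c(G,\mathcal E)$ and $N$. For $\delta(G,\mathcal E')=\delta(G,\mathcal E)$: any chain witnessing $\delta(G,\mathcal E)$, being a chain of $\pstab$'s of subtrees with $[L_{j+1}:L_j]=\infty$, yields by intersecting with the $E'$-versions a chain of the same length for $\mathcal E'$ (infinite index is preserved since the $E'$-subgroups have finite index in the $\mathcal E$-subgroups), giving $\delta(G,\mathcal E')\ge\delta(G,\mathcal E)$; conversely, $\mathcal E'$-chains promote to $\mathcal E$-chains of the same length since each intersection of $\mathcal E'$-conjugates is contained with finite index in the corresponding intersection of $\mathcal E$-conjugates, so passing up does not change which consecutive indices are infinite. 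The main obstacle is exactly the bookkeeping in the previous paragraph: making sure that replacing an a priori infinite edge set by a $\delta_c$-bounded one is legitimate and that $E'_S$ (or a finite-index subgroup thereof that still works for the statement) has index bounded by a quantity depending only on $\delta_c(G,\mathcal E)$ and the finitely many indices $[E:E']$, rather than on the size of $S$.
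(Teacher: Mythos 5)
Your skeleton matches the paper up to the decisive step: reduce to the convex hull $S$ of the $e_i$ (so $\cap_i\pstab(e_i)=\pstab(S)$), get $\delta_c(G,\mathcal E)<\infty$ from Lemma~\ref{lem:finite depth}, choose $m\le\delta_c(G,\mathcal E)+1$ edges with $\pstab(S)=\cap_{k=1}^m\pstab(e_{i_k})$, and bound $[\pstab(S):\cap_k E'_{e_{i_k}}]$ by a product of the finitely many indices $[E:E']$; your deduction of the two ``consequently'' statements from the uniform bound is also fine. The gap is the passage from these boundedly many edges to \emph{all} edges of $S$. The containment $E'_S\supseteq\cap_kE'_{e_{i_k}}$ you first assert is false in general: for an extra edge $e\subset S$ one only knows $\cap_kE'_{e_{i_k}}\subseteq\pstab(S)\subseteq\pstab(e)$, not $\subseteq E'_e$, since $E'_e$ is a proper subgroup of $\pstab(e)$. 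Your fallback, that the full edge set is ``effectively $\le\delta_c$-bounded,'' is not an argument: $\delta_c(G,\mathcal E)$ bounds how many of the groups $\pstab(e)$ are needed to realize $\pstab(S)$, but it says nothing about how many of the subgroups $E'_e$ cut $\cap_kE'_{e_{i_k}}$ down further, and each extra edge can lower the index by another factor of $\max_E[E:E']$. (Bounding the number of strict drops by $\delta_c(G,\mathcal E')$ would be circular, since finiteness of that quantity is part of the conclusion.) Your explicit bound $N=(\max_E[E:E'])^{\delta_c(G,\mathcal E)}$ is in fact false: take $G=F_2\rtimes_\phi\mathbb Z$ viewed as an HNN extension whose edge group is all of $F_2$ (finite stature holds since the attaching maps are isomorphisms, and $\delta_c(G,\mathcal E)=1$ because $F_2$ is normal, so every intersection of conjugates of the edge group is $F_2$ itself), let $\phi$ swap the generators $a,b$, and let $E'\trianglelefteq F_2$ be the index-$3$ kernel of $a\mapsto 1,\ b\mapsto 0$ in $\mathbb Z/3$. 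For a two-edge segment $S$ one gets $E'_S=E'\cap\phi(E')$ of index $9>3^{\delta_c}$ in $\pstab(S)=F_2$, even though a single edge already realizes $\pstab(S)$.

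The missing idea, which is how the paper closes exactly this gap, is finiteness of isomorphism types plus finite generation. By finite stature there are finitely many $G$-orbits of big-trees, hence finitely many isomorphism types of groups of the form $\pstab(S)$, and each is finitely generated because it is quasiconvex in a word-hyperbolic vertex group (Lemma~\ref{lem:quasiconvex}); consequently $K=\cap_{k=1}^mE'_{e_{i_k}}$, having uniformly bounded index in $\pstab(S)$, also ranges over finitely many isomorphism types of finitely generated groups. A finitely generated group has only finitely many subgroups of index at most $C=\max_E[E:E']$, with the count uniform over these finitely many types; since each $K\cap E'_e$ ($e\subset S$) has index at most $C$ in $K$ (because $K\subseteq\pstab(e)$ and $[\pstab(e):E'_e]\le C$), the subgroups being intersected take only boundedly many distinct values no matter how many edges $S$ has, so $E'_S$ has uniformly bounded index in $K$ and hence in $\pstab(S)$. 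The resulting constant depends on $C$, $\delta_c(G,\mathcal E)$, and these finitely many isomorphism types — not only on $C$ and $\delta_c$ as you claimed. Without this input your main bound does not follow.
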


\begin{proof}
By Lemma~\ref{lem:finite depth}, $\delta_c(G,\mathcal{E})<\infty$. We claim $[\pstab(S):E'_S]$ is uniformly bounded above for any finite subtree $S$. Note that there is a collection of edges $\{e_i\}_{i=1}^m\subset S$ with $m\le \delta_c(G,\mathcal{E})+1$ such that $\pstab(S)=\cap _{i=1}^{m}\pstab(e_i)$. Thus for any edge $e$ in $S$, $\cap _{i=1}^{m}E'_{e_i}\subset \pstab (e)$, hence
\begin{equation}
\label{equation:index}
[\cap_{i=1}^m E'_{e_i}:(\cap_{i=1}^m E'_{e_i})\cap E'_e]\le [\pstab(e):E'_e].
\end{equation}
Since there are only finitely many $G$-orbits of big-trees, there are finitely many isomorphism types of groups of form $\pstab(S)$, each of which is f.g.\ by Lemma~\ref{lem:quasiconvex}. Since each $\cap_{i=1}^m E'_{e_i}$ is a subgroup of $\pstab(S)$ with its index uniformly bounded above, there are finitely many isomorphism types of groups of form $\cap_{i=1}^m E'_{e_i}$, and each of them is finitely generated. Since $E'_{S}=\cap _{e\subset S} E'_e$, by Equation~\eqref{equation:index}, we have $[\pstab(S):E'_S]$ is uniformly bounded above.	

Finally, let $S$ be the convex hull of $\{e_i\}_{i=1}^{n}$. Then $[\cap_{i=1}^{n}\pstab(e_{\lambda}):\cap_{i=1}^{n}E'_{e_{\lambda}}]\le [\pstab(S):E'_{S}]$, which is uniformly bounded above by previous discussion.
\end{proof}

\begin{definition}
	\label{def:commensurator}
	The \emph{commensurator} $\C_G(H)$ of a subgroup $H$ of $G$, is the subgroup consisting of elements $g\in G$ such that $[H: H^g\cap H]<\infty$.
\end{definition}

The following is proven in \cite{KapovichShort96}:
\begin{lem}
	\label{lem:commensurator}
	Let $H$ be a quasiconvex subgroup of a word-hyperbolic group $G$. Then $H$ has finite index in the commensurator of $H$ inside $G$.
\end{lem}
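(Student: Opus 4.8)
The plan is to deduce this directly from Lemma~\ref{lem:finitely many double cosets} by a coset-counting argument, rather than going through limit sets. First I would observe that the statement is only meaningful when $H$ is infinite: if $H$ is finite then $H^g\cap H$ is finite for every $g$, hence automatically of finite index in $H$, so $\C_G(H)=G$. So assume $H$ is infinite. For $g\in\C_G(H)$ we have $[H:H^g\cap H]<\infty$, and since $H$ is infinite this forces $H^g\cap H$ to be infinite. Applying Lemma~\ref{lem:finitely many double cosets} with $A=B=H$ (both quasiconvex), there are only finitely many double cosets $Hg_1H,\dots,Hg_mH$ for which $H\cap H^{g_i}$ is infinite; hence $\C_G(H)\subseteq\bigcup_{i=1}^m Hg_iH$. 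For each $i$ with $Hg_iH$ meeting $\C_G(H)$ I would choose the representative to satisfy $g_i\in\C_G(H)$, and then $\C_G(H)$ is contained in the union of just those double cosets.

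Next I would decompose a single double coset $Hg_iH$, with $g_i\in\C_G(H)$, into left cosets of $H$. One checks that $h_1g_iH=h_2g_iH$ if and only if $h_1^{-1}h_2\in g_iHg_i^{-1}\cap H=H^{g_i}\cap H$; thus the left cosets of $H$ contained in $Hg_iH$ are in bijection with $H/(H^{g_i}\cap H)$, which is finite precisely because $g_i\in\C_G(H)$. Therefore each $Hg_iH$ is a finite union of left cosets of $H$, and so $\C_G(H)$ is contained in finitely many left cosets of $H$. To conclude, note that $H\le\C_G(H)$ and that for $h\in H$ conjugation by $gh$ coincides with conjugation by $g$, so $gH\subseteq\C_G(H)$ whenever $g\in\C_G(H)$; hence $\C_G(H)$ is itself a union of left cosets of $H$. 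Being covered by finitely many such cosets, $[\C_G(H):H]<\infty$.

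The step I would flag as the one needing care is that the commensurator of Definition~\ref{def:commensurator} is defined one-sidedly, via $[H:H^g\cap H]<\infty$ and not also the symmetric condition $[H^g:H^g\cap H]<\infty$. One is tempted to argue through limit sets instead — showing $\C_G(H)$ setwise stabilizes $\Lambda H$ and using that $H$ acts cocompactly on the quasiconvex hull of $\Lambda H$ while $\C_G(H)$ acts on it properly, whence $[\C_G(H):H]<\infty$ — but with the one-sided definition one only obtains the inclusion $\Lambda H\subseteq g\Lambda H$, and upgrading this to an equality is exactly the delicate point. The double-coset argument above sidesteps this entirely, since the one-sided condition $[H:H^g\cap H]<\infty$ is already precisely what makes $HgH$ split into finitely many left cosets of $H$.
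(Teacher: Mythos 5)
Your argument is correct, and it is a genuinely different route from the paper's: the paper gives no proof at all, quoting the result from Kapovich--Short, where the argument runs through limit sets (the commensurator stabilizes $\Lambda H$, and the stabilizer of the limit set of an infinite quasiconvex subgroup contains it with finite index). You instead derive the lemma as a formal consequence of Lemma~\ref{lem:finitely many double cosets}: every $g\in \C_G(H)$ has $H\cap H^g$ infinite (once $H$ is infinite), so $\C_G(H)$ lies in finitely many double cosets $Hg_iH$ with representatives chosen in $\C_G(H)$, and your coset count $h_1g_iH=h_2g_iH \iff h_1^{-1}h_2\in H\cap H^{g_i}$ shows each such double coset is a union of exactly $[H:H\cap H^{g_i}]<\infty$ left cosets of $H$; since $\C_G(H)$ is right $H$-invariant and contains $H$, the index is finite. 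This is non-circular (the double-coset lemma is itself quoted from \cite{GMRS98,HruskaWisePacking} and its proof does not pass through commensurators), it is more elementary modulo that input, and, as you note, it genuinely sidesteps the asymmetry of Definition~\ref{def:commensurator}: the one-sided condition $[H:H\cap H^g]<\infty$ is exactly what the left-coset count uses, whereas the limit-set argument needs an extra step to upgrade $\Lambda H\subseteq g\Lambda H$ to equality. What the citation route buys in exchange is independence from the double-coset packing machinery and a statement valid with the symmetric definition in other contexts. Two small points worth recording: the infiniteness of $H\cap H^g$ depends only on the double coset $HgH$ (since $H\cap H^{h_1gh_2}=(H\cap H^{g})^{h_1}$), which is what legitimizes choosing representatives $g_i\in\C_G(H)$; and the lemma as literally stated should be read with $H$ infinite (as in all its uses in the paper), since for finite $H$ one has $\C_G(H)=G$ and the conclusion fails whenever $G$ is infinite --- your opening reduction handles this correctly.
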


\begin{lem}
\label{lem:finitely many conjugacy classes2}
Let $G$ be as in Lemma~\ref{lem:quasiconvex0}.
The following are equivalent:
\begin{enumerate}
	\item $G$ has finite stature;
	\item for any vertex group $V$ of $G$ and its associated vertex $v\in T_{\mathcal{G}}$, there are finitely many $V$-conjugacy classes of infinite subgroups of $V$ of the form $E'_S$, where $S$ is a finite subtree containing $v$.
\end{enumerate}
\end{lem}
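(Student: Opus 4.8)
The plan is to show both implications by translating between the two notions of ``finitely many conjugacy classes'' using Lemma~\ref{lem:uniformly bounded index}, which says that $[\pstab(S):E'_S]$ is uniformly bounded above over all finite subtrees $S$. By Lemma~\ref{lem:finitely many conjugacy classes0} and Lemma~\ref{lem:finitely many conjugacy classes1}, finite stature of $G$ is equivalent to the statement that for each vertex group $V$ with associated vertex $v\in T_{\mathcal{G}}$, there are finitely many $V$-conjugacy classes of infinite subgroups of the form $\pstab(S)=V\cap(\cap_{e\in E}\stab(e))$ with $E$ a finite collection of edges; and since any such $S$ with $\pstab(S)$ infinite can be replaced by its convex hull with $v$ without changing $\pstab(S)$ (adding the edges of the geodesic from $v$ only intersects with vertex-group stabilizers along $T_{\mathcal{G}}$ — more carefully, one enlarges $S$ to contain $v$, which can only shrink $\pstab(S)$, but $\pstab(S)\le V$ already forces $v$ to be ``visible'' from $S$; alternatively just note that finitely many $V$-conjugacy classes of the subgroups $E'_S$ with $v\in S$ is what condition (2) asserts and apply the same normalization on both sides), we may restrict attention to finite subtrees $S$ containing $v$ on both sides.

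First I would prove $(1)\Rightarrow(2)$. Fix $V$ and $v$. Each $E'_S$ (for $S$ a finite subtree containing $v$, with $E'_S$ infinite) is a finite-index subgroup of $\pstab(S)$ by Lemma~\ref{lem:uniformly bounded index}, and $\pstab(S)$ is quasiconvex in $V$ by Lemma~\ref{lem:quasiconvex0}. By hypothesis there are finitely many $V$-conjugacy classes of the groups $\pstab(S)$, so we may assume all the $\pstab(S)$ lie in a fixed finite list $P_1,\dots,P_m$ of subgroups of $V$ (up to conjugacy we choose representatives). It then suffices to show that for each fixed quasiconvex $P\le V$ there are finitely many $V$-conjugacy classes of finite-index subgroups $E'_S\le P$ arising this way. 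The index $[P:E'_S]$ is uniformly bounded by Lemma~\ref{lem:uniformly bounded index}, and a finitely generated group $P$ has only finitely many subgroups of any given index, hence finitely many conjugacy classes of subgroups of bounded index; so only finitely many subgroups of $P$ occur as such $E'_S$, and a fortiori finitely many $V$-conjugacy classes. Combining over the finitely many $P_i$ gives condition (2). One technical point to handle with care: two subtrees $S,S'$ with conjugate pointwise stabilizers $\pstab(S)^g=\pstab(S')$ need not have $(E'_S)^g=E'_{S'}$, but since $E'$ was chosen \emph{normal} in each $E$, the subgroup $E'_S$ depends only on the set of edges of $S$ and conjugation permutes edges equivariantly, so $(E'_S)^g = E'_{gS}$; thus passing from $\pstab(S)$-conjugacy data to $E'_S$-conjugacy data is compatible with the $V$-action.

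Next, $(2)\Rightarrow(1)$, which I expect to be the easier direction. Assume finitely many $V$-conjugacy classes of the infinite subgroups $E'_S$. For each finite subtree $S\ni v$ with $\pstab(S)$ infinite, we have $E'_S$ of finite index in $\pstab(S)$ with index bounded by a uniform constant $N$ (Lemma~\ref{lem:uniformly bounded index}). Now $\pstab(S)$ is contained in the commensurator $\C_V(E'_S)$, and by Lemma~\ref{lem:commensurator} together with quasiconvexity of $E'_S$ in the word-hyperbolic group $V$ (Lemma~\ref{lem:quasiconvex0}, noting $E'_S\le E$ is quasiconvex hence so is its finite-index subgroup — or directly, $E'_S$ is an intersection of conjugates of the quasiconvex $E'\in\mathcal{E}'$), the commensurator $\C_V(E'_S)$ is a fixed finite-index overgroup of $E'_S$ depending only on $E'_S$. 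Hence $\pstab(S)$ lies between $E'_S$ and $\C_V(E'_S)$, and there are only finitely many subgroups in that finite interval. Since the $E'_S$ fall into finitely many $V$-conjugacy classes, and for each the ambient interval $[E'_S,\C_V(E'_S)]$ is finite, the $\pstab(S)$ fall into finitely many $V$-conjugacy classes as well. Applying Lemma~\ref{lem:finitely many conjugacy classes1} (the finite-$E$ version) then yields finite stature of $G$.

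The main obstacle, and the step requiring the most attention, is the bookkeeping around conjugacy versus equality of the auxiliary subgroups $E'_S$: one must consistently use the normality of each $E'$ in $E$ so that $E'_S$ is genuinely a function of the edge set of $S$ and transforms equivariantly under $G$, and one must be careful that the ``finitely many conjugacy classes of $\pstab(S)$'' and ``finitely many conjugacy classes of $E'_S$'' statements are being compared over the \emph{same} indexing family of subtrees (those containing $v$), which is legitimate because enlarging a subtree to contain $v$ does not affect whether its pointwise stabilizer lies in $V$ and only passes to a subgroup, together with the fact that $\Upsilon_V$ already records exactly the $(S,v)$-transections. Everything else — quasiconvexity, finite height, bounded index, the finitely-many-subgroups-of-bounded-index fact for finitely generated groups, and the commensurator finiteness — is quoted directly from the lemmas above.
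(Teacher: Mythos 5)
Your argument is correct and is essentially the paper's own proof: for $(1)\Rightarrow(2)$ you combine Lemma~\ref{lem:finitely many conjugacy classes1} with the uniform bound of Lemma~\ref{lem:uniformly bounded index}, the equivariance $(E'_S)^g=E'_{gS}$ coming from normality of $E'\le E$, and the fact that a finitely generated group has finitely many subgroups of bounded index; for $(2)\Rightarrow(1)$ you use quasiconvexity of $E'_S$, Lemma~\ref{lem:commensurator}, the inclusion $\pstab(S)\le \C_V(E'_S)$, and the finiteness of the interval between $E'_S$ and its commensurator, exactly as the paper does. The one citation to repair: in the $(2)\Rightarrow(1)$ direction you should not invoke Lemma~\ref{lem:uniformly bounded index}, since its proof goes through Lemma~\ref{lem:finite depth} and hence already assumes finite stature, i.e.\ statement (1), which would be circular; but your argument never uses the uniformity, only that $[\pstab(S):E'_S]<\infty$ for a \emph{finite} subtree $S$, which is immediate because $S$ has finitely many edges and each $E'\le E$ has finite index --- this is precisely the (weaker) fact the paper uses at that step.
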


\begin{proof}
$(1)\Rightarrow (2)$ is a consequence of Lemma~\ref{lem:finitely many conjugacy classes1} and Lemma~\ref{lem:uniformly bounded index}. Now we assume (2). Let $U'$ (resp. $U$) be the collection of infinite subgroups of $V$ which are of form $E'_S$ (resp. $\pstab(S)$) for a finite subtree $v\in S\subset T$. Since $[\pstab(S):E'_S]$ is finite, each element of $U'$ is quasiconvex in $V$ by Lemma~\ref{lem:quasiconvex0}. By (2) and Lemma~\ref{lem:commensurator}, each element of $U'$ is finite index in $\C_V(U')$ with index uniformly bounded above. Since $\pstab(S)$ is contained in the commensurator $\C_V(E'_S)$, $[\C_V(E'_S):\pstab(S)]$ is uniformly bounded above. Since there are finitely many $V$-conjugacy classes in $\C(U')$, the same is true for $U$, hence (1) follows by Lemma~\ref{lem:finitely many conjugacy classes1}.
\end{proof}




\subsection{Stature, Depth and Cleanliness}
In this subsection we examine the behavior of stature and commensurable depth for 
certain graphs of free groups.
As in Examples~\ref{exmp:height:BS}~and~\ref{example:BS},
Baumslag-Solitar groups typically fail to have finite stature and fail to have finite commensurable depth.
Irreducible lattices acting on products of trees \cite{Wise96Thesis,BurgerMozes2000}
also have infinite stature.
This failure is typical for general groups that split as a graph of groups where all edge groups are of finite index in the vertex groups:

\begin{prop}\label{prop:NbyF dichotomy}
Let $G$ split as a finite graph of groups where each edge group has finite index in its vertex groups. Then the following are equivalent:
\begin{enumerate}
\item $G$ has finite stature with respect to its vertex groups.
\item $G$ has finite commensurable depth with respect to its vertex groups.
\item  there is a normal subgroup $N\subset G$ such that $N$ is of finite index in each vertex group, and the quotient $G/N$ is a f.g.\ virtually free group.
\end{enumerate}
\end{prop}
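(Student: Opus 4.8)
The plan is to prove the cycle of implications $(3)\Rightarrow(1)\Rightarrow(2)\Rightarrow(3)$, where the first is straightforward, the second is Lemma~\ref{lem:finite depth} (which applies verbatim since finite-index edge groups in their vertex groups are in particular quasiconvex, once one replaces each vertex group by a suitable free subgroup — but actually $(1)\Rightarrow(2)$ in this setting is easiest to see directly from the structure of big-trees), and the bulk of the work is in $(2)\Rightarrow(3)$.

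For $(3)\Rightarrow(1)$: assume such an $N\trianglelefteq G$ exists. Then $N$ acts on the Bass-Serre tree $T$ with all vertex stabilizers $N\cap G_v$ of finite index in $G_v$, hence of finite index in every edge group they contain; so every edge stabilizer for the $N$-action on $T$ is of finite index in the vertex stabilizers, i.e. $N$ acts on $T$ with $T/N$ finite and all stabilizers commensurable. A subgroup $N$ acting this way on a tree with $T/N$ finite and $Q=G/N$ a finitely generated virtually free group: the big-trees of $T$ are controlled by $N$, and since $N$ is normal, $\pstab(S)\supseteq N\cap(\text{the common finite-index core})$... more carefully: because all edge groups contain a common finite-index normal subgroup $M\le N$ (pass to the normal core of $N$ in $G$, still finite index in each vertex group), $M$ is contained in $\pstab(S)$ for \emph{every} subtree $S$, and $M\trianglelefteq G$. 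Then $G/M$ acts on $T$ with finite edge stabilizers and $T/(G/M)$ finite, so $G/M$ is virtually free, and the based big-trees of $T$ correspond to the based big-trees of the action of $G/M$ on $T$ with the caveat that $\pstab(S)/M$ must be infinite — but $G/M$ virtually free has no infinite edge stabilizers and finitely many orbits of everything, so there are finitely many orbits of based big-trees with infinite $\pstab$, giving finite stature. This last paragraph is the one I'd want to write carefully; the cleanest route is to observe that passing to $G/M$ reduces to the virtually free case where stature is visibly finite because the tree is cocompact with finite edge stabilizers.

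For $(2)\Rightarrow(3)$, which I expect to be the main obstacle: let $M$ be the intersection of all $G$-conjugates of all edge groups — equivalently $M=\bigcap_{e\in ET}\stab(e)=\pstab(T)$. Since all edge groups are commensurable (each has finite index in a common vertex group, and the graph is connected, so any two edge groups are commensurable up to conjugacy; combined with finite commensurable depth this forces the chain of iterated intersections of conjugates to stabilize), finite commensurable depth $\delta_c(G,\mathcal V)<\infty$ says the descending process of intersecting conjugates of vertex/edge groups terminates, and the terminal group is $M$, which is normal in $G$ (being characteristic as an intersection of a conjugation-invariant family) and of finite index in each vertex group (by commensurability of all the pieces and finiteness of the depth, the index $[G_v:M]$ is bounded). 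Then $G/M$ acts on $T$ with finite edge stabilizers (stabilizer of $e$ is $\stab(e)/M$, finite since $M$ has finite index in the edge group) and $T/(G/M)=T/G$ is finite, so by the structure theory of groups acting on trees (a group acting on a tree with finite quotient graph and finite edge stabilizers is virtually free, and finitely generated since $T/G$ and all vertex groups $\stab(v)/M$ are finite), $G/M$ is finitely generated virtually free. Taking $N=M$ gives (3). The delicate points to nail down are: (a) that all edge groups are indeed mutually commensurable across the whole connected graph, so that "commensurable depth" is the right finiteness notion and its termination really produces the common finite-index normal core; (b) that the stabilizing value of the depth chain is exactly $\bigcap$ of \emph{all} conjugates (a priori depth only bounds chains of \emph{finitely many} conjugates), which follows because once no single further conjugate can strictly decrease the intersection, no infinite collection can either; and (c) the finite-generation of $G/M$, which needs $T/G$ finite and each $\stab(v)/M$ finite. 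I would handle (b) by a direct argument: if $L=\bigcap_{i=1}^k G_{e_i}^{g_i}$ is minimal among finite intersections of conjugates (exists by $\delta_c<\infty$), then for any further conjugate $G_e^g$ we have $L\cap G_e^g\supseteq L$ by minimality, i.e. $L\le G_e^g$; as $e,g$ were arbitrary, $L\le\bigcap_{\text{all}}=M\le L$, so $L=M$ and $M$ is a finite intersection of conjugates, hence finite index in each $G_v$ and normal.

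Finally $(1)\Rightarrow(2)$ is immediate from Lemma~\ref{lem:finite depth} provided one checks its hypotheses hold here — each edge group, being of finite index in its vertex groups, is quasiconvex there once the vertex groups are hyperbolic; in the general (non-hyperbolic) setting of this proposition one instead argues directly: an infinite strictly increasing chain $L_1<L_2<\cdots$ of infinite intersections of conjugates with infinite index at each step would, via the correspondence $L_i=\pstab(S_i)$ with $S_1\supsetneq S_2\supsetneq\cdots$ a strictly \emph{decreasing} chain of subtrees, produce infinitely many distinct orbits of based big-trees (since a quasiconvex — here finite-index — subgroup is never conjugate into a proper subgroup of itself, using that the vertex groups act cocompactly on their trees), contradicting finite stature. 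So the only real content is $(2)\Rightarrow(3)$, and within it, identifying the stabilized depth-chain value with the common normal finite-index core $M$ and invoking Bass–Serre structure theory to conclude $G/M$ is finitely generated virtually free.
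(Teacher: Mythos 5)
Your handling of the two substantive directions is essentially the paper's. For $(2)\Rightarrow(3)$ you take a minimal element $L$ of the poset of finite intersections of conjugates of edge groups (which exists by finite commensurable depth, since each edge stabilizer equals the intersection of its two endpoint stabilizers) and show $L=\pstab(T)$ is a finite intersection, hence of finite index in every vertex stabilizer and normal, with $G/L$ a finite graph of finite groups; this is exactly the paper's stabilized chain $N'=\pstab(S')$, just phrased via minimality instead of a nested sequence of finite subtrees. For $(1)\Rightarrow(2)$ your fallback direct argument (the terms of a strictly increasing chain are finite-index subgroups of a common vertex stabilizer with distinct indices, hence pairwise non-conjugate there, contradicting finite stature via Lemma~\ref{lem:finitely many conjugacy classes0}) is also the paper's argument. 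Two wording slips to repair: the chains you must exclude are chains of arbitrary proper inclusions (this is $\delta_c$), not chains ``with infinite index at each step''---infinite-index steps cannot occur in this setting, so ruling only those out proves nothing; and ``finite quotient graph plus finite edge stabilizers implies virtually free'' is false as a general principle (one also needs finite vertex stabilizers, which you do have since $[\stab(v):M]<\infty$, so the conclusion stands).

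The genuine gap is in $(3)\Rightarrow(1)$. The claimed correspondence between based big-trees of $G\acts T$ and based big-trees of the $G/M$-action is vacuous: for the $G/M$-action every pointwise stabilizer is finite (that is precisely what passing to $M$ achieves), so that action has no big-trees at all, whereas the big-trees relevant to the stature of $G$ are those of the $G$-action, whose defining condition ``$\pstab(S)$ is infinite'' is automatic because $\pstab(S)\supseteq N$; the condition is not ``$\pstab(S)/M$ is infinite''. For the same reason, ``stature is visibly finite in the virtually free case'' is a statement about the wrong action and does not transfer. The repair is immediate from facts you already isolated, and it is the paper's one-line proof: since $N$ is normal and contained with finite index in each vertex group, $N$ fixes every vertex of $T$, hence $N\le\pstab(S)\le\stab(v)$ for every subtree $S$ containing $v$; as there are only finitely many subgroups between $N$ and $\stab(v)$, each vertex group carries only finitely many subgroups of the form in Lemma~\ref{lem:finitely many conjugacy classes0}(2), which is finite stature---and the same count bounds all chains, so it gives $(3)\Rightarrow(2)$ simultaneously, with no quotient argument needed.
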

It follows that $G$ has a finite index subgroup that is isomorphic to $N\rtimes F$ for some f.g.\ free group $F$. 
\begin{proof}
$(3)\Rightarrow (1)$ and $(2)$: Let $N\subset G$ be the normal subgroup. For any subtree $S\subset T $ of the Bass-Serre tree,
we have  $N\subset \pstab(S)\subset V$.
Hence $\pstab(S)\subset V$ equals one of the finitely many subgroups of the vertex group $V$
containing $N$. Thus (1) and (2) follow. 

$(1)\Rightarrow(2)$: Let $v$ be a vertex in the Bass-Serre tree $T$. Let $\mathcal{C}$ be the collection of subgroups of $\stab(v)$ of form $\pstab(S)$ where $S$ is a finite subtree of $T$ containing $v$. Then each element of $\mathcal{C}$ is a finite index subgroup of $\stab(v)$. Note that finite index subgroups of $\stab(v)$ of different index can not be conjugated (inside $\stab(v)$). Thus finite stature implies that the $[\stab(v):H]$ is uniformly bounded from above for $H\in\mathcal{C}$. Thus (2) follows.

$(2)\Rightarrow(3)$: For any increasing sequence of finite subtrees $S_1\subset S_2\subset \cdots$,
the  sequence $\pstab(S_1)\geq \pstab(S_2)\geq \cdots$  stabilizes after finitely many proper inclusions. Let $N'$ be the smallest  subgroup arising in this way, and observe that $N'$ is of finite index in $V$ where $v$ is a base vertex of $S$ and $V=\pstab(v)$.
We now check that $N'$ is a normal subgroup of $G$. Observe that $gNg^{-1}= \pstab(gS)$.
However $N' = \pstab(S')$ where $S'$ the smallest subtree containing $S\cup gS$.
Thus $N'$ is normal.
It follows that $\pstab(T)=N'$.
Finally, the quotient $G/N'$ acts faithfully on the locally finite tree $T$,
and hence $F=G/N'$ is virtually free.
\end{proof}

Proposition~\ref{prop:NbyF dichotomy} deceptively suggests that determining finite stature might be accessible and interpretable. However, 
we expect that:
\begin{conjecture}
Then there is no algorithm that takes as input a group
$G$ which splits as a finite graph of f.g.\ free groups,
and outputs certification that $G$ has infinite stature with respect to its vertex groups.
\end{conjecture}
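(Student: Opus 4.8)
Since the statement is a conjecture rather than a theorem, what follows is a plan of attack. The goal is to show that the class of fundamental groups of finite graphs of finitely generated free groups having \emph{infinite} stature (with respect to their vertex groups) is not recursively enumerable; equivalently, that there is no semi-algorithm certifying infinite stature. The natural route is a many--one reduction from the complement of the halting problem: exhibit a computable assignment $M\mapsto G_M$ from Turing machines to graphs of f.g.\ free groups such that $G_M$ has infinite stature if and only if $M$ does not halt. Granting such a reduction, a certifier for infinite stature, run on $G_M$, would constitute a semi-decision procedure for the non-halting of $M$; but the set of non-halting machines is not recursively enumerable, a contradiction. (One does not need the complementary fact that finite stature is recursively enumerable, though establishing that via Lemma~\ref{lem:finitely many conjugacy classes0} would be a worthwhile sanity check.)

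The model for the ``non-halting yields infinitely many cross-sections'' phenomenon is $BS(1,2)=\langle a,t\mid a^t=a^2\rangle$ of Example~\ref{exmp:height:BS}, whose Bass--Serre action has infinitely many orbits of based big-trees, one for each cross-section $a^{2^n}$. I would engineer $G_M$ so that a distinguished vertex group carries an analogous strictly descending chain of pointwise stabilizers $L_1>L_2>L_3>\cdots$ indexed by the steps of the computation of $M$, with the edge groups and stable letters used to ``advance the tape'': each configuration of $M$ contributes one more big-tree, and the transfer isomorphisms of Definition~\ref{def:based big-trees} encode the transition function. If $M$ runs forever, the chain is genuinely infinite, so by Lemma~\ref{lem:finitely many conjugacy classes0} the group has infinite stature --- the mechanism being exactly that of Example~\ref{exmp:height:BS}. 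If $M$ halts, the halting configuration should trigger an additional edge relation that identifies the remaining cross-sections, collapsing the tower and leaving only finitely many orbits of based big-trees, i.e.\ finite stature. Standard machinery for simulating computations inside graphs of free groups (in the spirit of work of Rips, Bridson, Wilton and others) should make steps of this kind go through; the tubular-group and finite-index cases analysed in Proposition~\ref{prop:NbyF dichotomy} give a concrete picture of what a ``stature collapse'' looks like.

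The main obstacle is the ``collapse upon halting'' half. Transections are defined intrinsically, as maximal product regions in the Bass--Serre tree, not by any local rewriting rule, so one cannot simply insert a relator that ``sees'' the halting state --- its effect on the global family of big-trees must be argued from scratch, and crucially one must rule out that the new identifications introduce \emph{fresh} infinite families of transections elsewhere in $G_M$. A secondary obstacle is remaining within the category of graphs of \emph{free} groups: the amount of bookkeeping an edge group can carry is limited, so the simulation must be organized so that every vertex group stays free (or the construction must be pushed through a more permissive intermediate category and then pulled back). If a direct computation-simulation proves too delicate, a fallback is a black-box reduction from an existing undecidability result for graphs of free groups with cyclic edge groups --- for instance the triviality problem for profinite completions, or the existence of a nontrivial finite quotient --- by showing that such a finiteness property of $G_M$ forces, and is forced by, a stature collapse; this trades an explicit machine simulation for an off-the-shelf one.
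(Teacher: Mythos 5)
The statement you are addressing is left as an open \emph{conjecture} in the paper, with no proof supplied; the authors merely remark afterwards that the \emph{finite}-stature side is semi-decidable for this class (one can iteratively compute intersections of f.g.\ subgroups of a free group), which is consistent with the conjecture but does not establish it. So there is no paper proof to compare your plan against, and your proposal --- which you frankly label a plan of attack rather than a proof --- must be judged on its own.

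The overall strategy is the natural and correct skeleton: produce a computable assignment $M\mapsto G_M$ into finite graphs of f.g.\ free groups with $G_M$ of infinite stature if and only if $M$ does not halt, and conclude that a certifier for infinite stature would semi-decide the non-r.e.\ set of non-halting machines. The logical frame is sound. What is entirely missing is the construction itself, and the two obstacles you name at the end are precisely where the substance lies. One of them deserves sharper emphasis, because your phrasing ``the halting configuration should trigger an additional edge relation'' conceals a category error: $G_M$ must be built once, uniformly and computably, from the \emph{description} of $M$, at a moment when you do not know whether $M$ halts. You cannot add a relation upon ``discovering'' halting; and if you stay inside graphs of f.g.\ free groups you cannot impose a relation on a vertex group at all without destroying freeness. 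What must actually be engineered is that the attaching maps of the edge groups are set up so that, when the Bass--Serre tree is unfolded, the descending chain of pointwise stabilizers (equivalently, the family of transections via Lemma~\ref{lem:finitely many conjugacy classes0}) terminates exactly when a halting configuration is reached --- and, just as delicately, that no \emph{other} infinite families of big-trees arise as side effects of conjugation across the simulation gadgets. Neither implication is a routine step; Lemma~\ref{lem:finitely many conjugacy classes0} and the $BS(1,2)$ example give you the right picture to aim for, but not the machinery to realize it. Your fallback of piggy-backing on a known undecidability result for graphs of free groups is reasonable as a hedge, but it would require a theorem linking that property to finiteness of stature in both directions, and no such link is presently available; the analogies in the paper (Proposition~\ref{prop:NbyF dichotomy} and the tubular-group example) live in settings --- finite-index edge groups, or $\mathbb{Z}^2$ vertex groups --- that do not transfer to free vertex groups with quasiconvex edge groups of infinite index. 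In short: the reduction scheme is right, but the content of the proof --- an explicit $G_M$ together with a verified two-way correspondence between halting and finite stature, with control over spurious big-trees --- is not yet there, and the ``collapse upon halting'' half is the genuinely hard part.
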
 
There is an algorithm that computes the stature when it is finite
for the above $G$, since one can repeatedly compute intersections of subgroups in a free group.

Let us turn now to some more restricted classes of graphs of free groups.
Let $G$ split as a finite graph of groups where all vertex and edge groups are f.g.\ free groups.
The splitting is \emph{algebraically clean} if each edge group embeds as a free factor of each of its vertex groups.
The splitting is \emph{geometrically clean} if it arises from a graph of spaces 
where each vertex space is a graph, and edge space is a graph,
and the attaching maps are topological embeddings.
We caution that we do not assume that the inclusion are combinatorial inclusions of graphs.
(This latter possibility holds when $G$ is the fundamental group of a nonpositively curved $\mathcal{V}\mathcal{H}$-complex.)
For instance, every (f.g.\ free)-by-cyclic group $F\rtimes_\phi \integers$ is algebraically clean, but it is geometrically clean when $\phi$ has finite order.
A cyclic HNN extension $G=F*_{u^t=v}$ is algebraically clean precisely when $u$ and $v$ are both primitive, and it is geometrically clean when $u$ and $v$ belong to a common basis.
When $u$ and $v$ are not distinct powers of the same element (i.e. $u=w^m, v=w^n$ and $m\neq \pm n$), there is a finite index subgroup of $G$ that splits as a geometrically clean graph of free groups (this was first explained in \cite{WiseCycHNN}). The inclusions are  combinatorial in the  rare case where $|u|=|v|$.

\begin{prop}
If $G$ has an algebraically clean splitting then $G$ has finite commensurable depth with respect to its vertex groups.

If $G$ has a geometrically clean splitting then $G$ has finite stature with respect to its vertex groups.
\end{prop}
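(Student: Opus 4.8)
The plan is to analyze how the pointwise stabilizer of a subtree of the Bass–Serre tree $T$ sits inside a vertex group under each hypothesis, using the structure of free factors. First I would fix the graph of groups decomposition and its Bass–Serre tree $T$, and for a subtree $S\subset T$ record that $\pstab(S)$ is the intersection of the conjugates of edge groups $\pstab(e)$ over the edges $e\subset S$. Since all groups in sight are f.g.\ free, every subgroup is f.g.\ free, and the key point to exploit is that a free factor of a free group is \emph{inert} or at least behaves well under intersection: intersecting with a free factor does not increase rank.

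For the \textbf{algebraically clean} case, I would argue that $\delta_c(G,\mathcal V)<\infty$ by bounding the rank of any intersection $L_i=\bigcap_{j}\stab(v_j)^{g_j}$ that can occur. Pick a vertex $v$ of $T$ lying in the relevant subtree and localize at $V=\stab(v)$; then $L_i = V\cap\big(\bigcap \text{conjugates of edge groups}\big)$, and since each edge group is a free factor of $V$, and intersections of free factors of a free group are again (conjugate into) free factors — more precisely, by a Nielsen–style / Kurosh argument the rank cannot strictly increase along a descending intersection of free factors — any strictly increasing chain $L_1<\cdots<L_d$ forces the ranks (or the corresponding free-factor complexity) to strictly \emph{decrease} at each step downward, hence $d$ is bounded in terms of $\operatorname{rank}(V)$. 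The subtlety here is that "free factor" is only preserved after conjugation and one must track basepoints carefully; I would handle this by working with the graph-of-spaces picture where a clean edge inclusion is modelled by a subgraph that is a \emph{subcomplex after a homotopy}, so that pulling back to covers keeps the relevant immersed graphs $\pi_1$-injective free factors. This is where the main work lies.

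For the \textbf{geometrically clean} case, I would use the graph of spaces $X$ with each vertex space a graph $X_v$, each edge space a graph $X_e$, and each attaching map $X_e\hookrightarrow X_v$ a topological embedding (after homotopy, the image is an embedded subgraph). Following the intuitive picture described before Example~\ref{exmp:Multiple HNN}, a transection corresponds to an immersion $Y\to X$ of a product-like graph of spaces over a tree $\Gamma_Y$ with all attaching maps isomorphisms. The point is that geometric cleanliness lets one propagate an embedded cross-section subgraph $Y_v\subset X_v$ across each edge to an embedded subgraph of the adjacent vertex space, with the cross-section size (first Betti number of the core) \emph{non-increasing} and in fact eventually constant along any ray in $\Gamma_Y$; once it is constant the transection descends to a finite cover $\bar Y\to X$ (equivalently, the cross-section subgroup is its own stabilizer under the transfer isomorphisms, up to finite index), so $\bar Y$ has finite underlying graph. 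Finitely many combinatorial cross-section subgraphs of each finite graph $X_v$ then give finitely many cross-sections per vertex space, hence $\Upsilon$ is finite, which is finite stature by Lemma~\ref{lem:finitely many conjugacy classes0}.

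The main obstacle I anticipate is making the "cross-section cannot grow, and eventually stabilizes" claim precise in the geometrically clean setting: one must rule out an infinite strictly-nested family of cross-sections of bounded rank that never stabilizes (a priori the core graph could keep changing shape without changing Betti number), and one must ensure the transfer isomorphisms are genuinely realized by the combinatorial embeddings rather than only up to homotopy. I expect to resolve this by a pigeonhole argument on the finite set of core subgraphs of the finitely many finite graphs $X_v$ together with the edge graphs $X_e$: along any geodesic ray in $\Gamma_Y$ the data (which subgraph of which $X_v$) lies in a finite set, so it is eventually periodic, and periodicity plus the embedding hypothesis yields the desired covering $Y\to\bar Y$. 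The algebraically clean statement is the softer of the two and follows from the rank/complexity monotonicity of intersections of free factors, so I would present it first and then do the geometric argument, noting that geometric cleanliness implies algebraic cleanliness and hence also finite commensurable depth.
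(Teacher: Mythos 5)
Your proposal is essentially the paper's argument, and the algebraically clean half matches almost exactly: you invoke the same fact (Burns--Chau--Solitar: the intersection of two free factors of a free group is a free factor of each) and derive a rank-drop bound along a strictly descending chain of pointwise stabilizers, which bounds $\delta_c$ by the maximal rank of a vertex group. The paper phrases this in terms of a strictly increasing chain of big-trees $S_1\subsetneq S_2\subsetneq\cdots$, observing that adding an edge $e$ at a vertex $v$ of $S_i$ forces $\pstab(S_{i+1})$ to be a \emph{proper} free factor of $\pstab(S_i)$ (it cannot be all of it since $S_i$ is a big-tree), so the rank strictly decreases.

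For the geometrically clean case your key observation is correct --- each transection based at a vertex $v$ is carried by an embedded subgraph of the finite vertex space $X_v$, and a finite graph has only finitely many subgraphs, so $\Upsilon_V$ is finite for each $V$ --- but you then layer a good deal of unnecessary machinery on top of it. You worry about cross-sections that ``keep changing shape without changing Betti number'' along infinite rays of $\Gamma_Y$, about the map $Y\to\bar Y$ being a finite cover, and propose a pigeonhole/periodicity argument to force stabilization. None of this is needed. The paper proves the subgraph claim by a one-line induction: writing $S_{i+1}=S_i\cup e_{i+1}$ with $v_{i+1}=S_i\cap e_{i+1}$, if $\pstab(S_i)$ corresponds to a subgraph $C\subset X_{v_{i+1}}$ (transporting the subgraph across the embedded edge spaces as needed) and $X_{e_{i+1}}$ corresponds to a subgraph $D$, then $\pstab(S_{i+1})=\pstab(S_i)\cap\pstab(e_{i+1})$ corresponds to $C\cap D$, because for \emph{embedded} subgraphs $\pi_1(C)\cap\pi_1(D)=\pi_1(C\cap D)$ inside $\pi_1X_{v_{i+1}}$. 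Once you know every transection is $\pi_1$ of a subgraph of some fixed finite vertex graph, finiteness of $\Upsilon$ follows immediately by Lemma~\ref{lem:finitely many conjugacy classes0}, with no analysis of rays, periodicity, or finite covers of the transection $Y$. So your proposal is correct in spirit and lands in the right place, but the middle of the geometric argument is a detour around an issue that the embedded-subgraph intersection identity dispatches directly.
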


\begin{proof}
Burns-Chau-Solitar observed that if $A\subset F$ and $B\subset F$ are free factors
of the free group $F$, then $A\cap B$ is a free factor of $B$ \cite{BurnsChauSolitar77}.

We begin by verifying the finite commensurable depth.
\newcommand{\rank}{\text{rank}}
Consider a proper chain of big-trees $S_1 \subsetneq S_2 \subsetneq \cdots$  in the Bass-Serre tree $T$ of the splitting.
We claim that  $\rank(\pstab(S_i))>\rank(\pstab(S_{i+1})$ for each $i$,
and consequently the  length of the chain is bounded by the maximal rank of any vertex group.
To see this, note that $S_{i+1}$ has an edge $e$ such that $e$ is not an edge of  $S_i$ but has an endpoint at a vertex $v$ of $S_i$.
Both $\pstab(e)$ and $\pstab(S_i)$ are free factors of $\stab(v)$,
but since $S_i$ is a big-tree, we see that $\pstab(S_i)\cap \pstab(e)\neq \pstab(S_i)$, and hence 
$\pstab(S_{i+1})$ it is a proper free factor of  $\pstab(S_i)$, and so its rank decreases. 

We now prove finite stature in the geometrically clean case.
This holds by verifying that for any vertex $v$ in a big-tree $S$,
the subgroup $\pstab(S)\subset \stab(v)$ corresponds to the fundamental group of
a subgraph of the vertex space $X_v$.
To see this, consider a chain of subtrees $S_1\subsetneq S_2 \subsetneq \cdots$ of the bass-serre tree $T$,  where $S_{i+1}=S_i\cup e_{i+1}$ and $v_{i+1}=S_i\cap e_{i+1}$.
Suppose  $\pstab(S_i)$ corresponds to a subgraph $C$ of the vertex space $X_{v_{i+1}}$
and $D$ corresponds to the subgraph associated to the inclusion of the edge space $X_{e_{i+1}}\subset X_{v_{i+1}}$.
Then $\pstab(S_{i+1})$ corresponds to the subgraph of $C\cap D$.
The result follows since we can view $\pstab(S)$ as the fundamental group of a subgraph
of the vertex space for any vertex of $S$.
\end{proof}

\begin{example}
\label{example:free by Z}
	Let $\phi$ be a pseudo-Anosov automorphism of $F_2$, the free group of two generators $a$ and $b$. Let $G=F\rtimes_{\phi} \mathbb Z$. $G$ has a splitting with the underlying graph being a circle. It is clear that $G$ has finite stature with respect to its standard splitting. However, we can change the graph of groups structure of $G$ by adding a new edge $e$ to its underlying graph along the vertex of the circle, such that the vertex group at the leaf of $e$ and the edge group of $e$ are the subgroup $\langle a\rangle$ in $F_2$. Since $\phi$ is pseudo-Anosov, $\phi^n(\langle a\rangle)$ and $\langle a\rangle$ are not conjugate inside $F_2$ for any $n\neq 0$. Thus there are infinitely many orbits of based big-trees under this new splitting. Hence $G$ does not have finite stature under such splitting.
\end{example}

Example~\ref{example:free by Z} shows that finite stature may not
hold in the algebraically clean case.
When $G$ has an algebraically clean splitting, all its vertex groups and edge group are separable.
This can be proven in various ways (e.g.\ doubling along a vertex group preserves algebraically clean). More fundamentally, residual finiteness holds since there is a natural system of compatible quotients to graphs of finite groups, and one can extend this argument to see that the edge groups are separable. See \cite{WiseCycHNN}.

Note that when $G$ splits as a finite graph of f.g.\ free groups,
 if all edge groups of $G$ are separable, then $G$ has a finite index subgroup that is algebraically clean (this is if and only if). Consequently, $G$ has finite commensurable depth with respect to its edge groups
 provided they are separable.
Example~\ref{example:free by Z} shows that separability of the edge groups
does not imply finite stature.
However, perhaps it is true in general that separability of the edge groups implies finite commensurable depth.


\section{A separability result for graphs of hyperbolic special groups}
In this section we prove the following theorem.

\begin{thm}
	\label{thm:main1}
	Let $G$ split as a graph of groups with finite underlying graph. Suppose the following conditions hold:
	\begin{enumerate}
		\item each vertex group is word-hyperbolic and virtually compact special;
		\item each edge group is quasiconvex in its vertex groups;
		\item $(G,\mathcal{V})$ has finite stature, where $\mathcal{V}$ is the collection of vertex groups of $G$.
	\end{enumerate}
	Then each quasiconvex subgroup of a vertex group of $G$ is separable in $G$. In particular, $G$ is residually finite.
\end{thm}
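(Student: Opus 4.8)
# Proof Plan for Theorem~\ref{thm:main1}

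The plan is to proceed by induction on the number of edges of the underlying graph $\mathcal{G}$, reducing the general graph of groups to the two basic cases of an amalgamated free product $G = A *_C B$ and an HNN extension $G = A*_C$, and within each of these, to set up a cubical small-cancellation argument that produces finite quotients separating a given quasiconvex subgroup $K \le V$ of a vertex group. The separability of $K$ in $G$ will follow once we can, for any $g \in G \setminus K$, build a finite quotient $G \twoheadrightarrow \bar G$ in which the image of $K$ misses the image of $g$; equivalently, we must promote separability inside the vertex groups (which holds because they are word-hyperbolic virtually compact special, hence their quasiconvex subgroups are separable by Haglund--Wise) up to separability in $G$.

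The core mechanism is to realize $G$, or rather a well-chosen finite-index subgroup, as the fundamental group of a \emph{cubical presentation} $X^* = \langle X \mid \{Y_i\} \rangle$ that is $C'(\tfrac{1}{24})$, where $X$ is a compact special cube complex carrying the relevant vertex group and the cones $Y_i$ encode the relations imposed when one passes to a finite quotient along the edge groups. Here is where the three hypotheses enter. First I would pass to torsion-free finite-index subgroups $\mathcal{E}'$ of the edge groups (using Lemma~\ref{lem:uniformly bounded index}, which guarantees uniformly bounded index and preserves finite depth), so that the relevant edge-group data is clean. Then, using Lemma~\ref{lem:finitely many conjugacy classes2}, finite stature tells us that in each vertex group $V$ there are only \emph{finitely many} $V$-conjugacy classes of the infinite subgroups $E'_S$ arising as multiple intersections of conjugates of the $\mathcal{E}'$; the transections $\Upsilon_V$ of Definition~\ref{defn:phi} organize these. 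One then chooses the cones $Y_i$ to be superconvex cocompact cores (Lemma~\ref{lem:superconvex core}, Lemma~\ref{lem:bounded wall-piece}) built so as to ``cone off'' appropriate finite-index subgroups of the edge groups in the spirit of the Malnormal Virtually Special Quotient Theorem (Theorem~\ref{thm:malnormal special quotient}); quasiconvexity of the edge groups (hypothesis (2)) is what makes these cores available and makes the almost-malnormal machinery applicable after a further finite-index refinement. The finiteness of $\Upsilon_V$ is exactly what ensures that the cubical presentation $X^*$ has \emph{finitely many} cone types in each vertex piece, so that after scaling (replacing $\mathcal{E}'$ by deeper finite-index subgroups to make systoles large, hence pieces relatively small) the $C'(\tfrac{1}{24})$ condition is met.

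Once $X^*$ is set up, the separability conclusion is extracted via Lemma~\ref{lem:quasi-isom embedding} together with Lemma~\ref{lem:Intersection Control} and Remark~\ref{rmk:Intersection Control}: a superconvex core $A \to X$ for $K$ (with liftable shells verified using Lemma~\ref{lem:liftable shell criterion} and Lemma~\ref{lem:superconvex fiber-product intersection}) lifts to a quasi-isometrically embedded $\widetilde A^* \hookrightarrow \widetilde X^*$, and the Double Coset Separation and Intersections of Images statements let us control $\overline{\pi_1 A}$ inside $\pi_1 X^* = \bar G$ so that $\bar g \notin \overline{\pi_1 A}$ whenever $g \notin K$, provided $g$ was chosen short relative to the systoles (which can be arranged since only finitely many double cosets with infinite intersection occur, by Lemma~\ref{lem:finitely many double cosets}). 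The inductive step then feeds the word-hyperbolic virtually compact special quotient $\bar G$ back in as a new vertex group in a graph of groups with one fewer edge, after checking that the inductive hypotheses survive — the edge groups remain quasiconvex in $\bar G$ by the quasi-isometric embedding, and finite stature is preserved because coning off only collapses, never creates, transections (Lemma~\ref{lem:control of intersection of conjugates} controls how big-trees can interact). Finally, residual finiteness of $G$ is the special case $K = \{1\}$.

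\medskip

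\textbf{Main obstacle.} I expect the hardest part to be the simultaneous management of two competing demands on the finite-index subgroups of the edge groups: on one hand we must pass to subgroups deep enough that the resulting cubical presentation satisfies $C'(\tfrac{1}{24})$ and has liftable shells uniformly across \emph{all} the finitely many transection types in \emph{all} vertex groups; on the other hand we must keep the system of quotients \emph{compatible along edges}, so that the local quotients of adjacent vertex groups agree on the common edge group and actually glue to a quotient of $G$. Reconciling these — i.e., producing a single coherent choice of finite-index edge subgroups that works globally — is where finite stature does its real work (it bounds the combinatorial complexity that must be controlled), and it is also where the argument is most delicate, since an incautious refinement at one vertex can destroy the small-cancellation condition at a neighboring vertex. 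Verifying that finite stature is inherited by the quotient $\bar G$ at each inductive stage, so that the induction can even be run, is the second most technical point.
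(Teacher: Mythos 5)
There is a genuine gap, and it is structural. Your induction parameter is wrong: the paper does not induct on the number of edges, and your scheme cannot. If you collapse the splitting down to a single amalgam $A\ast_C B$ or HNN extension, the new ``vertex groups'' are fundamental groups of sub-graphs of groups, and nothing guarantees these are word-hyperbolic or virtually compact special (recall $G$ itself need not be hyperbolic, e.g.\ free-by-cyclic groups), so hypotheses (1) and (2) do not descend to the reduced splitting. Moreover the quotient you propose to ``feed back in'' does not have one fewer edge: quotienting along edge-group data leaves the underlying graph intact. In the paper's proof (Theorem~\ref{thm:main1} via Proposition~\ref{prop:quotients}), the quotient $\bar G$ is again a graph of groups on the \emph{same} graph, with only the vertex groups quotiented, and $\bar G$ is not claimed to be hyperbolic or virtually special --- only its vertex groups are. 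The quantity that decreases is the depth $\delta(G,\mathcal{E})$ of the edge groups (Proposition~\ref{prop:quotients}\eqref{conclusion5}), and proving that it decreases is itself nontrivial: one must show that finite subtrees of $\bar T$ with infinite pointwise stabilizer lift to such subtrees of $T$ (Lemma~\ref{lem:lift}, Proposition~\ref{prop:lift}). That lifting is what Lemma~\ref{lem:Intersection Control} and Remark~\ref{rmk:Intersection Control} are actually used for; in the paper the separation of $\bar g$ from $\bar Q$ is handled much more simply, by separability of $Q$ in the virtually special vertex group $V$ when $g$ fixes $v$, and by injectivity of $\phi_T$ on the finite subtree spanned by $v$ and $gv$ otherwise.

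The second gap is in what gets coned off. You propose to apply the Malnormal Virtually Special Quotient Theorem to ``appropriate finite-index subgroups of the edge groups,'' asserting that quasiconvexity makes ``the almost-malnormal machinery applicable after a further finite-index refinement.'' This fails: a quasiconvex edge group is in general not part of an almost malnormal collection, and passing to finite-index subgroups never repairs almost malnormality (conjugate intersections stay commensurable, hence infinite). The paper instead quotients finite-index subgroups of the \emph{lowest transections} --- the minimal infinite intersections of conjugates of edge groups in a vertex group (Definition~\ref{def:lowest and higher}) --- whose commensurators $\{\C_V(L^V_i)\}$ do form an almost malnormal family by Lemma~\ref{lem:control of intersection of conjugates}, so that Theorem~\ref{thm:malnormal special quotient} applies; compatibility across edges is then achieved by choosing characteristic finite-index subgroups $\hat L^V_i$ respecting the transfer isomorphisms of Definition~\ref{def:based big-trees}, and the cubical small-cancellation work (Lemma~\ref{lem:choice}) takes place inside each vertex group's cube complex $X_{V'}$, not in a cubulation of $G$ (which is never assumed). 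This is where finite stature does its real work --- it makes the set of transections $\Upsilon_V$ finite and makes the depth finite so the induction can run --- rather than merely bounding ``cone types.'' You did correctly anticipate the compatibility-versus-small-cancellation tension and the need to show finite stature survives the quotient, but without the transection/lowest-big-tree mechanism and the depth induction, the proposed argument does not go through.
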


\subsection{The depth reducing quotient}
\label{subsec_quotient}
Let $G$ split as a finite graph of groups with  underlying graph $\G$. Let $\{V_i\}_{i=1}^{n}$ be the collection of vertex groups of $G$. Suppose there is an edge $E$ between $V_i$ and $V_j$ (it is possible that $i=j$). Then $E$ induces an isomorphism $\alpha_E:E_i\to E_j$ from a subgroup of $V_i$ to a subgroup of $V_j$. Note that $\alpha_E$ is a transfer isomorphism discussed in Definition~\ref{def:based big-trees}.

Define a \emph{quotient of graphs of groups} as follows. Let $\{q_i:V_i\to \bar{V}_i\}_{i=1}^n$ be a collection of quotient maps. They are \emph{compatible} if for any edge $E$ between $V_i$ and $V_j$, we have $\alpha_E(E_i\cap \ker q_i)=E_j\cap \ker q_j$. In this case, $\alpha_E$ descends to an isomorphism $\bar{\alpha}_E:\bar{E}_i\to\bar{E}_j$, where $\bar{E}_i=q_i(E_i)$. Define a new graph of groups with the same underlying graph $\G$, vertex groups the $\bar{V}_i$'s, and isomorphisms $\{\bar \alpha_E\}$
between edge groups. Let $\bar{G}$ be the fundamental group of this new graph of groups. There is an induced quotient homomorphism $G\to \bar{G}$.

\begin{prop}
	\label{prop:quotients}
Let $G$ be the fundamental group of a finite graph of groups and let $\mathcal{V}$ and $\mathcal{E}$ be the collection of vertex groups and edge groups of $G$. Suppose
\begin{enumerate}
	\item each vertex group is word-hyperbolic and virtually compact special;
	\item each edge group is quasiconvex in its vertex groups;
	\item $G$ has finite stature, and $\delta(G,\mathcal{E})>0$.
\end{enumerate}
For each $V\in \mathcal{V}$, let $Q_V\le V$ be a quasiconvex subgroup. Choose a finite index subgroup $V'\le V$ for each $V\in \mathcal{V}$, and choose a finite index subgroup $E'\le E$ for each $E\in\mathcal{E}$. Then there exists a collection of quotient homomorphisms $\{\phi_V:V\to\bar{V}\}_{V\in \mathcal{V}}$ such that
\begin{enumerate}
	\item \label{conclusion10} $\bar{V}=V/\nclose{\{L^V_i\}}$, where each
	$L^V_i$ is a finite index subgroup of a lowest transection of $G$ in  $V$,
	and the collection varies over representatives of all such lowest transections; moreover, each $L^V_i$ can be chosen such that it is contained in a given finite index subgroup of its associated lowest transection;
	\item \label{conclusion20} for each edge group $E\to V$, 
	the kernel $\ker(E\to\bar{V})$ is generated by $V$-conjugates of $\{L^V_i\}$ that are contained in $E$;
	\item \label{conclusion2} the collection $\{\phi_V:V\to\bar{V}\}_{V\in \mathcal{V}}$ is compatible, hence there is a quotient of graphs of groups $\phi:G\to\bar{G}$ as above;
	\item \label{conclusion1} $\bar{V}$ is word-hyperbolic and virtually compact special for each $V$;
	\item \label{conclusion3} each edge group of $\bar{G}$ is quasiconvex in the corresponding vertex groups;
	\item \label{conclusion4} $(\bar{G},\bar{\mathcal{V}})$ has finite stature;
	\item \label{conclusion5} $\delta(\bar{G},\bar{\mathcal{E}})<\delta(G,\mathcal{E})$;
	\item \label{conclusion6} $\ker \phi|_{V}\le V'$ for each $V\in\mathcal{V}$ and $\ker\phi|_{E}\le E'$ for each $E\in\mathcal{E}$;
	\item \label{conclusion7} Each $\bar{Q}_V$ is quasiconvex in $\bar{V}$.
\end{enumerate}
Moreover, let $S\subset T$ be a finite subtree of the Bass-Serre tree of $G$. Then we can assume that the $G$-equivariant map $\phi_T:T\to \bar{T}$ to the Bass-Serre tree of $\bar{G}$ has the property that $\phi_T|_{_S}$ is injective.
\end{prop}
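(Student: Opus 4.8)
The plan is to build $\bar G$ by applying the Malnormal Virtually Special Quotient Theorem (Theorem~\ref{thm:malnormal special quotient}) simultaneously in every vertex group, coning off finite-index subgroups of the \emph{lowest} transections, with the coning data chosen compatibly along the edges so that the edge isomorphisms descend. First I would set up the families to be killed. By finite stature the collection $\Upsilon$ is finite, so each vertex group $V$ contains finitely many lowest transections in $\Upsilon_V$, each quasiconvex by Lemma~\ref{lem:quasiconvex}. Replacing each such transection by its $V$-commensurator (which contains it with finite index, by Lemma~\ref{lem:commensurator}) and keeping one representative per commensurability class, Lemma~\ref{lem:control of intersection of conjugates}\eqref{control of intersections:1} shows the resulting finite family of quasiconvex subgroups of $V$ is almost malnormal. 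Realizing a fixed finite-index compact special subgroup of $V$ as acting on a nonpositively curved cube complex, I would use Lemma~\ref{lem:superconvex core} to choose a superconvex cocompact core for each transection and form the cubical presentation $X_V^{*}=\langle X_V\mid\{Y\}\rangle$.

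Next I would produce the quotient and verify \eqref{conclusion1}, \eqref{conclusion3}, \eqref{conclusion20}, \eqref{conclusion7}, and then \eqref{conclusion10} and \eqref{conclusion6}. Passing to sufficiently deep finite-index covers of the cones $Y$---possible since the transections are separable in the word-hyperbolic vertex groups, with wall-pieces bounded by Lemma~\ref{lem:bounded wall-piece} and cone-pieces bounded by almost malnormality---makes each $X_V^{*}$ a $C'(\frac{1}{24})$ presentation, so Theorem~\ref{thm:malnormal special quotient} gives that $\bar V:=\pi_1X_V^{*}=V/\nclose{\{L^V_i\}}$ is word-hyperbolic and virtually compact special, which is \eqref{conclusion1}. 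For an edge group $E\to V$, realized by a local isometry $X_E\to X_V$ via Lemma~\ref{lem:superconvex core}, Lemma~\ref{lem:superconvex fiber-product intersection} identifies the noncontractible components of $X_E\otimes_{X_V}Y$ with the infinite intersections of $E$ with conjugates of the transection $\pi_1Y$; since $\pi_1Y$ is the stabilizer of a \emph{lowest} big-tree, such an intersection is either all of that conjugate (when the conjugate lies in $E$) or finite, so after deepening the covers every component of $X_E\otimes_{X_V}Y$ is a copy of $Y$ or a small contractible complex, and Lemma~\ref{lem:liftable shell criterion} then gives that $X_E^{*}$ has liftable shells. By Lemma~\ref{lem:quasi-isom embedding}, $\pi_1X_E^{*}\to\pi_1X_V^{*}$ is an injective quasi-isometric embedding, giving \eqref{conclusion3}, and since $\ker(\pi_1X_E\to\pi_1X_E^{*})$ is the normal closure of the cone subgroups of $X_E^{*}$---the conjugates of the $L^V_i$ that lie in $E$---it gives \eqref{conclusion20}; the identical argument for $X_{Q_V}\to X_V$ gives \eqref{conclusion7}. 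For compatibility \eqref{conclusion2}: each edge isomorphism $\alpha_E$ is a transfer isomorphism (Definition~\ref{def:based big-trees}), hence carries lowest transections inside $E_i$ to lowest transections inside $E_j$, and a transection lies in $\stab(e)$ exactly when $e$ lies in its big-tree (because big-trees are maximal among subtrees with a prescribed pointwise stabilizer). So I would organize the choice of the $L^V_i$ by orbits of lowest big-trees: for each orbit of a lowest big-tree $S$ fix one finite-index subgroup $P_S\le\pstab(S)$, and let the $L^V_i$ be the images of $P_S$ under the inclusions $\pstab(S)\hookrightarrow\stab(v)$; then $\alpha_E(E_i\cap\ker q_i)=E_j\cap\ker q_j$ and the $\phi_V$ descend to $\phi:G\to\bar G$. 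Since $P_S$ can be taken arbitrarily deep in $\pstab(S)$, the $L^V_i$ can be placed inside any prescribed finite-index subgroups, which gives \eqref{conclusion10} and, taking these to be normal cores of the given $V'$ together with the preimages of the given $E'$, also \eqref{conclusion6}.

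The main work, and the principal obstacle, is the comparison of big-trees upstairs and downstairs, which yields \eqref{conclusion4} and \eqref{conclusion5}. The key claim is that the $G$-equivariant map $\phi_T:T\to\bar T$ induces an inclusion-preserving bijection between $G$-orbits of \emph{high} big-trees of $G$ (Definition~\ref{def:lowest and higher}) and $\bar G$-orbits of big-trees of $\bar G$, with $\pstab_{\bar T}(\phi_TS)=\phi(\pstab_TS)$ for high $S$; the needed identity ``image of an intersection equals intersection of images'' for the infinite transections is obtained by combining the intersection-control statements of Remark~\ref{rmk:Intersection Control} at the vertices lying along a subtree. Granting this, \eqref{conclusion4} follows since high big-trees form finitely many $G$-orbits (finite stature) with cocompact stabilizers, both properties passing to $\bar G$. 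For \eqref{conclusion5}, take a chain $\bar L_1\subsetneq\cdots\subsetneq\bar L_d$ realizing $\delta(\bar G,\bar{\mathcal E})$, with $|\bar L_1|=\infty$ and all steps of infinite index, write $\bar L_i=\pstab_{\bar T}(\bar S_i)$ with $\bar S_1\supseteq\cdots\supseteq\bar S_d$ big-trees, and lift to high big-trees $S_1\supseteq\cdots\supseteq S_d$ of $G$ with $\phi(\pstab_TS_i)=\bar L_i$; the steps $L_i:=\pstab_TS_i$ still have infinite index because indices cannot increase under $\phi$, and each $L_i$ is infinite and a finite intersection of conjugates of edge groups (using $\delta_c(G,\mathcal E)<\infty$, Lemma~\ref{lem:finite depth}). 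Since $S_1$ is high it is properly contained in some lowest big-tree $S_0$---ascending chains of big-trees are finite because $\delta_c(G,\mathcal E)<\infty$---and $[\pstab_T(S_1):\pstab_T(S_0)]=\infty$ by the definition of ``high''; adjoining $L_0:=\pstab_T(S_0)$ produces a chain of length $d+1$ of the type counted by $\delta(G,\mathcal E)$, whence $\delta(\bar G,\bar{\mathcal E})\le\delta(G,\mathcal E)-1$.

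Finally, for the displayed ``moreover'': given the finite subtree $S\subset T$, only finitely many pairs of edges, and of vertices, of $S$ lie in a common $G$-orbit, and $\phi_T$ identifies such a pair exactly when the group element relating them lies in $\ker\phi$ (up to a bounded product governed by the structure of $\bar G$). Since $\ker\phi$ is generated by conjugates of the $L^V_i$ and the lowest transections are separable in the residually finite vertex groups, I would shrink each $P_S$---hence each $L^V_i$---into a further finite-index subgroup avoiding the finitely many offending elements, which forces $\phi_T|_S$ to be injective and is compatible with the earlier requirements since it only intersects finitely many finite-index subgroups. The hardest part is the big-tree correspondence of the third paragraph; the compatible bookkeeping of the coning data in the second paragraph is the secondary difficulty.
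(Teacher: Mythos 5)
Your proposal is correct and follows essentially the same route as the paper's proof: coning off commensurator-normalized finite-index subgroups of the lowest transections via Theorem~\ref{thm:malnormal special quotient}, controlling edge groups and $Q_V$ by superconvex cores, fiber products and liftable shells, organizing compatibility through transfer isomorphisms, lifting big-trees via the intersection control of Remark~\ref{rmk:Intersection Control} to get finite stature and the depth drop, and using separability to force injectivity on $S$. The only difference is packaging: the paper collects the bookkeeping you compress (in particular arranging the hypotheses of Lemma~\ref{lem:Intersection Control} for all pairs of transections $E'_S$, not just cone-versus-edge, and taking the coning subgroups normal in the commensurators and invariant under transfer isomorphisms) into its Lemma~\ref{lem:choice} before running the argument.
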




We  now deduce Theorem~\ref{thm:main1} from  Proposition~\ref{prop:quotients}.
 
\begin{proof}[Proof of Theorem~\ref{thm:main1}]
We induce on $\delta(G,\mathcal{E})$ and first look at the case $\delta(G,\mathcal{E})=0$. Then each element of $\mathcal{E}$ is finite. By a covering space argument, $G$ has a finite index subgroup which is a free product of a free group with a collection of groups which are fundamental groups of hyperbolic special cube complexes. Hence $G$ is hyperbolic and virtually compact special. Any quasiconvex subgroup $Q$ of a vertex group of $G$ is quasiconvex in $G$, hence $Q$ is separable.

Now we assume $\delta(G,\mathcal{E})>0$. Let $Q$ be a quasiconvex subgroup of a vertex group $V$. By Proposition~\ref{prop:quotients}, there exists a quotient $\phi:G\to \bar{G}$ which satisfies all the conditions there, in particular, $\delta(\bar{G},\bar{\mathcal{E}})<\delta(G,\mathcal{E})$. Let $\bar{Q}$ be the image of $Q$ under $V\to\bar{V}$. Pick $g\in G- Q$, we claim that it is possible to choose $\phi$ such that $\bar{g}\notin \bar{Q}$. Assuming this claim, we can deduce the theorem as follows. By Proposition~\ref{prop:quotients}.(6), $\bar{Q}$ is quasiconvex in $\bar V$, hence $\bar Q$ is separable in $\bar G$ by induction, and Theorem~\ref{thm:main1} follows. Now we prove the claim. Let $v\subset T$ be the vertex associated with $V$. Suppose $g$ fixes $v$. Since $Q$ is separable in $V$, there is a finite index normal subgroup $\dot V\le V$ such that $g\notin Q\dot V$. By Proposition~\ref{prop:quotients}.(5), we can choose $\phi$ such that $\ker(V\to\bar{V})\le \dot V$, which implies $\bar{g}\notin \bar{Q}$. Suppose $g$ does not fix $v$. Let $S$ be the convex hull of $v$ and $gv$. By the moreover statement of Proposition~\ref{prop:quotients}, we can assume $\phi_T|_S$ is injective. Since $\phi_T$ is $G$-equivariant, $\bar{g}$ does not stabilize $\phi_T(v)$, hence $\bar{g}\notin \bar{V}$, in particular, $\bar{g}\notin \bar{Q}$.
\end{proof}

\begin{remark}
	\label{rmk:super group acting on cube complex}
	We can actually assume each vertex group of $G$ acts geometrically on a $CAT(0)$ cube complex. This is because of \cite[Lem~7.14]{WiseIsraelHierarchy}, which says that if $A$ is word-hyperbolic and has a finite index subgroup that acts properly and cocompactly on a $CAT(0)$ cube complex, then $A$  acts properly and cocompactly on a  $CAT(0)$ cube complex.
\end{remark}

\begin{remark}[Intuition about compatibility]
We describe how a characteristic subgroup of $\pstab(S)$ for each lowest big-tree $S$
yields a compatible collection of subgroups of the vertex groups of the graph of groups $G$.

For each big tree $S$, there is a short exact sequence
$1\rightarrow \pstab(S) \rightarrow \stab(S) \rightarrow K \rightarrow 1$
where $K$ acts faithfully on $S$. A characteristic subgroup $N$ of $\pstab(S)$ (or more generally, a subgroup $N\le  \pstab(S)$ that is invariant under conjugation by $\stab(S)$) yields a collection of conjugacy classes of subgroups in the vertex groups of $G$ regarded as stabilizers of vertex groups of the Bass-Serre tree $T$.
More precisely, for a vertex $v$ of $T$, the translates $gS$ that contain $v$ yield a collection
 of subgroups of the vertex group $G_v$. When $v\in S$, the distinct conjugacy classes of such subgroups in $G_v$ correspond to the $\stab(S)$ orbits of $v$ in $S$. (More generally, for $v\in gS$, the analogous statement
 holds for $g^{-1}\stab(S)g$ orbits.)
So the $\stab(S)$ orbits of $v$ in $S$ correspond to the collection of various subgroups we will quotient by in $\bar v$,
and compatibility merely reflects that if $u,v$ are vertices that are joined by an edge $e$ in $gS$ for some $g\in G$, then the corresponding quotienting subgroups are isomorphic across that edge. Hence $G_{\bar u},G_{\bar v}$
are compatible across $\bar e$.

Different constraints on the various vertex groups (to ensure some separability, or to lie in our chosen subgroups, or to ensure small-cancellation and hence preserve quasiconvexity and actual (instead of plausible) 
compatibility, are collected together from all the vertex groups of $S$, and we choose $N$ to respect these constraints.

From the viewpoint of the base space, we regard $G$ as the fundamental group of a graph of spaces.
For simplicity, let us assume that $K$ is a free group, and let $N\le \pstab(S)$ be a characteristic subgroup.
Then $K\backslash S$ corresponds to a graph of spaces (which the reader should regard as an $X_N$ bundle over
$\bar S = K\backslash S$) that immerses into the graph of spaces for $G$.
Plausible compatibility corresponds to the fact that the immersed graph of spaces is a bundle,
and hence there is a direct isomorphism between the collection of subgroups of vertex groups on each side of an edge group.

In fact, we are describing ``plausible compatibility''  above: Namely that the subgroups we will kill are the same in a vertex group and an edge group.
Compatibility says that killing them in the vertex group, induces the expected result on  edge groups.
It is a separate technical result that plausible compatibility yields compatibility under certain small cancellation hypotheses that we ensure.

A further important point is the use of Proposition~\ref{prop:lift} which shows that any finite tree with infinite point-wise stabilizer in  $\bar T$ is actually the image of a finite tree with infinite point-wise stabilizer in $T$. This ensures that the depth decreases, since we are quotienting by  finite index subgroups  of the point-wise stabilizers of lowest transections.
This explains our interest in quotienting these particular subgroups.
\end{remark}

\subsection{Proof of Proposition~\ref{prop:quotients}}




As in Definition~\ref{def:lowest and higher}, for each vertex group $V$, let $\{L^{V}_{i}\}_{i=1}^{\ell_V}$ be the collection of lowest elements in $\Upsilon_V$. Let $\{H^{V}_{i}\}_{i=1}^{h_V}$ be the collections of high elements in $\Upsilon_V$. The following is a consequence of Lemma~\ref{lem:commensurator} and Lemma~\ref{lem:control of intersection of conjugates}.\eqref{control of intersections:1} (we recall commensurator and almost malnormality from Definition~\ref{def:commensurator} and Definition~\ref{def:almost malnormal}).
\begin{lem}
$\{\C_V(L^{V}_{i})\}_{i=1}^{\ell_V}$ is almost malnormal in $V$.
\end{lem}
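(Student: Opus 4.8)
The plan is to verify the two conditions of almost malnormality for the collection $\{\C_V(L^V_i)\}_{i=1}^{\ell_V}$ directly from Lemma~\ref{lem:control of intersection of conjugates}.\eqref{control of intersections:1}, using Lemma~\ref{lem:commensurator} to pass between a lowest transection and its commensurator. First I would recall that each $L^V_i$ is lowest in $\Upsilon_V$, hence quasiconvex in the word-hyperbolic group $V$ by Lemma~\ref{lem:quasiconvex}, so by Lemma~\ref{lem:commensurator} the index $[\C_V(L^V_i):L^V_i]$ is finite; in particular $\C_V(L^V_i)$ is itself quasiconvex and commensurable with $L^V_i$, and conjugation by an element $g\in V$ carries $\C_V(L^V_i)$ to $\C_V((L^V_i)^g)$.

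The core step is: suppose $\C_V(L^V_i)^g \cap \C_V(L^V_j)$ is infinite for some $g\in V$. Since $\C_V(L^V_i)^g$ is commensurable with $(L^V_i)^g$ and $\C_V(L^V_j)$ is commensurable with $L^V_j$, their intersection being infinite forces $(L^V_i)^g \cap L^V_j$ to be infinite as well (an infinite subgroup of finite index in each factor survives the passage to the commensurable subgroups). Now Lemma~\ref{lem:control of intersection of conjugates}.\eqref{control of intersections:1} applies: we conclude $L^V_i = L^V_j$ (so $i=j$, since elements of $\Upsilon_V$ are chosen one per $V$-conjugacy class and two distinct representatives cannot be $V$-conjugate) and $(L^V_i)^g = L^V_i$, i.e.\ $g$ normalizes $L^V_i$, hence $g\in \C_V(L^V_i)$. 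This gives exactly the almost malnormality condition, and in fact shows the collection is \emph{malnormal} in the stronger sense that nontrivial (indeed infinite-index-would-be-trivial is not quite right, but) intersections of distinct conjugates are finite — matching Definition~\ref{def:almost malnormal}. I would also remark that the $\C_V(L^V_i)$ are pairwise non-conjugate in $V$: if $\C_V(L^V_i)^g = \C_V(L^V_j)$ then that intersection is infinite, so by the above $i=j$.

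The main obstacle is the elementary but slightly fiddly commensurability bookkeeping: I must be careful that ``$A$ and $B$ are commensurable'' together with ``$A^g \cap B'$ infinite'' for $A',B'$ finite-index in $A,B$ does indeed yield ``$A \cap B$-type intersection infinite'' in the precise form demanded by Lemma~\ref{lem:control of intersection of conjugates}.\eqref{control of intersections:1}, which is phrased for elements of $\Upsilon_V$ rather than for their commensurators. Concretely, one replaces $\C_V(L^V_i)^g \cap \C_V(L^V_j)$ by its finite-index subgroup $(L^V_i)^g \cap L^V_j \cap \C_V(L^V_i)^g \cap \C_V(L^V_j)$, observes this is still infinite, hence $(L^V_i)^g \cap L^V_j$ is infinite, and then invokes the lemma. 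Everything else is a direct citation, so the proof should be short.
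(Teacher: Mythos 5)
Your proposal is correct and follows the same route the paper indicates: the paper simply states the lemma is a consequence of Lemma~\ref{lem:commensurator} and Lemma~\ref{lem:control of intersection of conjugates}.\eqref{control of intersections:1}, and your argument is exactly the expansion of that citation — pass from $\C_V(L^V_i)^g\cap\C_V(L^V_j)$ to the commensurable infinite subgroup $(L^V_i)^g\cap L^V_j$, apply Lemma~\ref{lem:control of intersection of conjugates}.\eqref{control of intersections:1} to get $L^V_i=L^V_j$ and $(L^V_i)^g=L^V_i$, and conclude $i=j$ and $g\in\C_V(L^V_i)$ since the normalizer lies in the commensurator. The commensurability bookkeeping you spell out is the only nontrivial point and is handled correctly.
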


We may assume without loss of generality the $V'$ and the $E'$ of Proposition~\ref{prop:quotients} are normal and torsion-free in their ambient groups. We may moreover assume $E'\le V'$ whenever $E$ is an edge group of $V$.

By Remark~\ref{rmk:super group acting on cube complex}, for each vertex group $V$, let $\widetilde{X}_V$ be a $CAT(0)$ cube complex upon which $V$ acts properly and cocompactly. Let $X_{V'}=\widetilde{X}_V/V'$. We do not assume $X_{V'}$ is virtually special (though it is true), since we only need $X_{V'}$ for cubical small cancellation theory.

The following holds by Lemma~\ref{lem:control of intersection of conjugates} and Lemma~\ref{lem:finite subgroups}.
\begin{lem}
	\label{lem:piece control}
	Let $\widetilde{X}^{L_i}_V\subset \widetilde{X}_V$ be a $\C_V(L^{V}_{i})$-cocompact superconvex subcomplex for each lowest subgroup. Let $\widetilde{X}^{H_i}_V\subset \widetilde{X}_V$ be an $H^V_i$-cocompact superconvex subcomplex for each high subgroup. By possibly enlarging $\widetilde{X}^{H_i}_V$, there exists $M>0$ such that for any $g_1,g_2\in V$ and any $i,i'$ we have:
	\begin{enumerate}
		\item \label{piece control1} either $diam(g_1\widetilde{X}^{L_i}_{V}\cap g_2\widetilde{X}^{L_{i'}}_{V})<M$, or $g_1\widetilde{X}^{L_i}_{V}= g_2\widetilde{X}^{L_{i'}}_{V}$;
		\item \label{piece control2} either $diam(g_1\widetilde{X}^{L_i}_{V}\cap g_2\widetilde{X}^{H_{i'}}_{V})<M$, or $g_1\widetilde{X}^{L_i}_{V}\subset g_2\widetilde{X}^{H_{i'}}_{V}$.
	\end{enumerate}	
\end{lem}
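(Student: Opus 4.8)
The plan is to prove Lemma~\ref{lem:piece control} by combining two ingredients: the ``separation of cosets'' statements implicit in Lemma~\ref{lem:control of intersection of conjugates}, together with Lemma~\ref{lem:superconvex core} and the elementary geometry of superconvex subcomplexes. First I would fix, for each lowest transection $L^V_i$ and each high transection $H^V_i$, the superconvex cocompact subcomplexes whose existence is guaranteed by Lemma~\ref{lem:superconvex core}: take $\widetilde X^{L_i}_V$ to be $\C_V(L^V_i)$-cocompact and superconvex, and take $\widetilde X^{H_i}_V$ to be $H^V_i$-cocompact and superconvex. The key point is that by Lemma~\ref{lem:finite height} the various transections are quasiconvex (this is Lemma~\ref{lem:quasiconvex}), so Lemma~\ref{lem:superconvex core} applies. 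We record for later use that a quasiconvex subgroup coincides with the stabilizer of its superconvex core up to finite index, and that, because $\C_V(L^V_i)$ is the full commensurator (Lemma~\ref{lem:commensurator}), $g_1 \widetilde X^{L_i}_V$ and $g_2 \widetilde X^{L_{i'}}_V$ are equal if and only if the translates of the commensurators agree, i.e.\ $g_1 \C_V(L^V_i) g_1^{-1} = g_2 \C_V(L^V_{i'}) g_2^{-1}$.

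Next I would derive \eqref{piece control1}. The collection $\{\C_V(L^V_i)\}$ is almost malnormal in $V$ by the Lemma immediately preceding; therefore for distinct translates $g_1\C_V(L^V_i)g_1^{-1}$ and $g_2\C_V(L^V_{i'})g_2^{-1}$ the intersection is finite. A finite subgroup stabilizes a bounded subcomplex, and a standard hyperbolicity/quasiconvexity argument (the intersection of the $r$-neighbourhoods of two quasiconvex sets is coarsely the neighbourhood of the intersection of the limit sets, hence bounded when that intersection is finite) shows that $g_1\widetilde X^{L_i}_V \cap g_2 \widetilde X^{L_{i'}}_V$ has diameter bounded by a constant $M_1$ depending only on the finitely many subcomplexes in play and the quasiconvexity/hyperbolicity constants. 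Since there are finitely many $\widetilde X^{L_i}_V$ and finitely many conjugacy behaviours of their commensurators, $M_1$ can be chosen uniformly. In the remaining case the two translated commensurators are equal, and then $g_1\widetilde X^{L_i}_V = g_2\widetilde X^{L_{i'}}_V$ after we have arranged (enlarging if necessary) that each $\widetilde X^{L_i}_V$ is precisely $\C_V(L^V_i)$-invariant and that distinct indices $i\ne i'$ give non-conjugate commensurators, which follows from the choice of $\Upsilon_V$ together with Lemma~\ref{lem:correspondence} and Lemma~\ref{lem:control of intersection of conjugates}.\eqref{control of intersections:1}.

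For \eqref{piece control2} I would invoke Lemma~\ref{lem:control of intersection of conjugates}.\eqref{control of intersections:2}: since $L^V_i$ is lowest, for every $a_1,a_2\in V$ either $(L^V_i)^{a_1}\subset (H^V_{i'})^{a_2}$ or $(L^V_i)^{a_1}\cap (H^V_{i'})^{a_2}$ is finite. In the first case $(L^V_i)^{a_1}\subset (H^V_{i'})^{a_2}$, so the superconvex core of the former is contained in a neighbourhood of the latter; enlarging $\widetilde X^{H_{i'}}_V$ (this is the one place we use the freedom ``by possibly enlarging $\widetilde X^{H_i}_V$'') so that it is superconvex and contains the relevant translated cores of the finitely many lowest subcomplexes that are commensurated into it, superconvexity forces $g_1\widetilde X^{L_i}_V \subset g_2\widetilde X^{H_{i'}}_V$ outright. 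In the second case the same bounded-intersection argument as in \eqref{piece control1} gives $diam(g_1\widetilde X^{L_i}_V\cap g_2\widetilde X^{H_{i'}}_V) < M_2$ uniformly. Finally take $M = \max(M_1,M_2)$ and absorb into $M$ any additive constants produced by passing between ``bounded intersection of neighbourhoods'' and ``bounded intersection of subcomplexes''.

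The main obstacle I anticipate is the enlargement step for \eqref{piece control2}: one must enlarge each $\widetilde X^{H_i}_V$ to simultaneously (a) remain superconvex, (b) remain $H^V_i$-cocompact, and (c) contain, up to the $H^V_i$-action, all the finitely many translated cores $g\widetilde X^{L_j}_V$ of lowest subgroups commensurated into $H^V_i$, so that the genuinely containment-type conclusion (rather than merely a neighbourhood statement) holds. Controlling that there are only finitely many such cores to absorb — which uses that there are finitely many lowest transections in $\Upsilon_V$ and, via Lemma~\ref{lem:finitely many double cosets}, finitely many relevant double cosets — and checking that Lemma~\ref{lem:superconvex core} can be applied to the enlarged compact piece, is the delicate bookkeeping. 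The dichotomy itself (bounded versus nested) is essentially immediate from Lemma~\ref{lem:control of intersection of conjugates}; it is promoting ``bounded/contained at the level of subgroups'' to ``bounded/contained at the level of superconvex subcomplexes,'' uniformly in $g_1,g_2$, that requires care.
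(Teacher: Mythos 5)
Your proposal is correct and follows essentially the same route the paper intends (the paper simply cites Lemma~\ref{lem:control of intersection of conjugates} and Lemma~\ref{lem:finite subgroups}): the group-level dichotomy plus the standard fact that translates of cocompact cores of quasiconvex subgroups with finite stabilizer-intersection have uniformly bounded overlap, together with enlarging each $\widetilde{X}^{H_{i'}}_V$ via Lemma~\ref{lem:superconvex core} to contain the finitely many relevant $H^V_{i'}$-orbits of translated lowest cores (finiteness coming from Lemma~\ref{lem:finitely many double cosets}). One phrasing should be corrected: superconvexity does not ``force'' the containment $g_1\widetilde{X}^{L_i}_V\subset g_2\widetilde{X}^{H_{i'}}_V$ (a convex core lying in a bounded neighborhood of a superconvex set need not lie inside it); rather, the containment holds because the enlarged core is $H^V_{i'}$-invariant and was built to contain the double-coset representatives' translated cores, so every translate with infinite (hence, by Lemma~\ref{lem:control of intersection of conjugates}\eqref{control of intersections:2}, nested) stabilizer-intersection is an $H^V_{i'}$-translate of one already absorbed.
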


We use $\nclose{\cdots}_H$ to denote the normal closure inside a subgroup $H$ of a collection of elements. If $H$ is clear, we will also write $\nclose{\cdots}$. We use $\langle \cdots\rangle$ to denote the subgroup generated by a collection of elements.

\begin{lem}
	\label{lem:choice}
There exists a collection of finite index normal subgroups $\{\dot{L}^{V}_{i}\,\triangleleft\, \C_V(L^{V}_{i})\}_{V\in\mathcal{V}}$ such that the following properties hold for each $V\in\mathcal{V}$, moreover, they hold for any deeper finite index normal subgroups of the $\C_V(L^{V}_{i})$.
\begin{enumerate}
	\item \label{choice1} Each $\dot{L}^{V}_{i}\le V'\cap L^V_{i}$. Moreover, for each edge group $E$ of $V$, any $V$-conjugate of $\dot{L}^{V}_{i}$ is either contained in $E'$ or intersects $E'$ trivially.
	\item \label{choice2} $\bar{V}=V/\nclose{\dot{L}^{V}_{1},\cdots, \dot{L}^V_{\ell_V}}$ is word-hyperbolic and virtually compact special. For a subgroup $J\le V$, we use $\bar{J}$ to denote the image of $J$ under $V\to\bar{V}$.
	\item \label{choice3} Let $E$ be any edge group of $V$ and let $E'_V=V'\cap E$. Then $\ker (E'_V\to \bar V)$ equals the normal closure in $E'_V$ of all $V$-conjugates of $\{\dot{L}^{V}_{i}\}_{i=1}^{\ell_V}$ that are contained in $E'_V$. Moreover, $\overline{E'_V}$ is quasiconvex in $\bar{V}$.
	\item \label{choice3.5} For each subtree $S$ of the Bass-Serre tree $T$, let $E'_S$ be the subgroup defined right before Lemma~\ref{lem:uniformly bounded index}. Then for each finite subtree $S$ containing the vertex $v\in T$ associated with $V$ with $E'_S$ infinite, any $V$-conjugate of $\dot{L}^{V}_{i}$ is either contained in $E'_S$ or intersects $E'_S$ trivially.
	\item \label{choice4} For any pair $S_1$ and $S_2$ of finite subtrees with infinite pointwise stabilizers with $v\in S_1\cap S_2$, Remark~\ref{rmk:Intersection Control} holds for $E'_{S_1}$ and $E'_{S_2}$ under the quotient homomorphism $V'\to\overline{V'}$.
\end{enumerate}
\end{lem}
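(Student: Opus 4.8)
The plan is to construct the subgroups $\dot L^V_i$ by first passing to sufficiently deep finite-index normal subgroups of the commensurators $\C_V(L^V_i)$ and then repeatedly shrinking them, each stage securing one of the five families of conditions; the point is that every condition, once arranged, persists under any further shrinking, so the conditions may be imposed in turn. Each $V\in\mathcal V$ is handled separately. The standing facts I use are: $V$ is word-hyperbolic and virtually compact special, so each of its quasiconvex subgroups is separable; every $L^V_i$, every $\C_V(L^V_i)$, every edge group of $V$, and every $E'_S$ with $E'_S$ infinite is quasiconvex in $V$, by Lemmas~\ref{lem:quasiconvex0},~\ref{lem:quasiconvex} and~\ref{lem:commensurator}; and finite stature together with Lemmas~\ref{lem:finite depth},~\ref{lem:uniformly bounded index},~\ref{lem:finite height},~\ref{lem:finitely many double cosets} and Corollary~\ref{lem:finitely many conjugacy class} reduces each ``for all $g\in V$'' or ``for all finite subtrees $S$'' clause to finitely many cases, with the relevant constants uniform over the infinitely many $S$.

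Condition~\eqref{choice2} is supplied directly by the Malnormal Virtually Special Quotient Theorem~\ref{thm:malnormal special quotient}: as noted above, $\{\C_V(L^V_i)\}_{i=1}^{\ell_V}$ is an almost malnormal collection of quasiconvex subgroups of $V$, so there exist finite-index $\ddot L^V_i\le\C_V(L^V_i)$ such that $V/\nclose{\dot L^V_1,\dots,\dot L^V_{\ell_V}}$ is word-hyperbolic and virtually compact special for \emph{every} choice of finite-index $\dot L^V_i\le\ddot L^V_i$. Replacing each $\ddot L^V_i$ by the normal core in $\C_V(L^V_i)$ of $\ddot L^V_i\cap V'\cap L^V_i$ also forces the first sentence of~\eqref{choice1}. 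From now on, every candidate $\dot L^V_i$ is a finite-index normal subgroup of $\C_V(L^V_i)$ lying inside this $\ddot L^V_i$.

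The dichotomy clauses --- the second sentence of~\eqref{choice1} and clause~\eqref{choice3.5} --- rest on the observation that an infinite edge group $E$ of $V$, or an infinite $\pstab(S)$, is a transection, hence a $V$-conjugate of an element of $\Upsilon_V$; thus by Lemma~\ref{lem:control of intersection of conjugates}\eqref{control of intersections:2}, applied with the lowest $H_1=L^V_i$, any conjugate of $\dot L^V_i\le L^V_i$ that meets $E'$ (resp.\ $E'_S$) nontrivially --- equivalently, since $\dot L^V_i\le V'$ is torsion free, infinitely --- already lies in $E\cap V'$ (resp.\ $\pstab(S)\cap V'$). By finite stature (and Lemma~\ref{lem:uniformly bounded index} for uniformity over $S$) there are only finitely many such conjugates up to conjugation by $E$ (resp.\ by $\pstab(S)$); for a representative $g\dot L^V_i g^{-1}$ one has $L^V_i\subset g^{-1}Eg$, so $g^{-1}E'g\cap L^V_i$ has finite index in $L^V_i$. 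Shrinking $\dot L^V_i$ into the intersection of all these finitely many subgroups $g^{-1}E'g$ (over all edge groups $E$ of $V$, and, for clause~\eqref{choice3.5}, the relevant subtrees $S$), and taking a normal core in $\C_V(L^V_i)$, makes every conjugate of $\dot L^V_i$ meeting $E'$ (resp.\ $E'_S$) nontrivially lie inside it; a direct computation with conjugates, using that $N_V(L^V_i)$ normalizes every candidate, shows this survives further shrinking.

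Clauses~\eqref{choice3} and~\eqref{choice4} are obtained from the cubical small-cancellation machinery over $X_{V'}$. Enlarge the superconvex cores $\widetilde X^{L_i}_V,\widetilde X^{H_i}_V$ of Lemma~\ref{lem:piece control}, and choose superconvex cores (Lemma~\ref{lem:superconvex core}) for $E'_V$ and for each of the finitely many commensurability classes of $E'_S$, large enough to contain the relevant lowest cores; then, once the $\dot L^V_i$ are deep enough, the presentation of $\overline{V'}$ with relators the finitely many $V'$-conjugacy classes of $V$-conjugates of the $\dot L^V_i$ (carried by translates of the cores $\widetilde X^{L_i}_V/\dot L^V_i$) is $C'(\tfrac1{24})$, and the small-pieces hypothesis of Lemma~\ref{lem:Intersection Control} holds relative to the cores of $E'_V$ and of the $E'_S$ --- using Lemma~\ref{lem:piece control}, the fact that when a relator carrier meets such a core deeply the corresponding lowest core is \emph{contained} in it (yielding the ``$\widetilde Y_i\subset\widetilde A_j$'' alternative rather than the forbidden reverse inclusion), and that the finitely many relevant double-coset representatives (Lemma~\ref{lem:finitely many double cosets}) are short compared to the systoles. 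Then Remark~\ref{rmk:Intersection Control} applied to these cores gives clause~\eqref{choice4} and the identification of $\ker(E'_V\to\bar V)$ in~\eqref{choice3}, while the induced presentation on the core of $E'_V$ has liftable shells by Lemma~\ref{lem:liftable shell criterion} --- its fiber-product components being, by Lemma~\ref{lem:superconvex fiber-product intersection}, either whole relators or contractible complexes of diameter below half a systole --- so Lemma~\ref{lem:quasi-isom embedding} shows $\overline{E'_V}$ is quasiconvex in $\bar V$. Since the hypotheses of Theorem~\ref{thm:malnormal special quotient}, Lemma~\ref{lem:Intersection Control} and Lemma~\ref{lem:liftable shell criterion} all survive replacing relators by deeper covers, the ``moreover'' clause follows. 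The main difficulty is this bookkeeping: making the finiteness reductions uniform over the infinitely many subtrees $S$ (via finite commensurable depth and the uniform index bound of Lemma~\ref{lem:uniformly bounded index}), and arranging the superconvex cores so that all of the conditions can be met simultaneously --- and in particular so that the ``contained rather than reverse-contained'' alternative holds --- while remaining stable under deepening.
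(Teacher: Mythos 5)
Your proposal is correct and follows essentially the same route as the paper's proof: secure each property in turn by passing to deeper finite-index normal subgroups of the $\C_V(L^V_i)$ and note that each property persists under further shrinking, use Theorem~\ref{thm:malnormal special quotient} for~\eqref{choice2}, use Lemma~\ref{lem:control of intersection of conjugates}\eqref{control of intersections:2} together with the finiteness from finite stature and Lemma~\ref{lem:uniformly bounded index} for the dichotomy clauses in~\eqref{choice1} and~\eqref{choice3.5}, and use the cubical small-cancellation machinery (Lemmas~\ref{lem:piece control},~\ref{lem:liftable shell criterion},~\ref{lem:quasi-isom embedding},~\ref{lem:Intersection Control},~\ref{lem:superconvex fiber-product intersection}) over $X_{V'}$ for~\eqref{choice3} and~\eqref{choice4}. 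The paper's version is more explicit about the bookkeeping with the $V'$-conjugacy class representatives $\dot L^V_{i\lambda}$ and about how property~\eqref{choice3.5} supplies the ``factor through'' hypothesis of Lemma~\ref{lem:Intersection Control}, but you identify all the same ingredients and justifications, so this is not a different argument, just a more compressed exposition of it.
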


\begin{proof}
It suffices to prove that for each property, there exists a collection of subgroups satisfying that property, and the property still holds after passing to further finite index subgroups of elements in this collection.

We first ensure property~\eqref{choice1}. Each infinite edge group in $V$ is an element of $\Upsilon_V$ up to conjugacy in $V$. By Lemma~\ref{lem:control of intersection of conjugates}.(2), for any $g\in V$, either $L^V_i\cap (E')^g$ is trivial, or $L^V_i\cap (E')^g$ is of finite index in $L^V_i$ (the index is uniformly bounded above independent of $g$). We choose $\dot L^V_i$ to be contained in these finite index subgroups.

Property~\eqref{choice2} follows from Theorem~\ref{thm:malnormal special quotient}.

Now we look at property~\eqref{choice3}. We can assume $E'_V$ is infinite. By property~\eqref{choice1}, $\ker(V\to \bar V)\le V'$. For each $\dot{L}^{V}_{i}$, the collection of all $V$-conjugates of $\dot{L}^{V}_{i}$ consists of finitely many $V'$-conjugacy classes. We pick a representative from each $V'$-conjugacy class and form the collection $\{\dot{L}^{V}_{i\lambda}\}_{\lambda\in\Lambda_i}$. Then $\ker(V'\to\bar V')=\nclose{\dot{L}^{V}_{i\lambda}:1\le i\le \ell_V,\, \lambda\in\Lambda_i}_{V'}$, where the subscript $V'$ indicates normal closure inside $V'$. Each $\dot{L}^{V}_{i\lambda}$ acts cocompactly on a translate of $\widetilde{X}^{L_i}_V$ (cf. Lemma~\ref{lem:piece control}), which we denote by $\widetilde{X}^{L_{i\lambda}}_V$. Let $X^{L_{i\lambda}}_V=\widetilde{X}^{L_{i\lambda}}_V/\dot{L}^{V}_{i\lambda}$. Each $\dot{L}^{V}_{i\lambda}$ is residually finite since it is a subgroup of a virtually special group. Hence by Lemma~\ref{lem:piece control}.\eqref{piece control1} and Lemma~\ref{lem:bounded wall-piece}, we can pass to finite index subgroup of $\dot{L}^{V}_{i\lambda}$ such that the systole $\systole{X^{L_{i\lambda}}_V}$ is large enough to ensure that the presentation $\langle X_{V'}\mid X^{L_{i\lambda}}_V:1\le i\le \ell_V,\, \lambda\in\Lambda_i \rangle$ is $C'(1/24)$ (note that $\widetilde{X}^{L_{i\lambda}}_V$ is $\C_V(\dot{L}^{V}_{i\lambda})$-invariant and we always choose $\dot{L}^{V}_{i\lambda}$ to be normal in $\C_V(\dot{L}^{V}_{i\lambda})$, then all the cone-pieces correspond to the first situation of Lemma~\ref{lem:piece control}.\eqref{piece control1}).

Note that $E'_V$ acts cocompactly on a translate of $\widetilde{X}^{L_i}_V$ or $\widetilde{X}^{H_i}_V$ (cf. Lemma~\ref{lem:piece control}), which we denoted by $\widetilde{X}^{E'_V}_V$. Let $X^{E'_V}_V=\widetilde{X}^{E'_V}_V/E'_V$. Since $E'_V$ is torsion-free, and $E'$ is contained in $E'_V$ as a finite index subgroup, property~\eqref{choice1} holds with $E'$ replaced by $E'_V$. Thus by Lemma~\ref{lem:superconvex fiber-product intersection}, each component of the fiber product $X^{E'_V}_V\otimes_{X_{V'}} X^{L_{i\lambda}}_V$ is either a copy of $X^{L_{i\lambda}}_V$, or is a contractible complex. By passing to a further finite cover of each $X^{L_{i\lambda}}_V$, we can assume the diameter of contractible components in $X^{E'_V}_V\otimes_{X_{V'}} X^{L_{i\lambda}}_V$ is $\le \frac{1}{2}\systole{X^{L_{i\lambda}}_V}$ (note that contractible components in the fiber product do not change when we pass to a cover of $X^{L_{i\lambda}}_V$). By Lemma 3.56, the map $\langle X^{E'_V}_V\mid X^{E'_V}_V\otimes_{X_{V'}} X^{L_{i\lambda}}_V:1\le i\le \ell_V, \,\lambda\in\Lambda_i\rangle\to \langle X_{V'}\mid X^{L_{i\lambda}}_V:1\le i\le \ell_V, \,\lambda\in\Lambda_i \rangle$ has liftable shells. We have thus determined the appropriate $\dot L^V_i$ such that Lemma~\ref{lem:quasi-isom embedding} ensures that property~\eqref{choice3} holds for the edge group $E$ of $V$. We repeat this argument for each infinite edge group of $V$ (there are only finitely many edge groups) to find the required collection satisfying property~\eqref{choice3}.

For property~\eqref{choice3.5}, we consider the collection of the various $\{E'_S\}$ where $S$ ranges over all finite subtrees with infinite pointwise stabilizers based at $v$. By Lemma~\ref{lem:finitely many conjugacy classes2}, there are finitely many $V$-conjugacy classes of such subgroups. Thus property~\eqref{choice3.5} can be arranged in the same as property~\eqref{choice1}, using Lemma~\ref{lem:control of intersection of conjugates} and Lemma~\ref{lem:uniformly bounded index}.

It remains to arrange property~\eqref{choice4}. Let $\{E'_S\}$ be the collection in the previous paragraph. By our initial assumption that $E'\le V'$ whenever $E$ is an edge group of $V$, we have $E'_S\le V'$. Since $\{E'_S\}$ has only finitely many $V$-conjugacy classes, it has finitely many $V'$-conjugacy classes. Choose a representative from each $V'$-conjugacy class and form a collection $\{K_i\}_{i=1}^n$. Each $K_i$ is of finite index in a $V$-conjugate of an element in $\Upsilon_V$, thus each $K_i$ acts cocompactly on some translate of $\widetilde{X}^{L_i}_V$ or $\widetilde{X}^{H_i}_V$, which we shall denote by $\widetilde{X}^{K_i}_V$. Let $X^{K_i}_V=\widetilde{X}^{K_i}_V/K_i$.

Since each $K_i$ is quasiconvex in $V'$ (Lemma~\ref{lem:quasiconvex}), there are finitely many double cosets $K_i g K_j$ such that $K^g_i\cap K_j$ is infinite (Lemma~\ref{lem:finitely many double cosets}). Let $\{K_i g^{ij}_\ell K_j\}_{\ell=1}^{\ell_{ij}}$ be the collection of such double cosets. We pass to further finite sheet cover of $X^{L_{i\lambda}}_V$ such that for any $i,j,\ell$, we have $M<\frac{1}{8}\systole{X^{L_{i\lambda}}_V}$ and $|g^{ij}_l|<\frac{1}{8}\systole{X^{L_{i\lambda}}_V}$, where $M$ is the constant in Lemma~\ref{lem:piece control}.\eqref{piece control2}. Then property~\eqref{choice4} follows from Lemma~\ref{lem:Intersection Control} (the \textquotedblleft factor through\textquotedblright\ part of the assumption of Lemma~\ref{lem:Intersection Control} follows from property~\eqref{choice3.5}).
\end{proof}

For each vertex group $V$, we have determined in Lemma~\ref{lem:choice},  a collection $\{\dot{L}^{V}_{i}\}_{i=1}^{\ell_V}$ which we  call \emph{$\dot L$-subgroups}. For each $V$ and $i$, we choose a finite index subgroup $\hat{L}^{V}_{i}\le L^V_i$ such that the collection $\{\hat{L}^{V}_{i}\}_{V\in\mathcal{V},1\le i\le \ell_V}$ satisfies 
\begin{enumerate}
	\item $\hat{L}^{V}_{i}\le \dot L^V_i$ for each $V$;
	\item $\{\hat{L}^{V}_{i}\}$ is compatible with the transfer isomorphisms in Definition~\ref{def:based big-trees}, i.e. any transfer isomorphism from $L^V_i$ to $L^{V'}_{i'}$ maps $\hat{L}^{V}_{i}$ to $\hat{L}^{V'}_{i'}$.
\end{enumerate}
Such choice of finite index subgroups can be made in the following way. Define a relation over the set $\{L^V_i\}_{V\in\mathcal{V},1\le i\le \ell_V}$ such that $L^V_i\sim L^{V'}_{i'}$ if there is a transfer isomorphism between them. This is an equivalence relation by Definition~\ref{def:based big-trees}. For each equivalent class, we pick a representative $L^V_i$, and define $\hat{L}^{V}_{i}$ to be a finite index characteristic subgroup of $L^V_{i}$ such that $\hat{L}^{V}_{i}$ is contained in the images of the $\dot L$-subgroups under the transfer isomorphisms whose ranges are $L^V_{i}$. Then we define the finite index subgroups of other elements in the equivalent class to be the image of $\hat{L}^{V}_{i}$ under transfer isomorphisms.

Let $\ddot L^V_i$ be the normal closure of $\hat{L}^{V}_{i}$ in $\C_V(L^{V}_{i})$. Note that $\ddot L^V_i\le \dot L^V_i$, thus $\{\ddot{L}^{V}_{i}\}_{i=1}^{\ell_V}$ also satisfy Lemma \ref{lem:choice}. Now we consider the quotient map $V\to \bar{V}=V/\nclose{\ddot{L}^{V}_{1},\cdots, \ddot{L}^V_{\ell_V}}$. Note that 
$\nclose{\hat{L}^{V}_{1},\cdots, \widehat{L}^V_{\ell_V}} = \nclose{\ddot{L}^{V}_{1},\cdots, \ddot{L}^V_{\ell_V}}$ 
since  $\nclose{\hat{L}^{V}_{i}}_{\C_V(L_i)} \subset \nclose{\hat{L}^{V}_{i}}_V$ for each $i$.
We employ $\{\ddot{L}^{V}_{i}\}$ to facilitate the small-cancellation conditions.



\textbf{Existence of the quotient map}: Pick two vertex groups $V_i$ and $V_j$ such that there is an edge $E$ between them. Let $\alpha_E:E_i\to E_j$ be as in the beginning of Section~\ref{subsec_quotient}. Let $N_i=\ker(V_i\to \bar V_i)$. It suffices to show $\alpha_E(E_i\cap N_i)=E_j\cap N_j$.

Let $V'_i\le V_i$ be our chosen finite index subgroup and let $E'_{V_i}=V'_i\cap E_i$. Then $E_i\cap N_i=V'_i\cap N_i\cap E_i=N_i\cap E'_{V_i}$ since $N_i\le V'_i$. By Lemma \ref{lem:choice}.\eqref{choice3}, $N_i\cap E'_{V_i}$ is generated by all $V_i$-conjugates of $\{\ddot{L}^{V_i}_{k}\}_{k=1}^{l_{V_i}}$ that are contained in $E'_{V_i}$, hence it is generated by a collection of $V_i$-conjugates of $\{\hat{L}^{V_i}_{k}\}_{k=1}^{l_{V_i}}$. However, $\alpha_E$ maps any $V_i$-conjugate of $\hat{L}^{V_i}_{k}$ to an $V_j$-conjugate of $\{\hat{L}^{V_j}_{k}\}_{k=1}^{l_{V_j}}$. Thus $\alpha_E(N_i\cap E_i)=\alpha_E(N_i\cap E'_{V_i})\subset N_j\cap E'_{V_j}=N_j\cap E_j$. Similarly, $\alpha^{-1}_E(N_j\cap E_j)\subset N_i\cap E_i$.

We have verified the compatibility of  $\{\phi_{V_i}:V_i\to\bar{V}_i\}$. Hence by Lemma~\ref{lem:choice}.\eqref{choice2} and \eqref{choice3}, conclusions~\eqref{conclusion1}, \eqref{conclusion2} and \eqref{conclusion3} of Proposition~\ref{prop:quotients} hold. Proposition~\ref{prop:quotients}.\eqref{conclusion10} holds by construction. Moreover, for $V$, $E$, $E'$ and $E'_V$ in Lemma~\ref{lem:choice}, if an $V$-conjugate of $\{\ddot{L}^{V}_{i}\}_{i=1}^{\ell_V}$ is contained in $E'_V$, then it has a finite index subgroup contained in $E'$, hence it is contained in $E'$ by (1). However, $\ker(E\to\bar{E})=\ker(V\to\bar{V})\cap E$ is generated by such conjugates by the discussion above, thus $\ker(E\to\bar{E})\le E'$ and conclusion~\eqref{conclusion6} of Proposition~\ref{prop:quotients} holds. We deduce Proposition~\ref{prop:quotients}.\eqref{conclusion20} in a similar way. Conclusion~\eqref{conclusion7} of Proposition~\ref{prop:quotients} can be arranged in a similar way as the quasiconvexity statement in Lemma~\ref{lem:choice}.\eqref{choice3}.

\textbf{$(\bar{G},\bar{\mathcal{V}})$ has finite stature:} Let $\mathcal{E}'$ be the collection of finite index subgroups of edge groups chosen at the beginning. By Lemma~\ref{lem:uniformly bounded index}, $\delta(G,\mathcal{E}')<\infty$. Let $\bar{\mathcal{E}}$ be the edge groups of $\bar{G}$ (they are quotients of $\mathcal{E}$), and $\bar{\mathcal{E}}'$ be the finite index subgroups of $\bar{\mathcal{E}}$ that are images of elements of $\mathcal{E}'$. Let $\bar{T}$ be the Base-Serre tree of $\bar{G}$. For each nontrivial subtree $\bar{S}\subset\bar{T}$, we define $\bar{E}'_{\bar{S}}$ in the same way as we defined $E'_S$ for a subtree $S\subset T$.

Let $\phi:G\to \bar{G}$ be the quotient map between graphs of groups induced by $\{\phi_{V_i}:V_i\to\bar{V}_i\}$. Recall that each $\phi_{V_i}$ is formed by quotienting relators satisfying the properties of Lemma~\ref{lem:choice}. Let $\phi_T:T\to\bar{T}$ be the $G$-equivariant map between the Bass-Serre trees of $G$ and $\bar G$ induced by $\phi$. 
\begin{lem}
	\label{lem:lift}
Suppose there is a chain of finite nontrivial subtrees $\bar{S}_1\subset\cdots\subset\bar{S}_n$ with $|\bar{E}'_{\bar{S}_n}|=\infty$. Choose a base vertex $\bar{w}$ in $\bar{S}_1$. Then for any $w\in T$ with $\phi_T(w)=\bar{w}$, there is a sequence of subtrees $w\in S_1\subset S_2\subset\cdots\subset S_n$ such that $\phi(E'_{S_i})=\bar{E}'_{\bar{S}_i}$ and $\phi$ maps $S_i$ isomorphically to $\bar{S}_i$ for each $i$.
\end{lem}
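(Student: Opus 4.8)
The plan is to prove the statement by induction on $n$, the length of the chain, and within each inductive step, to peel off one edge of $\bar S_{i+1}$ that is not in $\bar S_i$ and lift it to an edge of $T$ using the local structure of the quotient map. The key feature I will exploit is that $\phi_{V}\colon V\to\bar V$ is a quotient by a normal closure of certain $\dot L$-subgroups (more precisely $\ddot L$-subgroups), and by Lemma~\ref{lem:choice}\eqref{choice3.5} and \eqref{choice3}, the kernel of $E\to\bar E$ for each edge group $E$ is generated by conjugates of $\ddot L$-subgroups contained in $E$; in particular the edge groups $\bar E$ of $\bar G$ are genuine quotients of $E$ and the splitting of $\bar G$ has the same underlying graph, with $\phi_T\colon T\to\bar T$ a $G$-equivariant simplicial map. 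The crucial local fact I would establish first: for a vertex $w\in T$ with $\phi_T(w)=\bar w$, the map $\phi_T$ induces a surjection from the set of edges of $T$ at $w$ onto the set of edges of $\bar T$ at $\bar w$, and — this is what I really need — if $\bar e$ is an edge at $\bar w$ with $\bar E'_{\bar e}$ infinite (equivalently, whose stabilizer-related subgroup survives as infinite in $\bar V$), then among the preimage edges at $w$ there is one, $e$, with $E'_e$ infinite and $\phi(E'_e)=\bar E'_{\bar e}$; moreover $\phi$ is injective on the star of such an $e$. This is because the edge groups $\bar E$ are quotients whose kernels are understood, so an infinite $\bar E'_{\bar e}$ forces the preimage edge group to already be infinite, and one chooses the preimage edge lying over $\bar e$ in the $G$-orbit matching $\bar w$.

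For the inductive step, suppose $S_1\subset\cdots\subset S_i$ have been constructed with $\phi(E'_{S_j})=\bar E'_{\bar S_j}$ and $\phi|_{S_j}$ an isomorphism onto $\bar S_j$. Write $\bar S_{i+1}=\bar S_i\cup \bar\gamma$ where $\bar\gamma$ is a (finite) subtree attached to $\bar S_i$ along the rest, and build $S_{i+1}$ edge by edge: for each edge $\bar e$ of $\bar S_{i+1}\setminus \bar S_i$ with an endpoint $\bar u$ already realized as $u\in S_i$ (or in the part built so far), use the local lifting fact above at $u$ to pick an edge $e$ of $T$ at $u$ with $\phi(e)=\bar e$. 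One must check that the resulting subcomplex $S_{i+1}$ is a tree (no accidental identifications) — this follows because $\phi_T$ restricted to $S_{i+1}$ is injective, which in turn follows from injectivity on each star together with the tree structure of $\bar S_{i+1}$; a cycle or an identification in $S_{i+1}$ would map to a cycle or identification in the tree $\bar S_{i+1}$, impossible. Then one verifies $\phi(E'_{S_{i+1}})=\bar E'_{\bar S_{i+1}}$: the inclusion ``$\subseteq$'' is immediate since $E'_{S_{i+1}}\subseteq E'_{S_i}$ and each new edge contributes, and ``$\supseteq$'' uses that $\bar E'_{\bar S_{i+1}}$ is an intersection of the (finitely many, by $\delta_c(\bar G,\bar{\mathcal E}')<\infty$) conjugates of $\bar E'$ along edges of $\bar S_{i+1}$, each of which is hit by the corresponding $E'_e$, combined with Lemma~\ref{lem:choice}\eqref{choice4} (Remark~\ref{rmk:Intersection Control} applied to the $E'_S$) to commute $\phi$ past these finite intersections. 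The hypothesis $|\bar E'_{\bar S_n}|=\infty$ propagates downward since $\bar E'_{\bar S_n}\le \bar E'_{\bar S_j}$ for $j\le n$, so every $\bar E'_{\bar S_j}$ is infinite, which is what licenses the local lifting fact at each stage.

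The main obstacle I anticipate is the step ``$\phi(E'_{S_{i+1}})=\bar E'_{\bar S_{i+1}}$'', specifically the inclusion $\bar E'_{\bar S_{i+1}}\subseteq \phi(E'_{S_{i+1}})$: a priori $\phi$ of an intersection is contained in, but need not equal, the intersection of the $\phi$-images, and here we are intersecting infinitely many edge-conjugates of edge groups. The resolution is exactly the content of Lemma~\ref{lem:choice}\eqref{choice4}: because $\delta_c(G,\mathcal E')<\infty$ (Lemma~\ref{lem:uniformly bounded index}) the relevant pointwise stabilizers are already cut out by finitely many edges, and the Intersection Control machinery (Lemma~\ref{lem:Intersection Control}, via Remark~\ref{rmk:Intersection Control}) was arranged to guarantee $\overline{E'_{S_1}\cap E'_{S_2}}=\overline{E'_{S_1}}\cap\overline{E'_{S_2}}$ for finite subtrees through $v$ — so $\phi$ does commute with these intersections after all. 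A secondary subtlety is the bookkeeping that the subgroup $E'_{S_{i+1}}$ one obtains, which is attached to the base vertex $w$ via conjugating everything into $\stab(w)$, is the same (up to the appropriate conjugacy) whether one assembles $\bar S_{i+1}$ from its edges in one order or another; this is handled by the well-definedness of $E'_S$ noted before Lemma~\ref{lem:uniformly bounded index} (which used normality of the $E'$ in $E$) together with the corresponding normality in $\bar G$ coming from Lemma~\ref{lem:choice}\eqref{choice3}.
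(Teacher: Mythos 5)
Your plan follows the same route as the paper --- induct by adjoining one edge of $\bar T$ at a time, invoking Lemma~\ref{lem:choice}\eqref{choice4} and the Intersection Control machinery --- and you correctly flag the main obstacle, but your resolution of it is wrong, and this is a genuine gap. Your ``local lifting fact'' asserts the existence of a preimage edge $e$ at $u$ with $E'_e$ infinite and $\phi(E'_e)=\bar E'_{\bar e}$; but this holds for \emph{every} preimage edge of $\bar e$ at $u$, since the preimages form a single $\ker(\stab(u)\to\stab(\bar u))$-orbit and their groups $E'_e$ are all conjugate. What actually varies across the orbit is $E'_{S_i}\cap E'_e$; for a naive lift $e$ this intersection can be finite or trivial, in which case $\phi(E'_{S_i}\cap E'_e)$ is finite and cannot equal the infinite group $\bar E'_{\bar S_i}\cap\bar E'_{\bar e}$. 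So your local fact does not single out a usable lift.

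Your claimed fix --- that Remark~\ref{rmk:Intersection Control} guarantees $\overline{E'_{S_1}\cap E'_{S_2}}=\overline{E'_{S_1}}\cap\overline{E'_{S_2}}$ unconditionally, so that ``$\phi$ does commute with these intersections after all'' --- is a misreading. Part (2) of that remark yields this equality only when $E'_{S_1}\cap E'_{S_2}$ is \emph{already} infinite. The correct mechanism is part (1), applied with trivial $\bar g$: since $\bar E'_{\bar e}\cap\bar E'_{\bar S_i}$ is infinite, there exists $g\in V'$ with $g\in\ker\phi$ such that $(E'_e)^g\cap E'_{S_i}$ is infinite and $\overline{(E'_e)^g\cap E'_{S_i}}=\bar E'_{\bar e}\cap\bar E'_{\bar S_i}$; one then sets $S_{i+1}=S_i\cup g^{-1}e$, which still maps onto $\bar S_{i+1}$ because $g\in\ker\phi$. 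In other words, the lift of $\bar e$ must be \emph{chosen} --- conjugated by a kernel element supplied by the Intersection Control --- and that conjugation is the missing step in your argument. As a secondary point, your appeal to injectivity of $\phi_T$ on stars is both false in general (the whole $\ker\phi$-orbit of edges at $u$ maps to $\bar e$) and unneeded: the new edge maps to $\bar e\notin\bar S_i$ and hence is distinct from every edge of $S_i$, which already gives the tree structure and the injectivity of $\phi_T|_{S_{i+1}}$ by induction.
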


\begin{proof}
We only prove the case $n=1$ as the more general case is similar. We will induct on the number of edges in $\bar{S}_1$. The case when $\bar{S}_1$ has only one edge follows from the definition of quotients between graphs of groups. Now we consider more general $\bar{S}_1$. Let $\bar{e}\subset\bar{S}_1$ be an edge containing a valence one vertex of $\bar{S}_1$. We remove $\bar{e}$ from $\bar{S}_1$ to obtain a smaller tree $\bar{S}_0$. We can also assume the base point $\bar{w}$ is inside $\bar{S}_0$. By induction, we can lift $\bar{S}_0$ to $S_0\subset T$ such that $w\in S_0$ and $\phi(E'_{S_0})=\bar{E}'_{\bar{S}_0}$. Let $\bar{v}=\bar{e}\cap\bar{S_0}$, and let $v$ be the lift of $\bar{v}$ in $S_0$. Let $e\subset T$ be a lift of $\bar{e}$ that contains $v$. Up to conjugacy inside $G$, we assume without loss of generality that $\stab(v)$ is a vertex group $V$ of $G$ (recall that we have identified vertex groups of $G$ with stabilizers of vertices in $T_{\mathcal{G}}\subset T$). We can then view $E'_{S_0}$ and $E'_{e}$ as subgroups of $V'$. Since $\bar{E}'_{\bar{e}}\cap \bar{E}'_{\bar{S}_0}$ is infinite, by Lemma~\ref{lem:choice}.\eqref{choice4} and Remark~\ref{rmk:Intersection Control}, there exists $g\in V'$ with $g\in\ker \phi$ such that $(E'_{e})^{g}\cap E'_{S_0}$ is infinite and $\overline{(E'_{e})^{g}\cap E'_{S_0}}=\bar{E}'_{\bar{e}}\cap\bar{E}'_{\bar{S}_0}$. Then $S_1=S_0\cup g^{-1}e$ is the tree as required (note that $\phi_T(g^{-1}e)=\phi(g^{-1})\phi_T(e)=\bar{e}$). Here $S_1$ must be a high subtree of $T$ for otherwise $\phi(E'_{S_1})$ would be finite.
\end{proof}

To see $\bar{G}$ has finite stature, we verify Lemma~\ref{lem:finitely many conjugacy classes2}.(2). By Lemma~\ref{lem:lift}, we can lift each $\bar{E}'_{\bar{S}}\le \bar G$ with finite $\bar{S}$ to $E'_S\le G$ with finite $S$. However, the collection of $V$-conjugacy classes of all such $E'_S$ is finite since $(G,\mathcal{V})$ has finite stature, hence their $\phi$-images have finitely many $\bar{V}$-conjugacy classes.

\textbf{The depth of $\bar{E}$ decreases:} We verify Proposition~\ref{prop:quotients}.\eqref{conclusion5}. It suffices to show that whenever there is a chain $\bar{S}_1\subset\cdots\subset\bar{S}_n$ with each $\bar{S}_i$ finite, $[\pstab(\bar{S}_{i}):\pstab(\bar{S}_{i+1})]=\infty$ and $|\pstab(\bar{S}_n)|=\infty$, then $n\le \delta(G,\mathcal{E})-1$. Note that $[\bar{E}'_{\bar{S}_{i}}:\bar{E}'_{\bar{S}_{i+1}}]=\infty$. Applying Lemma~\ref{lem:lift}, we obtain a chain of high subtrees $S_1\subset\cdots\subset S_n$ such that $[E'_{S_i}:E'_{S_{i+1}}]=\infty$. So $[\pstab(S_i):\pstab(S_{i+1})]=\infty$. Since $S_n$ is a high subtree, there exists a subtree $S_{n+1}$ such that $S_n\subset S_{n+1}$, $[\pstab(S_n):\pstab(S_{n+1})]=\infty$ and $[\pstab(S_{n+1})]=\infty$. It follows that $n+1\le \delta(G,\mathcal{E})$, hence $n\le \delta(G,\mathcal{E})-1$.

\textbf{Injectivity of finite subtrees:} We verify the moreover statement of Proposition~\ref{prop:quotients}. We assume without loss of generality that $S$ is connected. It suffices to show distinct edges of $S$ meeting a vertex $v$ is sent by $\phi_T$ to distinct edges. Let $\{e_i\}_{i=1}^n$ be the collection of edges of $S$ containing $v$ and in the same $\stab(v)$ orbit. Up to conjugacy in $G$, we assume that $\stab(v)$ is a vertex group $V$. Each $e_i$ corresponds to a coset $g_iE$ of an edge group in $V$. Since $E$ is separable in $V$, there exists a finite index normal subgroup $\dot V\le V$ such that $g_ig^{-1}_j \notin E\dot V$ for any $i\neq j$. Thus provided $\ker(V\to\bar{V})\le \dot V$, $\{g_iE\}$ project to distinct cosets in $\bar{V}$. We repeat this process of constructing $\dot V$ for other $\stab(v)$-orbits of edges in $S$ that contains $v$, as well as other vertices in $S$, to obtain a collection of finite index subgroups of certain vertex groups. By Proposition~\ref{prop:quotients}.\eqref{conclusion6}, it is possible to choose the $\phi_V$'s such that their kernels are contained in these finite index subgroups. Then the resulting $\phi$ now has the additional property that it is injective on $S$. This concludes the proof of Proposition~\ref{prop:quotients}.

We record a consequence of the above construction.
\begin{lem}
	\label{lem:used later}
Let $\phi:G\to \bar{G}$ and $\phi_T:T\to\bar{T}$ be as above. Then
\begin{enumerate}
	\item for any subtree $S\subset T$ with $\pstab(S)$ infinite, $\phi(\pstab(S))$ is commensurable to $\pstab(\bar{S})$ for $\bar{S}=\phi_T(S)$.
	\item Pick finite subtree $\bar{S}\subset\bar{T}$ such that $\pstab(\bar{S})$ is infinite and pick a vertex $\bar{w}\subset\bar{S}$. Then for any $w\in T$ with $\phi_T(w)=\bar{w}$, there exists a subtree $S\subset T$ containing $w$ such that $\phi(\pstab(S))$ is commensurable to $\pstab(\bar{S})$ and $S$ is a lift of $\bar{S}$.
	\item Let $v\in T$ be a vertex and let $H_1$ and $H_2$ be two transections in $\stab(v)$. If $|\phi(H_1)\cap \phi(H_2)|=\infty$, then there exists $g\in \ker(\stab(v)\to\stab(\bar v))$ such that $\phi(H^g_1\cap H_2)=\phi(H_1)\cap \phi(H_2)$ up to finite index subgroups.
\end{enumerate}
\end{lem}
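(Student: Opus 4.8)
The plan is to read off all three statements from the construction of $\phi$ carried out above, the main inputs being Lemma~\ref{lem:lift}, Lemma~\ref{lem:uniformly bounded index} (applied to $G$, and also to $\bar{G}$, whose hypotheses are furnished by conclusions \eqref{conclusion1}, \eqref{conclusion3}, \eqref{conclusion4} of Proposition~\ref{prop:quotients}), and the intersection control packaged in Lemma~\ref{lem:choice}.\eqref{choice4} together with Remark~\ref{rmk:Intersection Control}. In all three parts one inclusion, namely $\phi(\pstab(S))\le\pstab(\phi_T(S))$ (and likewise $\phi(\pstab(S_i))\le\pstab(\phi_T(S_i))$ in part (3)), is immediate from $\phi$-equivariance of $\phi_T$; the substance is always to bound the image of a pointwise stabilizer \emph{below}. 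The anticipated obstacle is that $\phi_T$ restricted to a subtree may fold distinct edges together, so that $\phi(\pstab(S))$ could a priori be a thin subgroup of $\pstab(\phi_T(S))$; the remedy, used in part (1), is an edge-by-edge induction whose viability hinges on the fact that every subtree of a finite subtree with infinite pointwise stabilizer again has infinite pointwise stabilizer, keeping Remark~\ref{rmk:Intersection Control} applicable throughout.

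For (1), I would first reduce to the case that $S$ is finite: since $\delta_c(G,\mathcal{E})<\infty$ by Lemma~\ref{lem:finite depth}, there is a finite subtree $S_0\subseteq S$ with $\pstab(S_0)=\pstab(S)$, and as $\phi(\pstab(S))=\phi(\pstab(S_0))\le\pstab(\phi_T(S))\le\pstab(\phi_T(S_0))$ it suffices to prove $\phi(\pstab(S_0))$ commensurable with $\pstab(\phi_T(S_0))$ (then $\pstab(\phi_T(S))$ is squeezed between two commensurable groups). Next I would induct on the number of edges of $S_0$. When $S_0$ is a single edge the assertion is the defining property of a quotient of graphs of groups, with equality on the nose. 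For the inductive step, delete from $S_0$ a leaf edge $e$ with non-leaf endpoint $y$, obtaining $S_0'\ni y$; conjugate so that $y\in T_{\mathcal{G}}$; and note $E'_{S_0}=E'_{S_0'}\cap E'_{e}$ is of finite index in $\pstab(S_0)$ by Lemma~\ref{lem:uniformly bounded index}, hence infinite. Then Lemma~\ref{lem:choice}.\eqref{choice4} lets Remark~\ref{rmk:Intersection Control} be applied to $E'_{S_0'}$ and $E'_{e}$ (finite subtrees with infinite pointwise stabilizer, both containing $y$), giving $\overline{E'_{S_0}}=\overline{E'_{S_0'}}\cap\overline{E'_{e}}$. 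The inductive hypothesis together with Lemma~\ref{lem:uniformly bounded index} makes $\overline{E'_{S_0'}}$ commensurable with $\pstab(\phi_T(S_0'))$ and $\overline{E'_{e}}$ with $\pstab(\phi_T(e))$, so $\overline{E'_{S_0}}$, and hence $\phi(\pstab(S_0))$, is commensurable with $\pstab(\phi_T(S_0'))\cap\pstab(\phi_T(e))=\pstab(\phi_T(S_0))$.

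Statement (2) would then follow quickly. Since $\pstab(\bar{S})$ is infinite, Lemma~\ref{lem:uniformly bounded index} applied to $\bar{G}$ shows $\bar{E}'_{\bar{S}}$ has finite index in $\pstab(\bar{S})$, hence is infinite, so Lemma~\ref{lem:lift} (with $n=1$, base vertex $\bar{w}$, and the prescribed lift $w$) yields a subtree $S\ni w$ mapping isomorphically onto $\bar{S}$ with $\phi(E'_S)=\bar{E}'_{\bar{S}}$. Then $\bar{E}'_{\bar{S}}=\phi(E'_S)\le\phi(\pstab(S))\le\pstab(\phi_T(S))=\pstab(\bar{S})$, with the two outer groups commensurable, so $\phi(\pstab(S))$ is commensurable with $\pstab(\bar{S})$.

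For (3), I would conjugate so that $\stab(v)=V$ is a vertex group and, using $\delta_c(G,\mathcal{E})<\infty$ once more, write $H_i=\pstab(S_i)$ with $S_i\ni v$ finite; each $E'_{S_i}$ has finite index in $H_i$ by Lemma~\ref{lem:uniformly bounded index}. Since $\phi(H_1)\cap\phi(H_2)$ is infinite, so is its finite index subgroup $\overline{E'_{S_1}}\cap\overline{E'_{S_2}}$, and Remark~\ref{rmk:Intersection Control}(1) --- valid for $E'_{S_1},E'_{S_2}$ under $V'\to\overline{V'}$ by Lemma~\ref{lem:choice}.\eqref{choice4} --- provides $g\in\ker(\phi|_{V'})\subseteq\ker(\stab(v)\to\stab(\bar{v}))$ with $\overline{(E'_{S_1})^{g}\cap E'_{S_2}}=\overline{E'_{S_1}}\cap\overline{E'_{S_2}}$. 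Because $(E'_{S_1})^{g}\cap E'_{S_2}$ has finite index in $H_1^{g}\cap H_2$, the group $\phi(H_1^{g}\cap H_2)$ is commensurable with $\overline{E'_{S_1}}\cap\overline{E'_{S_2}}$, which has finite index in $\phi(H_1)\cap\phi(H_2)$; this is the claimed equality up to finite index subgroups. As noted, the step I expect to be most delicate is the inductive argument in (1) handling the folding of $\phi_T$, everything else being bookkeeping with the tools already in place.
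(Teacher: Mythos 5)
Your proof is correct and follows essentially the same route as the paper: part (1) by the same reduction to finite subtrees (via finite commensurable depth) and the same edge-by-edge induction using Lemma~\ref{lem:choice}.\eqref{choice4} and Remark~\ref{rmk:Intersection Control}, with part (2) read off from Lemma~\ref{lem:lift} and part (3) from expressing transections as $\pstab$ of finite subtrees and invoking Remark~\ref{rmk:Intersection Control}.(1). The only cosmetic differences are that you strip a leaf edge rather than splitting $S$ at an arbitrary interior vertex, and you carry a commensurability invariant through the induction where the paper proves the sharper equality $\phi(E'_S)=\bar{E}'_{\bar S}$ and then deduces commensurability afterward.
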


\begin{proof}
For (1), by Lemma~\ref{lem:finite depth} and Lemma~\ref{lem:finite subgroups}, we can assume all the subtrees involved are finite. It suffices to prove $\phi(E'_S)=\bar{E}'_{\bar{S}}$. We induct on the number of edges in $S$. Suppose $S$ is a union of two smaller trees $S=S_1\cup S_2$ such that $S_1$ and $S_2$ intersect in a vertex $v\in T$. Since $E'_S=E'_{S_1}\cap E'_{S_2}$ is infinite, by Lemma~\ref{lem:choice}.\eqref{choice4}, $\phi(E'_{S_1}\cap E'_{S_2})=\phi(E'_{S_1})\cap\phi(E'_{S_2})$ (we view $E'_{S_1}$ and $E'_{S_2}$ as subgroups of $\pstab(v)$, and up to conjugation, we can assume $v=v_A$ in Lemma~\ref{lem:choice}.\eqref{choice4}). However, by induction, $\phi(E'_{S_i})=\bar{E}'_{\bar{S}_i}$ for $i=1,2$. Thus $\phi(E'_S)=\bar{E}'_{\bar{S}}$. (2) is a consequence of Lemma~\ref{lem:lift}. (3) follows from Lemma~\ref{lem:choice}.\eqref{choice4} and that any transection of $G$ can be expressed as the pointwise stabilizer of a finite big-tree of $T$ (cf. Lemma~\ref{lem:finite depth}).
\end{proof}

We extract the following observation from the proof of Lemma~\ref{lem:lift}.
\begin{prop}
	\label{prop:lift}
Let $\phi:G\to \bar{G}$ be the quotient map between graphs of groups induced by quotients $\{\phi_V:V\to \bar V\}_{V\in\mathcal{V}}$ between their vertex groups. Let $\phi_T:T\to\bar{T}$ be the $G$-equivariant map between the Bass-Serre trees of $G$ and $\bar G$ induced by $\phi$. 

Suppose  the following holds whenever
 $H_1 \le V$ is a $V$-conjugate of an edge group of $V$ and $H_2\le V$ is a transection in $V$:
 If $\phi_V(H_1)\cap \phi_V(H_2)$ is infinite
 then there exists $g\in V$ such that $\phi_V(H^g_1\cap H_2)=\phi_V (H_1)\cap \phi_V(H_2)$. 

Then for any finite subtrees $\bar{S}_1\subset\bar{S}_2$ in $\bar T$ with 
$|\pstab(\bar S_2)|=\infty$, and for any lift $S_1\subset T$ with $\phi(S_1)=\bar S_1$
and $\phi(\pstab(S_1))=\pstab(\bar S_1)$,
there exists a lift $S_2$ with $S_1\subset S_2$ and $\phi(S_2)=\bar S_2$
and $\phi(\pstab(S_2))=\pstab(\bar S_2)$.

If we weaken the assumption so that there exists $g\in V$ and finite index subgroups $H'_1\le H_1, H'_2\le H_2$ such that $\phi_V((H'_1)^g\cap H'_2)=\phi_V (H'_1)\cap \phi_V(H'_2)$, then the second conclusion becomes $\phi(\pstab(S_i))$ is commensurable with $\pstab(\bar S_i)$ for each $i$.
\end{prop}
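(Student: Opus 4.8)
The plan is to imitate the proof of Lemma~\ref{lem:lift}, with the hypothesis of the proposition playing the role that Lemma~\ref{lem:choice}.\eqref{choice4} and Remark~\ref{rmk:Intersection Control} played there. I would induct on the number $n$ of edges of $\bar S_2$ not lying in $\bar S_1$. When $n=0$ we have $\bar S_2=\bar S_1$ and $S_2=S_1$ works. When $n\geq 1$, the edges of $\bar S_2$ outside $\bar S_1$ form a forest attached to $\bar S_1$, so I can pick a leaf edge $\bar e$ of $\bar S_2$ with $\bar e\notin\bar S_1$ for which $\bar R:=\bar S_2\setminus\bar e$ is still a subtree containing $\bar S_1$. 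Since $\bar S_1\subseteq\bar R\subseteq\bar S_2$ and $|\pstab(\bar S_2)|=\infty$, also $|\pstab(\bar R)|=\infty$, so the inductive hypothesis applied to $\bar S_1\subseteq\bar R$ yields a lift $R\supseteq S_1$ of $\bar R$ with $\phi(\pstab(R))=\pstab(\bar R)$. It then remains to extend $R$ across the single edge $\bar e$.

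For the one-edge step, let $\bar v=\bar e\cap\bar R$ and let $v\in R$ be its unique lift; conjugating in $G$ if necessary I may assume $V:=\stab(v)$ is a vertex group of $G$. Since $\phi_V:V\to\bar V$ is surjective, $\phi_T$ maps the edges of $T$ at $v$ onto the edges of $\bar T$ at $\bar v$, so there is a lift $e$ of $\bar e$ at $v$. Put $H_1:=\pstab(e)$, a $V$-conjugate of an edge group of $V$, and $H_2:=\pstab(R)\leq V$, a transection in $V$. Here $H_2$ is infinite because $\phi_V(H_2)=\pstab(\bar R)\supseteq\pstab(\bar S_2)$ is infinite, and $\phi_V(H_1)=\pstab(\bar e)$ since $\phi_V$ carries the edge group, hence any conjugate of it, onto the corresponding edge group of $\bar V$. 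Thus $\phi_V(H_1)\cap\phi_V(H_2)=\pstab(\bar e)\cap\pstab(\bar R)=\pstab(\bar e\cup\bar R)=\pstab(\bar S_2)$ is infinite, and the hypothesis produces $g\in V$ with $\phi_V(H_1^g\cap H_2)=\phi_V(H_1)\cap\phi_V(H_2)=\pstab(\bar S_2)$. Because $g\in\stab(v)$, the edge $ge$ still meets $R$ at $v$, so $S_2:=R\cup ge$ is a subtree with $\pstab(S_2)=\pstab(R)\cap\pstab(ge)=H_2\cap H_1^g$, whence $\phi(\pstab(S_2))=\phi_V(H_1^g\cap H_2)=\pstab(\bar S_2)$.

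The point I expect to require real care — the main obstacle — is that $S_2$ must be a genuine lift of $\bar S_2$, i.e.\ $\phi_T(ge)$ should be exactly $\bar e$ rather than some other edge $\phi_V(g)\,\bar e$ in the $\bar V$-orbit of $\bar e$ at $\bar v$. This is automatic as soon as $g$ can be taken with $\phi_V(g)\in\phi_V(H_1)=\stab_{\bar V}(\bar e)$, in particular whenever $g\in\ker\phi_V$. In the situations where the proposition is applied the hypothesis is in fact verified in this strengthened form: Remark~\ref{rmk:Intersection Control}(1), applied with the trivial double-coset representative to the pair $(\pstab(e),\pstab(R))$, returns a $g\in\ker\phi_V$ realizing $\overline{\pstab(e)^g\cap\pstab(R)}=\overline{\pstab(e)}\cap\overline{\pstab(R)}=\pstab(\bar S_2)$. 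With such a $g$ one has $\phi_T(ge)=\bar e$, so $\phi_T(S_2)=\bar R\cup\bar e=\bar S_2$ and $\phi_T$ restricts to an isomorphism $S_2\to\bar S_2$, completing the induction. (To argue purely from the bare hypothesis one would need an extra step replacing the given $g$ by one lying in $\phi_V(H_1)\ker\phi_V$ without changing $\phi_V(H_1^g\cap H_2)$; this is the only genuinely delicate ingredient.)

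For the weakened statement I would carry the assertion that $\phi(\pstab(S_i))$ is commensurable with $\pstab(\bar S_i)$ through the same induction. In the one-edge step the weak hypothesis supplies $g$ and finite-index subgroups $H_1'\leq H_1$, $H_2'\leq H_2$ with $\phi_V((H_1')^g\cap H_2')=\phi_V(H_1')\cap\phi_V(H_2')$. Taking $S_2=R\cup ge$ as before, $\phi(\pstab(S_2))$ contains $\phi_V(H_1')\cap\phi_V(H_2')$, which has finite index in $\phi_V(H_1)\cap\phi_V(H_2)=\pstab(\bar e)\cap\phi_V(\pstab(R))$; since $\phi_V(\pstab(R))\leq\pstab(\bar R)$ has finite index there (inductive commensurability plus the automatic containment $\phi_V(\pstab(R))\leq\pstab(\phi_T(R))$), this intersection has finite index in $\pstab(\bar e)\cap\pstab(\bar R)=\pstab(\bar S_2)$. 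As also $\phi(\pstab(S_2))\leq\pstab(\phi_T(S_2))=\pstab(\bar S_2)$, the group $\phi(\pstab(S_2))$ sits between a finite-index subgroup of $\pstab(\bar S_2)$ and $\pstab(\bar S_2)$, hence is commensurable with it. The base case and the leaf-edge reduction are unchanged.
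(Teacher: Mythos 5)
Your argument follows the same edge-by-edge induction that the paper uses: Proposition~\ref{prop:lift} is stated as an observation extracted from the proof of Lemma~\ref{lem:lift}, which removes a leaf edge $\bar e$, lifts the remaining tree inductively, and then adjoins a translate of a lift of $\bar e$ using the $g$ supplied by the intersection-control hypothesis. The delicate point you flag is real and correctly diagnosed: the bare hypothesis, giving only $g\in V$, does not by itself force $\phi_T(ge)=\bar e$; for that one needs $\phi_V(g)\in\phi_V(H_1)=\stab_{\bar V}(\bar e)$, e.g.\ $g\in\ker\phi_V$. This is precisely what the paper's proof of Lemma~\ref{lem:lift} uses — it invokes Remark~\ref{rmk:Intersection Control}(1) to obtain $g\in\ker\phi$ and then notes $\phi_T(g^{-1}e)=\phi(g^{-1})\phi_T(e)=\bar e$ — so the proposition's hypothesis should be read as asserting the existence of such a $g$ with controlled image (as Remark~\ref{rmk:Intersection Control} delivers in every application). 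With that reading your proof, including the commensurability variant, matches the paper's and is complete.
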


It follows that a sequence of finite trees $\bar S_1\subset \bar S_2 \subset \cdots $ 
with each $|\pstab(\bar S_i)|=\infty$ lifts to a 
sequence $ S_1\subset S_2 \subset \cdots $ 
with $\phi$ mapping each $S_i$ isomorphically to $\bar S_i$
and each $\phi( \pstab(S_i))= \pstab(\bar S_i))$.

\section{Relatively Hyperbolic Application}

\subsection{Background on relative hyperbolicity }
We refer to \cite{HruskaRelQC}
for background on relative hyperbolic groups and the equivalence of various definitions of relative quasiconvexity.

\begin{thm}
	\label{thm:induced peripheral}
	Let $G$ be hyperbolic relative to a collection of maximal parabolic subgroups $\mathcal{P}$ and let $H\le G$ be relatively quasiconvex. 
\begin{enumerate}
	\item There are finitely many $H$-conjugacy classes of infinite subgroups of form $H\cap P^g$ with $g\in G$ and $P\in\mathcal{P}$, moreover, if $\mathcal{P}_H$ is a set of representatives of these conjugacy classes then $H$ is hyperbolic relative to $\mathcal{P}_H$.
	\item Given $P\in\mathcal{P}$, there are finitely many $P$-conjugacy classes of infinite subgroups of form $P\cap H^g$ with $g\in G$.
\end{enumerate}
\end{thm}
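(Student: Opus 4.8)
The plan is to derive both assertions from a single standard consequence of relative quasiconvexity, call it $(\ast)$, treating the ``moreover'' clause of~(1) as the one place where genuine work happens. First I would fix a geometric model for $(G,\mathcal{P})$: let $G$ act on the Groves--Manning cusped space $X=X(G,\mathcal{P})$ (Bowditch's fine hyperbolic graph or the coned-off Cayley graph would serve equally well, with \cite{HruskaRelQC} supplying the dictionary). In $\partial X$ the parabolic points are exactly the fixed points of conjugates of peripheral subgroups; since $\mathcal{P}$ consists of \emph{maximal} parabolics, the full $G$-stabilizer of the parabolic point $\xi_{gP}$ attached to a left coset $gP\in G/P$ is $gPg^{-1}=P^g$, so its $H$-stabilizer is $H\cap P^g$. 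I would use relative quasiconvexity of $H$ in the form: $H$ acts on its limit set $\Lambda H\subseteq\partial X$ as a geometrically finite convergence group; equivalently $H$ acts cocompactly on $\mathrm{Hull}(\Lambda H)$ with a suitable $H$-invariant family of open horoballs removed, and the point $\xi_{gP}$ lies in $\Lambda H$ with infinite stabilizer exactly when $H\cap P^g$ is infinite, in which case it is a bounded parabolic point of $\Lambda H$. Geometric finiteness then gives $(\ast)$: \emph{for each $P\in\mathcal{P}$ there are only finitely many double cosets $HgP$ with $H\cap P^g$ infinite} (i.e.\ finitely many $H$-orbits of parabolic points of $\Lambda H$); this is part of the basic theory of relatively quasiconvex subgroups \cite{HruskaRelQC}.

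For part~(1): the subgroups $H\cap P^g$ are precisely the point stabilizers of the action of $H$ on $G/P$; two of them are $H$-conjugate whenever their defining cosets lie in one $H$-orbit, and $H$-orbits on $G/P$ biject with double cosets $HgP$. Hence the number of $H$-conjugacy classes of infinite $H\cap P^g$ is at most the count in $(\ast)$, which is finite, and we take $\mathcal{P}_H$ to be a transversal of these classes. The ``moreover'' clause --- that $H$ is hyperbolic relative to $\mathcal{P}_H$ --- is the substantive step and the one I expect to be the main obstacle: one assembles the cusped space $X(H,\mathcal{P}_H)$ from the finitely many deep horoballs identified above, verifies that the natural map $X(H,\mathcal{P}_H)\to X(G,\mathcal{P})$ is a quasi-isometric embedding onto a quasiconvex subspace, concludes that $X(H,\mathcal{P}_H)$ is hyperbolic, and invokes the Groves--Manning characterization of relative hyperbolicity. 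I would either cite this directly (it is Hruska's theorem, \cite{HruskaRelQC}) or reproduce that argument; the remainder of~(1) is bookkeeping once $(\ast)$ is available.

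For part~(2) I would avoid introducing any new geometry and instead reduce to $(\ast)$ by an inversion symmetry. Fix $P\in\mathcal{P}$. The subgroups $P\cap H^g$ are the point stabilizers of the action of $P$ on $G/H$, so the number of $P$-conjugacy classes of infinite $P\cap H^g$ is at most the number of double cosets $PgH$ with $P\cap H^g$ infinite. Now $x\mapsto x^{-1}$ carries the double coset $PgH$ to $Hg^{-1}P$, thereby identifying the double coset spaces $P\backslash G/H$ and $H\backslash G/P$, and under this identification the subgroup $P\cap H^g$ attached to $PgH$ is carried to $g^{-1}(P\cap H^g)g=H\cap P^{g^{-1}}$, which is the subgroup attached to $Hg^{-1}P$ in the counting of $(\ast)$. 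Since conjugate subgroups are simultaneously infinite, this restricts to a bijection between the double cosets relevant to part~(2) and those counted in $(\ast)$; hence their number is finite. The only point needing care is keeping the conjugation conventions straight, and no input beyond $(\ast)$ is used --- so parts~(1) and~(2) rest on the same finiteness fact, the real content of the theorem being the relative hyperbolicity asserted in the ``moreover'' clause.
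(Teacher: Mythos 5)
Your proposal is correct and follows essentially the same route as the paper: part (1) is ultimately Hruska's Theorem 9.1 (you unpack the geometric content rather than citing it as a black box, but it is the same fact), and part (2) is obtained from the finiteness of double cosets $HgP$ with $H\cap P^g$ infinite by the inversion $g\mapsto g^{-1}$, which identifies $H\backslash G/P$ with $P\backslash G/H$ and carries $H\cap P^{g^{-1}}$ to a $P$-conjugate of $P\cap H^g$. The only minor difference is that the paper passes from the conjugacy-class count of (1) to the double-coset count via almost malnormality of parabolics, whereas you obtain the double-coset count directly from geometric finiteness; both are standard.
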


The collection $\mathcal{P}_H$ is  the \emph{induced peripheral structure} on $H$.

\begin{proof}
The first assertion is \cite[Thm~9.1]{HruskaRelQC}. Note that (1) implies that there are finitely many double cosets of form $HgP$ such that $H\cap P^g$ is infinite. Hence there are finitely many double cosets of form $PgH$ such that $P\cap H^g$ is infinite. Hence assertion (2) follows.
\end{proof}


The following is a variant of \cite[Prop~5.11]{MartinezPedrozaCombinations2009}:
\begin{thm}
	\label{thm:plus}
	Let $G$ be hyperbolic relative to f.g.\ virtually abelian subgroups $\{P_i\}$, and let $Q$ be a relatively quasiconvex subgroup with its induced peripheral structure. Let $\{K_1, \dots, K_m\}$ be a complete collection of representatives of  maximal parabolic subgroups of $Q$. There exists a collection of finite index subgroups $\{\dot P_i\leq P_i\}$ such that the following holds.
	
	Let $\{\ddot P_i\leq \dot P_i\}$ be a collection of subgroups
	such that $\ddot P_i\leq P_i$ is a finite index normal subgroup  for each $i$. Let $Q^+\leq G$ be the subgroup generated by the union of $Q$ and each conjugate of an element in $\{\ddot P_i\}$ having infinite intersection with $Q$. Then 
	\begin{enumerate}
		\item $Q^+$ is a full quasiconvex subgroup of $G$;
		\item \label{amalgamation} $Q^+$ splits as a tree $T_m$ of groups whose vertex groups are $\{Q,A_1,\ldots,A_m\}$, and where $T_m$ is a wedge of $m$~edges.
		\item The central vertex of $T_m$ has vertex group $Q$,
		and for each $i$ there is an edge between $Q$ and $A_i$ whose edge group is $K_i$.
		\item Each $A_i$ has finite index in a maximal parabolic subgroup of $G$;
		and $\{A_1,\ldots,A_m\}$ is the induced peripheral structure of $Q^+$ in $G$.
	\end{enumerate}
\end{thm}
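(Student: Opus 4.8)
The strategy is to construct $Q^+$ as the fundamental group of the claimed graph of groups and then verify each property using the relative Dehn filling / quasiconvexity machinery. First I would fix the induced peripheral structure $\{K_1,\dots,K_m\}$ of $Q$, where each $K_j = Q \cap P_{i(j)}^{g_j}$ for suitable $i(j)$ and $g_j$; by Theorem~\ref{thm:induced peripheral}(1) this is a finite list and $Q$ is hyperbolic relative to it. Using Theorem~\ref{thm:induced peripheral}(2) together with the Kapovich--Short / commensurator finiteness for relatively quasiconvex subgroups, I would choose the finite index subgroups $\dot P_i \le P_i$ so that: (i) for any $g$, the intersection $Q \cap \dot P_i^g$ is either trivial or of finite index in some conjugate of a $K_j$, uniformly; and (ii) the almost malnormality/height bookkeeping for the collection $\{P_i\}$ descends well. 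Then, given $\ddot P_i \le \dot P_i$ finite index normal, I would set $A_j$ to be the subgroup of $P_{i(j)}^{g_j}$ generated by $K_j$ and $\ddot P_{i(j)}^{g_j}$ — this is finite index in $P_{i(j)}^{g_j}$ since $K_j$ has finite index in its commensurator and $\ddot P$ is finite index — and let $Q^+$ be the fundamental group of the wedge $T_m$ of $m$ edges with central group $Q$, outer groups $A_j$, and edge groups $K_j \hookrightarrow Q$, $K_j \hookrightarrow A_j$.

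Next I would identify this abstractly-defined $T_m$-group with the concrete subgroup of $G$ generated by $Q$ and the relevant conjugates of the $\ddot P_i$. The key point is that the map from the graph-of-groups fundamental group to $G$ is injective: this follows because $Q$ is relatively quasiconvex with peripheral structure $\{K_j\}$, each $A_j$ is parabolic, and the amalgamation glues each $A_j$ to $Q$ precisely along the maximal parabolic $K_j = Q \cap A_j$ — so one can apply a combination theorem (this is exactly the content of \cite[Prop~5.11]{MartinezPedrozaCombinations2009}, of which the theorem is a variant) to conclude that $Q^+$ is relatively quasiconvex in $G$, with the natural action on the Bass--Serre tree of $T_m$ embedding equivariantly. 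Fullness of $Q^+$ (every infinite parabolic of $G$ meeting $Q^+$ lies in $Q^+$) comes from the fact that the $A_j$ already capture all of $Q$'s parabolic intersections and each $A_j$ is full in its maximal parabolic by the choice of $\dot P_i$ making $\ddot P_i$ normal of finite index; quasiconvexity of $Q^+$ then follows from fullness plus relative quasiconvexity of the pieces. Finally, the claim that $\{A_1,\dots,A_m\}$ is the induced peripheral structure of $Q^+$ is read off from the Bass--Serre tree: the maximal parabolics of $Q^+$ are the stabilizers of vertices of the tree lying over the outer vertices, which are exactly the $A_j$ and their $Q^+$-conjugates, and one checks there are no coincidences among them (using that the $K_j$ were a complete irredundant list for $Q$).

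The main obstacle I expect is arranging the finite index subgroups $\dot P_i$ so that \emph{simultaneously} the combination theorem applies for every choice of deeper normal $\ddot P_i$ (uniformity over the filling) and the induced peripheral structure comes out exactly $\{A_1,\dots,A_m\}$ with no extra parabolic conjugacy classes and no collapsing — this is the delicate part, since a careless choice could either fail to make $A_j$ full, or introduce new parabolic intersections between distinct $A_j$'s inside $Q^+$. Controlling this requires the finiteness statements of Theorem~\ref{thm:induced peripheral} applied both to $Q$ in $G$ and, after the fact, to $Q^+$ in $G$, together with the observation that conjugating an $A_j$ into $Q^+$ can only happen via an element of $Q$ (a Bass--Serre tree argument on the wedge $T_m$), which pins the parabolic classes down to the $K_j$. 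Everything else — hyperbolicity relative to the $A_j$, the explicit shape of $T_m$, and the splitting — is then bookkeeping on the graph of groups.
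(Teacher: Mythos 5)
Your outline follows the same skeleton as the paper's proof (amalgamate $Q$ with parabolic pieces $A_j=\langle K_j,\ddot P_{i(j)}^{g_j}\rangle$ along the $K_j$ via the Mart\'inez-Pedroza combination theorem), but the two places you flag as delicate are exactly where the argument is incomplete, and the conditions you impose on the $\dot P_i$ are not the ones that make the combination theorem applicable. Your condition (i) --- that $Q\cap\dot P_i^g$ be trivial or of finite index in a conjugate of some $K_j$ --- is automatic (any infinite $Q\cap P_i^g$ is already a maximal parabolic of $Q$, hence a $Q$-conjugate of some $K_j$, and finite index intersections come for free from $[P_i:\dot P_i]<\infty$), and Kapovich--Short commensurator finiteness plays no role here. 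What \cite[Thm~1.1]{MartinezPedrozaCombinations2009} actually requires is that $A_j\cap Q=K_j$ exactly (this part is automatic, since $A_j\le P_{i(j)}^{g_j}$ and $K_j=Q\cap P_{i(j)}^{g_j}$) together with a depth condition: every element of $A_j\setminus K_j$ must avoid a prescribed finite ball. The paper arranges this using the fact that f.g.\ virtually abelian groups are subgroup separable, so $K_j$ is separable in its maximal parabolic, producing a finite index subgroup $P'\le P$ containing $K_j$ with no short elements outside $K_j$; the $\dot P_i$ are then chosen inside such subgroups, and the depth condition is inherited by every $A_j$ built from a deeper $\ddot P_i$. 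This separability input is missing from your choice of $\dot P_i$, and without it neither injectivity of $Q\ast_{K_1}A_1\ast\cdots\to G$ nor relative quasiconvexity of the image follows.

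The second gap is the uniformity over the filling, which you correctly identify as the main obstacle but do not resolve: to rerun the induction of \cite[Prop~5.11]{MartinezPedrozaCombinations2009} directly at the level of the $\ddot P_i$ for \emph{every} deeper normal choice, you need the constants at each inductive step (which depend on the quasiconvexity constant of the partially built amalgam) to be independent of the chosen $\ddot P_i$, and you give no argument for this. The paper sidesteps the issue: it applies the combination theorem once, at the $\dot P_i$ level, to produce a single full relatively quasiconvex group $Q'=Q\ast_{K_1}A'_1\ast\cdots\ast_{K_m}A'_m$; then for any deeper $\ddot P_i$ it notes $Q^+\le Q'$, reads off conclusions (2)--(3) by a Bass--Serre argument inside $Q'$, and obtains relative quasiconvexity of $Q^+$ in $G$ by showing $Q^+$ is \emph{undistorted} in $Q'$ (it sits as a sub-tree of spaces whose vertex and edge spaces are undistorted) and invoking the fact from \cite{HruskaRelQC} that undistorted subgroups of groups hyperbolic relative to f.g.\ virtually abelian subgroups are relatively quasiconvex; fullness holds since $\ddot P_i\le A_i$ forces $A_i$ to have finite index in a maximal parabolic, and (4) then follows from (1) and Theorem~\ref{thm:induced peripheral}. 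To repair your route you must either prove a version of the combination theorem with constants uniform in $R$ and thread that uniformity through the induction, or replace that step by something like the paper's $Q^+\le Q'$ undistortion argument. Finally, your closing inference ``quasiconvexity of $Q^+$ then follows from fullness plus relative quasiconvexity of the pieces'' reverses the actual logical order: relative quasiconvexity is what the combination/undistortion argument delivers, and fullness and the identification of the peripheral structure are consequences, not inputs.
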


When $Q$ is parabolic,
$\{K_i\}=\{Q\}$ consists of a single element, $T_m$ is a single edge,
and $A_i=\langle Q\cup g\ddot P_ig^{-1}\rangle$ for some $g,i$. So it is a trivial splitting.

\begin{proof}
	We assume $m\ge 1$ otherwise the theorem is trivial. First we claim there is a collection of finite index normal subgroups $\{\dot P_i\}$ of $\{P_i\}$ such that if $Q'$ is the subgroup of $G$ generated $Q$ and any conjugate of an element in $\{\dot P_i\}$ having infinite intersection with $Q$, then $Q'$ satisfies all the requirements of the theorem. To see the claim, suppose first that  $\{K_i\}$ has a single representative of maximal parabolic subgroup, which is $K_1$. Suppose without loss of generality that $K_1=Q\cap P_1$. Since f.g.\ virtually abelian groups are subgroup separable, $K_1$ is separable in $P_1$, so we can find finite index subgroup $ P'_1\le P_1$ satisfying the assumption of \cite[Thm~1.1]{MartinezPedrozaCombinations2009}. Choose $\dot P_1\le P'_1$ which is finite index and normal in $P_1$. Then the subgroup $A_1$ of $P_1$ generated by $K_1$ and $\dot P_1$ also satisfies the assumption of \cite[Thm~1.1]{MartinezPedrozaCombinations2009}. Hence $\langle Q\cup \dot P_1\rangle_G=\langle Q\cup A_1\rangle_G$ is naturally isomorphic to $Q\ast_{Q\cap A_1} A_1=Q\ast_{K_1} A_1$ as in \cite[Thm~1.1]{MartinezPedrozaCombinations2009}. And this subgroup is quasiconvex. The general case of $m\ge 1$ follows by the same argument together with the induction scheme in \cite[Prop~5.11]{MartinezPedrozaCombinations2009}. By Theorem~\ref{thm:induced peripheral}, the induction terminates after finitely many steps.
	
	We denote the vertex group of $Q'$ by $\{Q,A'_1,\ldots,A'_m\}$. Let $\{\ddot P_i\leq \dot P_i\}$ and $Q^+$ be as in the statement of the theorem. Then $Q^+\le Q'$. Suppose without loss of generality that $\ddot P_i\le A'_i$ for $1\le i\le m$ and suppose $A_i$ is generated by $\ddot P_i$ and $K_i$. Then $Q^+=\langle Q\cup \{(\ddot P_i)^g\}_{1\le i\le m,g\in Q}\rangle=\langle Q\cup \{(A_i)^g\}_{1\le i\le m,g\in Q}\rangle=\langle Q\cup \{A_i\}_{i=1}^m\rangle$. Thus Properties (2) and (3) hold. Now we verify (1). We only consider the case where $A_1\subsetneq A'_1$ and $A_i=A'_i$ for $2\le i\le m$. Other cases are similar. Let $Q_1=\langle Q\cup \{A_i\}_{i=2}^m\rangle$. Then $Q^+=A_1\ast_{K_1}Q_1$ and $Q'=A'_1\ast_{K_1}Q_1$. As $Q'$ is hyperbolic relative to f.g.\ virtually abelian subgroups, it suffices to show $Q^+$ is undistorted in $Q'$ \cite[Thm 1.4 and Thm 1.5]{HruskaRelQC}. Since each vertex group and edge group of $Q'$ is quasiconvex (hence undistorted) in $Q'$, we can view $Q'$ as a tree of spaces over its Bass-Serre tree with vertex spaces and edge spaces undistorted. Now $Q^+$ sits inside $Q'$ as a sub-tree of spaces, hence it is undistorted. Property (4) follows from (1) and Theorem~\ref{thm:induced peripheral}.
\end{proof}


Recall that a subgroup $H$ in a relatively hyperbolic group $G$ is \emph{loxodromic}, if $H$ contains an infinite order element $h$ such that $h$ is not contained in a maximal parabolic subgroup of $G$. The following is a slightly more general form of \cite[Cor~8.6]{HruskaWisePacking}, however, its proof is exactly the same as in \cite{HruskaWisePacking}.
\begin{thm} 
	\label{lem:finitely many double cosets1}
	Suppose $G$ is relatively hyperbolic. Let $H_1,H_2$ be two relatively quasiconvex subgroups of $G$. Then there are only finitely many double cosets $H_1gH_2$ such that $H_1\cap gH_2g^{-1}$ is loxodromic.
\end{thm}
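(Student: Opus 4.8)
The plan is to follow the proof of \cite[Cor~8.6]{HruskaWisePacking} essentially verbatim; the only new features are the presence of two (rather than one) relatively quasiconvex subgroups and the explicit ``loxodromic'' phrasing, neither of which affects the argument. First I would pass to the cusped space $X$ of $(G,\mathcal P)$: a $\delta$-hyperbolic geodesic space on which $G$ acts properly and cocompactly on the complement of a $G$-invariant family of open horoballs $\{B_j\}$. Relative quasiconvexity of $H_i$ yields an $H_i$-invariant quasiconvex subspace $Y_i\subset X$ with $\stab_G(Y_i)=H_i$, on which $H_i$ acts cocompactly modulo the horoballs (for instance a quasiconvex hull of an $H_i$-orbit, with its horoball parts lying over cosets of the induced peripheral structure of Theorem~\ref{thm:induced peripheral}). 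Because $Y_2$ is $H_2$-invariant, the double coset $H_1gH_2$ is determined by the $H_1$-orbit of the translate $gY_2$, and $H_1\cap gH_2g^{-1}=\stab_G(Y_1)\cap\stab_G(gY_2)$; so the task becomes to bound the number of $H_1$-orbits of translates $gY_2$ whose stabiliser meets $\stab_G(Y_1)$ in a subgroup containing a loxodromic element.

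Next I would extract the geometry of a loxodromic $h\in H_1\cap gH_2g^{-1}$. Such an $h$ acts on $X$ with positive stable translation length, and --- this is where the loxodromic hypothesis enters decisively --- its quasi-axis penetrates each horoball $B_j$ only to a depth $D_0=D_0(\delta)$; hence some $\langle h\rangle$-orbit lies at bounded Hausdorff distance from a bi-infinite geodesic $\ell$ contained in the ``thick part'' $X_0$, the $D_0$-neighbourhood of $X\setminus\bigcup_j B_j$, on which $G$ still acts cocompactly. Since $h$ preserves both $Y_1$ and $gY_2$, iterating $h$ on points of each shows that $\ell\subset\neb_C(Y_1)\cap\neb_C(gY_2)$ and that the coarse intersection $Y_1\cap\neb_\kappa(gY_2)$ is unbounded and penetrates $X_0$ arbitrarily deeply.

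Then I would run the Gitik--Mitra--Rips--Sageev counting argument by contradiction. Given infinitely many such double cosets $H_1g_nH_2$, with loxodromics $h_n$ and geodesics $\ell_n$, I would use $H_1$-cocompactness on $Y_1\cap X_0$ to move each $g_n$ within its double coset so that $\ell_n$ runs through a fixed bounded region near a basepoint $x_0\in Y_1\cap X_0$. This produces infinitely many \emph{distinct} $H_1$-orbit representatives $g_nY_2$, each fellow-travelling $Y_1$ along a bi-infinite geodesic near $x_0$; a pigeonhole/packing argument in the cocompact space $X_0$ then forces two of them to fellow-travel each other and to agree coarsely near both ends of a long subsegment, whence $g_nY_2$ and $g_mY_2$ lie in the same $H_1$-orbit, contradicting distinctness of the double cosets. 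This is precisely the cusped-space analogue of Lemma~\ref{lem:finitely many double cosets}.

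The hard part, exactly as in \cite{GMRS98} and \cite{HruskaWisePacking}, is the \emph{uniformity of constants}: the neighbourhood constant $C$ with $\ell_n\subset\neb_C(Y_1)$, and the Morse constants comparing $\ell_n$ to an $\langle h_n\rangle$-orbit, a priori depend on $h_n$ rather than only on $\delta$ and the quasiconvexity data of the $Y_i$. I would handle this in the standard way: work directly with the coarse intersection $Y_1\cap\neb_\kappa(gY_2)$ for $\kappa$ depending only on $\delta$ and the quasiconvexity constants, and observe that loxodromicity of $h_n$ guarantees this coarse intersection is unbounded inside $X_0$, which is enough to feed the packing argument without ever passing through an $h_n$-quasi-axis with uncontrolled constants. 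A secondary technical nuisance is the construction of the $Y_i$ with the stated properties --- in particular the control of $Y_i\cap B_j$ and the identity $\stab_G(Y_i)=H_i$ --- and this is where the structure theory of relatively quasiconvex subgroups recorded in Theorem~\ref{thm:induced peripheral} is used.
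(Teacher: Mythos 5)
Your proposal is correct and takes essentially the same route as the paper, which offers no independent argument but simply asserts that the statement is a slight generalization of \cite[Cor~8.6]{HruskaWisePacking} whose proof goes through verbatim---exactly the plan you carry out, including the observation that passing from one subgroup to two and phrasing the hypothesis as ``loxodromic intersection'' changes nothing. Your reconstruction of the cusped-space/packing argument, together with the standard fix of working with coarse intersections in the thick part rather than $h_n$-quasi-axes to keep the constants uniform, is the intended content of that citation.
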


The following is a consequence of \cite[Cor~8.6]{HruskaWisePacking}.
\begin{thm}
	\label{lem:finite height1}
	Let $\{H_1,\ldots,H_r\}$ be a collection of relatively quasiconvex subgroups of the word-hyperbolic group $G$. Then $\{H_1,\ldots,H_r\}$ has finite height in $G$.
\end{thm}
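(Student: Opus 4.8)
The plan is to reduce the assertion for a collection to the assertion for a single subgroup and then adapt the standard finite-height argument (cf.\ \cite{GMRS98,HruskaWisePacking}), using Theorem~\ref{lem:finitely many double cosets1} as the essential finiteness input. By Definition~\ref{def:height} a finite collection has finite height precisely when each of its members does, so it suffices to fix one relatively quasiconvex $H=H_k\le G$ and bound its height. Suppose for contradiction that $H$ has infinite height: there is a sequence $\{g_n\}_{n\in\naturals}$ with $g_iH\ne g_jH$ for $i\ne j$ and with $L:=\bigcap_{n\in\naturals}H^{g_n}$ infinite. Replacing each $g_n$ by $g_0^{-1}g_n$, which preserves distinctness of the cosets and replaces $L$ by a conjugate, we may assume $g_0$ represents the trivial coset, so that $L\le H$.

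Since $G$ is word-hyperbolic, the infinite subgroup $L$ contains an element $h$ of infinite order (so $h$ is ``loxodromic''). As $h\in H\cap H^{g_n}$ for every $n$, each $H\cap H^{g_n}$ is infinite, so Theorem~\ref{lem:finitely many double cosets1}, applied with $H_1=H_2=H$, confines the $g_n$ to finitely many double cosets $HgH$. Passing to a subsequence, I may assume all $g_n$ lie in a single double coset and write $g_n=a_n d b_n$ with $a_n,b_n\in H$ and $d$ fixed. Then $g_nH=a_ndH$, and $H^{g_n}=a_nKa_n^{-1}$ where $K:=dHd^{-1}$, so the condition $h\in H^{g_n}$ becomes $a_n^{-1}ha_n\in K$.

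The crucial step, and the one I expect to be the main obstacle, is to collapse from finitely many double cosets to finitely many cosets $g_nH$; Theorem~\ref{lem:finitely many double cosets1} by itself controls only double cosets, not the length of a chain. Here I would use two standard features of word-hyperbolic groups: first, the $G$-conjugates of $h$ that lie in the quasiconvex subgroup $K$ fall into only finitely many $K$-conjugacy classes (translation lengths in $K$ and in $G$ are comparable, and elements of bounded translation length form finitely many conjugacy classes); second, the centralizer $C_G(h)$ is virtually cyclic, so $C_G(h)\cap H^{g_m}$ has finite index in $C_G(h)$ as soon as it contains $h$. Passing to a further subsequence so that all the elements $a_n^{-1}ha_n$ are $K$-conjugate, and unwinding the resulting identity, one finds that all the $a_n$ lie in the double coset $C_G(h)\,a_m\,K$ for a fixed index $m$ of the subsequence. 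Feeding this back into $g_nH=a_ndH$ gives $g_nH\in C_G(h)\,a_m d\,H$, and since $a_md\,H\,(a_md)^{-1}=H^{g_m}$ meets $C_G(h)$ in a finite-index subgroup, only finitely many cosets $g_nH$ occur. This contradicts that the $g_nH$ are pairwise distinct, so $H$, and hence the collection, has finite height.

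In summary, the new content over the double-coset statement is the collapse in the last paragraph, and its engine is that a single infinite-order element $h$ is shared by all of the conjugates $H^{g_n}$, forcing the conjugators to differ only by elements of the virtually cyclic group $C_G(h)$ together with elements of $H$; bookkeeping this through the two subsequence-passing steps and the double-coset decomposition is the part that requires care.
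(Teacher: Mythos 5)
The paper's own ``proof'' of this statement is simply a citation of \cite[Cor~8.6]{HruskaWisePacking}, so your self-contained argument is inherently a different route. Before addressing the argument itself: the word ``word-hyperbolic'' in the statement is almost certainly a slip for ``relatively hyperbolic'' --- the immediately following remark explicitly discusses ``finite height in the relatively hyperbolic setting'' with ``infinite'' replaced by ``loxodromic'', the input Theorem~\ref{lem:finitely many double cosets1} is stated for relatively hyperbolic $G$ and loxodromic intersections, and the word-hyperbolic/quasiconvex case is already covered by Lemma~\ref{lem:finite height}. You took the statement literally and worked with ``infinite'' throughout; for the intended statement you need ``loxodromic'' (take $h$ a loxodromic element of $L$, apply Theorem~\ref{lem:finitely many double cosets1} because $h\in H\cap H^{g_n}$ makes that intersection loxodromic).

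The skeleton of your reduction --- normalize $g_0=1$, pick $h\in L$, use Theorem~\ref{lem:finitely many double cosets1} to drop into a single double coset $HdH$, write $g_n=a_ndb_n$, and then upgrade from a single double coset to finitely many cosets by observing that the conjugators $a_n$ are pinned down up to $C_G(h)$ and $K=dHd^{-1}$, with $C_G(h)$ virtually cyclic and $C_G(h)\cap H^{g_m}\supseteq\langle h\rangle$ of finite index in $C_G(h)$ --- is sound and is a genuine alternative to the geodesic-fellow-traveling proof in \cite{GMRS98,HruskaWisePacking}. The one step that does not transfer cleanly to the relatively hyperbolic setting is your translation-length argument that the $G$-conjugates of $h$ lying in $K$ fall into finitely many $K$-conjugacy classes: $K$ need not be word-hyperbolic, and bounding relative translation length in a relatively quasiconvex subgroup and then counting conjugacy classes of loxodromics of bounded relative length is considerably more delicate than the hyperbolic case you appeal to. You can sidestep this entirely: having deduced that $h\in a_nKa_n^{-1}$ for all $n$, the subgroup $C_G(h)\cap a_nKa_n^{-1}$ contains $h$ and is therefore loxodromic, so a second application of Theorem~\ref{lem:finitely many double cosets1} to the pair of relatively quasiconvex subgroups $C_G(h)$ and $K$ puts the $a_n$ into finitely many double cosets $C_G(h)\,g\,K$ directly. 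After passing to a subsequence with all $a_n\in C_G(h)\,a_m\,K$, your endgame (using $Kd=dH$ and $[C_G(h):C_G(h)\cap H^{g_m}]<\infty$) goes through verbatim. With that substitution your argument becomes a correct and self-contained proof in the relatively hyperbolic setting, using only Theorem~\ref{lem:finitely many double cosets1} and basic facts about centralizers of loxodromics; what you gain over the paper's citation is an explicit derivation of finite height from the double-coset finiteness, at the cost of repeating work that \cite{HruskaWisePacking} already packages.
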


Recall that the notion of finite height in the relatively hyperbolic setting is similar to Definition~\ref{def:height}, except we replace ``is infinite'' there by ``is loxodromic''.

The next result follows from Theorem~\ref{lem:finite height1} and Theorem~\ref{lem:finitely many double cosets1}.
\begin{cor}
	\label{lem:finitely many conjugacy class1}
	Let $\{H_1,\ldots,H_r\}$ be a collection of relatively quasiconvex subgroups of a relatively hyperbolic group $G$. Let $K$ be a quasiconvex subgroup of $G$. Then there are only finitely many $K$-conjugacy classes of loxodromic subgroups of form $K\cap(\cap_{k=1}^{n}H^{g_k}_{k_i})$.
\end{cor}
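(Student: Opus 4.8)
The plan is to deduce Corollary~\ref{lem:finitely many conjugacy class1} from the two results it is said to follow from, namely Theorem~\ref{lem:finite height1} (finite height of a finite collection of relatively quasiconvex subgroups) and Theorem~\ref{lem:finitely many double cosets1} (finitely many double cosets $H_1gH_2$ with loxodromic intersection), essentially by mimicking the argument that Corollary~\ref{lem:finitely many conjugacy class} makes in the word-hyperbolic case out of Lemma~\ref{lem:finite height}, Lemma~\ref{lem:finitely many double cosets}, and Lemma~\ref{lem:finite subgroups}. The one thing to be careful about is that in the relatively hyperbolic setting ``loxodromic'' replaces ``infinite'', and that the relative analogue of Lemma~\ref{lem:finite subgroups} is not needed because a loxodromic subgroup is automatically infinite and, more importantly, the finiteness counting will be driven entirely by the height bound and the double-coset bound rather than by a torsion argument.

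First I would set up notation: fix the finite collection $\{H_1,\dots,H_r\}$ of relatively quasiconvex subgroups and the quasiconvex subgroup $K$, and consider all subgroups of the form $L=K\cap H_{k_1}^{g_1}\cap\cdots\cap H_{k_n}^{g_n}$ that are loxodromic, where $n$ varies over $\naturals$, the indices $k_j$ over $\{1,\dots,r\}$, and the $g_j$ over $G$. Up to relabeling $K$ as $H_0$, it suffices to bound the number of $K$-conjugacy classes of loxodromic subgroups arising as intersections $\bigcap_{j} H_{k_j}^{g_j}$ of conjugates of elements of the finite collection $\{H_0,H_1,\dots,H_r\}$ that contain a conjugate of $K$, but it is cleaner to keep $K$ separate and argue by induction on $n$.

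The inductive step is the heart of it. For $n=1$: by Theorem~\ref{lem:finitely many double cosets1} applied to the pair $(K,H_{k_1})$ there are finitely many double cosets $KgH_{k_1}$ with $K\cap H_{k_1}^{g}$ loxodromic, and two elements $g,g'$ in the same double coset $KgH_{k_1}=Kg'H_{k_1}$ give $K$-conjugate intersections $K\cap H_{k_1}^{g}$ and $K\cap H_{k_1}^{g'}$, since writing $g'=k g h$ with $k\in K$, $h\in H_{k_1}$ yields $K\cap H_{k_1}^{g'}=K\cap H_{k_1}^{kgh}=(K\cap H_{k_1}^{g})^{k}$ using $H_{k_1}^{h}=H_{k_1}$; hence finitely many $K$-conjugacy classes, uniformly over $k_1\in\{1,\dots,r\}$. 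For the inductive step, suppose $L'=K\cap H_{k_1}^{g_1}\cap\cdots\cap H_{k_{n-1}}^{g_{n-1}}$ and we intersect with one more conjugate $H_{k_n}^{g_n}$. By the inductive hypothesis there are finitely many $K$-conjugacy classes of loxodromic such $L'$; pick a representative $L'$ from each class, note each representative is again relatively quasiconvex (intersections of relatively quasiconvex subgroups are relatively quasiconvex), and apply Theorem~\ref{lem:finitely many double cosets1} to the pair $(L',H_{k_n})$ to get finitely many double cosets $L'gH_{k_n}$ with $L'\cap H_{k_n}^{g}$ loxodromic, hence finitely many $L'$-conjugacy classes — and therefore finitely many $K$-conjugacy classes — of the new intersections $L'\cap H_{k_n}^{g}$. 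The point that makes the induction terminate is Theorem~\ref{lem:finite height1}: finite height of the collection $\{H_1,\dots,H_r\}$ (together with the observation that adjoining $K$ to the collection keeps the height finite, or alternatively that a strictly decreasing chain of intersections each of which stays loxodromic cannot be arbitrarily long once one controls how conjugates overlap) bounds the number of conjugates that can be intersected while keeping the intersection loxodromic; more precisely, once $n$ exceeds the height $h$ of $\{H_0,\dots,H_r\}$, any loxodromic $\bigcap_{j=1}^n H_{k_j}^{g_j}$ already equals one obtained from at most $h$ of the factors, so the union over all $n$ collapses to a finite union over $n\le h$.

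The main obstacle I anticipate is the bookkeeping needed to see that ``finite height bounds the length of the intersection'', i.e.\ that every loxodromic $n$-fold intersection of conjugates already arises as an $h$-fold intersection: one has to argue that if the cosets $g_jH_{k_j}$ are not all ``essential'' for the intersection then some factor is redundant, and if they are all essential and $n>h$ one contradicts finite height (applied to the enlarged finite collection containing $K$). This is exactly parallel to the reduction behind Corollary~\ref{lem:finitely many conjugacy class}, but it must be carried out with ``loxodromic'' in place of ``infinite''; since a loxodromic subgroup is in particular infinite, the height argument of \cite{GMRS98,HruskaWisePacking} applies verbatim, and no separate control of finite subgroups (the relative analogue of Lemma~\ref{lem:finite subgroups}) is required. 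Everything else — relative quasiconvexity of intersections, the double-coset-to-conjugacy-class passage — is routine.
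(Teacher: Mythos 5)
Your argument is correct and is essentially the (unwritten) argument the paper intends: deduce the corollary from Theorem~\ref{lem:finite height1} and Theorem~\ref{lem:finitely many double cosets1} by the same induction pattern used implicitly for the hyperbolic Corollary~\ref{lem:finitely many conjugacy class}, with your correct observation that the restriction to loxodromic intersections makes a relative analogue of Lemma~\ref{lem:finite subgroups} unnecessary. The ``bookkeeping'' you flag as the main obstacle is simpler than you fear: coinciding cosets $g_jH_{k_j}=g_{j'}H_{k_{j'}}$ give literally identical conjugates $H_{k_j}^{g_j}=H_{k_{j'}}^{g_{j'}}$, so after discarding duplicates the remaining cosets are pairwise distinct and the height bound for the finite collection $\{K,H_1,\dots,H_r\}$ applies directly, with no notion of ``essentiality'' required.
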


\subsection{Virtually sparse specialness}
We recall the notion of a ``sparse cube complex'' which is a generalization of a compact cube complex
that arises naturally for cubulations of groups that are hyperbolic relative to virtually abelian subgroups.
We refer to \cite[Sec~7.e]{WiseIsraelHierarchy} for more on this topic.
\begin{definition}[Quasiflat]
	\label{def:quasiflat}
	A \emph{quasiflat} $\widetilde F$ is a locally finite CAT(0) cube complex with a
	proper  action
	by a f.g.\ virtually abelian group $P$
	such that there are finitely many $P$-orbits of hyperplanes.
\end{definition}

\begin{definition}[Sparse]\label{def:sparse}
	Let $G$ be hyperbolic relative to f.g.\ virtually abelian groups $\{P_i\}$.
	We say $G$ acts
	\emph{cosparsely}
	on a CAT(0) cube complex $\widetilde X$
	if there is a compact subcomplex $K$, and quasiflats $\widetilde F_i$ with
	$P_i=\stab(\widetilde F_i)$ for each $i$, such that:
	\begin{enumerate}
		\item $\widetilde X = GK\cup \bigcup_i G\widetilde F_i$
		\item for each $i$ we have $(\widetilde F_i\cap GK) \subset P_iK'$ for some compact $K'$.
		\item for $i,j$ and $g\in G$, either
		$\widetilde F_i \cap g \widetilde F_j \subset GK$ or else $i=j$ and $\widetilde F_i = g\widetilde F_j$.
	\end{enumerate}
	It follows that translates of quasiflats are either equal or are coarsely isolated  in the sense that
	$\diameter(g_i\widetilde F_i \cap g_j\widetilde F_j)$ is bounded by some uniform constant.
	
	We will be especially interested in the case when $G$ acts both properly and cosparsely.
	In particular, when $G$ acts freely and cosparsely on $\widetilde X$,
	then the quotient $X= G\backslash \widetilde X$ is 
	\emph{sparse}.
	In this case, $X$ is the finite union $K\cup \bigcup_i F_i$ where $K$ is compact and $(F_i\cap F_j)\subset K$ for $i\neq j$,
	and each $F_i$ equals $P_i \backslash \widetilde F_i$ where $P_i$ is a f.g.\ virtually abelian group acting freely on a quasiflat $\widetilde F_i$.
\end{definition}

\begin{remark}
	\label{rmk:compactible}
	Suppose $G=\pi_1 X$ for a sparse cube complex $X$. Suppose $G$ is  hyperbolic relative to f.g.\ virtually abelian subgroups $\{P_1,\ldots,P_n\}$ stabilizing quasiflats $\{\widetilde F_1,\ldots,\widetilde F_n\}$ in $\widetilde X$, and $X$ is sparse relative to $\{F_1,\ldots,F_n\}$ with $F_i=\widetilde F_i/P_i$. Now we assume $G$ is hyperbolic relative to another collection of virtually abelian subgroups $\{P'_1,\ldots,P'_m\}$. If $P'_i$ is conjugate to one of $\{P_i\}$, then it stabilizes a translate of an element in $\{\widetilde F_i\}$, which we denote by $\widetilde F'_i$. If $P'_i$ is not conjugate to one of $\{P_i\}$, then $P'_i$ is virtually $\mathbb Z$, and $P'_i$ acts cocompactly on a superconvex subcomplex $\widetilde F'_i$. Thus $X$ is also sparse relative to $\{F'_1,\ldots,F'_n\}$ with $F'_i=\widetilde F'_i/P'_i$. Thus we can always assume the sparse structure of the cube complex is compatible with a given collection of virtually abelian subgroups that $G$ is relative hyperbolic to.
\end{remark}

We refer to \cite[Thm~7.2]{SageevWiseCores} for the following.
A slightly weaker statement is expressed there, but the proof gives the following:
\begin{lem}\label{lem:core}
	Let $G$ be hyperbolic relative to virtually abelian groups, and suppose $G$ acts
	cosparsely
	on a CAT(0) cube complex $\widetilde X$.
	Let $J$ be a full relatively quasiconvex subgroup of $G$, and let $\widetilde K_o$ be a compact subcomplex of $\widetilde X$.
	Then there exists a convex subcomplex $\widetilde Y\subset \widetilde X$ such that $\widetilde K_o\subset \widetilde Y$
	and such that $J$ acts cosparsely on $\widetilde Y$.
	Moreover, 
	let $\{P_i\}$ be the parabolic subgroups of $G$ with $|J\cap P_i|=\infty$,
	and let $\{\widetilde F_i\}$ be the corresponding quasiflats of $G$,
	we may assume that
	$$(J\widetilde K_o \cup_i  J \widetilde F_i)
	\subset
	\widetilde Y
	\subset
	\neb_r(J\widetilde K_o \cup_i  J \widetilde F_i).$$
	
\end{lem}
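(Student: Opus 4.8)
\noindent\emph{Proof outline.}
The plan is to realize $\widetilde Y$ as the cubical convex hull of a $J$-invariant subcomplex obtained from $\widetilde K_o$ by adjoining the quasiflats relevant to $J$, and then to read off the cosparse structure of $\widetilde Y$ from that of $\widetilde X$. Concretely: let $\{P_i\}$ be the (finitely many, up to $G$-conjugacy) parabolics with $|J\cap P_i|=\infty$; since $J$ is \emph{full}, each such $J\cap P_i$ is of finite index in $P_i=\stab(\widetilde F_i)$, and hence acts cocompactly on the quasiflat $\widetilde F_i$. Here I would use Theorem~\ref{thm:induced peripheral} to see that this accounts for all $J$-conjugacy classes of infinite parabolic intersections, and Remark~\ref{rmk:compactible} to arrange the sparse structure compatibly. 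Put $Z=J\widetilde K_o\cup\bigcup_i J\widetilde F_i$, a $J$-invariant subcomplex, and let $\widetilde Y$ be its cubical convex hull. Then $\widetilde Y$ is a convex subcomplex, it is $J$-invariant because the hull operation is equivariant, and $\widetilde K_o\subseteq Z\subseteq\widetilde Y$; moreover each $\widetilde F_i$ lies in $Z\subseteq\widetilde Y$, so the $J$-translates of the $\widetilde F_i$ are quasiflats contained in $\widetilde Y$. Everything then reduces to two claims: (A) $\widetilde Y\subseteq\neb_r(Z)$ for some $r$, and (B) $J$ acts cosparsely on $\widetilde Y$ with quasiflats the $J$-translates of the $\widetilde F_i$.

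Claim~(A) is the analytic heart and is where relative hyperbolicity is genuinely used. The two inputs I would establish are: first, that $\widetilde X$ is hyperbolic relative to the family $\{g\widetilde F_i : g\in G\}$ of convex subcomplexes (this is standard for cosparse actions, the quasiflats playing the role of horoballs, since electrifying them yields a space equivariantly quasi-isometric to the coned-off Cayley graph of $G$); and second, that $Z$ is relatively quasiconvex in $\widetilde X$ with respect to this family, because via the orbit map $J\widetilde K_o$ is at finite Hausdorff distance from a $J$-orbit and $J$ is relatively quasiconvex in $G$, while adjoining the full quasiflats through which $Z$ passes saturates the parabolic directions without destroying relative quasiconvexity. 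Granting these, I would invoke the cube-complex principle that the cubical convex hull of a relatively quasiconvex, quasiflat-saturated subcomplex lies in a bounded neighbourhood of it. The cleanest route: electrify the quasiflats to get a hyperbolic space in which the image of $Z$ is honestly quasiconvex, and hence within bounded Hausdorff distance of its coarse convex hull; de-electrifying converts this into at most a bounded excursion into each $g\widetilde F_i$, and those excursions land inside $Z$. Alternatively, argue directly with hyperplanes, using that $\widetilde Y=\{\,y:\text{no hyperplane of }\widetilde X\text{ separates }y\text{ from }Z\,\}$ together with thinness of relatively hyperbolic triangles and the fact that $Z$ contains the \emph{entire} quasiflats, to rule out a hyperplane of $\widetilde Y$ lying far from $Z$.

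Claim~(B) is then bookkeeping with the cosparse axioms of Definition~\ref{def:sparse}. By~(A), $\widetilde Y\subseteq\neb_r(Z)=J\,\neb_r(\widetilde K_o)\cup\bigcup_i J\,\neb_r(\widetilde F_i)$. For each quasiflat $g\widetilde F_i\subseteq\widetilde Y$ with $g\in J$, the group $(J\cap P_i)^g$ is finite index in $\stab(g\widetilde F_i)$ and so acts cocompactly on $g\widetilde F_i$, which is a quasiflat in the sense of Definition~\ref{def:quasiflat}; coarse isolation of the family $\{g\widetilde F_i\}$ is inherited verbatim from the coarse isolation of the quasiflats of $\widetilde X$. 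For the compact part I would take $\widetilde K_\ast$ to be the subcomplex of $\widetilde Y$ obtained from $\widetilde Y\cap\neb_r(\widetilde K_o)$ together with the portions of $\widetilde Y$ that lie within $r$ of some $\widetilde F_i$ but outside a quasiflat: the axioms $\widetilde F_i\cap G\widetilde K\subseteq P_i\widetilde K'$ and the coarse isolation for $\widetilde X$ show these collars are compact modulo $J\cap P_i$, so after absorbing the finitely many collars we obtain a compact $\widetilde K_\ast$ (containing $\widetilde K_o$) with $\widetilde Y=J\widetilde K_\ast\cup\bigcup_i J\widetilde F_i$ and $\widetilde F_i\cap J\widetilde K_\ast$ compact modulo $J\cap P_i$. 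This is the cosparse action, and $\bigl(J\widetilde K_o\cup_i J\widetilde F_i\bigr)\subseteq Z\subseteq\widetilde Y\subseteq\neb_r\bigl(J\widetilde K_o\cup_i J\widetilde F_i\bigr)$ is exactly the asserted sandwich.

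The step I expect to be the main obstacle is Claim~(A): controlling the cubical convex hull. The delicate point is that the hull must not drift arbitrarily far \emph{along} a quasiflat direction, and this is precisely why $Z$ must be saturated with the full quasiflats $\widetilde F_i$ rather than merely a $J$-orbit of a compact set; this is also exactly the place where the relative-hyperbolicity input (bounded coset penetration / thin triangles, or the electrification argument) enters. Everything after~(A) is routine verification against the cosparse axioms.
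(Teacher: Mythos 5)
The paper does not prove this lemma itself; it defers to \cite[Thm~7.2]{SageevWiseCores}, and your outline---building $\widetilde Y$ as the cubical convex hull of the $J$-invariant, quasiflat-saturated set $Z=J\widetilde K_o\cup\bigcup_i J\widetilde F_i$ and controlling that hull via relative quasiconvexity together with an electrification/bounded-coset-penetration estimate---is essentially a reconstruction of the cited argument. You correctly isolate the technical heart (your Claim~(A), and in particular that geodesics between points of $Z$ may make bounded excursions into quasiflats $g\widetilde F_k$ that are \emph{not} in $\{J\widetilde F_i\}$, which is exactly where relative quasiconvexity of $J$ enters), and you correctly use fullness to get $J\cap P_i$ acting cocompactly on $\widetilde F_i$; the remaining verifications, including the compactness bookkeeping in your Claim~(B), are precisely what the cited theorem supplies.
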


Note that there might be large parts of $\widetilde Y$ that are in $GK$ but not coarsely in $J$.

It follows that for each $m\geq 0$ there is a uniform upper bound on
$\diameter(\widetilde Y \cap \neb_m(\widetilde F_k))$ unless $\widetilde F_k\subset \widetilde Y$.
Indeed, if the latter statement does not hold, then
$g_i\widetilde F_i \cap \neb_m(\widetilde F_k)$ lies in a finite neighborhood of $GK$,
and hence in a finite neighborhood of $JK_o$. (Here $K$ is the compact subcomplex such that $GK$
contains the intersection of
distinct $G$-translates of the various $\widetilde F_k$.) Thus if $\widetilde F_k$ has infinite
coarse intersection with $\widetilde Y$ then it has infinite coarse intersection with $J\widetilde x$,
and so $|\stab_J(\widetilde F_k)|=\infty$ and hence $\widetilde F_k$ is one of the quasiflats
included in $\widetilde Y$.


The following is proven in \cite[Thm~15.6]{WiseIsraelHierarchy}
\begin{lem}	\label{lem:filling}
	Let $X$ be a virtually special nonpositively curved cube complex that is sparse.
	Suppose $\pi_1X$ is hyperbolic relative to subgroups $P_1,\dots, P_r$
	stabilizing quasiflats $\widetilde F_1,\dots, \widetilde F_r$ of $\widetilde X$, where $X$ is sparse relative to $F_1,\dots, F_r$,
	and each $F_i=P_i\backslash\widetilde F_i$.
	There exist finite index subgroups $P_1^o,\dots, P_r^o$ such that for any
	normal finite index or virtually-cyclic index subgroups
	$P_i^c\subset P_i^o$ the quotient group $G / \nclose{P_1^c, \dots, P_r^c}$ is a word-hyperbolic group
	virtually having a quasiconvex hierarchy terminating in finite groups.
	Hence the quotient group is virtually compact special.
\end{lem}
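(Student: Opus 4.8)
The plan is to realize the quotient $G/\nclose{P_1^c,\dots,P_r^c}$ as the fundamental group of a cubical presentation and then combine cubical small‑cancellation theory with the relatively hyperbolic Dehn filling theorem. Since each $\widetilde F_i$ is CAT(0), hence contractible, we have $\pi_1(P_i^c\backslash\widetilde F_i)=P_i^c$; so, after first replacing each $\widetilde F_i$ by a $P_i$-cocompact \emph{superconvex} subcomplex (which exists for these virtually abelian parabolics, as in the discussion around Remark~\ref{rmk:compactible}, and does not disturb the sparse structure), we set $Y_i:=P_i^c\backslash\widetilde F_i$ and obtain a cubical presentation $X^*=\langle X\mid\{Y_i\}\rangle$ with $\pi_1X^*=G/\nclose{P_1^c,\dots,P_r^c}$. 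When $[P_i:P_i^c]$ is finite the cone $Y_i$ is compact; when $P_i/P_i^c$ is virtually cyclic, $Y_i$ is a lower‑rank quasiflat, and one must check that the small‑cancellation and bounded‑wall‑piece estimates used below continue to hold in that non‑compact case (the sparse analogue of the situation treated in \cite{WiseIsraelHierarchy}).

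Next I would choose the $P_i^o$ deep enough that every admissible $P_i^c\le P_i^o$ makes $X^*$ satisfy $C'(\frac{1}{24})$. For cone‑pieces this rests on the coarse isolation of the quasiflats built into the sparse structure (Definition~\ref{def:sparse}): distinct $G$-translates of the $\widetilde F_i$ have uniformly bounded coarse intersection, so passing to a deep finite‑index or virtually‑cyclic‑index $P_i^c$ makes $\systole{Y_i}$ large relative to that bound, while cone‑pieces inside a single $\widetilde F_i$ are absorbed by the action of $\stab(\widetilde Y_i)$ and hence are not pieces. For wall‑pieces I would invoke superconvexity of the enlarged $\widetilde F_i$ together with the bounded‑wall‑piece estimate of Lemma~\ref{lem:bounded wall-piece} (and its sparse variant for the non‑compact cones), again beaten by deep filling. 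With $C'(\frac{1}{24})$ in hand, Lemma~\ref{lem:quasi-isom embedding} and the structural results it rests on give good control of $\widetilde{X^*}$, its hyperplanes, and the images of subgroups of $G$.

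For word‑hyperbolicity I would appeal to relatively hyperbolic Dehn filling: sufficiently deep fillings of $(G,\{P_i\})$ are hyperbolic relative to $\{P_i/P_i^c\}$, and since each $P_i/P_i^c$ is finite or virtually cyclic, hence word‑hyperbolic, the filled group is itself word‑hyperbolic. After possibly replacing the $P_i^o$ by deeper subgroups one may assume the $C'(\frac{1}{24})$ fillings are also ``deep'' in the sense of the filling theorem; alternatively, hyperbolicity can be extracted directly from the small‑cancellation geometry of $\widetilde{X^*}$.

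The crux — and the step I expect to be the main obstacle — is producing the virtual quasiconvex hierarchy terminating in finite groups. The point of departure is that a sparse special cube complex already supplies such a hierarchy for $G$ (this uses the theory of \cite[Sec~15]{WiseIsraelHierarchy}: hyperplanes together with the sparse structure give successive splittings with relatively quasiconvex edge groups that eventually reach finite groups). I would then argue that this hierarchy is inherited by $\pi_1X^*$: the images of the relevant hyperplanes of $X$ still cut the small‑cancellation complex $\widetilde{X^*}$, so $\pi_1X^*$ splits along the images of the corresponding edge groups; these remain quasiconvex in the quotient because $C'(\frac{1}{24})$ quotients preserve quasiconvexity of the pieces and control their intersections (Lemma~\ref{lem:quasi-isom embedding}, Lemma~\ref{lem:Intersection Control}), and — crucially — each vertex piece of the new splitting is again the fundamental group of a sparse special complex arising from a Dehn filling, but of strictly smaller hierarchy complexity. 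An induction then yields the full hierarchy for $\pi_1X^*$, terminating in finite groups, and the final conclusion follows since a word‑hyperbolic group that virtually possesses a quasiconvex hierarchy terminating in finite groups is virtually compact special — this is the culmination of the special‑quotient machinery underlying Theorem~\ref{thm:malnormal special quotient}. The genuinely hard part is the descent‑and‑complexity‑reduction step: verifying that the wall structure of $X$ really does pass to the filled complex and that a well‑founded complexity strictly decreases at each stage, which is exactly the content established in \cite[Sec~15]{WiseIsraelHierarchy}.
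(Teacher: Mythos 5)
You should first note that the paper itself contains no proof of Lemma~\ref{lem:filling}: it is quoted verbatim from \cite[Thm~15.6]{WiseIsraelHierarchy}, so the ``paper's proof'' is just that citation. Your sketch reconstructs the broad strategy of that source --- form the cubical presentation $\langle X\mid \{Y_i\}\rangle$ with $Y_i=P_i^c\backslash\widetilde F_i$, get $C'(\frac{1}{24})$ from the coarse isolation of quasiflats and bounded wall-pieces by choosing $P_i^o$ deep, get word-hyperbolicity from relatively hyperbolic Dehn filling (legitimate here, since the virtually abelian parabolics are residually finite so the bad finite set can be avoided), and then cut the quotient along images of hyperplanes to build a hierarchy. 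But at the decisive step --- that the wall structure descends to the filled complex, that the induced edge groups are quasiconvex there, and that a well-founded complexity strictly decreases so the hierarchy terminates in finite groups --- you state that this ``is exactly the content established in \cite[Sec~15]{WiseIsraelHierarchy}.'' Since that is precisely the result being proved, the proposal is not a proof: it is a reduction of the lemma to the citation the paper already uses, and as a blind argument it has a genuine gap exactly where the theorem has its content.

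Two concrete points would need repair even as an outline. First, in the sparse setting a quasiflat $\widetilde F_i$ need not admit a $P_i$-\emph{cocompact} superconvex enlargement; Lemma~\ref{lem:superconvex core} is a word-hyperbolic statement, and the correct tool is a cosparse core as in Lemma~\ref{lem:core}. Consequently $Y_i$ need not be compact even when $[P_i:P_i^c]<\infty$ is not assumed small, and the wall-piece bound cannot be taken from Lemma~\ref{lem:bounded wall-piece} as stated; the noncompact-cone case $[P_i:P_i^c]=\infty$ (virtually cyclic index) needs its own estimates, which you flag but do not supply. Second, the hierarchy you propose to ``inherit'' from the sparse special complex $X$ terminates in the parabolic, i.e.\ virtually abelian, subgroups of $G$, not in finite groups; the entire point of the theorem is that after filling, the vertex groups of the induced splittings are again fillings of strictly lower-complexity sparse special groups whose hierarchies now do bottom out in finite (or further fillable virtually cyclic) groups. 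Establishing that descent, together with quasiconvexity of the image edge groups in the quotient, is the real work, and deferring it to \cite[Sec~15]{WiseIsraelHierarchy} leaves the argument circular relative to the task.
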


\begin{cor}
	\label{cor:rhquotient}
	Suppose $G=\pi_1 X$ is hyperbolic relative to a collection of virtually abelian subgroups $\{P_1,\ldots,P_n\}$, where $X$ is a virtually special sparse cube complex. Suppose $Q\le G$ is relatively quasiconvex. Then there exists $\{P'_1,\ldots,P'_n\}$ with each $P'_i$ being finite index in $P_i$ such that for any $\{\dot P_1,\ldots,\dot P_n\}$ with $\dot P_i\le P'_i$, $[P'_i,\dot P_i]<\infty$ and $\dot P_i\trianglelefteq P_i$, we have
	\begin{enumerate}
		\item $\bar{G}=G/\nclose{\dot P_1,\ldots,\dot P_n}$ is word-hyperbolic and virtually compact special;
		\item the image $\bar Q$ of $Q$ under $G\to \bar G$ is quasiconvex;
		\item $\ker(Q\to \bar G)=\nclose{\{Q\cap (\dot P_i)^g\}_{g\in G}}_Q$.
	\end{enumerate}
\end{cor}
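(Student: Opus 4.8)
\emph{Proof plan.}\ The plan is to read off part~(1) from Lemma~\ref{lem:filling}, and to obtain parts~(2) and~(3) by analysing the filling of the larger group $Q^{+}$ furnished by Theorem~\ref{thm:plus} --- whose induced peripheral structure consists of finite-index subgroups of the $P_{i}$ --- and then transporting the conclusion back to $Q$ through the graph-of-groups splitting of $Q^{+}$.

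First I would assemble the data. By Remark~\ref{rmk:compactible} we may assume the sparse structure of $X$ is compatible with $\{P_{1},\dots,P_{n}\}$, with quasiflats $\widetilde F_{i}$, $P_{i}=\stab(\widetilde F_{i})$ and $F_{i}=P_{i}\backslash\widetilde F_{i}$. Applying Lemma~\ref{lem:filling} to $X$ gives finite-index $P_{i}^{o}\le P_{i}$. Applying Theorem~\ref{thm:plus} to $(G,\{P_{i}\},Q)$ gives finite-index $\dot P_{i}^{\mathrm{tp}}\le P_{i}$ governing $Q^{+}$, together with the splitting $Q^{+}=Q*_{K_{1}}A_{1}*\cdots*_{K_{m}}A_{m}$ in which each $A_{k}$ has finite index in some $P_{j(k)}$ and $\{A_{1},\dots,A_{m}\}$ is the induced peripheral structure of $Q^{+}$. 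Since $Q^{+}$ is full relatively quasiconvex, Lemma~\ref{lem:core} produces a convex subcomplex $\widetilde Y\subset\widetilde X$ on which $Q^{+}$ acts cosparsely, with $\widetilde Y$ coarsely equal to $Q^{+}\widetilde K_{o}\cup\bigcup_{k}A_{k}\widetilde F_{j(k)}$ for a suitable compact seed $\widetilde K_{o}$; then $Y:=Q^{+}\backslash\widetilde Y$ (after passing to a torsion-free finite-index subgroup of $Q^{+}$ if needed) is a virtually special sparse cube complex with $\pi_{1}Y=Q^{+}$ that is sparse relative to the $A_{k}\backslash\widetilde F_{j(k)}$, so Lemma~\ref{lem:filling} applied to $Y$ gives finite-index $A_{k}^{o}\le A_{k}$. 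Finally set $P_{i}'$ to be a torsion-free finite-index normal subgroup of $P_{i}$ with $P_{i}'\le P_{i}^{o}\cap\dot P_{i}^{\mathrm{tp}}$ and $P_{i}'\cap A_{k}\le A_{k}^{o}$ whenever $j(k)=i$.

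Now let $\dot P_{i}\le P_{i}'$ be normal of finite index in $P_{i}$. Part~(1) is immediate: $\dot P_{i}$ is a normal finite-index subgroup of $P_{i}^{o}$, so Lemma~\ref{lem:filling} applied to $X$ shows $\bar G=G/\nclose{\dot P_{1},\dots,\dot P_{n}}$ is word-hyperbolic and virtually compact special. For parts~(2) and~(3), Theorem~\ref{thm:plus} applies with $\ddot P_{i}:=\dot P_{i}$ (since $\dot P_{i}\le\dot P_{i}^{\mathrm{tp}}$), so $Q^{+}$ is full relatively quasiconvex in $G$ with the stated splitting, and by Theorem~\ref{thm:plus}(4) and the almost malnormality of the parabolics (Theorem~\ref{thm:induced peripheral}) each infinite intersection $Q^{+}\cap(\dot P_{i})^{g}$, $g\in G$, is a $Q^{+}$-conjugate of $\dot A_{k}:=A_{k}\cap\dot P_{j(k)}$, while the finite ones are trivial. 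Hence $\nclose{\{Q^{+}\cap(\dot P_{i})^{g}\}_{g\in G}}_{Q^{+}}=\nclose{\dot A_{1},\dots,\dot A_{m}}_{Q^{+}}$, and the quotient $Q^{+}/\nclose{\dot A_{1},\dots,\dot A_{m}}_{Q^{+}}$ is exactly the cubical Dehn filling of $\pi_{1}Y$ along the subgroups $\dot A_{k}\le A_{k}$ (note $\dot A_{k}\le A_{k}^{o}$). The inclusion $\widetilde Y\subset\widetilde X$ of convex subcomplexes induces a map of cubical presentations $Y^{*}\to X^{*}$ between the fillings of $\pi_{1}Y$ and of $G$, and the sparse cubical small-cancellation machinery of \cite[\S15]{WiseIsraelHierarchy} --- the liftable-shells criterion of Lemma~\ref{lem:liftable shell criterion} (a component of the fiber product of $Y$ with the filling cone of $P_{i}$ is a copy of the corresponding cone of $Y^{*}$ when $(P_{i})^{g}\cap Q^{+}$ is infinite, and is a small contractible complex otherwise, using the coarse isolation of the quasiflats in Definition~\ref{def:sparse} and a deep enough $\dot P_{i}$) together with the sparse analogue of Lemma~\ref{lem:quasi-isom embedding} --- shows that $Q^{+}/\nclose{\dot A_{1},\dots,\dot A_{m}}_{Q^{+}}\to\bar G$ is an injective quasi-isometric embedding. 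Writing $\bar{Q^{+}}$ for the image of $Q^{+}$ in $\bar G$, this gives $\ker(Q^{+}\to\bar G)=\nclose{\dot A_{1},\dots,\dot A_{m}}_{Q^{+}}$ and that $\bar{Q^{+}}$ is quasiconvex in $\bar G$.

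Finally I would descend to $Q$. The splitting of $Q^{+}$ descends to $\bar{Q^{+}}=\bar Q*_{\bar K_{1}}\bar A_{1}*\cdots*_{\bar K_{m}}\bar A_{m}$ with each $\bar A_{k}=A_{k}/\dot A_{k}$ finite, using --- as in the proof of Proposition~\ref{prop:quotients} --- that for $P_{i}'$ deep enough the plausible compatibility of the $\{\dot A_{k}\}$ with the edge groups $K_{k}$ becomes genuine, so that $\nclose{\dot A_{1},\dots,\dot A_{m}}_{Q^{+}}$ meets $A_{k}$ in $\dot A_{k}$ and meets $K_{k}$ in $\dot A_{k}\cap K_{k}$. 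Since $\bar{Q^{+}}$ is word-hyperbolic with finite, hence quasiconvex, edge groups, its vertex group $\bar Q$ is quasiconvex in $\bar{Q^{+}}$, and therefore quasiconvex in $\bar G$; this is~(2). And $\ker(Q\to\bar G)=Q\cap\nclose{\dot A_{1},\dots,\dot A_{m}}_{Q^{+}}$ is the normal closure in $Q$ of the intersections of $Q$ with the kernels of the adjacent edge groups (a general feature of compatible quotients of graphs of groups), which a routine comparison of the induced peripheral structures of $Q$ inside $Q^{+}$ and inside $G$ identifies with $\nclose{\{Q\cap(\dot P_{i})^{g}\}_{g\in G}}_{Q}$; together with the obvious reverse inclusion this gives~(3). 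The main obstacle is the step invoking the sparse cubical small-cancellation machinery: one must set up the two filling presentations $Y^{*}$ and $X^{*}$, verify the liftable-shells hypothesis and whatever compactness/sparseness bookkeeping is needed for the quasi-isometric-embedding conclusion, and --- equivalently --- verify that filling $G$ restricts on $Q^{+}$ to the expected filling of $\pi_{1}Y$, along with the ``plausible compatibility implies compatibility'' point used above. All of this is routine once the depth of $P_{i}'$ is taken large enough, exactly as in the proof of Proposition~\ref{prop:quotients} and in \cite[\S15]{WiseIsraelHierarchy}, as is the remaining bookkeeping around torsion and around matching the $\dot P_{i}$ across $X$, $Y$ and $Q^{+}$.
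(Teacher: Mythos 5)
Your plan follows essentially the same route as the paper's proof: pass to $Q^{+}$ via Theorem~\ref{thm:plus}, get conclusion~(1) from Lemma~\ref{lem:filling}, use Lemma~\ref{lem:core} together with a liftable-shell small-cancellation argument to identify $\ker(Q^{+}\to\bar G)=\nclose{\dot A_1,\dots,\dot A_m}_{Q^{+}}$ (with $\ker(A_k\to\bar G)=\dot A_k$) and to get quasiconvexity of $\overline{Q^{+}}$, and then descend through the splitting of $Q^{+}$ to obtain~(2) and~(3). The only deviations are cosmetic --- your extra application of Lemma~\ref{lem:filling} to $Y$ is not needed, and the descent step you invoke as a ``general feature of compatible quotients'' is exactly the paper's Lemma~\ref{lem:quotient} --- so the argument is correct at the same level of detail as the paper's.
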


\begin{proof}
	By the above discussion, we assume the sparse structure of $X$ is compatible with $\{P_1,\ldots,P_n\}$. Let $Q^+$ be as in Theorem~\ref{thm:plus} with its representatives of maximal parabolic subgroups denoted by $\{A_1,\ldots,A_m\}$. By Lemma~\ref{lem:filling}, we choose $\{\dot P_i\}$ such that $q:G\to \bar G =G/\nclose{\dot P_1,\ldots,\dot P_n}$ satisfies Conclusion~(1). Moreover, we assume for each $i,j$ and $g\in G$, either $A_j\cap (\dot P_i)^g$ is trivial, or $A_j\cap (\dot P_i)^g=(\dot P_i)^g$. Thus $\{\dot P_i\}$ induces a collection $\{\dot A_1,\ldots,\dot A_m\}$ such that $\dot A_i$ is a finite index normal subgroup of $A_i$. Suppose $Q^+$ stabilizes a superconvex subcomplex $\widetilde Y\subset\widetilde X$ as in Lemma~\ref{lem:core}, and suppose each $P_i$ stabilizes a quasiflat $\widetilde F_i\subset \widetilde X$. Then there exists constant $M$ such that for any $g,g'\in G$ and $i$, either $g\widetilde F_i\subset g'\widetilde Y$, or $g\widetilde F_i\cap g'\widetilde Y$ has diameter $\le M$. Since $Q^+$ is full, we can deduce from a liftable shell argument as before that as long as $\systole{\widetilde F_i/\dot P_i}$ is large enough, we have $\ker(Q^+\to \bar G)=\nclose{\dot A_1,\ldots,\dot A_m}_{Q^+}$, the image $\bar Q^+$ of $Q^+$ under $G\to \bar G$ is quasiconvex, and $\ker(A_i\to \bar G)=\dot A_i$ for each $i$. We deduce from Theorem~\ref{thm:plus}.\eqref{amalgamation} and Lemma~\ref{lem:quotient} below that $\ker (Q\to \bar G)=\nclose{K_1\cap\dot A_1,\ldots,K_m\cap\dot A_m}_Q$, where $\{K_i\}$ is as in Theorem~\ref{thm:plus}. Hence conclusion (3) follows. Lemma~\ref{lem:quotient} also implies that $\bar Q^+$ is $\bar Q$ amalgamated with several f.g. virtually abelian groups along its maximal parabolic subgroups. Then each edge of $\bar Q^+$ is contained in a maximal parabolic subgroup of $\bar Q^+$. Thus $\bar Q$ is quasiconvex in $\bar Q^+$ (\cite[Lem~4.9]{BigdelyWiseAmalgams}) and conclusion (2) is true.
\end{proof}

\begin{lem}
	\label{lem:quotient}
	Let $G=A*_CB$ and let $N\trianglelefteq B$. Suppose the quotient $\bar G=G/\nclose{N}$ satisfies that $\ker(B\to \bar G)=N$. Then $\ker (A\to\bar G)=\nclose{C\cap N}_A$, and $\bar G=\bar A*_{\bar C}\bar B$, where $\bar A=A/\ker(A\to\bar G)$, $\bar{B}=B/N$ and $\bar C=C/\ker(C\to\bar G)$.
\end{lem}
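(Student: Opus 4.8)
The plan is to identify $\bar G:=G/\nclose{N}$ with $\bar A\ast_{\bar C}\bar B$ by first pinning down the three kernels $\ker(q|_A)$, $\ker(q|_B)$, $\ker(q|_C)$ and then assembling the amalgam by a universal-property argument, where $q\colon G\to\bar G$ is the quotient map. The cheap inputs come first: since $C\le B$, we have $\ker(q|_C)=C\cap\ker(q|_B)=C\cap N$ by hypothesis, so $\bar C:=q(C)\cong C/(C\cap N)$ embeds into $\bar B:=q(B)=B/N$; and since $C\cap N\le N\le\nclose{N}$ with $\nclose{N}\trianglelefteq G$, we get $\nclose{C\cap N}_A\le\ker(q|_A)$ for free.

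The substantive point is the reverse inclusion $\ker(q|_A)\le\nclose{C\cap N}_A$, which I would obtain by analyzing the action of $\Gamma:=\nclose{N}$ on the Bass--Serre tree $T$ of $A\ast_C B$, with its edge $e_0$ joining $v_A$ (with $\stab(v_A)=A$) to $v_B$ (with $\stab(v_B)=B$), so $\stab(e_0)=C$. Since $N\trianglelefteq B$ and $\Gamma\trianglelefteq G$, one has $\stab_\Gamma(gv_B)=g(\Gamma\cap B)g^{-1}=gNg^{-1}$, and the $gNg^{-1}$ are exactly all $G$-conjugates of $N$, so $\Gamma$ is generated by its $B$-vertex stabilizers. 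Here the hypothesis $\Gamma\cap B=N$ is used: it forces $\Gamma\cap C=C\cap N$, so the edge stabilizers $\stab_\Gamma(ge_0)=g(C\cap N)g^{-1}$ are conjugates of $C\cap N$, while $\stab_\Gamma(v_A)=\ker(q|_A)$. Thus $\Gamma$ is the fundamental group of the graph of groups $\Gamma\backslash T$ whose $B$-vertex groups are conjugates of $N$, whose edge groups are conjugates of $C\cap N$, and whose $A$-vertex groups are conjugates of $\ker(q|_A)$; and because $\Gamma$ is generated by its $B$-vertex subgroups, shrinking each $A$-vertex group to the subgroup generated by its incident edge groups leaves $\pi_1$ unchanged and is therefore an isomorphism of graphs of groups. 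Evaluating this at $v_A$, whose incident edges carry exactly the $A$-conjugates of $C\cap N$ as stabilizers, yields $\ker(q|_A)=\langle a(C\cap N)a^{-1}:a\in A\rangle=\nclose{C\cap N}_A$, hence $\bar A:=q(A)\cong A/\nclose{C\cap N}_A$. (Alternatively, one could present $\bar G$ as the pushout of $A\leftarrow C\to B/N$, reduce it to the pushout of $A/\nclose{C\cap N}_A\leftarrow C/(C\cap N)\hookrightarrow B/N$, and cite the fact that a vertex group of a pushout embeds when the opposite edge map is injective; I would use whichever is cleaner to write down.)

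With the kernels in hand, assembling the amalgam is formal: $\bar C\le\bar A\cap\bar B$ with $\bar C\hookrightarrow\bar A$ and $\bar C\hookrightarrow\bar B$ genuine subgroup inclusions, so $\bar A\ast_{\bar C}\bar B$ is an honest amalgamated product. The inclusions $\bar A\hookrightarrow\bar G$, $\bar B\hookrightarrow\bar G$ agree on $\bar C$ and hence induce a surjection $\theta\colon\bar A\ast_{\bar C}\bar B\to\bar G$; in the other direction, $A\to\bar A\hookrightarrow\bar A\ast_{\bar C}\bar B$ and $B\to\bar B\hookrightarrow\bar A\ast_{\bar C}\bar B$ agree on $C$ and kill $N$, hence factor through some $\eta\colon\bar G\to\bar A\ast_{\bar C}\bar B$; as $\theta\eta$ and $\eta\theta$ fix the generating images of $A$ and $B$, $\theta$ is an isomorphism. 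This gives $\bar G=\bar A\ast_{\bar C}\bar B$ with $\bar A=A/\ker(q|_A)$, $\bar B=B/N$, $\bar C=C/\ker(q|_C)$, as required.

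The main obstacle is the middle step: checking carefully that $\Gamma=\nclose{N}$ has precisely the asserted edge and vertex stabilizers on $T$ and is generated by its $B$-vertex subgroups, and that this generation property really does force the $A$-vertex groups of $\Gamma\backslash T$ to equal the subgroups generated by incident edge groups. This is exactly the place where the hypothesis $\ker(q|_B)=N$ cannot be dropped: without it $\Gamma\cap C$ could be strictly larger than $C\cap N$, and the identification $\ker(q|_A)=\nclose{C\cap N}_A$ would break.
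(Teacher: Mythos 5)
Your cheap inputs and your final assembly are fine (the $\theta$/$\eta$ argument is essentially the paper's construction of $h$ and the verification that $h\circ q_2=\mathrm{id}$), but the crux, $\ker(q|_A)\le\nclose{C\cap N}_A$, is where there is a genuine gap. The assertion that ``shrinking each $A$-vertex group of the quotient graph of groups of $\Gamma\curvearrowright T$ to the subgroup generated by its incident edge groups leaves $\pi_1$ unchanged'' is exactly the statement to be proven, and it does not follow from ``$\Gamma$ is generated by its $B$-vertex stabilizers'' by the reasoning you give: if $\mathcal G'$ is the shrunk graph of groups, the natural map $\pi_1\mathcal G'\to\Gamma$ is injective (edge groups are unchanged, so reduced words stay reduced), but its image visibly contains only the chosen representative $B$-vertex groups, the shrunk $A$-vertex groups and the stable letters, whereas your hypothesis says $\Gamma$ is generated by \emph{all} $\Gamma$-conjugates of the $B$-vertex groups; the conjugators are not obviously in the image, so ``$\pi_1$ unchanged'' is not automatic, and even granted it, you would still need an argument to conclude that the vertex groups themselves coincide. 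The claim is true, but the natural proof is itself a compatibility/quotient argument: kill every $B$-vertex group and every edge group and quotient each $A$-vertex group $\Gamma_v$ by $U_v=\langle\stab_\Gamma(e):e\ni v\rangle$; these quotients are compatible, so $\Gamma$ surjects onto the free product of the groups $\Gamma_v/U_v$ with a free group, every $B$-vertex stabilizer dies in this quotient, and the generation hypothesis forces each $\Gamma_v/U_v=1$. At that point you have reproduced, in Bass--Serre language, the paper's one-paragraph proof, which avoids the tree entirely: set $\bar A'=A/\nclose{C\cap N}_A$, use your computation $C\cap\ker(A\to\bar G)=C\cap N$ to get $C\cap\nclose{C\cap N}_A=C\cap N$, so $A\to\bar A'$ and $B\to B/N$ are compatible and induce $G\to\bar A'\ast_{\bar C}\bar B$ factoring through $\bar G$; since the vertex group $\bar A'$ embeds in that amalgam, $\ker(A\to\bar G)=\nclose{C\cap N}_A$ falls out, and the inverse homomorphism gives $\bar G=\bar A\ast_{\bar C}\bar B$.

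A separate problem: the ``fact'' cited in your parenthetical alternative --- that a vertex group of a pushout embeds when the \emph{opposite} edge map is injective --- is false. For example, let $Y=F(a,b)$, $Z=\langle a,bab^{-1}\rangle\le Y$, and $f\colon Z\to X:=Z/\nclose{a}_Z\cong\mathbb Z$; in the pushout of $X\leftarrow Z\hookrightarrow Y$ every element of $\ker f=\nclose{a}_Z$ dies, hence so does $bab^{-1}$ and therefore its image $f(bab^{-1})$, which generates $X$ --- so $X$ does not embed even though $Z\to Y$ is injective. What is true, and what your reduction actually provides (because $C\cap\nclose{C\cap N}_A=C\cap N$ makes \emph{both} legs of the reduced pushout injective), is the standard fact that vertex groups of a genuine amalgam $\bar A'\ast_{\bar C}\bar B$ embed. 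With that correction the ``alternative'' is no longer an alternative: it is precisely the paper's argument.
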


\begin{proof}
	Note that $C\cap \ker (A\to\bar G)=\ker (C\to \bar G)=C\cap \ker(B\to\bar G)=C\cap N$. This together with $\nclose{C\cap N}_A\le \ker (A\to\bar G)$ implies that $C\cap \nclose{C\cap N}_A=C\cap N$. Let $\bar A'=A/\nclose{C\cap N}_A$. Then $A\to\bar A'$ and $B\to\bar B$ are compatible, hence induces a quotient of graphs of groups $q:A*_C B\to \bar A'*_{\bar C}\bar B$. Since $N\le \ker q$, so $q$ is a composition $G \stackrel{q_1}{\to} \bar G \stackrel{q_2}{\to} \bar A'*_{\bar C}\bar B$. Since $q(A)=\bar A'$, we have $A\to A/\ker(A\to\bar G)\to \bar A'$ induced by $q$. Thus $A/\ker(A\to\bar G)= \bar A'$. This gives a homomorphism $h:\bar A'*_{\bar C}\bar B\to \bar G$. Then $h\circ q_2$ is identity since it is identity on $q_1(A)$ and $q_1(B)$. Thus $q_2$ is an isomorphism and the lemma follows.
\end{proof}

We say a group $G$ is \emph{virtually sparse special} if $G$ has a finite index torsion free subgroup $H$ such that $H$ is hyperbolic relative to f.g. virtually abelian and $H$ acts cosparsely on a $CAT(0)$ cube complex $\tilde X$ with the quotient $\tilde X/H$ being special.

The following is proved in \cite[Thm~15.13]{WiseIsraelHierarchy}.
\begin{thm}
	\label{thm:sparse separability}
	Suppose $G$ is virtually sparse special. Then any relatively quasiconvex subgroup of $G$ is separable.
\end{thm}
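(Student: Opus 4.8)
The plan is to deduce the statement from the relatively hyperbolic Dehn‑filling and cubical small‑cancellation machinery assembled above, in the same spirit as the hyperbolic results of this paper. Fix a relatively quasiconvex $Q\le G$ and $g\in G\setminus Q$; I want a finite‑index $G'\le G$ containing $Q$ but not $g$. First I would reduce to a standard form: since relative quasiconvexity passes to intersections with finite‑index subgroups and since $Q$ is separable in $G$ as soon as $Q\cap H$ is separable in some finite‑index $H\le G$ (because $H$ is itself separable and $Q$ is a finite union of cosets of $Q\cap H$), I may assume $G$ is torsion‑free, $G=\pi_1 X$ for a sparse \emph{special} cube complex $X$, and $G$ is hyperbolic relative to f.g.\ virtually abelian subgroups $\{P_1,\dots,P_n\}$ stabilizing quasiflats $\{\widetilde F_i\}$ of $\widetilde X$ with $X$ sparse relative to $F_i=\widetilde F_i/P_i$; by Remark~\ref{rmk:compactible} I may also take the sparse structure compatible with $\{P_i\}$. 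I treat first the case that $Q$ is \emph{full}, and reduce the general case to it at the end.

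The heart of the argument is the full case. Here I would take a $Q$‑cocompact superconvex subcomplex $\widetilde X^Q\subset\widetilde X$ from Lemma~\ref{lem:core}, so that $\widetilde X^Q$ contains each quasiflat $\widetilde F_i$ with $|Q\cap P_i|=\infty$ (up to $Q$‑translation) and meets every other $G$‑translate of a quasiflat boundedly. Fullness makes each such $Q\cap P_i$ of finite index in $P_i$, so I can choose the filling subgroups $\dot P_i\trianglelefteq P_i$ to be finite‑index and to satisfy $\dot P_i\le Q$ whenever $|Q\cap P_i|=\infty$, while also being deep enough that Lemma~\ref{lem:filling} and Corollary~\ref{cor:rhquotient} apply; this produces $\phi\colon G\to\bar G:=G/\nclose{\dot P_1,\dots,\dot P_n}$ with $\bar G$ word‑hyperbolic and virtually compact special, $\bar Q$ quasiconvex in $\bar G$, and $\ker(Q\to\bar G)=\nclose{\{Q\cap\dot P_i^{\,x}\}}_Q$; equivalently $\bar G=\pi_1 X^*$ for the $C'(\tfrac1{24})$ presentation $X^*=\langle X\mid\{\widetilde F_i/\dot P_i\}\rangle$.

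Next I would separate $g$. Regard $\widetilde X^Q\to X$ and the basepoint $\widetilde x_0\to X$ as based local isometries $A_1,A_2$. For a cone $Y_i=\widetilde F_i/\dot P_i$, the piece between $\widetilde X^Q$ and $Y_i$ is small when $|Q\cap P_i|<\infty$ (coarse isolation of quasiflats together with cocompactness of $\widetilde X^Q$), while when $|Q\cap P_i|=\infty$ the inclusion $\widetilde F_i\subset\widetilde X^Q$ and $\dot P_i\le Q$ exhibit $Y_i$ as factoring through a map $Y_i\to\widetilde X^Q/Q$; the pieces involving $A_2$ are trivially small. After taking the $\dot P_i$ deeper still so that $|g|<\tfrac18\systole{Y_i}$ for every $i$, Lemma~\ref{lem:Intersection Control}\eqref{Double:1} applies with $g_1=1$ and $g_2=g$: the cosets $Q\cdot 1$ and $Q\cdot g$ are distinct since $g\notin Q$, hence $\bar Q\cdot\bar 1\ne\bar Q\cdot\bar g$, i.e.\ $\bar g\notin\bar Q$. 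Since $\bar G$ is word‑hyperbolic and virtually compact special, $\bar Q$ is separable in $\bar G$ (quasiconvex subgroups of such groups are separable, as used in the proof of Theorem~\ref{thm:main1}); pulling back along $\phi$ a finite‑index $\bar G'\ni\bar Q$ with $\bar g\notin\bar G'$ completes the full case.

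Finally, for general $Q$ the plan is to pass to the full quasiconvex hull $Q^+=Q*_{K_1}A_1*\cdots*A_m$ from Theorem~\ref{thm:plus} (central vertex group $Q$, each $A_j$ of finite index in a conjugate of some $P_i$), which is again torsion‑free sparse special by Lemma~\ref{lem:core} and the fact that a convex subcomplex of a special cube complex is special. If $g\notin Q^+$ I separate $g$ from $Q^+$ by the full case; if $g\in Q^+$ I must separate $g$ from $Q$ \emph{inside} $Q^+$, using that $Q^+$ splits as a finite graph of groups over $Q$ and the subgroup‑separable virtually abelian groups $A_j$ with edge groups $K_j$, so that killing suitable finite‑index subgroups of the $A_j$ turns $Q^+$ into $\bar Q$ amalgamated with finite groups over the images of the $K_j$, in which $\bar Q$ is quasiconvex. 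The hard part will be precisely this last reduction: when $Q$ is not full its superconvex core absorbs only a proper sub‑quasiflat of each $\widetilde F_i$, so the piece between that core and the cone $\widetilde F_i/\dot P_i$ is unbounded and the coset‑separation lemma cannot be applied to $Q$ directly; handling the normal‑form bookkeeping in $Q^+$ (rather than assembling the filling and small‑cancellation inputs) is where the real work lies.
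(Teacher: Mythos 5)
First, note that the paper does not actually prove this statement: it is imported verbatim from \cite[Thm~15.13]{WiseIsraelHierarchy}, so your argument is an independent reconstruction rather than something to match line-by-line. In outline (reduce to a torsion-free sparse special $G=\pi_1X$; fill the virtually abelian parabolics deeply enough that $\bar G=G/\nclose{\dot P_1,\dots,\dot P_n}$ is word-hyperbolic and virtually compact special with $\bar Q$ quasiconvex and $\bar g\notin\bar Q$, then use quasiconvex separability in $\bar G$; treat a general relatively quasiconvex $Q$ via the fullification $Q^+$ of Theorem~\ref{thm:plus}) your plan follows the same circle of ideas as the cited proof and as the paper's own uses of Lemma~\ref{lem:core}, Lemma~\ref{lem:filling}, Corollary~\ref{cor:rhquotient} and Lemma~\ref{lem:Intersection Control}, and the full case as you set it up is essentially sound.

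However, the case you yourself flag as ``where the real work lies'' --- $Q$ not full and $g\in Q^+$ --- is a genuine gap, and the route you sketch for it does not work as stated. Separating $g$ from $Q$ \emph{inside} $Q^+$ only produces finite-index subgroups of $Q^+$, which need not extend to finite-index subgroups of $G$; and if instead you intend to choose the filling so that $\bar g\notin\bar Q$ in $\bar G$, you must keep the image of a reduced form $g=q_0a_1q_1a_2\cdots$ (with $a_i\in A_{j_i}\setminus K_{j_i}$) reduced in $\bar Q^+=\bar Q\ast_{\bar K_j}\bar A_j$. That requires not only $\bar a_i\notin\bar K_{j_i}$, which LERF of the virtually abelian $A_{j_i}$ does give, but also that the intermediate $Q$-syllables are not pushed into $\bar K_j$ by the infinite, normally generated kernel of $Q\to\bar Q$ --- a Dehn-filling-type injectivity statement inside $Q$ that none of the quoted lemmas supply and that your sketch does not address. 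The standard repair, and the natural way to complete your argument, is to deepen the filling data so that $g\notin Q^+$ from the outset: the offending syllables $a_i$ are finitely many, $K_j$ is separable in $A_j$, and $\dot P_i$ may be taken normal in $P_i$, so one can choose $\ddot P_i$ with $\langle K_j,\ddot P_i^{g_j}\rangle$ avoiding every $a_iK_{j_i}$; then the normal-form theorem for $Q\ast_{K_j}A_j$ shows $g\notin Q\ast_{K_j}\langle K_j,\ddot P_i^{g_j}\rangle=Q^+$, and everything reduces to your full case. Two smaller repairs in the full case: the core $\widetilde X^Q$ of Lemma~\ref{lem:core} is $Q$-cosparse, not $Q$-cocompact (the uniform bound on pieces you need is the remark following Lemma~\ref{lem:core}, not cocompactness); and the factoring hypothesis of Lemma~\ref{lem:Intersection Control} requires $\dot P_i^{x}\le Q$ for \emph{every} conjugate with $|Q\cap P_i^{x}|=\infty$, not just $x=1$ --- this is arrangeable via the finitely many double cosets of Theorem~\ref{thm:induced peripheral} together with normality of $\dot P_i$ in $P_i$, but it should be said.
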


We now discuss the virtually sparse specialness of certain amalgams.

\begin{prop}
	\label{prop:virtual abelian almalgaration}
	Let $G=E\ast_B A$ where $E$ is virtually sparse special, $A$ is a f.g. virtually abelian group and $B$ satisfies at least one of the following conditions
	\begin{enumerate}
		\item $B$ is a maximal parabolic subgroup of $E$;
		\item $B$ is a maximal virtually cyclic group that is loxodromic in $E$.
	\end{enumerate}
	Then $G$ is virtually sparse special.
\end{prop}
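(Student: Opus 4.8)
The plan is to verify the definition of virtual sparse specialness directly: after passing to a finite-index subgroup of $G$ I would exhibit an explicit proper cosparse action on a CAT(0) cube complex with special quotient, built by attaching cube complexes for $A$ onto the cube complex for $E$ along cube complexes for $B$. In both cases $B$ is relatively quasiconvex in $E$ — a maximal parabolic in case (1), virtually cyclic and loxodromic in case (2) — so $B$ is separable in $E$ by Theorem~\ref{thm:sparse separability}, and it is separable in the f.g.\ virtually abelian group $A$; hence by the standard graph-of-groups covering argument it suffices to treat the case where $G$ splits as a finite graph of groups with torsion-free vertex groups that are finite-index in conjugates of $E$ and of $A$ and torsion-free edge groups that are finite-index in conjugates of $B$, and where $B\cong\integers^k$ embeds in $A\cong\integers^n$ as a direct summand. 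In case (2), a maximal virtually cyclic loxodromic subgroup is its own commensurator, so $\{B\}$ together with the peripheral structure of $E$ is almost malnormal and $E$ is hyperbolic relative to its original peripherals together with $B$; by Remark~\ref{rmk:compactible} and Lemma~\ref{lem:superconvex core} we may enlarge the cosparse structure on the cube complex for $E$ so that $B$ stabilises a (one-dimensional) quasiflat. Thus in both cases we may assume $B$ is a peripheral subgroup of $E$ stabilising a quasiflat which, shrinking once more via Lemma~\ref{lem:core}, we take to be a convex subcomplex $\widetilde F_B\subset\widetilde X_E$, where $\widetilde X_E$ carries the proper cosparse $E$-action with $X_E=E\backslash\widetilde X_E$ special.

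First I would record, via the combination theorems for relatively hyperbolic groups (e.g.\ \cite{HruskaRelQC}, \cite{MartinezPedrozaCombinations2009}, together with the fact that enlarging a peripheral subgroup to an f.g.\ virtually abelian overgroup preserves relative hyperbolicity), that $G=E\ast_B A$ is hyperbolic relative to an f.g.\ virtually abelian collection — the peripheral structure of $E$ with $B$ replaced by $A$ in case (1), and with $A$ adjoined in case (2) — and that $E$ and $A$ are relatively quasiconvex in $G$. Next I would cubulate $A\cong\integers^k\times\integers^{n-k}$ as $\widetilde F_A:=\widetilde F_B\times\reals^{n-k}$, a quasiflat for $A$ whose quotient $(B\backslash\widetilde F_B)\times(\text{torus})$ is special, since $B\backslash\widetilde F_B$ is a convex subcomplex of the special complex $X_E$ and products of special complexes are special, and in which $\widetilde F_B=\widetilde F_B\times\{0\}$ is a convex $B$-cocompact subcomplex. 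Finally I would build $\widetilde X_G$ as the tree of spaces over the Bass--Serre tree $T_G$ of $G$: $E$-type vertex spaces are copies of $\widetilde X_E$, $A$-type vertex spaces are copies of $\widetilde F_A$, and each edge of $T_G$ glues the relevant copy of $\widetilde F_B$ inside a copy of $\widetilde X_E$ to a copy of $\widetilde F_B\times\{0\}$ inside a copy of $\widetilde F_A$.

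Because $\widetilde F_B$ is convex in both $\widetilde X_E$ and $\widetilde F_A$, the link condition holds at the glued vertices, so $\widetilde X_G$ is a CAT(0) cube complex; $G$ acts on it properly, and from the construction one reads off that the action is cosparse, with quasiflats the $G$-translates of the copies of $\widetilde F_A$ and of the remaining quasiflats of $\widetilde X_E$, matching the peripheral structure above. It then remains to see that a finite-index cover of $G\backslash\widetilde X_G$ is special. Its hyperplanes are either entirely inside an $A$-type vertex space in one of the ``new'' $\reals^{n-k}$ directions — these only meet torus hyperplanes and present no pathology — or they are continuations of hyperplanes of $\widetilde X_E$ across $\widetilde F_B$ into the attached quasiflats as $\{c\}\times\reals^{n-k}$. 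For the latter, self-crossing, self-osculation and inter-osculation are removed by passing to a further finite-index subgroup, using separability of $B$ and of the relevant quasiconvex subgroups exactly as in the proof of Corollary~\ref{cor:rhquotient} (equivalently, a liftable-shells / cubical small-cancellation argument as in Lemmas~\ref{lem:quasi-isom embedding} and~\ref{lem:bounded wall-piece}). This produces a finite-index subgroup of $G$ acting freely and cosparsely on a CAT(0) cube complex with special quotient, so $G$ is virtually sparse special.

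I expect the crux to be the gluing step together with the specialness check: arranging that the peripheral quasiflat $\widetilde F_B$ of $E$ can simultaneously be taken convex in $\widetilde X_E$ and realised convexly, with special quotient, inside a cube complex for $A$ — that is, reconciling the two cubulations of $B$ — and then controlling hyperplane osculations in the amalgamated complex $\widetilde X_G$. By contrast, the relative-hyperbolicity bookkeeping and the reduction to torsion-free pieces via separability should be routine.
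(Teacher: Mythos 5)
Your proposal takes a genuinely different and more direct route than the paper: you attempt to exhibit the cube complex by gluing $\widetilde X_E$ to quasiflats $\widetilde F_B\times\reals^{n-k}$ along a Bass--Serre tree, whereas the paper never constructs the cube complex for $G$ by hand. The paper instead quotients $G$ so that $A$ and $B$ become finite, passes to a finite-index subgroup $G'$ whose splitting has trivial ``abelian side'' vertex and edge groups, enlarges $G'$ into a group $G^+$ with an \emph{abelian} hierarchy terminating in compact special pieces via Lemma~\ref{lem:sparse perfection special}, invokes Theorem~\ref{thm:abelian hierachy} (i.e.\ \cite[Thm~18.15]{WiseIsraelHierarchy}) as a black box to get virtual sparse specialness of $G^+$, and then recovers it for $G'$ by convex-core/quasi-isometric embedding (Sageev--Wise). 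The whole point of this detour is to avoid having to establish specialness of the amalgamated cube complex directly.

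That is exactly where your argument has a real gap. The gluing and CAT(0)-ness are plausible, and the reduction to the torsion-free, direct-summand case via separability in $E$ and in $A$ is fine. But the last step — ``self-crossing, self-osculation and inter-osculation are removed by passing to a further finite-index subgroup, using separability of $B$ and of the relevant quasiconvex subgroups exactly as in the proof of Corollary~\ref{cor:rhquotient}'' — does not stand up. Corollary~\ref{cor:rhquotient} \emph{presupposes} that $G=\pi_1X$ for a virtually special sparse cube complex; applying it here to the new $G=E\ast_B A$ is circular, since $G$ being virtually sparse special is precisely the conclusion you want. And the separability in $G$ of the hyperplane stabilizers of $\widetilde X_G$ — which is what Haglund--Wise style arguments need — is not a consequence of separability in $E$ and $A$ alone; that combination theorem for separability is essentially the content of Theorem~\ref{thm:main1} and Theorem~\ref{thm:sparse separability}, and it is what Proposition~\ref{prop:virtual abelian almalgaration} is being used to establish in this paper. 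Likewise, Lemmas~\ref{lem:quasi-isom embedding} and~\ref{lem:bounded wall-piece} are about quasi-isometric embeddings and wall-piece bounds for cubical small-cancellation quotients; they are not instruments for removing hyperplane pathologies in an amalgamated complex. To repair your argument you would, in effect, have to reprove (a version of) the abelian hierarchy theorem for $G$, which is why the paper appeals to it instead of building the complex directly.
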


The proof of Proposition~\ref{prop:virtual abelian almalgaration} employs Theorem~\ref{thm:abelian hierachy}, which is a consequence of \cite[Thm~18.15]{WiseIsraelHierarchy}, as well as Lemma~\ref{lem:sparse perfection special}, which is a consequence of \cite[Lem~7.56 and Rmk~7.57]{WiseIsraelHierarchy}. The original statement of \cite[Thm~18.15]{WiseIsraelHierarchy} is under a more general condition called strongly sparse, however, this condition is satisfied for fundamental groups of virtually special compact cube complexes that are hyperbolic relative to abelian subgroups.

\begin{definition}$G$ has an \emph{abelian hierarchy} terminating in groups in a class $\mathcal{C}$ if $G$ belongs to the smallest class of groups $\mathcal{M}$ closed under the following conditions:
\begin{enumerate}
	\item $\mathcal{C}\subset\mathcal{M}$;
	\item if $H=A\ast_C B$ with $C$ being f.g.\ free-abelian and $A,B\subset\mathcal{M}$, then $H\in\mathcal{M}$;
	\item if $H=A*_{C^t=C'}$ with $C$ being f.g.\ free-abelian and $A\in\mathcal{M}$, then $H\in\mathcal{M}$.
\end{enumerate}
\end{definition}

\begin{thm}
	\label{thm:abelian hierachy}
	Suppose that $G$ is hyperbolic relative to free-abelian subgroups, and that $G$ has an abelian hierarchy terminating in groups that are fundamental groups of virtually special compact cube complexes that are hyperbolic relative to abelian subgroups. Then $G$ is the fundamental group of a sparse cube complex $X$ that is virtually special.
\end{thm}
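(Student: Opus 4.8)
The plan is to derive Theorem~\ref{thm:abelian hierachy} from \cite[Thm~18.15]{WiseIsraelHierarchy}. That result takes as input a group that is hyperbolic relative to virtually abelian subgroups and carries a hierarchy obtained by successively splitting over free-abelian subgroups, terminating in fundamental groups of \emph{strongly sparse} virtually special cube complexes, and outputs that the group is the fundamental group of a sparse virtually special cube complex. The hypotheses of Theorem~\ref{thm:abelian hierachy} already supply hyperbolicity relative to free-abelian, hence virtually abelian, subgroups, and an abelian hierarchy in the sense of the definition preceding the theorem is precisely an iteration of amalgamated products and HNN extensions over f.g.\ free-abelian edge groups, which is a hierarchy of the kind handled in \cite{WiseIsraelHierarchy}. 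So the only thing that genuinely needs to be checked is that each terminal group of the hierarchy -- the fundamental group of a virtually special \emph{compact} nonpositively curved cube complex that is hyperbolic relative to abelian subgroups -- satisfies the ``strongly sparse'' requirement of \cite[Thm~18.15]{WiseIsraelHierarchy}.

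So the substantive step is as follows. Let $G_0=\pi_1 X_0$, where $X_0$ is a virtually special compact nonpositively curved cube complex and $G_0$ is hyperbolic relative to abelian subgroups $\{P_1,\dots,P_r\}$; I would show that $X_0$ is strongly sparse with a sparse structure compatible with $\{P_i\}$ (passing to a finite special cover first, which is harmless, should that definition presuppose specialness). A compact cube complex is trivially sparse in the sense of Definition~\ref{def:sparse}: it is its own compact core, and by Remark~\ref{rmk:compactible} the parabolic structure $\{P_i\}$ is carried by quasiflats $\widetilde F_i\subset\widetilde X_0$ whose quotients $P_i\backslash\widetilde F_i$ are compact subcomplexes of $X_0$, so conditions (1)--(3) of Definition~\ref{def:sparse} hold with every ingredient compact. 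The additional conditions imposed by ``strongly sparse'' in \cite{WiseIsraelHierarchy}, which concern the behaviour of hyperplanes along the non-compact directions of the quasiflats and along the intersections of translates of quasiflats with the core, are vacuous here because $X_0$ has no non-compact ends; I would record this by unwinding that definition clause by clause against compactness.

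Granting this, \cite[Thm~18.15]{WiseIsraelHierarchy} applies directly to $G$ and yields a sparse virtually special cube complex $X$ with $\pi_1 X\cong G$, which is the desired conclusion. The main obstacle is bookkeeping rather than mathematics: it lies entirely in reconciling definitions from \cite{WiseIsraelHierarchy} that are not reproduced in this excerpt -- confirming that ``strongly sparse'' is implied, for compact virtually special cube complexes hyperbolic relative to abelian subgroups, by compactness together with the compatibility of the parabolic structure (Remark~\ref{rmk:compactible}), and that the hierarchy notion used in \cite[Thm~18.15]{WiseIsraelHierarchy} subsumes abelian hierarchies in the sense defined here.
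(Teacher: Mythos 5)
Your proposal matches the paper's own treatment: the paper derives Theorem~\ref{thm:abelian hierachy} directly from \cite[Thm~18.15]{WiseIsraelHierarchy}, noting only that the ``strongly sparse'' hypothesis there is automatically satisfied for fundamental groups of virtually special compact cube complexes that are hyperbolic relative to abelian subgroups, which is precisely the reduction you identify and justify via compactness.
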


\begin{lem}\label{lem:sparse perfection special}
	Let $X\rightarrow R$ be a local-isometry to a compact special cube complex.
	If $X$ is sparse then there is a local-isometry $X\rightarrow X'$
	where $X'$ is compact and $\pi_1X'$ is hyperbolic relative to abelian subgroups $\{P_i'\}$
	that contain the corresponding parabolic subgroups $\{P_i\}$ of the relatively hyperbolic structure of $\pi_1X$.
	And there is a local-isometry $X'\rightarrow R$ such that  $X\rightarrow R$ factors as $X\rightarrow X'\rightarrow R$.
	
	Moreover, we can assume that 
	$\pi_1X'$ splits over a tree $T_r$,
	whose central vertex group is $\pi_1X$,  whose edge groups are the $P_i$,
	and whose leaf vertex groups $P_i'$ are of the form $P_i\times \integers^{m_i}$ for some $m_i$.
\end{lem}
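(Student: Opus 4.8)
The plan is to deduce the statement from \cite[Lem~7.56 \& Rmk~7.57]{WiseIsraelHierarchy} by completing one sparse end at a time and then reassembling, so that the argument is mostly bookkeeping around those two results.

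First I would fix a sparse decomposition $X=K\cup\bigcup_{i=1}^{r}F_i$, with $K$ compact, $F_i=P_i\backslash\widetilde F_i$ the quotient of a quasiflat by the corresponding maximal parabolic $P_i=\pi_1F_i$, and $F_i\cap F_j\subseteq K$ for $i\neq j$; enlarging $K$ if necessary, we may assume each $F_i\to X$ is a local isometry (so $\widetilde F_i\subseteq\widetilde X$ is a convex $\pi_1$-injective subcomplex), whence each composite $F_i\to X\to R$ is a local isometry to the compact special complex $R$. Then, for each fixed $i$, I would apply \cite[Lem~7.56]{WiseIsraelHierarchy} to $F_i\to R$: since $\pi_1F_i=P_i$ is f.g.\ virtually abelian, this yields a compact cube complex $E_i$ with local isometries $F_i\to E_i$ and $E_i\to R$ such that $F_i\to R$ factors as $F_i\to E_i\to R$, with $\pi_1E_i$ f.g.\ virtually abelian and containing $P_i$ (via the inclusion $P_i\hookrightarrow\pi_1E_i$ induced by $F_i\to E_i$); by \cite[Rmk~7.57]{WiseIsraelHierarchy} we may further take $\pi_1E_i=P_i\times\integers^{m_i}$ for some $m_i\geq 0$. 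Since $E_i$ is a compact convex piece of a cover of the special complex $R$, it is special, and the construction of $E_i$ (as a suitable convex subcomplex of $\widetilde R$) is arranged so that the hyperplanes of $E_i$ not meeting $F_i$ stay ``inside the cusp'' --- the property that makes the gluing below a local isometry.

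Next I would form $X'$ as the graph of cube complexes with central vertex complex $X$, leaf vertex complexes $E_1,\dots,E_r$, and, for each $i$, an edge with attaching local isometries $F_i\to X$ and $F_i\to E_i$. Standard arguments --- using that $\widetilde F_i$ is convex on both sides and that the links glue to flag complexes --- show that $X'$ is a compact nonpositively curved cube complex, that $X\to X'$ is a local isometry, and that $X\to R$ and the $E_i\to R$ assemble into a local isometry $X'\to R$ through which $X\to R$ factors. Its fundamental group is the iterated amalgam of $\pi_1X$ with the groups $P_i':=P_i\times\integers^{m_i}$ along the $P_i$, which is exactly a splitting over the wedge $T_r$ of $r$ edges whose central vertex group is $\pi_1X$, whose $i$-th edge group is $P_i$, and whose $i$-th leaf vertex group is $P_i'\supseteq P_i$. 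Finally, $\pi_1X'$ is hyperbolic relative to $\{P_i'\}$: in $\widetilde X'$ each coarsely isolated quasiflat $\widetilde F_i$ of the sparse complex $\widetilde X$ has been replaced by the $P_i'$-cocompact convex subcomplex $\widetilde E_i$, which lies in a bounded neighbourhood of $\widetilde F_i$ and hence remains coarsely isolated, so $\widetilde X'$ carries a collection of coarsely isolated $P_i'$-cocompact convex subcomplexes with stabilizers $\{P_i'\}$, whence the claimed relative hyperbolicity.

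The main obstacle is the one confronted by \cite[Lem~7.56]{WiseIsraelHierarchy}: constructing the compact completion $E_i$ of the noncompact cusp $F_i$ while keeping the enveloping map to $R$ a local isometry. Concretely, one must choose the abelian overgroup $P_i'\geq P_i$ and the $P_i'$-cocompact convex subcomplex $\widetilde E_i\subseteq\widetilde R$ containing $\widetilde F_i$ so that no hyperplane of $\widetilde E_i$ disjoint from $\widetilde F_i$ shares a $0$-cube with, and crosses, a hyperplane of $\widetilde X$ disjoint from $\widetilde F_i$; otherwise the glued links of $X'$ fail to be full and $X'\to R$ is not a local isometry. This is where finiteness of the hyperplane orbits of the quasiflat and the abelian structure of $P_i$ are used, and it is the content we import; the remaining steps are routine.
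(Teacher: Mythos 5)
There is a genuine gap, and it sits exactly at the point the lemma exists to address: compactness of $X'$. You build $X'$ as a graph of cube complexes whose central vertex space is $X$ itself and whose edge spaces are the cusps $F_i$, and then assert that ``standard arguments'' show $X'$ is compact. But a graph-of-spaces total space contains its vertex and edge spaces, and $X$ (being sparse) and the $F_i$ (and hence the cylinders $F_i\times[0,1]$) are noncompact, so the complex you describe is noncompact whenever $X$ is. The actual construction must \emph{absorb} each noncompact cusp into the compact completion: since $[P_i'\colon P_i]=\infty$ precisely in the directions in which $P_i$ fails to act cocompactly on $\widetilde F_i$, the map $F_i=P_i\backslash\widetilde F_i\to E_i=P_i'\backslash\widetilde E_i$ is an infinite-degree local isometry onto a \emph{compact} subcomplex of $E_i$, and $X'$ is obtained by gluing $E_i$ directly onto $X$ along this map (i.e.\ as a quotient of $X\sqcup\bigsqcup_i E_i$ identifying $x\in F_i$ with its image), so that the cusp gets wrapped onto a compact piece and $X'=K\cup\bigcup_i E_i$ up to these identifications is compact. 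With that construction, the splitting of $\pi_1X'$ over the star $T_r$ with edge groups $P_i$ and leaf groups $P_i'=P_i\times\integers^{m_i}$ is not read off a literal vertex/edge-space decomposition of $X'$ (there is none with compact pieces and vertex group $\pi_1X$); it comes from the tree-of-spaces structure of $\widetilde X'$ (copies of $\widetilde X$ and $\widetilde E_i$ meeting along copies of $\widetilde F_i$) via Bass--Serre theory, and one must separately verify that the identifications produce flag links and that $X\to X'$ and $X'\to R$ are local isometries --- this is the technical heart of \cite[Lem~7.56]{WiseIsraelHierarchy}, which the present paper simply quotes (together with Rmk~7.57) rather than reproving, so the part you chose to redo yourself is exactly the part your write-up does not actually carry out.

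A secondary problem: your justification of relative hyperbolicity claims that $\widetilde E_i$ ``lies in a bounded neighbourhood of $\widetilde F_i$,'' which is false whenever $m_i>0$ (a plane is not in a bounded neighbourhood of a line), and in any case having coarsely isolated convex $P_i'$-cocompact subcomplexes does not by itself yield relative hyperbolicity without invoking a further criterion. The clean route, consistent with what the paper does elsewhere (see the proof of Proposition~\ref{prop:virtual abelian almalgaration}), is to apply a combination theorem such as \cite[Thm~A]{BigdelyWiseAmalgams} to the splitting $\pi_1X'=\pi_1X\ast_{P_1}P_1'\ast\cdots\ast_{P_r}P_r'$, in which each edge group $P_i$ is parabolic (hence full and relatively quasiconvex) in the adjacent vertex groups; this gives hyperbolicity of $\pi_1X'$ relative to $\{P_i'\}$ directly. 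Your use of \cite[Lem~7.56 \& Rmk~7.57]{WiseIsraelHierarchy} for the per-cusp completions $E_i$ with $\pi_1E_i=P_i\times\integers^{m_i}$ is reasonable as imported input, but as written the assembly step neither achieves compactness nor correctly establishes the relatively hyperbolic structure.
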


\begin{proof}[Proof of Proposition~\ref{prop:virtual abelian almalgaration}]
	
	By assumption, $E$ has a finite index normal subgroup $E'$ which is the fundamental group of a sparse special cube complex. Moreover, by Theorem~\ref{thm:sparse separability}, we also assume $E'$ is hyperbolic relative to abelian subgroups. First we create a quotient of graph of groups $q:G=E\ast_B A\to \bar G=\bar E\ast_{\bar B}\bar A$ such that 
	\begin{enumerate}
		\item $\ker(E\to \bar E)\subset E'$;
		\item $\bar E$ is virtually compact special;
		\item both $\ker(A\to \bar A)$ and $\ker(B\to \bar B)$ are free abelian.
	\end{enumerate}

	Let $\{P_1,\ldots,P_n\}$ be representatives of maximal parabolic subgroup of $G$ such that $P_1=B$. Let $\{P'_1,\ldots,P'_n\}$ be as in Corollary~\ref{cor:rhquotient}. Let $\dot A\le A$ be a finite index abelian normal subgroup such that $\dot B=\dot A\cap B\le P'_1$. Let $q_E$ be the quotient map $E\to \bar E=E/\nclose{\dot B,\dot P_2,\ldots,\dot P_n}$, where $\dot P_i\le P'_i$ is a finite index abelian normal subgroup of $P_i$ with $\dot P_i\le E_0$ for $2\le i\le n$. Let $q_A$ be the quotient map $A\to A/\dot A$. Corollary~\ref{cor:rhquotient} (3) implies that $B\cap \ker (q_E)=B\cap \ker (q_A)=\dot B$. Thus $q_A$ and $q_E$ induce the desired quotient $q$ with $\bar B=B/\dot B$.
	
	As $\bar A$ and $\bar B$ are finite, we find finite index $\bar G'\le G'$ splitting as a graph of groups with underlying graph $\mathcal{G}$ such that each edge group and vertex group of $\bar G'$ is either trivial or isomorphic to $\bar E_0$ which is a finite index normal torsion free subgroup of $\bar E$ with $\bar E_0\le q_E(E')$. Let $V_1$ (resp. $V_2$) be the collection of vertices of $\mathcal{G}$ whose vertex groups are trivial (resp. isomorphic to $\bar E_0$). Then $(\mathcal{G},V_1,V_2)$ is bipartite. Let $G'=q^{-1}(\bar G')$. Then $G'$ splits as a graph of groups over $\mathcal{G}$ such that
	\begin{enumerate}
		\item a vertex group of $G'$ is of type I (resp. II) if its associated vertex is in $V_1$ (resp. $V_2$), then each type I vertex group of $G'$ is isomorphic to $A_0=\ker q_A$, each type II vertex group of $G'$ is isomorphic to $E_0=(q_E)^{-1}(\bar E_0)$;
		\item $E_0$ is the fundamental group of a sparse special cube complex and $E_0$ is hyperbolic relative to free abelian subgroups;
		\item each edge group of $G'$ isomorphic to $\ker q_B$ and any edge group in a vertex group of type II is a maximal parabolic subgroup of this vertex group.
	\end{enumerate}
	
	Now define a new graph of groups by enlarging each edge group and vertex group of $G'$ as follows. First we enlarge each type I vertex group of $G'$. Let $E_i$ be one such vertex group. Then we enlarge $E_i$ to $E^+_i$ for $1\le i\le k_2$ such that 
	\begin{enumerate}
		\item $E^+_i$ splits as a tree $T_r$ of groups where $T_r$ is an $r$-star with the central vertex group being $E_i$, each edge group being a peripheral subgroup of $E_i$ and each leaf vertex group being free abelian which contains its vertex group as a direct summand;
		\item $E^+_i$ is the fundamental group of a compact special cube complex;
		\item $E^+_i$ is hyperbolic relative to free abelian subgroups and each leaf vertex group of $E^+_i$ is a maximal parabolic subgroup of $E^+_i$.
	\end{enumerate}
	Such enlargement is possible by Lemma~\ref{lem:sparse perfection special}. Second we enlarge each edge group. Let $B_i$ be an edge group. Then exactly one of its vertex groups is of type II, denoted by $E_j$. Since $E_j$ has a unique maximal parabolic subgroup $P$ containing $B_i$ such that $P=B_i\oplus \mathbb Z^m$, we enlarge $B_i$ to $P$. Last we enlarge each vertex group of type II. Let $A_i$ be one such vertex group and let $\{B_i\}_{i=1}^k$ be its edge groups. Since we already enlarge $B_i$ to $P_i=B_i\oplus B'_i$, we enlarge $A_i$ to $A^+_i=A_i\oplus B'_1\oplus\cdots\oplus B'_k$. The boundary map $B_j\to A_i$ naturally extends to $P_j\to A^+_i$. 
	
	Let $G^+$ be the resulting new graph of groups from the previous paragraph. Note that $G^+$ is hyperbolic relative to the $A^+_i$ by \cite[Thm~A]{BigdelyWiseAmalgams}. Thus $G^+$ is the fundamental group of a sparse cube complex $X$ that is virtually special by Theorem~\ref{thm:abelian hierachy}. Since there is a retraction $G^+\to G'$ by our construction, $G'$ is quasi-isometrically embedded in $G^+$. Then it follows from \cite[Thm~7.2]{SageevWiseCores} that $G'$ acts cosparely on a convex subcomplex of $\widetilde X$, which implies that $G'$ is sparse and virtually special, hence the proposition follows.
\end{proof}

\subsection{Quotienting in the relative hyperbolic setting}
In this subsection we prove the following result.

\begin{thm}
	\label{thm:rel hyperbolic}
Let $G$ be hyperbolic relative to subgroups that are virtually f.g.\ free abelian by $\mathbb Z$. Suppose $G$ splits as a finite graph of groups whose edge groups are relative quasiconvex and whose vertex groups are virtually sparse special. Then each relatively quasiconvex subgroup of each vertex group of $G$ is separable. In particular, $G$ is residually finite.
\end{thm}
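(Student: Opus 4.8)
The strategy parallels the proof of Theorem~\ref{thm:main1}: induct on the relative depth of the edge-group collection, and at each stage produce a "depth-reducing" quotient $G\to\bar G$ that preserves all the hypotheses while strictly decreasing this depth, and that can be chosen to separate a given element $g$ from a given relatively quasiconvex subgroup $Q$ of a vertex group. So first I would set up the appropriate analogue of $\delta(G,\mathcal E)$ in the relatively hyperbolic world — measuring chains of infinite subtrees whose pointwise stabilizers are \emph{loxodromic-rich} (not contained, up to finite index, in a conjugate of a peripheral subgroup), using Theorem~\ref{lem:finite height1} and Corollary~\ref{lem:finitely many conjugacy class1} to see that this depth is finite. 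The base case is when every edge group of $G$ is (up to finite index) peripheral or finite: then $G$ has an abelian hierarchy over its vertex groups terminating in virtually sparse special groups, so a covering-space argument together with Theorem~\ref{thm:abelian hierachy} and Theorem~\ref{thm:sparse separability} shows every relatively quasiconvex subgroup of a vertex group is separable in $G$ (one first checks, via Theorem~\ref{thm:plus}, that such a subgroup is relatively quasiconvex in $G$).

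For the inductive step I would build $\bar G$ by a compatible system of quotients $\{\phi_V:V\to\bar V\}$ of the vertex groups, exactly as in Proposition~\ref{prop:quotients}, but now using the relatively hyperbolic filling technology: for each vertex group $V$ (which is virtually sparse special) and each "lowest" transection $L$ in $V$, I apply Corollary~\ref{cor:rhquotient} and Theorem~\ref{thm:plus} to kill a deep finite-index normal subgroup $\ddot L$ of (the commensurator of) $L$, choosing these subgroups compatibly across the transfer isomorphisms of the graph of groups. The key structural inputs carrying over are: (i) Lemma~\ref{lem:control of intersection of conjugates}, which gives almost-malnormality of the commensurators of lowest transections so the hypotheses of Theorem~\ref{thm:plus}/Corollary~\ref{cor:rhquotient} apply; (ii) the liftable-shells / Intersection-Control machinery (Lemma~\ref{lem:Intersection Control}, Remark~\ref{rmk:Intersection Control}, Lemma~\ref{lem:superconvex fiber-product intersection}) applied now to the cosparse cube complexes and their quasiflats via Lemma~\ref{lem:core}, which ensures the quotient preserves relative quasiconvexity of edge groups and of $Q$; (iii) Proposition~\ref{prop:lift}, which lifts finite subtrees of $\bar T$ with infinite (loxodromic-rich) pointwise stabilizer back to $T$, forcing the depth to strictly decrease. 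After verifying the quotient is compatible — so $\bar G$ is genuinely a graph of groups with underlying graph $\mathcal G$ — one checks $\bar G$ is again hyperbolic relative to virtually-(f.g.\ free abelian)-by-$\mathbb Z$ subgroups (the peripherals of $\bar G$ are quotients/images of those of $G$ together with the new $A_i$-type groups from Theorem~\ref{thm:plus}, all still of the required form), that each vertex group $\bar V$ is virtually sparse special (Corollary~\ref{cor:rhquotient} plus Proposition~\ref{prop:virtual abelian almalgaration} to absorb the amalgamated virtually-abelian pieces), and that finite stature is preserved by the same lifting argument as in the proof of Proposition~\ref{prop:quotients}. Finally, as in the proof of Theorem~\ref{thm:main1}, one arranges $\phi_T$ to be injective on the convex hull of $v$ and $gv$ when $g$ doesn't fix $v$, and uses separability of $Q$ in $V$ (available by Theorem~\ref{thm:sparse separability}) together with conclusion~\eqref{conclusion6}-type control when $g$ does fix $v$, so $\bar g\notin\bar Q$; then $\bar Q$ is relatively quasiconvex in a vertex group of $\bar G$, hence separable in $\bar G$ by induction, and the preimage of a finite-index subgroup of $\bar G$ missing $\bar g$ separates $g$ from $Q$.

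The main obstacle I expect is the correct bookkeeping of peripheral structures through the quotient and the amalgamation. In the relatively hyperbolic setting a transection of a vertex group need not be loxodromic — it can itself be parabolic — so "lowest transection" must be interpreted relative to the loxodromic/parabolic dichotomy, and one must separately handle the parabolic transections (which are dealt with by the relative Dehn-filling of Corollary~\ref{cor:rhquotient}/Lemma~\ref{lem:filling}, not by Theorem~\ref{thm:malnormal special quotient}). Matching up Theorem~\ref{thm:plus}'s $A_i$ (finite-index subgroups of peripherals, giving $Q^+$) with the sparse cube complex structure of the vertex groups, and checking that after all the fillings the induced peripheral structure on the quotient vertex groups is still compatible edge-by-edge across $\mathcal G$ — so that $\bar G$ is again relatively hyperbolic with peripherals of the prescribed virtually-(free abelian)-by-$\mathbb Z$ type and each vertex group is still virtually sparse special — is where the delicate work lies; this is precisely why Proposition~\ref{prop:virtual abelian almalgaration} was proven, and invoking it at the right moment (to conclude $\bar V$ is virtually sparse special after the amalgamation with virtually abelian groups) is the crux.
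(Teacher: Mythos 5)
Your plan has a genuine gap at its core: the inductive step relies on a quotient tool that does not exist in this paper. You propose to run the depth induction inside the relatively hyperbolic category, at each stage killing a deep finite-index normal subgroup $\ddot L$ of (the commensurator of) each lowest transection $L$ of a virtually sparse special vertex group $V$, citing Corollary~\ref{cor:rhquotient} and Theorem~\ref{thm:plus} for this. Neither result does that. Theorem~\ref{thm:plus} is not a quotient statement at all: it enlarges a relatively quasiconvex subgroup $Q$ to a full subgroup $Q^+$ inside $G$. Corollary~\ref{cor:rhquotient} and Lemma~\ref{lem:filling} only permit Dehn filling of finite-index subgroups of the virtually abelian \emph{parabolic} subgroups of $V$, and their output is a \emph{word-hyperbolic} virtually compact special group, not a relatively hyperbolic virtually sparse special one. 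Lowest transections in this setting are typically loxodromic, and there is no relative analogue of Theorem~\ref{thm:malnormal special quotient} available here that would let you kill finite-index subgroups of an almost malnormal family of loxodromic relatively quasiconvex subgroups while keeping the quotient hyperbolic relative to virtually (free abelian)-by-$\mathbb Z$ subgroups with virtually sparse special vertex groups. So the assertion that $\bar G$ again satisfies the hypotheses of the theorem --- the heart of your induction --- is unsupported; likewise, Proposition~\ref{prop:virtual abelian almalgaration} cannot be invoked to show $\bar V$ is virtually sparse special after the quotient (in the paper it serves a pre-quotient reduction, see below). A secondary issue: in your base case, Theorem~\ref{thm:abelian hierachy} requires hyperbolicity relative to \emph{abelian} subgroups, whereas the peripherals of $G$ are (abelian)-by-$\mathbb Z$, so it does not apply to $G$ directly.

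The paper avoids iterating in the relatively hyperbolic category altogether. First, Lemma~\ref{lem:reduction} (``Full Splitting'', whose engine is Proposition~\ref{prop:virtual abelian almalgaration}) replaces the splitting by one with the same underlying graph in which every edge group is full and relatively quasiconvex in its vertex groups, the new vertex groups remaining virtually sparse special. Then Proposition~\ref{prop:quotient rel hyperbolic} performs a \emph{single} compatible Dehn filling of finite-index subgroups of the parabolic subgroups of all vertex groups (via Corollary~\ref{cor:rhquotient}, with the Intersection-Control and lifting machinery giving compatibility, finite stature of $\bar G$, kernel control, and injectivity of $\phi_T$ on a chosen finite subtree), so that $\bar G$ is a finite graph of word-hyperbolic virtually compact special groups with quasiconvex edge groups and finite stature. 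Only then is the depth induction run, entirely in the hyperbolic world, by invoking Theorem~\ref{thm:main1} (where the Malnormal Special Quotient Theorem is available); the separation of $g$ from $Q$ is arranged as in your final step, using Theorem~\ref{thm:sparse separability} and Corollary~\ref{cor:rhquotient}(2) for quasiconvexity of $\bar Q$. If you want to salvage your outline, restructure it along these lines rather than attempting a relative version of Proposition~\ref{prop:quotients}.
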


The assumption of Theorem~\ref{thm:rel hyperbolic} implies that each intersection of a maximal parabolic subgroup of $G$ with a vertex group of $G$ is virtually f.g.\ abelian.


We need a preparatory fact for the proof of the above theorem, which may be useful for controlling stature in general situation.

\begin{lem}[Full Splitting]
	\label{lem:reduction}
Suppose $G$ admits a splitting as in Theorem~\ref{thm:rel hyperbolic} with its collection of edge groups and vertex groups denoted by $\mathcal{E}$ and $\mathcal{V}$. We claim there is a new splitting of $G$ with the same underlying graph such that
\begin{enumerate}
	\item each vertex/edge group of the old splitting is contained in the corresponding vertex/edge group of the new splitting;
	\item each edge group of the new splitting is quasiconvex and full in its vertex group;
	\item each vertex group of the new splitting is virtually sparse special.
\end{enumerate}
\end{lem}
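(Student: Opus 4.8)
The plan is to enlarge the edge and vertex groups of the given splitting in finitely many rounds, in each round making one more edge group full in one of its vertex groups. The tool producing fullness and preserving relative quasiconvexity is Theorem~\ref{thm:plus}, and the tool preserving virtual sparse specialness of the vertex groups is Proposition~\ref{prop:virtual abelian almalgaration}. To set up, fix relatively hyperbolic structures: $G$ is hyperbolic relative to a collection $\mathcal{P}$ of representatives of maximal parabolics, and for each vertex group $V$ we work with the peripheral structure consisting of the maximal infinite subgroups among $\{V\cap P^g : P\in\mathcal{P},\ g\in G\}$, which are virtually f.g.\ abelian (by the remark following Theorem~\ref{thm:rel hyperbolic}), form finitely many conjugacy classes (by Theorem~\ref{thm:induced peripheral}), and are the relevant peripherals for a virtually sparse special group; the same applies to any enlargement of $V$ obtained below. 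The key finiteness input is that each parabolic of $G$, being virtually f.g.\ free abelian by $\mathbb{Z}$, is virtually polycyclic, hence every subgroup of it is finitely generated and it satisfies the ascending chain condition on subgroups.

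\textbf{One round.} Suppose at some stage we have a graph of groups with the original underlying graph, vertex groups $\{V_v\}$ and edge groups $\{E_e\}$ each containing the originals, each $E_e$ relatively quasiconvex in its vertex groups, each $V_v$ virtually sparse special. If every $E_e$ is full in each $V_{e,i}$ we are finished. Otherwise pick an edge $e$ and an endpoint $V=V_{e,i}$ in which $E=E_e$ is not full, and apply Theorem~\ref{thm:plus} to $E\le V$: this produces a full relatively quasiconvex $E^{+}\le V$ splitting as a star with central vertex group $E$ and leaf groups $A_1,\dots,A_m$, each $A_\ell$ a finite index subgroup of a maximal parabolic of $V$ and each $A_\ell\cap E=K_\ell$ a maximal parabolic of $E$. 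Replace $E_e$ by $E^{+}$; to keep the attaching map to the opposite vertex group $V'=V_{e,3-i}$ defined we must replace $V'$ by $\langle V', A_1,\dots,A_m\rangle$, and this enlargement may propagate through the graph to other vertex groups that have become too small. Each such enlargement is an iterated amalgam of the form $V'\ast_{B}A_\ell$ with $B=A_\ell\cap V'$; after first enlarging along the maximal parabolic or maximal virtually cyclic loxodromic subgroup of $V'$ containing $B$ (again via Theorem~\ref{thm:plus}), $B$ is of the form required by Proposition~\ref{prop:virtual abelian almalgaration}, so the enlarged vertex group stays virtually sparse special and the image of $E^{+}$ stays relatively quasiconvex in it. Finally, the new data form a graph of groups whose fundamental group is $G$: all the enlarged groups sit inside $G$ compatibly, giving a surjection from the new fundamental group onto $G$, while the inclusions of the old groups into the new ones give a map back, and the two composites are the identity.

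\textbf{Termination.} Each round strictly enlarges at least one of the virtually abelian subgroups $V_v\cap P^g$ — the one along the direction in which $E_e$ was just made full — inside the fixed virtually polycyclic group $P^g$. Since there are only finitely many such subgroups up to conjugacy and each can strictly increase only finitely often by the ascending chain condition, the process stops after finitely many rounds. At that point every edge group is full and relatively quasiconvex in its vertex groups and every vertex group is virtually sparse special, so conditions (1)--(3) hold. One then chooses, once and for all at the end, the finite index subgroups $\dot{P}_i$ of the parabolics entering the (finitely many) invocations of Theorem~\ref{thm:plus} and Proposition~\ref{prop:virtual abelian almalgaration} deep enough that all of them are valid simultaneously.

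\textbf{The main obstacle} is the bookkeeping that keeps the enlargement compatible with the attaching maps while it propagates through the graph: adjoining a parabolic of $V'$ to $E_e$ in order to make $E_e$ full at $V$ enlarges $V'$, which can destroy the fullness of a different edge group at $V'$ and trigger further rounds, so one must verify (via the ascending chain condition on the virtually polycyclic parabolics) that this cascade terminates, and check at each step that after passing to the appropriate finite index subgroups the amalgamating subgroup is genuinely a maximal parabolic or maximal virtually cyclic loxodromic subgroup of the current vertex group — which in turn requires correctly identifying the peripheral structure of each intermediate vertex group as $\{V\cap P^g\}$.
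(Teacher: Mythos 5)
Your high-level plan matches the paper's (make each edge group full by adjoining parabolic directions, one move at a time; the two tools are Theorem~\ref{thm:plus}/\cite[Thm~1.1]{MartinezPedrozaCombinations2009} for fullness and Proposition~\ref{prop:virtual abelian almalgaration} for preserving virtual sparse specialness), but there is a genuine gap in the step you flag yourself as ``the main obstacle,'' and it is not a bookkeeping issue. To apply Proposition~\ref{prop:virtual abelian almalgaration} to the amalgam $V'\ast_B A_\ell$, you need the amalgamating subgroup $B=A_\ell\cap V'$ to already be a maximal parabolic (or maximal virtually cyclic loxodromic) subgroup of $V'$. Your suggested remedy --- ``first enlarging along the maximal parabolic of $V'$ containing $B$ via Theorem~\ref{thm:plus}'' --- is circular: that preliminary enlargement is again an amalgam along a possibly non-maximal subgroup, so it would need Proposition~\ref{prop:virtual abelian almalgaration} to be applicable already. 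The paper closes this gap by proving the specific claim $P_G\cap E = P_G\cap V_2$ (so that $B$ \emph{is} already the full parabolic intersection $P_G\cap V'$, hence maximal), and the proof of that claim is the one genuinely new argument in the lemma: it exploits that $P_G$ is virtually (f.g.\ free abelian)-by-$\mathbb{Z}$, hence the subgroup of $P_G$ generated by $P_G\cap V_1$, $P_G\cap E$, $P_G\cap V_2$ would act on its Bass--Serre tree with no invariant vertex or line if the inclusion $P_G\cap E\subsetneq P_G\cap V_2$ were proper, contradicting the solvable structure. Without this, you cannot verify the hypothesis of Proposition~\ref{prop:virtual abelian almalgaration}, and your termination argument also weakens, since the claimed strict increase of $V'\cap P^g$ in each round is exactly the statement that $V'\cap P^g$ was previously only $E\cap P^g$.

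A secondary issue is the description of the enlarged opposite vertex group as $\langle V', A_1,\dots,A_m\rangle$. Read literally as a subgroup of $G$ this is wrong: $V'$ and the $A_\ell$ stabilize different vertices of the Bass--Serre tree and together typically generate far more than intended. The paper instead builds the new vertex group abstractly as the amalgam $\dot P\ast_{\dot P\cap V_2}V_2$, realizes the new data as a graph of spaces that deformation retracts onto the old one, and thereby checks $\pi_1$ is preserved. Your alternative phrasing (iterated amalgams $V'\ast_B A_\ell$) is the right reading, but it is exactly there that the key claim above is needed to identify $B$. On the positive side, using Theorem~\ref{thm:plus} to make $E$ full in $V$ in one shot rather than one parabolic direction per move, and replacing the paper's per-parabolic ``complexity'' by an ACC argument on subgroups of virtually polycyclic parabolics, are legitimate and arguably cleaner variations --- but neither absolves you of proving $P_G\cap E=P_G\cap V_2$.
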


\begin{proof}
Since each edge group is quasiconvex in $G$, so is each vertex group by \cite[Lem~4.9]{BigdelyWiseAmalgams}. First we describe a basic move. Let $E$ be an edge group and $V_1,V_2$ be its vertex groups (it is possible that $V_1=V_2$). To simplify notation, we use the same letter for both the group and its Eilenberg–MacLane space. Suppose $E$ is not full in $V_1$. Let $P\subset V_1$ be a maximal parabolic subgroup such that $P'=P\cap E$ is infinite and is of infinite index in $P$. By \cite[Thm~1.1]{MartinezPedrozaCombinations2009}, we can find a finite index subgroup $\dot P\le P$ such that $P'\le \dot P$ and the natural homomorphism $\dot P\ast_{P'} E\to V_1$ is injective with its image being relatively quasiconvex in $V_1$ (hence the image is also relatively quasiconvex in $G$). We also assume the moreover statement in \cite[Thm~1.1]{MartinezPedrozaCombinations2009} holds. Now consider the following commutative diagram of groups/Eilenberg–MacLane spaces:
$$\begin{matrix}
K & \rightarrow & \dot P \\
\downarrow & & \downarrow \\
E & \rightarrow & V_1 \\
\end{matrix}$$
where $K=\dot P\cap E$. We enlarge $E$ to $E'=(E\sqcup (K\times [0,1])\sqcup \dot P)/\sim$ with left attaching map being $K\to E$ and the right attaching map being $K\to \dot P$. The old boundary map $E\to V_1$ naturally extends to a new boundary map $E'\to V_1$ realizing the injective homomorphism $\dot P\ast_{\dot P\cap E} E\to V_1$. We also enlarge $V_2$ to $V'_2=(V_2\sqcup (K\times [0,1])\sqcup \dot P)/\sim$ with the left attaching map being $K\to E\to V_2$ and the right attaching map being $K\to \dot P$. The boundary map $E\to V_2$ naturally extends to a new boundary map $E'\to V'_2$, which represents the monomorphism $\dot P\ast_{\dot P\cap E} E\to \dot P\ast_{\dot P\cap V_2} V_2$. 

Now we show $V'_2$ is virtually sparse special. By Remark~\ref{rmk:compactible} and Proposition~\ref{prop:virtual abelian almalgaration}, it suffices to show $\dot P\cap V_2=\dot P\cap E$ is either a maximal parabolic subgroup of $V_2$, or a maximal loxodromic virtually cyclic subgroup. The subgroup $P\subset V_1$ in the previous paragraph can be written as $P=P_G\cap V_1$ where $P_G$ is a maximal parabolic subgroup of $G$. As $\dot P\cap E=P_G\cap E$, it remains to show $P_G\cap E=P_G\cap V_2$. Considering the subgroup $H$ of $P_G$ generated by $V_1\cap P_G$, $E\cap P_G$ and $V_2\cap P_G$ ($H$ has a graph of group of structure with these subgroups being its vertex/edge groups). Since $P_G$ is virtually abelian by cyclic, so is $H$. As $P_G\cap E$ is of infinite index in $P_G\cap V_1$, $P_G\cap E\subsetneq P_G\cap V_2$ would indicate that $H$ acts on a tree (which is its Bass-Serre tree) without any invariant vertices or lines, contradicting that $H$ is virtually abelian by cyclic.

Now we have obtained a new graph of spaces and one readily verifies that all the boundary maps induces monomorphisms on the fundamental groups. The new graph of spaces deformation retracts onto the old one, so its fundamental group remains unchanged. The corresponding new graph of groups satisfies all the conditions in Theorem~\ref{thm:rel hyperbolic}.

Now we show the conclusion of the lemma can be reached after finitely many basic moves. Let $T$ be the Bass-Serre tree of $G$ and let $\mathcal{G}=G/T$. Let $P$ be a maximal parabolic subgroup of $G$. Let $S_P$ be the subtree of $T$ spanned by vertices of $T$ whose stabilizers intersect $P$ in infinite subgroups. Note that $S_P$ is $P$-invariant, and $S_P$ also contains all edges of $T$ whose stabilizers intersect $P$ in infinite subgroups. We also know $S_P/P$ is a finite graph by Theorem~\ref{thm:induced peripheral} (2). Since $P$ is virtually free abelian by cyclic, one of the following hold:
\begin{enumerate}[label=(\alph*)]
	\item $P$ stabilizes a vertex $v\in T$;
	\item there is a $P$-invariant line $\ell\subset T$.
\end{enumerate}
Since $G$ is relatively hyperbolic to virtually abelian subgroups, $S_P/P$ is a tree with finitely many edges in case (a) and $S_P/P$ is $\ell/P$ together with finitely many finite trees attached to it in case (b). 

Suppose case (a) holds. Let $\bar v\in S_P/P$ be the image of the fixed vertex $v\in T$ of $P$. Note that $S_P/P$ can be viewed as a tree of groups whose vertex and edge groups are decreasing as we move away from the base point $\bar v$. Define the \emph{complexity} for $P$ to be the number of vertex groups and edge groups of $S_P/P$ which are of infinite index in $P$. If the complexity is $0$, then there is no need to perform any move, otherwise there is an edge in $e\subset S_P/P$ such that exactly one of its vertex groups is finite index in $P$. Now apply the basic move to obtain a new splitting of $G$ with the associated Bass-Serre tree denoted by $T'$. We define $S'_P\subset T'$ in the same way as $S_P$. There is a natural $G$-equivariant graph morphism $\phi:T\to T'$. 
 
We claim $\phi(S_P)=S_P'$. Indeed, it is immediate that $\phi(S_P)\subset S_P'$  since a vertex with infinite $P$-stabilizer must map to a vertex with infinite $P$-stabilizer. Suppose $\phi(S_P)\subsetneq S_P'$, then there is an edge $e'\subset S'_P$ satisfying $e'\nsubseteq \phi(S_P)$. Then there is an edge $e\subset T$ such that either $\stab(e')=\stab(e)$ or $\stab(e')=\stab(e)_{\ast_B}\dot P_0$ where $B$ is a maximal parabolic subgroup of $\stab(e)$ and $\dot P_0$ is of finite index in some maximal parabolic subgroup of $G$. In the former case we have $P\cap \stab(e')=P\cap \stab(e)$, which implies that $e\subset S_P$ and $\phi(e)=e'$. This leads to a contradiction. In the latter case by the moreover statement in \cite[Thm~1.1]{MartinezPedrozaCombinations2009}, $P\cap \stab(e')$ is either conjugated (in $\stab(e')$) to a subgroup of $\stab(e)$, or conjugated to $\dot P_0$. In either situation there exists $g\in \stab(e')$ such that $|(\stab(e))^g\cap P|=\infty$. Thus $ge\subset S_P$ and $\phi(ge)=e'$, which yields a contradiction again. Thus the claim follows.
 

 
The claim implies that there is a surjective map $S_P/P\to S'_P/P$ and one readily sees that the complexity decreases. If $P$ stabilizes a line in $\ell\subset T$, then let $\ell/P$ be the \emph{core} of $S_P/P$. We choose an edge $e\subset S_P/P$ such that exactly one of its vertex group is commensurable to a vertex group in the core, and run the same argument as before. Thus the $P$-complexity is $0$ after finitely many steps. Then we deal with another maximal parabolic subgroup $P'$ in a different conjugacy class. The argument in the previous paragraph implies that $P$-complexity remains $0$ when we decrease $P'$-complexity. Thus we are done after finitely many basic moves.
\end{proof}

%
%
%


The following is a main ingredient in the proof of Theorem~\ref{thm:rel hyperbolic}.
\begin{prop}
	\label{prop:quotient rel hyperbolic}
Let $G$ be as in Theorem~\ref{thm:rel hyperbolic}. Let $\mathcal{E}$ and $\mathcal{V}$ be the collection of edge groups and vertex groups. Suppose in addition that $E$ is full in $V$ whenever $E$ is an edge group of $V$. For each $V\in\mathcal{V}$ (resp. $E\in\mathcal{E}$), we choose a finite index subgroup $V'\le V$ (resp. $E'\le E$). Let $S\subset T$ be a finite subtree of the Bass-Serre tree $T$ of $G$.

Then for each vertex group $V$, there is a quotient $\phi_V:V\to\bar V$ induced by quotienting finite index subgroups of its parabolic subgroups such that the following conditions hold for the collection $\{\phi_V:V\to\bar V\}_{V\in\mathcal{V}}$:
\begin{enumerate}
	\item \label{quotient} these quotients are compatible, so there is an induced quotient $G\to\bar G$;
	\item \label{special} each $\bar V$ is hyperbolic and virtually compact special, moreover, each edge group of $\bar V$ is quasiconvex in $\bar V$;
	\item \label{height} $\bar G$ has finite stature;
	\item \label{subgroup} for each edge group $E$ and each vertex group $V$, $\ker(E\to\bar E)\le E'$ and $\ker(V\to\bar V)\le V'$;
	\item \label{injective} the induced map $\phi_T:T\to\bar T$ between the Bass-Serre trees is injective when restricted to $S$.
\end{enumerate}
\end{prop}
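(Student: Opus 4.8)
The plan is to follow the proof of Proposition~\ref{prop:quotients} almost verbatim, with the Malnormal Virtually Special Quotient Theorem and the lowest transections replaced by the parabolic Dehn fillings of the vertex groups supplied by Corollary~\ref{cor:rhquotient}. The one genuinely new feature is that the finite index subgroups of parabolic subgroups which we kill inside the vertex groups must be selected coherently for all of $G$ at once, so that the resulting vertex quotients glue; this is where the fullness of the edge groups is used.

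First I would fix the peripheral data. By \cite[Lem~4.9]{BigdelyWiseAmalgams} every vertex group $V$ and every edge group $E$ is relatively quasiconvex in $G$, and by Theorem~\ref{thm:induced peripheral} the induced peripheral structure of $V$ is the finite set of conjugacy classes of infinite subgroups $V\cap P^g$ with $P$ a maximal parabolic of $G$; by hypothesis these are virtually f.g.\ abelian, so $V$ is hyperbolic relative to virtually abelian subgroups, and being virtually sparse special it is $\pi_1$ of a virtually special sparse cube complex whose sparse structure may be taken compatible with this peripheral structure (Remark~\ref{rmk:compactible}). Hence Corollary~\ref{cor:rhquotient} applies to each $V$. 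Fullness says that for every maximal parabolic $P$ of $G$ the intersection $E\cap P^g$ is either finite or of finite index in $V\cap P^g$; in particular the induced peripheral structures of $E$ from its two vertex groups and from $G$ have the same infinite members up to conjugacy. Now I would fix representatives $P_1,\dots,P_s$ of the conjugacy classes of maximal parabolics of $G$; each contains a normal f.g.\ free abelian subgroup $A_k$ with $P_k/A_k$ virtually cyclic, and I choose finite index subgroups $\dot A_k\le A_k$ that are normal in $P_k$ (any finite index subgroup of $A_k$ contains such a one, as $A_k$ is finitely generated). Intersecting the $\dot A_k$ with the vertex and edge groups produces, for each $V$, finite (or virtually-cyclic) index normal subgroups $\dot P_i^V$ of the infinite peripheral subgroups of $V$; since these all descend from the single system $\{\dot A_k\}$, the edge identifications $\alpha_E$ carry the $\dot P$-data on one side of each edge to that on the other side.

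Next I would run Corollary~\ref{cor:rhquotient} inside each $V$, with $Q$ ranging over the edge groups at $V$, together with the Dehn filling theorem for relatively hyperbolic groups applied to $G$ itself. Since there are only finitely many vertex groups, finitely many edge groups, finitely many peripheral classes, and one finite subtree $S$, by passing to sufficiently deep $\dot A_k$ one can simultaneously arrange: the conclusions of Corollary~\ref{cor:rhquotient} in each $V$; that the filling $G\to\bar G$ is hyperbolic relative to its filled parabolics and restricts correctly to each vertex group; that $\ker(V\to\bar V)\le V'$ and $\ker(E\to\bar E)\le E'$; and that $\phi_T|_S$ is injective --- the last two arranged exactly as in Proposition~\ref{prop:quotients} using separability of edge groups in their vertex groups (Theorem~\ref{thm:sparse separability}). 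Writing $\bar V=V/\nclose{\{\dot P_i^V\}}$ and $\phi_V\colon V\to\bar V$, coherence of the $\dot P$-data gives $\alpha_E(E_i\cap\ker\phi_{V_i})=E_j\cap\ker\phi_{V_j}$ --- here one uses Corollary~\ref{cor:rhquotient}(3), applied inside a vertex group, to identify $E\cap\ker\phi_V$ with the kernel of the induced filling of $E$, which by fullness depends only on the peripheral structure of $E$ in $G$ --- so the $\phi_V$ are compatible and there is an induced quotient $\phi\colon G\to\bar G$, giving \eqref{quotient}; Corollary~\ref{cor:rhquotient}(1)--(2) give \eqref{special}, while \eqref{subgroup} and \eqref{injective} hold by the depth conditions. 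For \eqref{height}: since the $\dot P_i^V$ assemble to the $\dot A_k$, the map $\phi$ is a parabolic Dehn filling of $G$ whose filled parabolics are virtually cyclic, so $\bar G$ is word-hyperbolic; and each $\bar V$ is the image, under this sufficiently deep filling, of a full relatively quasiconvex enlargement of $V$ (the subgroup $V^+$ of Theorem~\ref{thm:plus}, with $\bar V$ quasiconvex in $\bar{V^+}$ because the adjoined edge and leaf groups become finite), hence $\bar V$ is quasiconvex in the hyperbolic group $\bar G$. Thus $\bar G$ is a word-hyperbolic group splitting as a finite graph of groups with quasiconvex vertex groups, and so it has finite stature by the corollary to Lemma~\ref{lem:finitely many conjugacy classes1} (a word-hyperbolic group that splits as a graph of groups with quasiconvex vertex groups has finite stature).

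The hard part will be the coherence across edges: one must verify that filling each vertex group along the restriction of the single global system $\{\dot A_k\}$ produces kernels that agree under the $\alpha_E$. This is exactly where fullness of the edge groups is essential --- through Corollary~\ref{cor:rhquotient}(3) it makes $E\cap\ker\phi_V$ computable intrinsically from the induced peripheral structure of $E$ in $G$, so the answer is independent of which vertex group is used. Once this is secured the remaining points are routine: hyperbolicity of $\bar G$ and quasiconvexity of the $\bar V$ in $\bar G$ come from Dehn filling, and the finitely many depth and separability conditions are handled as in Proposition~\ref{prop:quotients}.
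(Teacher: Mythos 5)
Your overall architecture --- fix peripheral data once for $G$, pass to finite index subgroups of the parabolics, run the sparse special Dehn fillings of Corollary~\ref{cor:rhquotient} in each vertex group, and use fullness together with Corollary~\ref{cor:rhquotient}(3) to make the kernels agree across edges --- matches the paper for conclusions~\eqref{quotient}, \eqref{special}, \eqref{subgroup}, \eqref{injective}. The place where your proof genuinely diverges from the paper, and where I think there is a gap, is conclusion~\eqref{height}.

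\textbf{The gap in the finite stature step.} You argue that the quotient $\phi\colon G\to\bar G$ is a parabolic Dehn filling of the relatively hyperbolic group $G$ with virtually cyclic filled parabolics, hence $\bar G$ is word-hyperbolic, hence finite stature follows from the corollary to Lemma~\ref{lem:finitely many conjugacy classes1}. But $\bar G$ is the fundamental group of a quotient graph of groups: it equals $G/N$ where $N$ is the normal closure of $\bigcup_V\ker\phi_V$, i.e.\ the normal closure of the various $V\cap(\dot A_k)^g$. For $N$ to equal $\nclose{\{\dot A_k\}}_G$ (the actual Dehn filling kernel) you would need each $\dot A_k$ to be elliptic in $T$, i.e.\ conjugate into a vertex group. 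You never verify this, and for an arbitrary choice of normal free abelian $A_k\le P_k$ with virtually cyclic quotient it need not hold: when $P_k$ acts on $T$ with an invariant line $\ell$, $A_k$ can contain a hyperbolic generator of the translation direction, in which case only the proper subgroup $A_k\cap\pstab(\ell)$ gets killed, the image of $P_k$ in $\bar G$ retains a $\integers\times\integers$-like subgroup, and $\bar G$ is not hyperbolic. (One can likely repair this by specifying $A_k$ to be a finite-index free abelian characteristic subgroup of $P_k\cap\pstab(\ell)$, but that requires an argument you don't give, and it is doing real work.) A related issue: you acknowledge that $\dot A_k\cap(V\cap P_k^g)$ may only have \emph{virtually-cyclic} rather than finite index in the vertex-group parabolic $V\cap P_k^g$; that is outside the hypotheses of Corollary~\ref{cor:rhquotient} as stated (it requires finite index $\dot P_i\le P_i'$), so the inputs you feed that corollary do not match what it promises.

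\textbf{What the paper does instead.} The paper avoids claiming that $\bar G$ is hyperbolic. It takes $\widehat P_i\trianglelefteq P_i$ of \emph{finite} index in the whole parabolic $P_i$ (not in an abelian piece of it), so the restrictions $\widehat P_i^g\cap V$ automatically have finite index in the vertex parabolics. It then builds up a delicate small-cancellation setup inside each vertex group: choosing cocompact superconvex hulls $\widetilde X^{J_i}_V$ of the loxodromic transections $J_i$, controlling pieces and double cosets (Lemma~\ref{lem:finitely many double cosets1}), and ensuring the hypotheses of Lemma~\ref{lem:Intersection Control} and hence of Remark~\ref{rmk:Intersection Control}. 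Finite stature of $\bar G$ is then obtained by lifting: Proposition~\ref{prop:lift} lets you pull back any transection of $\bar G$ to a transection of $G$, which is loxodromic in its vertex group (since parabolics of vertex groups have finite images), hence loxodromic in $G$; finiteness of conjugacy classes in $G$ (Corollary~\ref{lem:finitely many conjugacy class1} and Lemma~\ref{lem:finitely many conjugacy classes2}) then carries down to $\bar G$. This avoids both the ellipticity issue and the need for $\bar G$ itself to be hyperbolic. Your proposal skips this small-cancellation preparation entirely, which is a genuine omission: even if one could repair the Dehn filling claim, the Intersection Control conditions are needed elsewhere (e.g.\ to see that the transection structure is preserved and to make the compatibility argument robust).
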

%
%
%

Now we deduce Theorem~\ref{thm:rel hyperbolic} from Proposition~\ref{prop:quotient rel hyperbolic}.

\begin{proof}[Proof of Theorem~\ref{thm:rel hyperbolic}]
It follows from Lemma~\ref{lem:reduction} and Theorem~\ref{thm:sparse separability} that it suffices to prove Theorem~\ref{thm:rel hyperbolic} in the special case that each edge group is full and quasiconvex. Given Theorem~\ref{thm:sparse separability}, this can be deduced from Proposition~\ref{prop:quotient rel hyperbolic} and Theorem~\ref{thm:main1} in the same way as the proof of Theorem~\ref{thm:main1}.
\end{proof}

The following will help control finite stature in Proposition~\ref{prop:quotient rel hyperbolic} (\ref{height}).

\begin{lem}
	\label{lem:transection full}
Under the assumption of Proposition~\ref{prop:quotient rel hyperbolic}, let  $S\subset T$ be a finite subtree of the Bass-Serre tree. Then $\pstab(S)$ is a full quasiconvex subgroup of $\stab(v)$ for any vertex $v\in S$.
\end{lem}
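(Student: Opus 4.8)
The statement has two parts: that $\pstab(S)$ is relatively quasiconvex in $\stab(v)$, and that it is \emph{full} there, i.e.\ $\pstab(S)\cap P$ is finite or of finite index in $P$ for every maximal parabolic $P$ of $\stab(v)$ (this makes sense since $\stab(v)$, being virtually sparse special, is hyperbolic relative to its virtually f.g.\ abelian peripheral structure). The relative quasiconvexity is a routine adaptation of the proof of Lemma~\ref{lem:quasiconvex0}: induct on the number of edges of $S$, removing a leaf edge $e$ with non-leaf endpoint $u$ and using that an intersection of relatively quasiconvex subgroups is relatively quasiconvex and that relative quasiconvexity is transitive along nested relatively quasiconvex subgroups \cite{HruskaRelQC}, together with the fact that each edge stabilizer, being a conjugate of an edge group, is relatively quasiconvex in the adjacent vertex stabilizers by the hypotheses of Proposition~\ref{prop:quotient rel hyperbolic}. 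So the content lies in the fullness, and the plan is to deduce it from two elementary closure properties of full relatively quasiconvex subgroups.

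Both closure properties are immediate from the description of induced peripheral structures in Theorem~\ref{thm:induced peripheral}. First: if $H_1,H_2$ are full relatively quasiconvex subgroups of a relatively hyperbolic group $K$, then so is $H_1\cap H_2$; it is relatively quasiconvex by \cite{HruskaRelQC}, and if $Q$ is a maximal parabolic of $K$, $k\in K$, and $(H_1\cap H_2)\cap Q^k$ is infinite, then each $H_i\cap Q^k$ is infinite, hence of finite index in $Q^k$ by fullness of $H_i$, so their intersection is of finite index in $Q^k$. Second: fullness is transitive for a nested triple $H\le K\le L$ in which $K$ is full relatively quasiconvex in $L$ (and $H$ relatively quasiconvex in $L$), meaning that $H$ is full in $L$ if and only if $H$ is full in $K$. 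For this one uses Theorem~\ref{thm:induced peripheral} to identify the maximal parabolics of $K$, up to $K$-conjugacy, with the infinite subgroups $K\cap P^g$ for $P$ a maximal parabolic of $L$, notes that fullness of $K$ in $L$ forces each such $K\cap P^g$ to have finite index in $P^g$, and then chases the finite-index conditions through the identity $H\cap Q^k=H\cap P^{g}$ (valid since $H\le K$) in both directions.

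With these in hand the proof is an induction on the number of edges of $S$. If $S$ is a single edge $e$, then $\pstab(S)=\stab(e)$ is, for each endpoint $v$ of $e$, a conjugate of an edge group of $\stab(v)$, hence full relatively quasiconvex in $\stab(v)$ by the standing hypothesis of Proposition~\ref{prop:quotient rel hyperbolic}. For the inductive step, pick a leaf edge $e$ with non-leaf endpoint $u$ and let $S'$ be obtained by deleting $e$ and its leaf vertex, so $\pstab(S)=\pstab(S')\cap\stab(e)$; by induction $\pstab(S')$ is full relatively quasiconvex in $\stab(u)$, and $\stab(e)$ is a conjugate of an edge group of $\stab(u)$, hence full relatively quasiconvex in $\stab(u)$, so the first closure property shows $\pstab(S)$ is full relatively quasiconvex in $\stab(u)$. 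To propagate from $u$ to an arbitrary $v\in S$, walk along the path $u=x_0,\dots,x_k=v$ in $S$ with intervening edges $f_1,\dots,f_k$: each $\pstab(S)\le\stab(f_i)$, and each $\stab(f_i)$ is a conjugate of an edge group of both $\stab(x_{i-1})$ and $\stab(x_i)$, hence full relatively quasiconvex in each, so the second closure property applied alternately (down into $\stab(f_i)$, then up into $\stab(x_i)$) carries fullness and relative quasiconvexity from $\stab(u)$ to $\stab(v)$. No hypothesis that $\pstab(S)$ is infinite is needed, since a finite subgroup is vacuously full. I expect the only delicate point to be the second closure property, the transitivity of fullness in both directions, since this is where the induced peripheral structure of a subgroup must be matched carefully against that of the ambient group; everything else is a direct reprise of the argument for Lemma~\ref{lem:quasiconvex0}.
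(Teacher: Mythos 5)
Your proof is correct and is essentially the paper's argument: both induct on the number of edges of $S$, delete a leaf edge, apply closure of full relative quasiconvexity under intersection at the attaching vertex, and then propagate to other vertices of $S$ using transitivity and the ``descent'' property of full relative quasiconvexity. The only cosmetic difference is that the paper avoids your walk from $u$ to $v$ by choosing the leaf edge $e$ so that the residual subtree $S_1$ still contains $v$ and applying the inductive hypothesis directly to $(S_1,v)$, and it cites \cite[Cor~9.5]{HruskaRelQC} for the intersection step where you verify it by hand from Theorem~\ref{thm:induced peripheral}.
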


\begin{proof}
	We induct on the number of edges in $S$. The case when $S$ is a vertex is clear. Let $S=S_1\cup_u e$ where $S_1\cap e=u$ is a vertex and $e$ is an edge and $S_1$ is a tree containing $v$. Let $H_1\le_{f.q}H_2$ denote that $H_1$ is a full relatively quasiconvex subgroup of $H_2$. By induction, $\pstab(S_1)\le_{f.q}\stab(u)$. Moreover, $\pstab(e)\le_{f.q}\stab(u)$ as hypothesized in Proposition~\ref{prop:quotient rel hyperbolic}. Thus $\pstab(S)=\pstab(S_1)\cap\pstab(e)\le_{f.q}\stab(u)$ (cf. \cite[Cor~9.5]{HruskaRelQC}). Hence $\pstab(S)\le_{f.q}\pstab(S_1)$. By induction, $\pstab(S_1)\le_{f.q}\stab(v)$, hence $\pstab(S)\le_{f.q}\stab(v)$. 
\end{proof}

\begin{proof}[Proof of Proposition~\ref{prop:quotient rel hyperbolic}]
Let $\{P^V_{ij}\}$ be representatives of maximal parabolic subgroups of a vertex group $V\in\mathcal{V}$ with the induced peripheral structure from $G$. We choose finite index subgroups $\dot P^V_{ij}\trianglelefteq P^V_{ij}$ such that the conclusions of Corollary~\ref{cor:rhquotient} are satisfied for each edge group $E\le V$ playing the role of $Q\le G$ in Corollary~\ref{cor:rhquotient}.

We first describe the proof in the case where each $V$ is the fundamental group of a sparse cube complex $X_V$. We explain the general case at the end of the proof. Let $\widetilde F_{ij}\subset \widetilde X_V$ be the quasiflat stabilized by $P^V_{ij}$. Then there is a constant $M$ such that for any $g_1,g_2\in V$, either $g_1 \widetilde F_{ij}=g_2\widetilde F_{i'j'}$, or $g_1 \widetilde F_{ij}\cap g_2\widetilde F_{i'j'}$ has diameter $\le M$.

By Corollary~\ref{lem:finitely many conjugacy class1}, $V$ contains finitely many $V$-conjugacy classes of loxodromic transections. Let $\{J_i\}$ be representatives of these conjugacy classes. Each $J_i$ is the pointwise stabilizer of a finite subtree of $T$ by Theorem~\ref{lem:finite height1}. Hence each $J_i$ is full and relatively quasiconvex in $V$ by Lemma~\ref{lem:transection full}. Choose a $J_i$-invariant convex subcomplex $\widetilde X^{J_i}_V$ as in Lemma~\ref{lem:core}. Assume $M$ also satisfies that for any $g_1,g_2\in V$, either $g_1\widetilde F_{ij}\subset g_2\widetilde X^{J_{i'}}_V$, or $g_2 \widetilde X^{J_{i'}}_V\cap g_1\widetilde F_{ij}$ has diameter $\le M$. By Theorem~\ref{lem:finitely many double cosets1}, there are finitely many double cosets $J_i g J_j$ in $V$ such that $J^g_i\cap J_j$ is loxodromic. Let $\{J_i g^{ij}_\ell J_j\}$ be the collection of such double cosets. Choose finite index subgroups $\ddot P^V_{ij}\le \dot P^V_{ij}$ such that 
\begin{enumerate}
	\item for any $i',j',i,j,\ell$, we have $M<\frac{1}{8}\systole{\widetilde F_{ij}/\ddot P^V_{ij}}$ and $|g^{i'j'}_l|<\frac{1}{8}\systole{\widetilde F_{ij}/\ddot P^V_{ij}}$;
	\item for any $i,j,i'$ and $g\in V$, either $(P^V_{ij})^g\le J_{i'}$, or $(P^V_{ij})^g\cap J_{i'}=1$ (this is possible since each $J_{i'}$ is full in $V$).
\end{enumerate}
Then Remark~\ref{rmk:Intersection Control} holds for intersections of conjugates of the $J_i$ if we quotient $V$ by any further finite index subgroups of $\{\ddot P^V_{ij}\}$.

Let $\{P_i\}_{i\in I}$ be representatives of maximal parabolic subgroups of $G$. Since each $P_i$ is residually finite, we choose a finite index normal subgroup $\widehat P_i\trianglelefteq P_i$ such that for each $V\in\mathcal{V}$ and $g\in G$, either $(\widehat P_i)^g\cap V$ is contained in a $V$-conjugate of an element in $\{\ddot P^V_{ij}\}$, or $(\widehat P_i)^g\cap V$ is the identity subgroup.

Let $\{\widehat P^V_{ij}\}$ be the collection of finite index subgroups of $\{P^V_{ij}\}$ induced by $\{\widehat P_i\}$. For each $V$, we consider the quotient $q_V:V\to\bar V$ where: $$\bar V\ =\ V/\nclose{\{\widehat P^V_{ij}\}}_V\ =\ V/\langle\{V\cap (\widehat P_i)^g\}_{g\in G, i\in I}\rangle_V$$
Now Conclusion~\eqref{special} holds by our choice of $\widehat P_i$. For any edge group $E\to V$ we have the following where the first equality follows from the choice of $\widehat P_i$ and Corollary~\ref{cor:rhquotient}.(3) and the second equality is a notational restatement as above and the third holds since $E\le V$.
\begin{align*} 
\ker (E\to\bar V) &=  \nclose{\{E\cap (\widehat P^V_{ij})^g\}_{g\in V}}_E =\langle\{E\cap (V\cap (\widehat P_i)^g)\}_{g\in G, i\in I}\rangle_E\\ 
 &= \langle\{E\cap (\widehat P_i)^g\}_{g\in G, i\in I}\rangle_E
\end{align*}
The compatibility of the $q_V$ now follows, since $\ker (E\to\bar V)$ does not depend on $V$ as is clear in the last term above. Thus Conclusion~\eqref{quotient} holds. 

As Remark~\ref{rmk:Intersection Control} holds for intersections of conjugates of the $J_i$, by Proposition~\ref{prop:lift} each transection of $\bar{G}$ lifts to a transection of $G$,
which is necessarily loxodromic in its vertex group since parabolic subgroups of vertex groups of $G$ have finite images. Moreover, being loxodromic in a vertex group implies being loxodromic in $G$ as vertex groups have induced peripheral structure. Then Conclusion~\eqref{height} follows from Corollary~\ref{lem:finitely many conjugacy class1} and Lemma~\ref{lem:finitely many conjugacy classes2}. Given Theorem~\ref{thm:sparse separability}, the rest of the proof is similar to Proposition~\ref{prop:quotients}.

\newcommand{\VV}{{\mathbb V}}

We now explain the modifications needed to handle the general case. Since virtually cosparse implies cosparse \cite[Lem~7.34]{WiseIsraelHierarchy}, we can assume $V$ acts cosparsely on a $CAT(0)$ cube complex $\widetilde X_V$. Let $\VV\le V$ be a finite index  normal subgroup such that $\VV=\pi_1(X_\VV)$ where $X_\VV$ is a sparse cube complex. We do not use that $X_\VV$ is virtually special; instead the cube complex $X_\VV$ supports the cubical small cancellation theory computations. As in the proof of Proposition~\ref{prop:quotients}, we perform small cancellation computations inside $\VV$, and the equalities between normal closures there induce equalities of normal closures in the whole group.
\end{proof}




\bibliographystyle{alpha}
\bibliography{wise}


%
%
\end{document}